\documentclass{article}
\usepackage{graphicx} 
\usepackage[T1]{fontenc}
\usepackage[utf8]{inputenc}
\usepackage[english]{babel}
\usepackage{amsmath}
\usepackage{amsthm}
\usepackage{amssymb}
\usepackage{mathtools}
\usepackage{esint}
\usepackage{tikz}
\usepackage{faktor}
\usepackage{xcolor, colortbl}
\usepackage{makecell}
\usepackage{caption}
\usepackage{subcaption}
\usepackage{comment}
\usepackage{setspace}
\usepackage[12pt]{moresize}
\usepackage{float}
\usepackage[margin=1in]{geometry}
\usepackage{mathrsfs}
\usepackage{url}
\usepackage{authblk}
\usepackage{bbm}
\usepackage{hyperref}
\usepackage{cleveref}
\usepackage{enumitem}
\usepackage{bm}
\hypersetup{
colorlinks = true,
urlcolor   = blue,
citecolor  = blue,
linkcolor  = blue,
}

\DeclareMathOperator{\dist}{dist}

\DeclareMathOperator{\loc}{loc}

\DeclareMathOperator{\supp}{supp}

\DeclareMathOperator{\diam}{diam}
\DeclareMathOperator{\interior}{int}

\theoremstyle{plain}
\newtheorem{thm}{Theorem}[section]

\newtheorem*{thm*}{Theorem}
\newtheorem{lem}[thm]{Lemma}
\newtheorem{prop}[thm]{Proposition}
\newtheorem{cor}[thm]{Corollary}
\newtheorem{defn}[thm]{Definition}
\theoremstyle{remark}
\newtheorem{rmk}[thm]{Remark}

\newcommand{\R}{\mathbb{R}}
\newcommand{\Sph}{\mathbb{S}}

\newcommand{\N}{\mathbb{N}}

\newcommand{\eps}{\varepsilon}

\newcommand{\res}{\mathop{\hbox{\vrule height 7pt width .5pt depth 0pt
			\vrule height .5pt width 6pt depth 0pt}}\nolimits}

\numberwithin{equation}{section}

\title{The fractional Laplacian in Lipschitz domains: \\Dahlberg's Theorem and $L^{2}$-solvability}
\author{Roberto Colombo, Xavier Fernández-Real, Xavier Ros-Oton}
\date{}

\begin{document}

\maketitle

\begin{abstract}
    Given $s\in (0,1)$ and a bounded Lipschitz domain $\Omega\subset \R^{n}$, we establish a quantitative Dahlberg theory for the
$s$-harmonic measure of $\Omega$, $\omega_s^x$. In the nonlocal setting, the natural reference measure is an integral weight $\sigma_{s}$ in $\Omega^{c}$ that behaves like $(1-s)\dist(\cdot, \partial \Omega)^{-s}$ close to the boundary. Our main result is a scale-invariant reverse-H\"{o}lder estimate for the density $d\omega_{s}^{x}/d\sigma_{s}$ on boundary-centered balls. As a consequence, we obtain $L^2(\Omega^c,\sigma_s)$-solvability of the exterior Dirichlet problem, with estimates for a nonlocal non-tangential maximal function and uniqueness in the natural distributional class. A weighted Gehring argument improves the reverse-H\"{o}lder exponent beyond $2$ and consequently yields $L^{q}$-solvability for a range of exponents extending strictly below $2$. Our results apply to general symmetric stable operators comparable to the fractional Laplacian. Moreover, the proofs are compatible with the limit $s\to 1^-$ and thus yield the corresponding results for the Laplacian in the nonlocal-to-local limit. The main new step is to convert a fractional Pohozaev identity for the Green function into uniform square estimates on distance level sets of a Lipschitz boundary. As applications, we derive optimal Sobolev regularity estimates for the homogeneous weighted Dirichlet problem and for the inhomogeneous Poisson problem with zero exterior data. 
\end{abstract}

%\tableofcontents

\bigskip
\noindent\textbf{MSC:} 35R11; 31B25; 31B20; 42B37.\\
\noindent\textbf{Keywords:} Fractional Laplacian; Lipschitz domains; Harmonic measure; Dahlberg's theorem; reverse-H\"{o}lder estimates; Dirichlet problem.

\section{Introduction}

The study of harmonic functions in domains has been a central line of research in PDE and harmonic analysis for the last century. In this context, the distinction between smooth and Lipschitz domains is fundamental in the study of boundary value problems. In a $C^{1,\alpha}$-domain, rescaling around a boundary point makes the boundary progressively flatter, and perturbative arguments may be combined with the classical boundary regularity theory. A Lipschitz boundary, by contrast, is invariant as a geometric class under rescaling: corners and oscillations may persist at every scale, and there is no improvement of the boundary geometry under blow-up. Lipschitz domains, therefore, constitute a natural endpoint class for scale-invariant boundary estimates, and their analysis requires tools that depend only on quantitative geometric objects such as corkscrew points and Harnack chains \cite{kenig1994harmonic,prats2026harmonic}. This is analogous to the distinction between elliptic equations with continuous coefficients, for which Schauder-type estimates are available, and equations with merely bounded measurable coefficients, where completely different techniques are required. 

For the Laplacian, the resulting theory is both deep and remarkably complete. It includes the boundary Harnack principle, quantitative estimates for harmonic measure, and the solvability of boundary value problems with rough data. Dahlberg's theorem is one of its central results: if $\Omega\subset\R^n$ is a bounded Lipschitz domain and $\omega^{x}$ denotes harmonic measure with pole $x\in\Omega$, then $\omega^{x}$ is quantitatively absolutely continuous with respect to the surface measure on $\partial\Omega$, and its Poisson kernel satisfies an $L^{2}$-reverse-H\"{o}lder estimate on surface balls \cite{dahlberg1977estimates}. Together with the Gehring-type self-improvement of reverse-H\"{o}lder inequalities, this estimate is a principal ingredient in the solvability of the Dirichlet problem with $L^q$ boundary data and non-tangential maximal function bounds; see \cite{huntWheeden1968boundary,dahlberg1977estimates,dahlberg1979poisson,jerisonkenig1980identity,jerison1981dirichlet,kenig1994harmonic}. This theory was further developed for Neumann boundary conditions and for more general elliptic operators \cite{jerison1981neumann,fabes1984necessary,dahlberg1987hardy,kenig1993neumann}. Related Sobolev estimates for the inhomogeneous Dirichlet problem were obtained in \cite{jerison1995inhomogeneous} (see also \cite{jerison2026homogeneous}). Analogous questions have also been studied for the $p$-Laplacian \cite{lewis2010boundary}. For the sharp geometric characterization of quantitative absolute continuity of harmonic measure with respect to surface measure, equivalently of $L^{p}$-solvability of the Dirichlet problem for some finite $p$, under Ahlfors--David regularity and an interior corkscrew condition, see \cite{azzamHofmannMartellMourgoglouTolsa2020} and the references therein. We refer the reader to \cite{kenig1994harmonic,prats2026harmonic} for comprehensive treatments of this extensively studied topic.

The purpose of this paper is to develop an analogue of this theory for the fractional Laplacian \eqref{eq:fract_laplacian} and the corresponding class of $s$-harmonic functions (see \cite{XROXFRbook}):
\begin{equation*}
    (-\Delta)^s u=0\qquad\text{in $\Omega$},
    \qquad 0<s<1.
\end{equation*}
Nonlocal equations of this type have attracted great interest in the PDE community in the last two decades, especially since the works of Caffarelli and Silvestre \cite{caffarelli2007extension,caffarelli2009regularity, CaffarelliSilvestre2011}. Most boundary regularity results for the fractional Laplacian and related nonlocal operators have been proved in $C^{k,\alpha}$ or smoother domains \cite{rosoton2014dirichlet,grubb2015fractional,abatangelo2020obstacle}. In Lipschitz domains much less is known: the robust regularity results available so far are essentially H\"{o}lder continuity up to the boundary and the boundary Harnack principle \cite{bogdan1997boundaryHarnack,bogdan1999representationMartin}. In particular, the following questions have remained open in general Lipschitz domains:
\begin{enumerate}[label=\textup{(\roman*)}]
\item \emph{Is there an analogue of Dahlberg's theorem for the fractional Laplacian?}
\item \emph{Can one prove an $L^2$-solvability result for the exterior Dirichlet problem?}
\end{enumerate}

\noindent These are the questions that we tackle in this paper.

\bigskip 

A decisive structural feature of the classical local setting is that the boundary data, harmonic measure, and the reference surface measure all live on the same codimension-one set $\partial\Omega$. In the nonlocal context, instead, the equation couples every point of $\Omega$ to the whole complement $\Omega^c$. Dirichlet data are therefore prescribed on an exterior set of full dimension rather than only on $\partial\Omega$, and consequently, even the correct formulation of a fractional Dahlberg's theorem is not immediate; cf.~\cite{DavidEngelsteinMayboroda2021Duke}. More precisely, let $\omega_s^x$ denote the $s$-harmonic measure of $\Omega$ with pole $x\in\Omega$. Probabilistically, it is the exit distribution from $\Omega$ of the isotropic $2s$-stable process starting at $x$. In the present setting, $\omega_{s}^{x}$ is absolutely continuous with respect to Lebesgue measure on $\Omega^c$, and its density
\begin{equation}\label{eq:def-poisson-intro}
    P^{x}(y)\coloneqq\frac{d\omega_{s}^{x}}{d\mathscr{L}^{n}}(y)
    \qquad \forall (x, y)\in\Omega\times \Omega^c,
\end{equation}
is called the (nonlocal) Poisson kernel; see
\cite{ikedaWatanabe1962harmonic,chenSong1998green, XROXFRbook}. The exit distribution is thus spread throughout the complement rather than carried by the codimension-one boundary. The surface measure cannot serve as a reference measure, while the unweighted Lebesgue measure encodes neither the singular concentration of $P^x$ near $\partial\Omega$ nor its decay at infinity. 
The appropriate reference measure is identified by the half-space model. If
$\mathbb{H}=\{x_n>0\}$, then 
\begin{equation}\label{eq:half-space-poisson-intro}
    P_{\mathbb{H}}^x(y)    = C_{n, s}\frac{x_n^s}{(-y_n)^s|x-y|^n}\qquad
    \forall (x, y)\in \mathbb{H}\times \mathbb{H}^c;
\end{equation}
see, for instance,
\cite{bogdan1997boundaryHarnack,bogdan1999representationMartin, XROXFRbook}.
The factor $\dist(y,\partial\mathbb{H})^{-s}$ describes the singular behavior of the exit distribution near the boundary, while the remaining denominator determines its long-range decay. This leads, for a bounded Lipschitz domain, to the exterior measure
\begin{equation}\label{eq:def-sigma}
     \sigma_{s}\coloneqq(1-s)\frac{\delta^{-s}}{1+\delta^{n+s}}\,\mathscr{L}^{n}\res \Omega^{c},
    \end{equation}
where $\delta:\R^{n}\to [0,\infty)$ denotes the distance function from the boundary 
\begin{equation*}
    \delta(y)\coloneqq\dist(y,\partial\Omega).
\end{equation*}
On boundary-centered balls of sufficiently small radius, $\sigma_{s}(B_r(\xi))\approx r^{n-s}$, so $\sigma_s$ has the effective local (fractional) dimension $n-s$, which approaches the boundary dimension $n-1$ as $s\to 1^-$. The normalization by $1-s$ gives the corresponding weak limit $\sigma_{s} \rightharpoonup \mathcal{H}^{n-1}\res \partial \Omega$. Distance-weighted trace spaces and robust localization limits closely related to \eqref{eq:def-sigma} have been developed in
\cite{grubeHensiek2024trace,grubeKassmann2025trace}. The role established in the present paper is different: $\sigma_s$ is the reference measure with respect to which fractional harmonic measure has a quantitative reverse-H\"{o}lder structure.

The central question of this paper is therefore whether the scale-invariant harmonic measure theory of Dahlberg in the rough geometric class of Lipschitz domains survives in the nonlocal setting. We prove that it does. More precisely, our main estimate is a scale-invariant $L^{2}$-reverse-H\"{o}lder bound for the density $d\omega_{s}^{x_0}/d\sigma_{s}$ with respect to the exterior weight $\sigma_{s}$; see Theorem~\ref{thm:Dahlberg-intro}. Its direct consequence is the $L^{2}(\Omega^{c},\sigma_{s})$-solvability of the exterior Dirichlet problem with a non-tangential maximal function estimate; see Theorem \ref{thm:Dirichlet-solvability-intro}. A weighted Gehring argument then self-improves the reverse-H\"{o}lder estimate to some exponent above $2$, yielding as a further consequence $L^q$-solvability for the Dirichlet problem with $q$ in a range that extends strictly below $2$. We expect these results to be useful in the study of Poisson, Neumann, and regularity problems for the fractional Laplacian in Lipschitz domains. For instance, as corollaries of our theory, we derive some optimal Sobolev regularity estimates for the homogeneous weighted exterior Dirichlet problem and for the inhomogeneous Poisson problem with zero exterior data; see Section \ref{subsec:applications} below.  

\subsection{Main results and relation to previous work}

Our first theorem proves, in the nonlocal setting, Dahlberg's $L^{2}$ reverse-H\"{o}lder estimate in Lipschitz domains \cite{dahlberg1977estimates}; see Definition \ref{defi:Lip_char} for the notion of Lipschitz character. In the nonlocal context, the surface measure of $\partial \Omega$ is replaced by the exterior weight $\sigma_{s}$ introduced in \eqref{eq:def-sigma}.

\begin{thm}[Dahlberg's theorem for the fractional Laplacian]\label{thm:Dahlberg-intro}
    Let $\Omega\subset\R^n$ be a bounded Lipschitz domain, let $x_0\in\Omega$, and let $\sigma_s$ be the exterior measure in \eqref{eq:def-sigma}. Then, for every ball $B\subset\R^n$ centered on $\partial\Omega$,
    \begin{equation}\label{eq:reverse-holder-intro}
        \left( 
        \fint_{B\cap\Omega^c}
        \left(\frac{d\omega_s^{x_0}}{d\sigma_s}\right)^2
        d\sigma_s\right)^{1/2}
        \le C\,\frac{\omega_{s}^{x_0}(B\cap \Omega^{c})}{\sigma_{s}(B\cap \Omega^{c})},
    \end{equation}
    where $C$ depends only on $n$, $s$, the Lipschitz character of $\Omega$, and $\delta(x_0)$.
\end{thm}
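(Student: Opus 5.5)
The plan is to write the density of $\omega_s^{x_0}$ through the Green function $G(x_0,\cdot)$ of $\Omega$ and to reduce \eqref{eq:reverse-holder-intro} to a uniform weighted square estimate for $G$ near $\partial\Omega$. For $y\in\Omega^c$ the Poisson kernel is
\[
P^{x_0}(y)=c_{n,s}\int_\Omega \frac{G(x_0,z)}{|z-y|^{n+2s}}\,dz .
\]
Hence
\[
\frac{d\omega_s^{x_0}}{d\sigma_s}(y)=\frac{1+\delta(y)^{n+s}}{1-s}\,\delta(y)^{s}P^{x_0}(y).
\]
Fix a ball $B=B_r(\xi)$ with $\xi\in\partial\Omega$. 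First reduce to $r\le r_0$, with $r_0$ small depending on the Lipschitz character and $\delta(x_0)$. For large balls, $P^{x_0}(y)\approx \delta(y)^{-s}|y|^{-n-2s}$ away from $\Omega$, and the estimate follows from boundedness of $G(x_0,\cdot)$ away from $x_0$ together with the decay of the kernel. For small $r$, split $P^{x_0}(y)$ into the contribution from $z\in\Omega\cap B_{2r}(\xi)$ and from $z\notin B_{2r}(\xi)$. The far part is essentially constant on $B\cap\Omega^c$ up to the weight, and is controlled by $\omega_s^{x_0}(B\cap\Omega^c)/\sigma_s(B\cap\Omega^c)$ using the boundary Harnack principle of Bogdan and the Carleson/doubling estimates for $\omega_s^{x_0}$. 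These give $\omega_s^{x_0}(B\cap \Omega^c)\approx r^{n-s}\cdot r^{-s}\,G(x_0,A_r)\, r^{2s-n}\cdots$, where $A_r$ is a corkscrew point.

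The near part is the heart of the matter. For $y$ with $\delta(y)=t$, I would use Schur/Hardy-type bounds to show
\[
\delta(y)^s\int_{\Omega\cap B_{2r}}\frac{G(z)}{|z-y|^{n+2s}}dz \lesssim \mathcal{M}\bigl(G\,\delta^{-s}\bigr)(\text{nearby boundary-level points}).
\]
Here $\mathcal M$ is a maximal average over Whitney-type regions. This reduces the $L^2(\sigma_s)$ norm of the density to integrals
\[
\int_0^{r}\int_{\{\delta=t\}\cap B_{2r}}\Bigl(\frac{G}{\delta^{s}}\Bigr)^2 \,d\mathcal H^{n-1}\,(1-s)t^{-s}\,dt\ \text{-type quantities}
\]
on interior distance level sets. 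Averaged in $t$, these reproduce the $\sigma_s$ weight.

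So the key intermediate claim is a uniform square estimate. For every $t\in(0,r)$,
\[
\int_{\{\delta=t\}\cap B_{2r}(\xi)}\Bigl(\frac{G(x_0,\cdot)}{\delta^{s}}\Bigr)^2 d\mathcal H^{n-1}\le C\, r^{n-1}\Bigl(\frac{G(x_0,A_r)}{r^{s}}\Bigr)^2 .
\]
This is the fractional analogue of Dahlberg's $L^2$ bound for $\partial_\nu G$. I would prove it, as in Jerison--Kenig's Rellich approach, by applying the fractional Pohozaev identity (Ros-Oton--Serra type) to $u=\eta\,G(x_0,\cdot)$, with $\eta$ a cutoff adapted to $B_{4r}(\xi)$. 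The identity would be tested with a constant vector field $e$ transversal to the Lipschitz graph, i.e.\ $e\cdot\nu\ge c>0$. In the resulting boundary term $\Gamma(1+s)^2\int_{\partial\Omega}(u/\delta^s)^2(e\cdot\nu)$, transversality gives the square of $u/\delta^s$ with a positive sign. The interior error terms come from the cutoff and from $(-\Delta)^s u\neq0$ near $\partial B_{4r}$. They are bounded by $r^{n-1}(G(A_r)/r^s)^2$ via Carleson estimates and interior Hölder bounds.

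The main obstacle is that the Pohozaev identity requires $C^{1,1}$ domains and the boundary trace $u/\delta^s$ is not defined on a Lipschitz boundary. I would therefore apply it on the approximating smooth-ish sets $\Omega_t=\{\delta>t\}$, or on mollified-graph domains $\Omega^\varepsilon$ with uniform Lipschitz character. There $G$ (respectively the Green function of $\Omega^\varepsilon$) is an admissible solution, and the boundary term has a fixed sign uniformly in $t$ and $\varepsilon$. Passing to the limit requires uniform control of the constants and of the error terms. This is the step I expect to be most delicate. It needs boundary Harnack to compare $G_{\Omega^\varepsilon}$ with $G_\Omega$, and tracking of the $(1-s)$ factors so the bound is stable as $s\to1$. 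Once the level-set estimate holds, integrating in $t$ against $(1-s)t^{-s}$ and combining with the far part gives \eqref{eq:reverse-holder-intro}.
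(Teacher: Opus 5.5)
Your outer reduction matches the paper's. You reduce to small balls. You turn the exterior density $\delta^{s}P^{x_0}/(1-s)$ into interior values of $G^{x_0}$ at comparable height; the paper does this pointwise via Lemma~\ref{lem:comparison-P-G}, $P^{x_0}(A_t'(\xi'))\approx(1-s)G^{x_0}(A_t(\xi'))/t^{2s}$, so no Schur/maximal-function step is needed. Your key claim is exactly the upper half of Theorem~\ref{thm:uniform-L2-Green-function}, followed by the coarea formula.

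The gap is in how you propose to prove that level-set estimate.

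\textbf{The identity on $\Omega_t$ is not admissible.} The nonlocal Pohozaev/integration-by-parts identity requires $u\equiv 0$ outside the domain where it is applied. So it cannot be used for $G^{x_0}$ on $\Omega_t=\{\delta>t\}$: $G^{x_0}>0$ on $\partial\Omega_t$, and its ``trace'' $G^{x_0}/\delta_{\Omega_t}^{s}$ is infinite. This is precisely where the local Rellich strategy on interior approximating domains breaks down nonlocally.

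\textbf{Smooth approximations give only boundary upper bounds.} On smooth approximations $\Omega^\varepsilon$, your identity (constant transversal $e$, cutoff $\eta$) at best bounds $\int_{\partial\Omega^\varepsilon\cap B_{2r}}(\partial_\nu^s G^{x_0}_{\Omega^\varepsilon})^2$ from above. These traces have no limit on a Lipschitz boundary. Nothing in your plan converts them into bounds on $\{\delta=t\}$ uniformly in $t$. To do that you must dominate $G^{x_0}(y)/t^{s}$, for $\delta(y)=t$, by the $L^2$-average of $\partial_\nu^sG^{x_0}$ on $B_t(\xi')\cap\partial\Omega$, where $\xi'$ is the projection of $y$. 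That is, you need the \emph{lower} bound
\[
\frac{G^{x_0}(A_t(\xi'))^2}{t^{2s}}\lesssim \fint_{B_t(\xi')\cap\partial\Omega}(\partial_\nu^sG^{x_0})^2\,d\mathcal H^{n-1}
\]
at every small scale $t$, uniformly over the Lipschitz class. The upper bound of Theorem~\ref{thm:uniform-L2-Green-function}, which is all you need, uses exactly this lower bound of Proposition~\ref{prop:L2-normal-a-priori} at scale $t$.

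In the local case this is free: $\omega(B_t(\xi')\cap\partial\Omega)=\int_{B_t(\xi')\cap\partial\Omega}|\partial_\nu G|\,d\mathcal H^{n-1}$. Here $\omega^{x_0}$ lives on $\Omega^c$ and no such identity exists. A constant-field identity with a cutoff cannot supply it either, since apart from the boundary term all its terms are errors with no sign.

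\textbf{How the paper gets the lower bound.} It uses the \emph{dilation} identity of Lemma~\ref{lem:Pohozaev}, centered at the pole, on a smooth star-shaped domain $\widetilde\Omega\subset\Omega$ that coincides with $\Omega$ near the patch. There no cutoff is needed and the right-hand side is positive and explicit. Moving the pole to $(1-\eps)\xi$ makes the right-hand side $\gtrsim(\eps r)^{2s-n}$. The far part of $\partial\widetilde\Omega$ contributes only $O(\eps^{2\alpha}r^{2s-n})$, by boundary H\"older decay. With two-sided bounds at all scales, Fubini and Harnack give $t^{-2s}\int_{\{\delta=t\}}G^2\approx\int_{\partial\Omega}\fint_{B_t(\xi')}(\partial_\nu^sG)^2$ on smooth domains with uniform constants. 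Only then does Lemma~\ref{lem:approximation-smooth-domains} pass to Lipschitz domains at fixed $t>0$.

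\textbf{A secondary issue.} The cutoff errors in your identity contain solid integrals of the type $r^{-1}\int_{\Omega\cap B_{4r}}(G^{x_0}/\delta^s)^2$. Carleson plus $G^{x_0}\lesssim(\delta/r)^\alpha G^{x_0}(A_r)$ do not control these when $\alpha\le s-\tfrac12$. You would need a boundary Caccioppoli inequality combined with a Hardy inequality.
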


Thus, $d\omega_s^{x_0}/d\sigma_s$ belongs quantitatively to the analog of the $RH_2(\sigma_s)$-class in which the reverse-H\"{o}lder condition is required only on boundary-centered balls. In particular, Theorem \ref{thm:Dahlberg-intro} yields an $A_{\infty}$-type comparison between fractional harmonic measure and $\sigma_{s}$. By a weighted Gehring lemma (see Lemma \ref{lem:gehring}), this estimate self-improves: there exists $p_0>2$ such that the density belongs to the corresponding $RH_p(\sigma_s)$-class for every $p\in[2,p_0)$. The result is in fact proved for the larger family of doubling exterior measures (even if, in this case, Remark \ref{rmk:sto1} does not apply for the whole range of $\beta$):
\begin{equation}\label{eq:def-sigma-beta-intro}
     \sigma_\beta 
    =(1-\beta)\frac{\delta^{-\beta}}
    {1+\delta^{n+2s-\beta}}\,\mathscr{L}^{n}\res \Omega^{c},
    \qquad \beta\in(2s-1,s];
\end{equation}
see Theorem \ref{thm:Dahlberg-general}. Moreover, the same conclusions hold for symmetric stable operators whose angular density is bounded above and below; see Section \ref{sec:general_kernels}. In particular, the result is not tied to rotational invariance.

This complements the existing potential theory for stable processes, including boundary Harnack principles, pointwise estimates for Green functions and Poisson kernels, and Martin representations
\cite{bogdan1997boundaryHarnack,chenSong1998green,chen1998martin,bogdan1999representationMartin,jakubowski2002estimates,bogdan2015boundary,armstrong2025caloric}. It is also distinct from the variational, trace-space, and weighted exterior data theories developed in \cite{felsinger2015dirichlet,bogdan2020extension,grubeHensiek2024trace,grube2024dirichlet,grubeKassmann2025trace}.  

The $RH_2$-type estimate in Theorem \ref{thm:Dahlberg-intro} leads naturally to an $L^2$-solvability theory for the fractional Dirichlet problem with respect to the exterior weight $\sigma_{s}$. The relevant non-tangential approach regions are necessarily nonlocal in nature: for every ``vertex'' $y\in\Omega^c$, the corresponding nonlocal cone inside $\Omega$ is defined as
\begin{equation}\label{eq:def-non-tangential-cone-Intro}
    \Gamma(y)\coloneqq\left\{x\in\Omega:\delta(x)\ge\frac{|x-y|}{2}\right\}.
\end{equation}
Then, for any function $u$ in $\Omega$, the nonlocal non-tangential maximal function $u^{*}:\Omega^{c}\to [0,\infty]$ is given by
\begin{equation}\label{eq:non-tangential-max-function-Intro}
    u^*(y)\coloneqq\sup_{x\in\Gamma(y)}|u(x)|\qquad \forall y\in \Omega^{c},
\end{equation}
with the convention that $u^*(y)\coloneqq 0$ when $\Gamma(y)=\varnothing$; see Definition \ref{def:non-tangential-maximal-function}. Clearly, when restricted to the boundary of the domain, $\Gamma$ and $u^{*}$ reduce to the standard notions of non-tangential cone and non-tangential maximal function adopted in the local theory. In the nonlocal case, we obtain the following result, which is the first nonlocal analog of \cite{dahlberg1979poisson,jerison1981dirichlet}:

\begin{thm}[$L^2$-solvability of the exterior Dirichlet problem]\label{thm:Dirichlet-solvability-intro}
    Let $\Omega\subset\R^n$ be a bounded Lipschitz domain and let $\sigma_s$ be the exterior measure in \eqref{eq:def-sigma}. For every $g\in L^2(\Omega^c,\sigma_s)$, the function
    \begin{equation}\label{eq:Poisson-solution-Dirichlet-Intro}
        u(x)\coloneqq\int_{\Omega^c}P^x(y)g(y)\,dy
        \qquad \forall x\in\Omega
    \end{equation}
    is well-defined, and once extended by $u=g$ on $\Omega^c$, is the unique distributional solution of
    \begin{equation*}
    \left\{
    \begin{array}{rclll}
         (-\Delta)^{s}u&=&0\quad &\text{in $\Omega$},\\
            u&=&g\quad &\text{in $\Omega^{c}$},\\
            \lVert u^{*}\rVert_{L^{2}(\Omega^{c}, \sigma_{s})}&<&\infty,
    \end{array}
    \right.
    \end{equation*}
    where $u^{*}$ is the nonlocal non-tangential maximal function in \eqref{eq:non-tangential-max-function-Intro}.
    Moreover,
    \begin{equation}\label{eq:L2-estimate-dirichlet-intro}
        \lVert u^*\rVert_{L^2(\Omega^c,\sigma_s)}
        \le C\lVert g\rVert_{L^2(\Omega^c,\sigma_s)},
    \end{equation}
    where $C$ depends only on $n$, $s$, and the Lipschitz character of $\Omega$.
\end{thm}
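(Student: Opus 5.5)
The plan is to follow the classical Dahlberg--Jerison--Kenig route, with Theorem~\ref{thm:Dahlberg-intro} as the main input.

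\textbf{Step 1: pointwise bound by a maximal function.} For $y\in\Omega^c$ and $x\in\Gamma(y)$, write $u(x)=\int_{\Omega^c} k^x(z)g(z)\,d\sigma_s(z)$ with $k^x= d\omega_s^x/d\sigma_s$. Set $r=\delta(x)\approx|x-y|$ and fix $\xi\in\partial\Omega$ with $|x-\xi|=r$. Decompose $\Omega^c$ into $A_0=B_{4r}(\xi)\cap\Omega^c$ and the dyadic annuli $A_j=(B_{2^{j+2}r}(\xi)\setminus B_{2^{j+1}r}(\xi))\cap\Omega^c$. I would estimate $k^x$ differently in the two ranges.
\begin{itemize}
\item Near field. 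Compare $k^x$ with $k^{x_0}$ via the boundary Harnack principle and Harnack chains. This gives $k^x\lesssim k^{x_0}/\omega_s^{x_0}(B_{4r}(\xi)\cap\Omega^c)$ on $A_0$.
\item Far field. Use the explicit kernel decay of the fractional Poisson kernel, roughly $P^x(z)\lesssim r^s\delta(z)^{-s}|x-z|^{-n}$ up to boundary-Harnack factors. This yields a summable gain $2^{-j\epsilon}$ on $A_j$.
\end{itemize}
Then Cauchy--Schwarz together with the $RH_2$ estimate \eqref{eq:reverse-holder-intro} bounds the contribution of each piece by a $\sigma_s$-average of $|g|$ over the corresponding boundary-centered ball. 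The conclusion should be $u^*(y)\lesssim M_{\sigma_s}g(y)$, where $M_{\sigma_s}$ is a Hardy--Littlewood maximal operator over balls centered on $\partial\Omega$ containing $y$. For $y$ far from $\Omega$, $\Gamma(y)$ is empty or only pole-scale; there the crude bound $|u|\lesssim \|g\|_{L^1(\sigma_s)}$ on compacts together with the weight $(1+\delta^{n+s})^{-1}$ handles the tail. Using the Gehring improvement to $RH_p$ with $p>2$ gives room: Hölder with exponent $p'<2$ yields $u^*\lesssim (M_{\sigma_s}|g|^{p'})^{1/p'}$.

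\textbf{Step 2: $L^2$ bound.} Since $\sigma_s$ is doubling on boundary-centered balls, $M_{\sigma_s}$ is bounded on $L^{2/p'}$ with $2/p'>1$. This gives \eqref{eq:L2-estimate-dirichlet-intro}. The same estimates applied to $|g|$ show that the integral defining $u$ converges absolutely, so $u$ is well defined.

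\textbf{Step 3: $u$ is a distributional solution with $u=g$.} I would approximate $g$ by $g_k\in C_c^\infty(\Omega^c)$ in $L^2(\sigma_s)$. For these the Poisson integral is the classical solution, and the maximal bound gives local uniform convergence of $u_k\to u$ in $\Omega$. The tails are controlled by $\int |g|(1+|y|)^{-n-2s}\,dy\lesssim\|g\|_{L^2(\sigma_s)}$, by Cauchy--Schwarz against the weight. Hence $\int u(-\Delta)^s\varphi=0$ for $\varphi\in C_c^\infty(\Omega)$.

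\textbf{Step 4: uniqueness.} Take a solution $v$ with $g=0$ and $v^*\in L^2(\sigma_s)$; I will show $v\equiv 0$. I expect this to be the main obstacle. Fix $x\in\Omega$ and approximate $\Omega$ from inside by Lipschitz domains $\Omega_\epsilon$ with uniform character. Inside $\Omega_\epsilon$, $v$ is regular, so it is represented by the $\Omega_\epsilon$-Poisson kernel as $v(x)=\int_{\Omega_\epsilon^c}P^x_{\Omega_\epsilon}v$. The exterior data equal $v$ on $\Omega\setminus\Omega_\epsilon$ and $0$ on $\Omega^c$. The task is to show $\int_{\Omega\setminus\Omega_\epsilon}P^x_{\Omega_\epsilon}|v|\to0$. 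Points of $\Omega\setminus\Omega_\epsilon$ lie in cones $\Gamma(y)$ for nearby $y\in\Omega^c$, so $|v|\le v^*(y)$ on a set of comparable $\sigma$-mass. Using the uniform $RH_2$ for $\Omega_\epsilon$ (Theorem~\ref{thm:Dahlberg-intro}) and Cauchy--Schwarz, the integral is bounded by $\|v^*\|_{L^2(\sigma_s,\{\delta<C\epsilon\})}$, which tends to $0$ by dominated convergence. A key point is that $\sigma_s^{\Omega_\epsilon}$ on that strip is comparable to a transported $\sigma_s$-measure via a bi-Lipschitz layer map. Distributional solutions in $\Omega_\epsilon$ would first be upgraded to weak solutions by interior regularity to justify the representation there.
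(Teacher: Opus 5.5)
Your Steps 1--2 follow the paper's route with two stages merged. The paper first proves $u^*\lesssim M^{\Omega}_{\omega^{x_0}}(g\,\omega^{x_0})$ (Lemma~\ref{lem:non-tangential-maximal-function-bound}) by bounding the Martin kernel $K^{x_0}(x,\cdot)=P^x/P^{x_0}$ on dyadic annuli around $\xi$. Only afterwards does it apply the Gehring-improved reverse H\"older inequality of Theorem~\ref{thm:Dahlberg-general} at an exponent $\tilde q<2$, followed by Lemma~\ref{lem:maximal-inequality-general-weights}.

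The genuine difference is uniqueness. The paper proves a stand-alone criterion, Lemma~\ref{lem:general-uniqueness}: if $u=0$ in $\Omega^c$ and $u^*\in L^1(\Omega^c,\omega^{x_0})$, then $u\equiv0$. It writes $u(x_0)=\int_\Omega G^{x_0}(-\Delta)^s(u\varphi_\varepsilon)$, splits with the carr\'e du champ, and passes to the limit using the Caccioppoli inequality at every Whitney scale. No reverse H\"older inequality and no approximating domains enter, the hypothesis is sharp (Remark~\ref{rmk:sharpness-uniqueness-criterion}), and uniqueness in every $L^q(\sigma_\beta)$ class follows from one H\"older inequality.

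Your approximating-domain argument instead needs the Dahlberg estimate uniformly for the interior approximants $\Omega_\varepsilon$, but it avoids the energy machinery, and it does close:
\begin{itemize}
\item For $z\in\Omega\setminus\Omega_\varepsilon$ with $t=\delta(z)$, one has $v(z)^2\le\fint_{B_{2t}(z)\cap\Omega^c}(v^*)^2\,dy$.
\item After Fubini, the kernel $\int_{\{z\in\Omega\setminus\Omega_\varepsilon:\,z\in\Gamma(y)\}}\delta_{\Omega_\varepsilon}(z)^{-s}\delta(z)^{-n}\,dz$ is $\lesssim_s\delta(y)^{-s}$, and only $y$ with $\delta(y)\lesssim\varepsilon$ contribute.
\item Hence $\|v\|_{L^2(\Omega\setminus\Omega_\varepsilon,\sigma_s^{\Omega_\varepsilon})}\lesssim\|v^*\|_{L^2(\Omega^c\cap\{\delta\lesssim\varepsilon\},\sigma_s)}\to0$.
\end{itemize}
This comparison holds only after integrating across the layer, since the singularity of $\delta_{\Omega_\varepsilon}^{-s}$ at $\partial\Omega_\varepsilon$ is integrable. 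It does not hold pointwise through a bi-Lipschitz map, as you suggest.

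A few justifications in Steps 1 and 3 are wrong as written and should be replaced, though the structure survives.
\begin{itemize}
\item[(i)] A Lipschitz domain has no bound of the form $P^x(z)\lesssim r^s\delta(z)^{-s}|x-z|^{-n}$. The boundary decay exponent of $G^{x_0}$ depends on the geometry, and near a reentrant vertex $P^{x_0}$ is more singular than $\delta^{-s}$. The gain on $A_j$ comes instead from the boundary H\"older estimate (Theorem~\ref{thm:holder-boundary-lip-domain}) applied in the pole variable to $x\mapsto P^x(z)/P^{x_0}(z)$. For $z\in A_j$ this function is $s$-harmonic in $B_{2^{j+1}r}(\xi)\cap\Omega$ and vanishes on $B_{2^{j+1}r}(\xi)\setminus\Omega$. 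Combined with the change of pole formula, this gives $P^x/P^{x_0}\lesssim2^{-j\alpha}/\omega^{x_0}(B_{2^jr}(\xi))$ on $A_j$.
\item[(ii)] For the same reason, the bound $|u|\lesssim\|g\|_{L^1(\sigma_s)}$ on compacts fails, since $d\omega^{x_0}/d\sigma_s$ is unbounded near such vertices. Use $\|g\|_{L^1(\omega^{x_0})}\lesssim\|g\|_{L^2(\sigma_s)}$ instead, which follows from the reverse H\"older inequality on a large ball. Similarly, the intermediate conclusion $u^*\lesssim M_{\sigma_s}g$ would require $RH_\infty$. Cauchy--Schwarz only yields $(M_{\sigma_s}|g|^2)^{1/2}$, which is exactly why the Gehring step you add afterwards is indispensable.
\item[(iii)] In Step 3, local uniform convergence in $\Omega$ is not enough, because $(-\Delta)^s\varphi$ does not vanish near $\partial\Omega$. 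You need $u_k\to u$ in $L^1(\Omega)$. This follows from Lemma~\ref{lem:comparison-inside-outside-nontangential} together with $\int_{\Omega^c}w^*\,dy\lesssim\|w^*\|_{L^2(\Omega^c,\sigma_s)}$ for $w=u_k-u$, using that $w^*$ is supported in $\{\delta\le\diam\Omega\}$.
\end{itemize}
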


The conclusion is stronger than variational well-posedness: the datum is measured in a weighted $L^2$-space on the exterior, the solution is represented by harmonic measure, and we control a nonlocal non-tangential maximal function. The maximal function bound then implies in particular corresponding $L^2$-estimates in the interior. The self-improvement of Theorem \ref{thm:Dahlberg-intro} yields a stronger result for Theorem \ref{thm:Dirichlet-solvability-intro} as a consequence: there exists $q_0\in [1,2)$ such that, for every $q>q_{0}$, the Dirichlet problem is solvable for exterior data in $L^{q}(\Omega^{c},\sigma_{s})$. Here $q_{0}$ can be chosen as the dual exponent of $p_{0}>2$, the largest power for which a reverse-H\"{o}lder as in Theorem \ref{thm:Dahlberg-intro} holds. Moreover, we obtain analogous results for the whole family of weights $\sigma_\beta$ in \eqref{eq:def-sigma-beta-intro}; see Theorem \ref{thm:Dirichlet-problem-general}. Theorem \ref{thm:Dirichlet-problem-general-LTheta} gives its anisotropic stable-operator counterpart.

To the best of our knowledge, results related to the content of Theorem \ref{thm:Dirichlet-solvability-intro} have only been obtained so far in situations in which a stronger explicit expansion of the Poisson kernel close to the boundary is available. This is the case, for instance, when $\Omega$ has $C^{1,\alpha}$-boundary (see \cite{grube2024dirichlet,XROXFRbook} and references therein) or when $\Omega$ is a circular cone \cite{bogdan2005probleme}. 

\begin{rmk}[Weaker exterior data]
    We remark that an even weaker class of exterior data for which a solvability result as in Theorem \ref{thm:Dirichlet-solvability-intro} holds consists of functions which are in $L^{2}$ with respect to the weight $(1-s)\delta^{-s}$ close to $\partial \Omega$, and only $L^{1}$ with respect to $(1-s)\delta^{-n-2s}$ in the tails far from $\partial \Omega$. We do not follow this line here, for the sake of readability. However, one can adapt the proofs to extend to this broader class, with few technical difficulties. 
\end{rmk}
\begin{rmk}[The nonlocal-to-local limit]
\label{rmk:sto1}
    A quick inspection of the proof reveals that the constants appearing in the reverse-H\"{o}lder inequality \eqref{eq:reverse-holder-intro} and in the $L^{2}$-estimate \eqref{eq:L2-estimate-dirichlet-intro} are stable as $s\to 1^{-}$. In this limit, the reference measures $\sigma_{s}$
    converge towards the surface measure of $\partial\Omega$. Likewise, the nonlocal cone in \eqref{eq:def-non-tangential-cone-Intro} becomes the usual non-tangential approach region for harmonic functions as the vertex $y$ approaches the boundary, and the $L^2(\Omega^c,\sigma_{s})$-norm of the nonlocal maximal function \eqref{eq:non-tangential-max-function-Intro} corresponds in the limit to the $L^2(\partial\Omega)$-norm of the usual non-tangential maximal function considered in local problems. Thus, in the nonlocal-to-local limit, Theorems \ref{thm:Dahlberg-intro} and \ref{thm:Dirichlet-solvability-intro} can be used to recover Dahlberg's classical theorem and $L^2$-solvability of the Dirichlet problem for the Laplacian in Lipschitz domains. The self-improved estimates then give the corresponding nearby $L^q$-range.
\end{rmk}

\subsection{Ideas of the proof}\label{subsec-ideas-proof}

The proof requires a genuinely nonlocal substitute for the strategy adopted in the classical theory \cite{jerisonkenig1980identity}. The main new difficulty is that fractional harmonic measure is distributed throughout the whole exterior, while the estimate to be proved is quadratic and scale invariant. We will need to extract such an estimate from a global Pohozaev identity for smooth domains and to localize it in a form that remains uniform on rough boundaries.

Let $\Omega\subset \R^{n}$ be a smooth bounded domain, $x_{0}\in \Omega$, and let $G^{x_{0}}$ and $P^{x_{0}}$ denote respectively the Green function and the Poisson kernel with pole $x_{0}$. By the smoothness of $\partial \Omega$ and the results in \cite{rosoton2014dirichlet}, the Green function has a well-defined and continuous normal fractional derivative $\partial_{\nu}^{s}G^{x_{0}}\coloneqq (G^{x_{0}}/\delta^{s})|_{\partial \Omega}$ at the boundary.  
The starting point is the Pohozaev identity in Lemma \ref{lem:Pohozaev} for the Green function, which takes the form
\begin{equation}\label{eq:pohozaev-strategy-intro}
     \int_{\partial \Omega}(\partial_{\nu}^{s}G^{x_{0}}(\xi))^{2}(\xi-x_{0})\cdot \nu \,d\mathcal{H}^{n-1}(\xi)=C(n,s)\int_{\Omega^{c}}\frac{P^{x_{0}}(y)}{|y-x_{0}|^{n-2s}}\,dy.
\end{equation}
See also \cite{rosoton2014pohozaev,rosoton2017pohozaev,djitteSueur2023representation,dieb2025note}.
This identity is global and the boundary weight appearing in the left-hand side has no fixed sign on a general domain, so it does not directly imply a local square estimate for the fractional normal derivative $\partial_{\nu}^{s}G^{x_{0}}$. We overcome this obstruction by constructing auxiliary star-shaped domains that agree with the original domain on a prescribed boundary patch. Combined with a change of pole formula, boundary Harnack estimates, and localization, this turns \eqref{eq:pohozaev-strategy-intro} into a scale-invariant $L^{2}(\partial \Omega)$-estimate for $\partial_{\nu}^{s}G^{x_{0}}$: 
\begin{equation}\label{eq:L2-a-priori-boundary-estimate-Intro}
        \left(\fint_{B_{r}(\xi)\cap \partial\Omega}(\partial_{\nu}^{s}G^{x_{0}})^{2}d\mathcal{H}^{n-1}\right)^{1/2}\approx \frac{G^{x_{0}}(A_{r}(\xi))}{r^{s}}.
\end{equation}
Here, $A_r(\xi)\in \Omega$ is an interior corkscrew point at small distance $r$ from a boundary point $\xi$ and such that $\delta(A_r(\xi))\approx r$. This localization of the Pohozaev identity is the first key step of the proof, and is completed in Proposition \ref{prop:L2-normal-a-priori}.

At this point, the principal geometric difficulty is that for a general Lipschitz domain $\Omega$, a pointwise fractional normal trace is not available on all of $\partial\Omega$. Therefore, we cannot directly make sense of \eqref{eq:L2-a-priori-boundary-estimate-Intro} on Lipschitz boundaries by approximation with smooth domains. Instead, we need to transfer the
boundary estimate \eqref{eq:L2-a-priori-boundary-estimate-Intro} to the interior parallel surfaces of $\Omega$. There, the uniform stability of the Green functions with respect to the Hausdorff topology as the domain varies allows us to properly pass to the limit in the smooth approximation.   
This is done in Theorem \ref{thm:uniform-L2-Green-function}, where we show that the following local square estimate on the Green function holds on the parallel level surface $\{\delta=t\}$, for every $0<t<cr$:
\begin{equation}\label{eq:green-level-strategy-intro}
    \frac{1}{t^s}
    \left(
    \fint_{B_r(\xi)\cap\Omega\cap\{\delta=t\}}
    \bigl(G^{x_0}\bigr)^2\,d\mathcal H^{n-1}
    \right)^{1/2}
    \approx
    \frac{G^{x_0}(A_r(\xi))}{r^s}.
\end{equation}
The essential point is that the constants are independent of the level
$t$. 

A second nonlocal comparison converts
\eqref{eq:green-level-strategy-intro} into a square estimate for the Poisson
kernel on the exterior parallel surfaces. In
Corollary \ref{cor:uniform-L2-Poisson-kernel} we obtain
\begin{equation}\label{eq:poisson-level-strategy-intro}
    \frac{t^s}{1-s}
    \left(
    \fint_{B_r(\xi)\cap\Omega^c\cap\{\delta=t\}}
    \bigl(P^{x_0}\bigr)^2\,d\mathcal H^{n-1}
    \right)^{1/2}
    \approx
    \frac{\omega_s^{x_0}(B_r(\xi)\cap \Omega^{c})}{r^{n-s}}.
\end{equation}
This estimate is the bridge between the codimension-one information provided by the Pohozaev identity and the full-dimensional exterior measure appearing in the statement of Dahlberg's theorem. The coarea formula integrates \eqref{eq:poisson-level-strategy-intro} against the singular distance weight in \eqref{eq:def-sigma} (and actually also against any weight in the family \eqref{eq:def-sigma-beta-intro}), producing the $RH_2$-type-estimate in \eqref{eq:reverse-holder-intro}. A weighted Gehring lemma adapted to these exterior measures (see Lemma \ref{lem:gehring}) then yields the strict gain $p_0>2$; see Theorem \ref{thm:Dahlberg-general}.

The Dirichlet theory presents a further difficulty that has no direct local counterpart: both the datum and the maximal function are indexed by points of the full complement $\Omega^c$. We introduce a Martin kernel representation adapted to this exterior geometry and prove that the nonlocal non-tangential maximal function of a general Poisson integral is controlled by a boundary-centered Hardy-Littlewood maximal operator relative to harmonic measure; see Lemma \ref{lem:non-tangential-maximal-function-bound}. Combining this pointwise estimate with the $RH_2$-bound from Theorem \ref{thm:Dahlberg-intro} gives the $L^2$-estimate in
Theorem \ref{thm:Dirichlet-solvability-intro}. Finally, a separate uniqueness argument (see Lemma~\ref{lem:general-uniqueness}) shows that the maximal function condition determines the natural solvability class. Using the more general result in Theorem \ref{thm:Dahlberg-general} we get solvability for the broader range of exponents and weights; see Theorem \ref{thm:Dirichlet-problem-general}.

\subsection{Some applications of the theory}\label{subsec:applications}

As in the local case, the theory developed in this work provides a solid starting point for the study of fractional elliptic problems of various types in rough domains, including not only the Dirichlet problem, but also Neumann, Poisson, and regularity problems. Below, we present three corollaries of our results.

The first provides a uniform quadratic estimate on the parallel level sets above a Lipschitz graph for $s$-harmonic functions that vanish in the exterior, which addresses \cite[Open question 2.3]{XROXFRbook}. The proof follows from a localization argument combining Theorem \ref{thm:uniform-L2-Green-function} with the boundary Harnack principle.
\begin{cor}\label{cor:Linfty-L2-parallel-sets-harmonic}
    Let $\Omega=\{x=(x',x_{n}):x_{n}>\phi(x')\}$ be a Lipschitz epigraph, where $\phi:\R^{n-1}\to \R$ is $L$-Lipschitz and $\phi(0)=0$. Let $u\in C(B_{1})\cap L^{1}(\R^{n},w_{s})$ be a solution of 
    \begin{equation*}
    \left\{
    \begin{array}{rclll}
         (-\Delta)^{s}u&=&0\quad &\text{in $B_{1}\cap \Omega$},\\
            u&=&0\quad &\text{in $B_{1}\setminus \Omega$}.
    \end{array}
    \right.
    \end{equation*}
    Then, 
     \begin{equation}\label{eq:Linfty-L2-lip-graph-intro}
        \limsup_{t \to 0^+} \left(\fint_{B_{1/2}\cap \Omega\cap \{\delta=t\}}\left(\frac{u}{\delta^{s}}\right)^{2}\,d\mathcal{H}^{n-1}\right)^{1/2}\le C\left(\lVert u\rVert_{L^{\infty}(B_{1})}+(1-s)\lVert u\rVert_{L^{1}(\R^{n},w_{s})}\right),
    \end{equation}
    where $C$ depends only on $n,s$ and $L$ and is uniform as $s\to 1^{-}$.
\end{cor}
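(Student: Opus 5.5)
The plan is to reduce the estimate to the uniform Green-function square estimate of Theorem \ref{thm:uniform-L2-Green-function}, using the boundary Harnack principle to compare $u$ with a Green function in a bounded Lipschitz domain.

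\emph{Step 1: the auxiliary domain.} Construct a bounded Lipschitz domain $D\subset \Omega\cap B_{1}$ such that $D$ agrees with $\Omega$ inside $B_{3/4}$, i.e. $\partial D\cap B_{3/4}=\partial\Omega\cap B_{3/4}$. For instance, take $D=\{x_n>\phi(x')\}\cap\{|x'|<c,\ x_n<\phi(x')+c'\}$, suitably smoothed, with $c,c'$ depending only on $L$. Its Lipschitz character depends only on $n$ and $L$. Fix a pole $x_0\in D$ with $\delta(x_0)\approx 1$ and $x_0$ away from $B_{3/4}$, say near the top of $D$.

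\emph{Step 2: boundary Harnack reduction.} Split $u=u_1+u_2$, where $u_1$ solves $(-\Delta)^s u_1=0$ in $D$ with $u_1=u$ in $D^c$.
\begin{itemize}
\item Since $u=0$ on $B_{1}\setminus\Omega$, the exterior data of $u_1$ vanish on $B_{3/4}\setminus D$.
\item $u_1$ equals $u$ itself, since $u$ is already $s$-harmonic in $D\subset B_1\cap\Omega$, so nothing needs to be split after all.
\end{itemize}
Apply the (quantitative) boundary Harnack principle in $D$ to $u^{\pm}$-type comparisons, or more precisely its version with nonnegative-part control for sign-changing functions. This gives, on $B_{5/8}\cap D$,
\begin{equation*}
|u|\le C\left(\lVert u\rVert_{L^\infty(B_1)}+(1-s)\lVert u\rVert_{L^1(\R^n,w_s)}\right)\frac{G_D^{x_0}}{G_D^{x_0}(A_{1/2}(0))}.
\end{equation*}
The second summand accounts for the tail contribution. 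If the boundary Harnack principle is only available for nonnegative functions, I would first write $u=v-w$. Here $v$ is the $s$-harmonic function in $B_{7/8}\cap\Omega$ with exterior data $u^+$ and $w$ the one with data $u^-$, both nonnegative and vanishing in $B_{7/8}\setminus\Omega$. Each is bounded at a corkscrew point by $\lVert u\rVert_{L^\infty}$ plus the weighted $L^1$ tail, where the factor $(1-s)$ comes from the normalization of the kernel.

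\emph{Step 3: apply the Green estimate.} On $\{\delta=t\}\cap B_{1/2}$ with $t$ small, the distance to $\partial D$ coincides with the distance to $\partial\Omega$, because the two boundaries agree in $B_{3/4}$. Hence Theorem \ref{thm:uniform-L2-Green-function} applies in $D$ with $r\approx 1/2$, and covering $B_{1/2}\cap\partial\Omega$ by finitely many such balls gives
\begin{equation*}
t^{-s}\Bigl(\fint_{B_{1/2}\cap\{\delta=t\}}(G_D^{x_0})^2\,d\mathcal H^{n-1}\Bigr)^{1/2}\le C\,G_D^{x_0}(A_{1/2}(0)),
\end{equation*}
uniformly in $t<ct_0$. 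Combining this with Step 2 gives the bound on the left side of \eqref{eq:Linfty-L2-lip-graph-intro} for every small $t$, hence on the limsup. The normalization by $\mathcal H^{n-1}(\{\delta=t\}\cap B_{1/2})\approx 1$ is harmless: Lipschitz graphs have uniformly comparable parallel surface measures by the coarea formula and $|\nabla\delta|=1$.

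\emph{Main obstacle.} The hard step is Step 2: obtaining the boundary Harnack comparison with constants depending only on $n,s,L$ and uniform as $s\to1^-$, including the tail term with the correct factor $(1-s)$. I would first try the standard route: bound $u$ by a barrier built from $G_D^{x_0}$ plus the solution with exterior tail data, and estimate the tail solution via the Poisson kernel bound $P^x(y)\lesssim (1-s)\,\delta(x)^s|y|^{-n-2s}$ for $|y|\ge 1$. That solution is itself comparable to $G_D^{x_0}$ near $0$, with constant $(1-s)\lVert u\rVert_{L^1(w_s)}$.
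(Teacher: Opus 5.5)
Your proposal follows essentially the same route as the paper. You dominate $u$ near the origin by nonnegative $s$-harmonic functions that vanish outside $\Omega$, bound them at a corkscrew point by $\lVert u\rVert_{L^{\infty}(B_{1})}+(1-s)\lVert u\rVert_{L^{1}(\R^{n},w_{s})}$ through the Poisson kernel, compare them with a Green function of the auxiliary bounded domain via the boundary Harnack principle, and conclude with Theorem \ref{thm:uniform-L2-Green-function}. The paper uses the single extension $v$ of $|u|$ from $\Omega\cap B_{3/4}$ together with the maximum principle, which is equivalent to your $u^{\pm}$ split. One side remark should be dropped: the bound $P^{x}(y)\lesssim(1-s)\delta(x)^{s}|y|^{-n-2s}$ is false near reentrant corners of a Lipschitz boundary. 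Only its version at a corkscrew point, where $\delta(x)\approx 1$, is needed, and that version is true.
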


Next, as a consequence of the $L^{2}$-theory from Theorem \ref{thm:Dirichlet-solvability-intro}, we show that exterior weighted $L^{2}$-data generate interior fractional Sobolev regularity for solutions of the homogeneous Dirichlet problem in bounded Lipschitz domains. The proof combines the estimate \eqref{eq:L2-estimate-dirichlet-intro} for the non-tangential maximal function with the fractional Caccioppoli inequality from Lemma \ref{lem:Caccioppoli} applied at all scales. 
\begin{cor}\label{cor:Sobolev-Dirichlet}
    Let $\Omega\subset \R^{n}$ be a bounded Lipschitz domain, and let $\beta\in (\max\{2s-1, 0\},s]$. Given $g\in L^{2}(\Omega^{c},\sigma_{\beta})$, let $u$ be the Poisson integral solution defined in \eqref{eq:Poisson-solution-statement-Lp-solvability} of the Dirichlet problem $(-\Delta)^{s}u=0$ in $\Omega$ with exterior datum $g$. Then, $u\in H^{\beta/2}(\Omega)$ and 
    \begin{equation}\label{eq:bound-Hbetahalfs-dirichlet}
        \lVert u\rVert_{H^{\beta/2}(\Omega)}\le C\lVert g\rVert_{L^{2}(\Omega^{c},\sigma_{\beta})},
    \end{equation}
    where $C$ depends only on $n, s, \beta$, and the Lipschitz character of $\Omega$. 
\end{cor}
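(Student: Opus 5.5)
The statement to prove is Corollary~\ref{cor:Sobolev-Dirichlet}. The plan is to bound the Gagliardo seminorm
\[
[u]_{H^{\beta/2}(\Omega)}^2=\iint_{\Omega\times\Omega}\frac{|u(x)-u(z)|^2}{|x-z|^{n+\beta}}\,dx\,dz
\]
by means of a Whitney decomposition of $\Omega$ together with the interior regularity of $s$-harmonic functions. The $L^2(\Omega)$ part will be handled by $u^*$, since each $x\in\Omega$ lies in $\Gamma(y)$ for a set of $y\in\Omega^c$ of $\sigma_\beta$-measure comparable to $\delta(x)^{n-\beta}$. We take $u$ to be the Poisson integral, and we use the estimate $\lVert u^*\rVert_{L^2(\Omega^c,\sigma_\beta)}\le C\lVert g\rVert_{L^2(\Omega^c,\sigma_\beta)}$ from Theorem~\ref{thm:Dirichlet-problem-general} with $p=2$.

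\textbf{Step 1 (pointwise control by $u^*$).}
\begin{itemize}
\item Fix a Whitney cube $Q\subset\Omega$ with side length $\ell(Q)\approx\delta(x)$ for $x\in Q$.
\item Let $E_Q\subset\Omega^c$ be the set of $y$ with $Q\subset\Gamma(y)$, for instance the points $y\in\Omega^c$ within distance $c\,\ell(Q)$ of the nearest boundary point.
\item By the Lipschitz geometry, $E_Q$ contains an exterior region of size $\ell(Q)$ near $\partial\Omega$, so $\sigma_\beta(E_Q)\approx(1-\beta)\ell(Q)^{n-\beta}$.
\item Hence
\[
\sup_{2Q}|u|^2\le \frac{C}{\sigma_\beta(E_Q)}\int_{E_Q}(u^*)^2\,d\sigma_\beta .
\]
\end{itemize}
The sets $E_Q$ have bounded overlap among cubes of comparable size. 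Along the different generations, however, the cubes $Q$ whose $E_Q$ contain a given $y$ have sizes $\ell\gtrsim\delta(y)$, and there are about $\ell^{-(n-1)}\ell^{\,n-1}=O(1)$ of them per dyadic scale. This per-scale count is what must be summed below.

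\textbf{Step 2 (local seminorm).} We split the double integral into near pairs $|x-z|<\tfrac12\delta(x)$ and far pairs.
\begin{itemize}
\item \emph{Near pairs.} We apply the Caccioppoli inequality of Lemma~\ref{lem:Caccioppoli}, together with interior $C^{1}$ (or $C^{2s+\alpha}$) estimates for $s$-harmonic functions with tails. For $x\in Q$ this gives
\[
\int_{|z-x|<\delta(x)/2}\frac{|u(x)-u(z)|^2}{|x-z|^{n+\beta}}\,dz\lesssim \ell(Q)^{-\beta}\Big(\sup_{2Q}|u|^2+\mathrm{Tail}_Q(u)^2\Big),
\]
where $\mathrm{Tail}_Q$ contains $\ell(Q)^{2s}\int|u(y)|\,|y-x_Q|^{-n-2s}\,dy$, and this includes the exterior values $g$.
\item \emph{Far pairs.} We bound by $|u(x)|^2+|u(z)|^2$ and integrate, which gives
\[
\int_\Omega |u(x)|^2\,\delta(x)^{-\beta}\,dx .
\]
\end{itemize}
Summing over $Q$, with $|Q|\ell^{-\beta}\sup_{2Q}|u|^2\lesssim \ell^{n-\beta}\sigma_\beta(E_Q)^{-1}\int_{E_Q}(u^*)^2\,d\sigma_\beta\approx (1-\beta)^{-1}\int_{E_Q}(u^*)^2\,d\sigma_\beta$, we obtain
\[
\sum_Q \lesssim \int_{\Omega^c}(u^*)^2\,N(y)\,d\sigma_\beta(y),
\]
where $N(y)$ is the overlap count. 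The worry is that $N(y)\approx\log(1/\delta(y))$, one per scale. To avoid this, we instead choose $E_Q$ to be the exterior region at distance $\approx\ell(Q)$ from $\partial\Omega$ in a lateral window of size $\ell(Q)$, i.e.\ an exterior Whitney region. The sets $E_Q$ then have bounded overlap, still with $\sigma_\beta(E_Q)\approx\ell(Q)^{n-\beta}$ and $Q\subset\Gamma(y)$ for $y\in E_Q$ (taking the aperture constant from the cone definition into account). This gives the bound $C\lVert u^*\rVert^2_{L^2(\sigma_\beta)}$.

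\textbf{Step 3 (tails), the main obstacle.} We must show
\[
\sum_Q \ell(Q)^{n-\beta}\,\mathrm{Tail}_Q(u)^2\lesssim\lVert g\rVert^2_{L^2(\sigma_\beta)}+\lVert u^*\rVert^2 .
\]
\begin{itemize}
\item We split $\mathrm{Tail}_Q$ into dyadic annuli $A_k$ around $Q$, at distance $2^k\ell(Q)$.
\item On each annulus we control the averages of $|u|$ by the corresponding exterior/interior Whitney averages. Interior contributions are bounded by $u^*$ as in Step 1, and exterior contributions by $|g|$.
\item By Cauchy--Schwarz, $\int_{A_k}|g| \lesssim \big(\int_{A_k}|g|^2\delta^{-\beta}\big)^{1/2}\big(\int_{A_k}\delta^{\beta}\big)^{1/2}$, with weight $\delta^{\beta}$ integrable near $\partial\Omega$.
\end{itemize}
This produces a geometric factor $2^{-k(2s-\beta)}$ (up to Lipschitz volume factors), summable precisely because $\beta<2s$ and $\beta>2s-1$ controls the boundary-layer integrals. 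A Schur-type test in $(Q,k)$ then closes the estimate. The condition $\beta>\max\{2s-1,0\}$ should enter exactly here, through the integrability of $\delta^{-\beta}$ and the convergence of the Schur sums. I expect this tail bookkeeping to be the delicate step.
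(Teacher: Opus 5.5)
Your Steps 1--2 are sound and follow the paper's skeleton. You split the Gagliardo integral into near and far pairs, bound the far pairs by $\int_\Omega u^2\delta^{-\beta}$, and control $\int_\Omega u^2\delta^{-\beta}\lesssim\lVert u^*\rVert^2_{L^2(\Omega^c,\sigma_\beta)}$ through exterior Whitney regions, which is Remark~\ref{rmk:exterior-interior-weighted}. Using pointwise interior gradient bounds on near pairs, instead of the paper's quantity $\int_\Omega K\,\delta^{2s-\beta}$, is a harmless variation.

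The gap is Step~3, which is the real content of the corollary; the mechanism you propose there does not close. Write $R=2^k\ell(Q)$. You use the Cauchy--Schwarz bound $\int_{A_k}|g|\le(\int_{A_k}g^2\delta^{-\beta})^{1/2}(\int_{A_k}\delta^{\beta})^{1/2}$ together with $\int_{A_k\cap\Omega^c}\delta^{\beta}\approx R^{n+\beta}$. Then the coefficient of $\int_{A_k}g^2\delta^{-\beta}$ in $\ell^{n-\beta}\mathrm{Tail}_Q^2$ is $\ell^{n-\beta+4s}R^{-n-4s+\beta}=2^{-k(n+4s-\beta)}$, independent of $\ell$. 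Now fix $y\in\Omega^c$ with $\delta(y)=d$ and fix $k$. There are $\approx 2^{k(n-1)}$ Whitney cubes of each dyadic size $\ell\in[2^{-k}d,2^{-k}]$ with $y\in A_k(Q)$. So the Schur sum at $y$ is $\approx\sum_k 2^{-k(1+4s-\beta)}\log(1/d)\approx\log(1/d)$, and you only obtain $\int_{\Omega^c}g^2\delta^{-\beta}\log(1/\delta)$. That is not bounded by $\lVert g\rVert^2_{L^2(\Omega^c,\sigma_\beta)}$. Already for $g=\mathbbm{1}_{B_{d/2}(y)}$ your chain of inequalities gives $\approx d^{n-\beta}\log(1/d)$, while $\lVert g\rVert^2_{L^2(\Omega^c,\sigma_\beta)}\approx d^{n-\beta}$. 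Unlike the logarithm you removed in Step~2, this one is not an overlap issue. It comes from placing the full weight $\delta^{-\beta}$ on $g$ before summing, so every Whitney scale $\ell\gtrsim 2^{-k}\delta(y)$ contributes equally. In particular, the factor $2^{-k(2s-\beta)}$ you announce does not appear.

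The missing idea is that the weight $\delta(y)^{-\beta}$ must be \emph{generated} by the sum over Whitney scales, not imposed on the data beforehand. The paper applies Cauchy--Schwarz against the kernel measure itself. The rescaled Caccioppoli inequality (Lemma~\ref{lem:Caccioppoli}) gives $\fint_{B_{\delta(z)/8}(z)}K\lesssim\int_{\R^n}u^2(y)\,(\delta(z)^{n+2s}+|y-z|^{n+2s})^{-1}\,dy$. The paper integrates this against $\delta(z)^{2s-\beta}\,dz$ and applies Fubini with the geometric estimate $\int_\Omega\delta(z)^{2s-\beta}(\delta(z)^{n+2s}+|y-z|^{n+2s})^{-1}\,dz\approx\delta(y)^{-\beta}(1+\delta(y)^{n+2s-\beta})^{-1}$. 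This produces exactly $\int_\Omega u^2\delta^{-\beta}+(1-\beta)^{-1}\int_{\Omega^c}g^2\,d\sigma_\beta$ and treats interior and exterior tails at once.

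Your per-annulus scheme can be repaired by replacing $\delta^{\pm\beta}$ with $\delta^{\pm\gamma}$ for any $\gamma\in[0,\beta)$; plain Cauchy--Schwarz, $\gamma=0$, suffices. Each scale then carries an extra factor $\ell^{\gamma-\beta}$, the double sum over $\ell\gtrsim 2^{-k}\delta(y)$ and $k$ is $\approx\delta(y)^{\gamma-\beta}$, and combined with the $\delta^{-\gamma}$ on $g^2$ this gives exactly $\delta^{-\beta}$. The choice $\gamma=\beta$ is precisely the borderline.

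Finally, $\beta>2s-1$ plays no role in this bookkeeping. It is needed only to invoke Theorem~\ref{thm:Dirichlet-problem-general}, via $\int_0^{c_0r}t^{\beta-2s}\,dt<\infty$ in the reverse-H\"older estimate. The tail summation uses $\beta>0$.
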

Related results were previously obtained in \cite{grube2024dirichlet} for $C^{1,\alpha}$ domains, for the natural regime $\beta=s$. We remark that the exponent $\beta/2$ is optimal, already for $\Omega=B_1$: for every $0<\eta<(1-\beta)/2$, the datum 
$
g_\eta(y)=\mathbbm{1}_{B_2\setminus B_1}(y)(|y|-1)^{(\beta+\eta-1)/2}
$
belongs to $L^2(B_1^c,\sigma_\beta)$, whereas its $s$-harmonic Poisson extension does not belong to $H^{\beta/2+\eta}(B_1)$, by the same Poisson-kernel and Hardy-inequality argument as in \cite[Remark~1.5]{grube2024dirichlet}.

Finally, by means of a duality argument, we may pass from the estimates obtained for the homogeneous Dirichlet problem in Corollary \ref{cor:Sobolev-Dirichlet} to corresponding Sobolev bounds for the inhomogeneous Poisson problem with zero exterior data:  

\begin{cor}\label{cor:optimal-Sobolev-Poisson} 
    Let $\Omega\subset \R^n$ be a bounded Lipschitz domain, and let $\beta\in[s,\min\{s+1/2,2s\})$. Given $f\in H^{\beta-2s}(\Omega)$, let $u\in H^{s}(\R^n)$ be the weak solution of
    \begin{equation*}
       \left\{
       \begin{array}{rclll}
         (-\Delta)^s u&=&f &\text{in }\Omega,\\
            u&=&0 &\text{in }\Omega^c.
       \end{array}
       \right.
    \end{equation*}
    Then $u\in H^\beta(\R^n)$ and
    \begin{equation*}
       \lVert u\rVert_{H^\beta(\R^n)}
       \le C\lVert f\rVert_{H^{\beta-2s}(\Omega)},
    \end{equation*}
    where $C$ depends only on $n$, $s$, $\beta$, and the Lipschitz character of $\Omega$.
\end{cor}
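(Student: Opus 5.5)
We argue by duality, using Corollary~\ref{cor:Sobolev-Dirichlet} as the dual estimate. Fix $\beta\in[s,\min\{s+1/2,2s\})$ and set $\beta'\coloneqq 2s-\beta$. Then $\beta'\in(\max\{2s-1,0\},s]$, which is exactly the range allowed in Corollary~\ref{cor:Sobolev-Dirichlet} (with $\beta'$ in place of $\beta$). The case $\beta=s$ is just the energy estimate for the weak solution, since $H^{-s}(\Omega)$ is dual to $\widetilde H^{s}(\Omega)$. So the content is $\beta>s$, i.e.\ $\beta'<s$.

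By density it suffices to treat $f\in C^\infty_c(\Omega)$ and prove the estimate with a constant depending only on the stated quantities. Since $u=0$ in $\Omega^c$, the $H^\beta(\R^n)$-norm of $u$ can be computed by testing $(-\Delta)^{s}u$ against functions that are $s$-harmonic in $\Omega$. Concretely, we use the identity
\[
\|u\|_{H^\beta(\R^n)}\approx \|u\|_{L^2}+\|(-\Delta)^{s}u\|_{H^{\beta-2s}(\R^n)},
\]
together with the fact that $(-\Delta)^su=f$ in $\Omega$. The part of $(-\Delta)^s u$ living on $\Omega^c$ is
\[
(-\Delta)^s u(y)=-c_{n,s}\int_\Omega \frac{u(x)}{|x-y|^{n+2s}}\,dx,\qquad y\in\Omega^c,
\]
and this is what must be controlled. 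The natural way is via Green's representation: for $y\in\Omega^c$ the formula above, once $u=\int G(\cdot,z)f(z)\,dz$ is inserted, produces exactly $-\int_\Omega P^z(y)f(z)\,dz$ (Poisson kernel identity $P^z(y)=c\int_\Omega G(z,x)|x-y|^{-n-2s}dx$). Therefore, for any exterior test function $g$,
\[
\int_{\Omega^c}(-\Delta)^s u\, g=-\int_\Omega f(z)\,v_g(z)\,dz,\qquad v_g(z)\coloneqq\int_{\Omega^c}P^z(y)g(y)\,dy .
\]
Here $v_g$ is precisely the Poisson solution of Corollary~\ref{cor:Sobolev-Dirichlet} with datum $g$. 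That corollary gives $\|v_g\|_{H^{\beta'/2}(\Omega)}\lesssim\|g\|_{L^2(\Omega^c,\sigma_{\beta'})}$. Since $\beta-2s=-\beta'$ and $\beta'/2\le\beta'$, pairing $f\in H^{-\beta'}(\Omega)$ requires $v_g$ in $H^{\beta'}$ rather than $H^{\beta'/2}$. This is the delicate point, and the exponents should be matched at the level of the weighted quantity rather than naively. To do this I would reformulate: the map $T:g\mapsto v_g|_\Omega$ is bounded $L^2(\Omega^c,\sigma_{\beta'})\to H^{\beta'/2}(\Omega)$. Its adjoint therefore maps $(H^{\beta'/2}(\Omega))^*$ into $L^2(\Omega^c,\sigma_{\beta'}^{-1}\text{-dual})$, i.e.\ it bounds $\delta^{\beta'}(1+\delta^{n+2s-\beta'})\,(-\Delta)^su|_{\Omega^c}$ in $L^2(\Omega^c,\sigma_{\beta'})$.

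The plan is then to control $u$ directly by Hardy-type arguments. First, write $u=w+\tilde u$, where $\tilde u$ solves the whole-space problem $(-\Delta)^s\tilde u=\tilde f$ with $\tilde f$ an extension of $f$ satisfying $\|\tilde f\|_{H^{\beta-2s}(\R^n)}\lesssim\|f\|_{H^{\beta-2s}(\Omega)}$. Then $\tilde u\in H^\beta_{\rm loc}$ by Fourier analysis, with a cutoff to handle the behaviour at infinity. Second, $w$ is $s$-harmonic in $\Omega$ with exterior datum $-\tilde u$. Its $H^\beta$-regularity across $\partial\Omega$ is obtained by a transposition argument: $\langle w,(-\Delta)^{s}\varphi\rangle$ is estimated for $\varphi$ in $H^{2s-\beta}$, through the identity above and the dual bound from Corollary~\ref{cor:Sobolev-Dirichlet}, applied with the weight $\sigma_{\beta'}$. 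The condition $\beta<s+1/2$ is used precisely so that $\tilde u|_{\Omega^c}\in L^2(\Omega^c,\delta^{-\beta'})$ near the boundary holds via a Hardy inequality, $\beta-\beta'/2$ vs.\ $1/2$. The condition $\beta<2s$ ensures $\beta'>0$.

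The main obstacle is this exponent bookkeeping: making the Hardy inequality and the $\sigma_{\beta'}$-duality fit so that the $H^{\beta'/2}$ gain from Corollary~\ref{cor:Sobolev-Dirichlet} translates exactly into $H^\beta$ for $u$. I would first try the case $\beta'=s$ against the known $C^{1,\alpha}$ argument of \cite{grube2024dirichlet} to calibrate the pairing, then transplant it to general $\beta'$.
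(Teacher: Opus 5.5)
There is a genuine gap, at exactly the point you call delicate, and the rest of the proposal does not close it. Your duality identity $\int_{\Omega^c}(-\Delta)^s u\,g=-\int_\Omega f\,v_g$ and your variational treatment of $\beta=s$ are right, and the identity is what the paper uses. The problem is the weight: with $\sigma_{\beta'}$, $\beta'=2s-\beta$, Corollary~\ref{cor:Sobolev-Dirichlet} gives only $v_g\in H^{\beta'/2}(\Omega)$, while pairing with $f\in H^{-\beta'}(\Omega)$ needs $H^{\beta'}(\Omega)$.

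Neither of your workarounds fixes this. The adjoint of $g\mapsto v_g$ only accepts $f\in (H^{\beta'/2}(\Omega))^{*}=H^{-\beta'/2}(\Omega)$, which is strictly smaller than $H^{-\beta'}(\Omega)$. In the splitting $u=w+\tilde u$, the piece $w$ is $s$-harmonic in $\Omega$ with nonzero exterior datum, and its $H^\beta(\R^n)$ regularity for $\beta>s$ is essentially the original problem; an interior $H^{\beta'/2}(\Omega)$ bound says nothing about it.

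Your range claim is also wrong. $\beta\in[s,\min\{s+1/2,2s\})$ gives $\beta'\in(\max\{s-1/2,0\},s]$, and for $s>1/2$ this leaves the admissible range of Corollary~\ref{cor:Sobolev-Dirichlet}. For example, $s=0.8$, $\beta=1.2$ gives $\beta'=0.4<2s-1$. The fact that the endpoint $s+1/2$ does not match is a symptom of the wrong weight index. Finally, gluing the interior and exterior parts of $(-\Delta)^s u$ into an $H^{\beta-2s}(\R^n)$ bound requires $\beta-2s>-1/2$. This fails near $\beta=s$ when $s\ge 1/2$.

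The missing idea is to take the weight index equal to twice the dual exponent, $\theta\coloneqq 2(2s-\beta)=4s-2\beta$. Then the argument runs as follows.

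First, Corollary~\ref{cor:Sobolev-Dirichlet} with $\sigma_\theta$ gives $w_g\in H^{\theta/2}(\Omega)=H^{2s-\beta}(\Omega)$, which is exactly the space dual to $H^{\beta-2s}(\Omega)$. The admissibility condition $\theta\in(\max\{2s-1,0\},s]$ is equivalent to $\beta\in[3s/2,\min\{s+1/2,2s\})$, now with the correct endpoint $s+1/2$.

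Second, in this range $2s-\beta\le s/2<1/2$, so the fractional Hardy inequality embeds $H^{2s-\beta}(\Omega^c)$ into $L^2(\Omega^c,\sigma_\theta)$. Hence $g\mapsto w_g$ is bounded from $H^{2s-\beta}(\Omega^c)$ to $H^{2s-\beta}(\Omega)$, and your identity gives $\lVert(-\Delta)^s u\rVert_{H^{\beta-2s}(\Omega^c)}\lesssim\lVert f\rVert_{H^{\beta-2s}(\Omega)}$.

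Third, since $\beta-2s\in(-1/2,0)$, zero extension from $\Omega$ and from $\Omega^c$ is bounded into $H^{\beta-2s}(\R^n)$. Fourier analysis plus Poincar\'e then yield $u\in H^\beta(\R^n)$.

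The range $\beta\in(s,3s/2)$ would require $\theta>s$, so duality does not reach it at all. It follows by interpolating between the energy estimate at $\beta=s$ and the estimate at $\beta=3s/2$.
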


Sharp Besov regularity on bounded Lipschitz domains was previously obtained in \cite{borthagaray2023besov} by different methods based on the local difference quotient technique from \cite{savare1998regularity}. Accordingly, the contribution of Corollary \ref{cor:optimal-Sobolev-Poisson} is a new derivation from the exterior weighted $L^2$-theory and harmonic measure estimates. The upper threshold $s+1/2$ is sharp in general. In the unit ball, the solution of $(-\Delta)^su=1$ with zero exterior values is a constant multiple of $(1-|x|^2)_+^s$ \cite{dyda2012fractional}; the resulting failure of the endpoint $H^{s+1/2}(\R^n)$ regularity is consistent with the sharp Besov analysis in \cite{borthagaray2023besov}.

\subsection{Anisotropic operators}

The method is not specific to the rotationally invariant fractional
Laplacian. In Section \ref{sec:general_kernels} we extend the main results to symmetric,
translation-invariant stable operators of the form
\begin{equation*}
    \mathcal{L}_{\Theta} u(x)= c_{n, s}\,{\rm P.V.}\int_{\mathbb R^n}\frac{u(x)-u(y)}{|x-y|^{n+2s}}\Theta\left(\frac{y-x}{|y-x|}\right)\,dy,
\end{equation*}
where the angular density $\Theta$ is even and uniformly bounded above and below:
\begin{equation*}
    0<\lambda\le \Theta(\vartheta)\le \Lambda<\infty,
    \qquad
    \Theta(\vartheta)=\Theta(-\vartheta)\qquad \text{for a.e. $\vartheta\in \Sph^{n-1}$}.
\end{equation*}
For this class, the necessary boundary Harnack and Green function estimates remain available with constants depending additionally on the ellipticity constants $\lambda, \Lambda$. The corresponding Pohozaev identity contains an anisotropic boundary factor, but uniform ellipticity bounds this factor above and below. Consequently, the
the same strategy illustrated in Section \ref{subsec-ideas-proof} continues to apply. In this setting, the analogues of Theorems~\ref{thm:Dahlberg-intro} and \ref{thm:Dirichlet-solvability-intro} are Theorems \ref{thm:Dahlberg-general-LTheta} and \ref{thm:Dirichlet-problem-general-LTheta}, respectively. Likewise, the analogues of Corollaries \ref{cor:Linfty-L2-parallel-sets-harmonic}, \ref{cor:Sobolev-Dirichlet}, and \ref{cor:optimal-Sobolev-Poisson} are Corollaries \ref{cor:Linfty-L2-parallel-sets-LTheta}, \ref{cor:Sobolev-Dirichlet-LTheta}, and \ref{cor:optimal-Sobolev-Poisson-LTheta}, respectively.
Thus, the arguments used are stable under uniformly elliptic anisotropic perturbations of the kernel and provide a framework that
can be adapted to a broader class of nonlocal operators. 

\subsection{Organization of the paper}

In Section \ref{sec:prelim}, we introduce the notation and collect basic comparability estimates for $s$-harmonic measures, Green functions, and Poisson kernels in bounded Lipschitz domains. In Section \ref{sec:Dahlberg}, we prove the fractional Dahlberg theorem following the strategy outlined in Section \ref{subsec-ideas-proof}: we begin with the Green function Pohozaev identity, derive square estimates for fractional normal derivatives in smooth domains, transfer them to parallel surfaces of Lipschitz domains, and conclude with the $RH_2$-type estimate and its self-improvement. In Section \ref{sec:Dir_pb}, we consider the Dirichlet problem. We first prove a general comparison between nonlocal non-tangential maximal functions of Poisson integrals and Hardy--Littlewood maximal functions relative to harmonic measure. We then combine this result with the reverse-H\"{o}lder inequality and a separate uniqueness criterion to prove the weighted $L^{2}$-solvability theorem and its extension to a broader class of exponents and weights. Section \ref{sec:proof-corollaries} contains the proofs of the three corollaries stated in Section \ref{subsec:applications}, while Section \ref{sec:general_kernels} extends the main results to symmetric stable operators comparable to the fractional Laplacian. Finally, the appendix contains technical tools used throughout, such as the fractional Caccioppoli inequality and the weighted Gehring lemma.

\section{Preliminaries}
\label{sec:prelim}
In this section, we fix our main notations and collect some preliminary results that will be used throughout. We start by recalling the definition of fractional Laplacian, related function spaces, and notion of solutions to fractional elliptic problems. Next, after fixing our conventions for the Lipschitz character of a domain, we recall two fundamental H\"{o}lder regularity results for $s$-harmonic functions up to the Lipschitz boundary. Finally, we introduce $s$-harmonic measure and associated Green function and Poisson kernel, and we conclude the section by proving several useful comparability results involving these objects. 

\subsection{Some basic notation}

\paragraph{The fractional Laplacian.} Let $n\ge 2$ be the dimension and $s\in (0,1)$ the fractional exponent. We call $w_{s}\in \mathcal{M}_{+}(\R^{n})$ the weighted measure 
\begin{equation*}
    w_{s}\coloneqq\frac{1}{1+|x|^{n+2s}}\mathscr{L}^{n}.
\end{equation*}
For every open set $\Omega\subseteq \R^{n}$, if $u\in C^{2s+\eps}_{\loc}(\Omega)\cap L^{1}(\R^{n}, w_{s})$ for some $\eps>0$, the fractional Laplacian of $u$ is defined pointwise in $\Omega$ as
\begin{equation}
\label{eq:fract_laplacian}
    (-\Delta)^{s}u(x)\coloneqq c_{n,s}\,{\rm P.V.}\int_{\R^{n}}\frac{u(x)-u(y)}{|x-y|^{n+2s}}\,dy,\qquad c_{n,s}\coloneqq 2^{2s}s\frac{\Gamma\left(\frac{n+2s}{2}\right)}{\Gamma(1-s)}\pi^{-\frac{n}{2}}.
\end{equation}
Given $u,v\in C^{2s+\eps}_{\loc}(\Omega)\cap L^{2}(\R^{n}, w_{s})$, the following identity holds:
\begin{equation*}
    (-\Delta)^{s}(uv)(x)=u(x)(-\Delta)^{s}v(x)+v(x)(-\Delta)^{s}u(x)-2B_{s}(u,v)(x)\qquad\forall x\in \Omega,
\end{equation*}
where the carré du champ $B_{s}(u,v):\Omega\to \R$ is given by
\begin{equation}\label{eq:def-quadratic-form}
    B_{s}(u,v)(x)\coloneqq \frac{c_{n,s}}{2}\int_{\R^{n}}\frac{(u(x)-u(y))(v(x)-v(y))}{|x-y|^{n+2s}}\,dy.
\end{equation}
Given any distribution $f\in \mathscr{D}'(\Omega)$, we say that $u\in L^{1}(\R^{n}, w_{s})$ is a distributional solution of
\begin{equation*}
    (-\Delta)^{s}u=f\qquad \text{in $\Omega$}
\end{equation*}
whenever 
\begin{equation*}
    \int_{\R^{n}}u(-\Delta)^{s}\varphi=\langle f,\varphi\rangle\qquad \forall \varphi \in C^{\infty}_{c}(\Omega).
\end{equation*}
The fundamental solution of $(-\Delta)^{s}$ in $\R^{n}$, solving $(-\Delta)^{s}\Phi_{n,s}=\delta_{0}$ in the sense of distributions, is given by 
\begin{equation}\label{eq:def-fond-sol-frac-lap}
    \Phi_{n,s}(x)=\frac{\kappa_{n,s}}{|x|^{n-2s}},\qquad \kappa_{n,s}\coloneqq 2^{-2s}\frac{\Gamma\left(\frac{n-2s}{2}\right)}{\Gamma(s)}\pi^{-\frac{n}{2}}.
\end{equation}

\paragraph{Fractional Sobolev spaces.} For any open set $\Omega\subseteq \R^{n}$ and any $s\in (0,1)$, the fractional Sobolev space $H^{s}(\Omega)$ is defined as
\begin{equation*}
    H^{s}(\Omega)\coloneqq\left\{u\in L^{2}(\Omega): \lVert u\rVert_{H^{s}(\Omega)}<\infty\right\},
\end{equation*}
where the norm is
\begin{equation*}
    \lVert u\rVert_{H^{s}(\Omega)}\coloneqq \lVert u\rVert_{L^{2}(\Omega)}+[u]_{H^{s}(\Omega)},
\end{equation*}
and the seminorm is given by
\begin{equation*}
    [u]_{H^{s}(\Omega)}^{2}\coloneqq \frac{c_{n,s}}{2}\int_{\Omega}\int_{\Omega}\frac{(u(x)-u(y))^{2}}{|x-y|^{n+2s}}\,dx\,dy.
\end{equation*}
For any $\gamma>0$ such that $\gamma=k+s$, for some $k\in \N_{\ge 1}$ and $s\in (0,1)$, we also define higher order Sobolev spaces as
\begin{equation*}
    H^{\gamma}(\Omega)\coloneqq \left\{u\in H^{k}(\Omega): \lVert u\rVert_{H^{\gamma}(\Omega)}<\infty\right\},\qquad \lVert u\rVert_{H^{\gamma}(\Omega)}\coloneqq  \lVert u\rVert_{H^{k-1}(\Omega)}+\lVert \nabla^{k}u\rVert_{H^{s}(\Omega)}. 
\end{equation*}
Let $H_{0}^{\gamma}(\Omega)$ be the closure of $C^{\infty}_{c}(\Omega)$ with respect to the $\lVert\cdot\rVert_{H^{\gamma}}$-norm. Then, we define the negative order Sobolev space $H^{-\gamma}(\Omega)$ as the dual of $H^{\gamma}_{0}(\Omega)$, i.e.
\begin{equation*}
    H^{-\gamma}(\Omega)\coloneqq\left\{f\in \mathscr{D}'(\Omega): \lVert f\rVert_{H^{-\gamma}(\Omega)}<\infty\right\},\qquad \lVert f\rVert_{H^{-\gamma}(\Omega)}\coloneqq \sup\left\{\frac{\langle f,\varphi\rangle}{\lVert \varphi\rVert_{H^{\gamma}(\Omega)}}:\varphi \in C^{\infty}_{c}(\Omega), \varphi\neq 0\right\}.
\end{equation*}
For given $s\in (0,1)$ and $f\in H^{-s}(\Omega)$, we say that $u\in H^{s}_{\loc}(\Omega)\cap L^{1}(\R^{n},w_{s})$ is a weak solution of 
\begin{equation*}
    (-\Delta)^{s}u=f\qquad \text{in $\Omega$}
\end{equation*}
whenever 
\begin{equation*}
    \frac{c_{n,s}}{2}\int_{\R^{n}}\int_{\R^{n}}\frac{(u(x)-u(y))(\varphi(x)-\varphi(y))}{|x-y|^{n+2s}}\,dx\,dy= \langle f,\varphi\rangle\qquad \forall \varphi\in C^{\infty}_{c}(\Omega).
\end{equation*}

\paragraph{Lipschitz domains.} Given $L>0$, we say that $\Omega\subset \R^{n}$ is a Lipschitz epigraph with Lipschitz constant $L>0$ if there is an orthonormal coordinate system $x=(x',x_{n})\in \R^{n-1}\times \R$ and a Lipschitz function $\phi:\R^{n-1}\to \R$ such that $\phi(0)=0$, $|\nabla \phi|\le L$, and 
\begin{equation*}
    \Omega=\{(x',x_{n})\in \R^{n}:x_{n}>\phi(x')\}.
\end{equation*}

\begin{defn}
\label{defi:Lip_char}
    Given $\bar r,L>0, S>0$, we say that $\Omega\subset \R^{n}$ is a bounded Lipschitz domain with Lipschitz character $(\bar r,L,S)$ if $\diam \Omega \le S$ and for every $\xi\in \partial \Omega$, there is a Lipschitz epigraph $\Omega_{\xi}\subset \R^{n}$ such that 
    \begin{equation*}
        \Omega \cap B_{\bar r}(\xi)=\left(\xi+\Omega_{\xi}\right)\cap B_{\bar r}(\xi).
    \end{equation*}
    We call $\bar r$ the localization radius, $L$ the Lipschitz constant, and $S$ the size of the domain $\Omega$, respectively.
\end{defn}

In the sequel we will use the notation $\delta_{\Omega}$ to denote the distance function from the boundary of the domain $\dist(\cdot,\partial \Omega):\R^{n}\to [0,\infty)$. Note that the distance function $\delta_{\Omega}$ will be used on both sides, $\Omega$ and $\Omega^{c}$. When clear from the context, we will often drop the subscript $\Omega$.

Any bounded Lipschitz domain $\Omega\subset \R^{n}$ satisfies the interior and exterior corkscrew conditions. Namely, there exists a constant $c\in (0,1)$ depending only on the Lipschitz constant $L$ such that the following holds: for every $r\in (0,\bar r/2)$, with $\bar r$ the localization radius, and any $\xi \in \partial \Omega$, there exist points $A_{r}(\xi)\in \Omega$ and $A_{r}'(\xi)\in \interior \Omega^{c}$ such that
    \begin{equation}\label{eq:corkscrew-condition}
        \delta_{\Omega}(A_{r}(\xi)),\, \delta_{\Omega}(A'_{r}(\xi))\ge cr\qquad \text{and}\qquad |A_{r}(\xi)-\xi|,\,|A'_{r}(\xi)-\xi|=r.
    \end{equation}
The points $A_{r}(\xi)$ and $A'_{r}(\xi)$ are called, respectively, interior and exterior corkscrew points at distance $r$ from the boundary point $\xi$. The precise choice of corkscrew points will not play any role in the sequel, once condition \eqref{eq:corkscrew-condition} is satisfied. 

\paragraph{Fractional normal derivatives.} For any $\beta\in \R$, and any function $u:\Omega\to \R$ such that $u/\delta^{\beta}$ extends continuously to $\overline{\Omega}$, we denote
\begin{equation}\label{eq:def-fractional-normal-der}
    \partial_{\nu}^{\beta}u(\xi)\coloneqq \lim_{\Omega\ni x\to \xi}\frac{u(x)}{\delta^{\beta}(x)}\qquad \forall \xi \in \partial \Omega.
\end{equation}
We recall that $s$-harmonic functions in smooth domains vanishing continuously in the exterior have a well-defined and continuous fractional normal derivative $\partial_{\nu}^{s}u$ on $\partial \Omega$; see \cite{rosoton2014dirichlet}.

\subsection[The s-harmonic measure in Lipschitz domains]{The $s$-harmonic measure in Lipschitz domains}\label{subsec:harmonic-measure}

\paragraph{H\"{o}lder regularity up to the boundary for $s$-harmonic functions.}

Here, we recall two fundamental boundary regularity results for $s$-harmonic functions in Lipschitz domains. We refer the reader to \cite{bogdan1997boundaryHarnack}, \cite[Lemmas 3 and 4]{bogdan1999representationMartin}, \cite{caffarelli2018bounds}, \cite[Chapter 3]{XROXFRbook}, and references therein for several different proofs.

\begin{thm}\label{thm:holder-boundary-lip-domain}
    Let $\Omega\subset \R^{n}$ be a Lipschitz epigraph with Lipschitz constant $L>0$, and let $u\in C(B_{r})\cap L^{1}(\R^{n}, w_{s})$, $u\ge 0$ be a solution of 
        \begin{equation*}
    \left\{
    \begin{array}{rclll}
         (-\Delta)^{s}u&=&0\quad &\text{in $B_{r}\cap \Omega$},\\
            u&=&0\quad &\text{in $B_{r}\setminus \Omega$}.
    \end{array}
    \right.
    \end{equation*}
    Then, there exist constants $C>0$ and $\alpha\in (0,s)$ depending only on $n,s$ and $L$ such that $u\in C^{\alpha}(B_{r/2})$, and  
    \begin{equation*}
        \frac{1}{C}\left(\frac{\delta(x)}{r}\right)^{2s-\alpha}u\left(\frac{re_{n}}{2}\right)\le u(x)\le C\left(\frac{\delta(x)}{r}\right)^{\alpha}u\left(\frac{re_{n}}{2}\right)\qquad \forall x\in B_{r/2}\cap \Omega.
    \end{equation*}
\end{thm}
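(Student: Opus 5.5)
The plan is to combine three standard ingredients: interior H\"older regularity, a boundary barrier argument for the upper bound, and a Harnack chain together with a subsolution comparison for the lower bound. By scaling $x\mapsto rx$ (the epigraph class with constant $L$ is scale invariant, and $u(r\cdot)$ stays in $L^{1}(\R^n,w_s)$), we may assume $r=1$. Write $a:=u(e_n/2)$. Note that $e_n/2$ is an interior corkscrew point, since $\delta(e_n/2)\ge c(L)$.

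\emph{Upper bound.} First we prove a boundary Harnack/Carleson estimate: $\sup_{B_{3/4}}u\le C a$. The nonlocal tail is handled by splitting $u=u\mathbbm{1}_{B_{7/8}}+u\mathbbm{1}_{B_{7/8}^c}$. The exterior part acts as a bounded source in $B_{3/4}$, controlled by $\int u\,w_s$. That tail is itself bounded by $Ca$ via the standard comparison: use a test function equal to the Poisson kernel of a small ball around $e_n/2$ and the mean-value formula, which gives $a\ge c\int_{B_1^c}u\,|y|^{-n-2s}$. Next comes an oscillation decay at the boundary. Lipschitz domains satisfy the exterior cone condition, so in each $B_\rho(\xi)$ with $\xi\in\partial\Omega$ the complement occupies a fixed proportion of the ball. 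A nonlocal Krylov--Safonov/De Giorgi measure estimate then gives
\[
\sup_{B_{\rho/2}(\xi)}u\le (1-\theta)\sup_{B_{\rho}(\xi)}u+C\,\mathrm{Tail}.
\]
Iterating this yields $u(x)\le C\delta(x)^{\alpha}a$. The tail terms at small scales are absorbed by the standard weighted-tail iteration, which costs a reduction of $\alpha$. Combining the boundary decay with interior $C^{2s+\eps}$ estimates in Whitney balls gives $u\in C^{\alpha}(B_{1/2})$. Choosing $\alpha<s$ is harmless.

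\emph{Lower bound.} For $x\in B_{1/2}\cap\Omega$, let $\xi$ be the closest boundary point and $\rho=\delta(x)$. We join $x$ to $e_n/2$ by a Harnack chain of about $\log_2(1/\rho)$ balls of comparable radius, moving upward inside the cone $\{y_n-\phi(y')>L|y'-x'|\}$. Each step uses the nonlocal Harnack inequality for nonnegative $s$-harmonic functions, which needs $u\ge0$ globally; this holds by assumption (we take $u\ge0$ in $\R^n$). Each step loses a factor $K=K(n,s,L)$, so
\[
u(x)\ge K^{-N}a\ge c\rho^{\log_2K}a.
\]
This is a power lower bound, but with an uncontrolled exponent. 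To get the exponent $2s-\alpha$, we instead compare with an explicit subsolution in the inner cone $\Sigma$ at $\xi$, of aperture depending on $L$. Homogeneous $s$-harmonic functions in cones vanishing outside, $\Psi=|y|^{\gamma}\Theta(y/|y|)$, exist with $\gamma<2s$ (known from Ba\~nuelos--Bogdan). Then comparison in $\Sigma\cap B_{1/2}$ gives $u\ge c\,\delta^{\gamma}a$, since $u\ge ca$ on the top portion by the chain argument. We finally set $2s-\alpha:=\max(\gamma,2s-\alpha)$ after shrinking $\alpha$.

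The main obstacle is the lower bound exponent: showing $\gamma<2s$ for cones whose aperture depends only on $L$. If that route fails, the fallback is to accept the chain exponent, which can still be written in the form $2s-\alpha$ only when it is less than $2s$. So a genuinely nonlocal input is needed. This is the fact that each Harnack step gains from the positive mass of $u$ far away, giving the per-scale factor $K<2^{2s}$ via the kernel lower bound $u(x)\gtrsim\rho^{2s}\int_{\text{far}}u|y|^{-n-2s}$.
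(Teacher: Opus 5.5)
The paper gives no proof of this theorem. It quotes it from the literature (Bogdan; Bogdan, Lemmas 3--4; the Fern\'andez-Real--Ros-Oton book), so your outline has to stand on its own. The architecture is reasonable, and the upper-bound half works up to one omission noted below. The step you call the main obstacle is not one: the Ba\~nuelos--Bogdan result you cite gives homogeneity $0\le\gamma<2s$ for \emph{every} cone, and the cone $\{y_n>2L|y'|\}$ is fixed by $L$.

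\textbf{The genuine gap is the comparison step of the lower bound.} $\Psi$ grows like $|y|^{\gamma}$ inside $\Sigma$. Outside $B_1$ you only know $u\ge 0$, and $u$ may vanish there. So $u\ge ca\Psi$ fails off $\Sigma\cap B_{1/2}$, and you are forced to truncate. Truncation has the wrong sign: for $x\in\Sigma\cap B_{R/2}(\xi^*)$,
\[
(-\Delta)^s\bigl(\Psi\mathbbm{1}_{B_R(\xi^*)}\bigr)(x)=c_{n,s}\int_{\Sigma\setminus B_R(\xi^*)}\frac{\Psi(y)}{|x-y|^{n+2s}}\,dy>0 .
\]
The truncated barrier is therefore a strict supersolution, and comparison gives no lower bound.

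You need to add compensating mass where $u$ is already known to be large. For example, take $\underline{\Psi}=\Psi\mathbbm{1}_{B_R(\xi^*)}+M\mathbbm{1}_E$, with $E$ a ball in $\Sigma\cap(B_R(\xi^*)\setminus B_{R/2}(\xi^*))$. On $D=\Sigma\cap B_{R/4}(\xi^*)$ one has $(-\Delta)^s(M\mathbbm{1}_E)\le -cMR^{-2s}$, against a truncation error $\le CR^{\gamma-2s}$. Hence a large $M$ makes $\underline{\Psi}$ a subsolution in $D$.

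For the exterior data to work, make two choices:
\begin{itemize}
\item Put the vertex at the vertical projection $\xi^*=(x',\phi(x'))$, not at the closest boundary point. Then $x$ lies on the axis and $\Psi(x)\approx\delta(x)^{\gamma}$.
\item Take $\Sigma$ strictly narrower than the Lipschitz cone. Then $\delta\gtrsim R$ on $\Sigma\cap(B_R(\xi^*)\setminus B_{R/4}(\xi^*))$, and a Harnack chain gives $u\ge ca\ge \epsilon a\underline{\Psi}$ there.
\end{itemize}

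\textbf{Carleson estimate.} Your tail bound $a\ge c\int_{B_1^c}u\,|y|^{-n-2s}$ does not control $u$ inside $B_1$, which $\sup_{B_{3/4}}u\le Ca$ requires. Instead, apply the Poisson kernel of $B_\rho(e_n/2)$ to all of $\R^n\setminus B_\rho(e_n/2)$. This gives $\|u\|_{L^1(B_1)}+\|u\|_{L^1(\R^n,w_s)}\lesssim a$. Then use the $L^\infty$--$L^1$ bound for $u$: being nonnegative and zero on $B_1\setminus\Omega$, it is a subsolution in all of $B_1$.

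\textbf{Fallback.} As stated it is wrong. The kernel bound with far mass only gives $u\gtrsim\rho^{2s}a$, with exponent exactly $2s$. A single-scale comparison loses a factor larger than $2^{2s}$. What does work, and gives a proof with no cone harmonic functions, is to collect the mass of $u$ at every intermediate scale. Set $x_t=\xi^*+te_n$. The Poisson kernel of $B_{c\rho}(x_\rho)$, together with Harnack on the disjoint balls $B_{c2^k\rho}(x_{2^k\rho})$, gives
\[
u(x_\rho)\ge c\,\rho^{2s}\sum_{k\ge 1,\;2^k\rho\le t_0}(2^k\rho)^{-2s}\,u(x_{2^k\rho}).
\]
Suppose $u(x_t)\ge \epsilon t^{2s-\eta}a$ at the coarser scales. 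Then for $\rho$ below a threshold depending on $\eta$, the right-hand side is at least $c\epsilon\rho^{2s-\eta}a/(4\eta)$. For $\eta\le c/4$ this closes the induction, and the coarse scales are handled by a Harnack chain.
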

\begin{thm}[Boundary Harnack principle]\label{thm:BHP}
    Let $\Omega\subset \R^{n}$ be a Lipschitz epigraph with Lipschitz constant $L>0$, and let $u_{1}, u_{2} \in C(B_{r})\cap L^{1}(\R^{n}, w_{s})$, $u_{i}\ge 0$ be solutions of 
    \begin{equation*}
        \left\{
    \begin{array}{rclll}
         (-\Delta)^{s}u_{i}&=&0\quad &\text{in $B_{r}\cap \Omega$},\\
            u_{i}&=&0\quad &\text{in $B_{r}\setminus \Omega$}.
    \end{array}
    \right.\qquad i=1,2,\qquad  u_{1}\left(\frac{re_{n}}{2}\right)=u_{2}\left(\frac{re_{n}}{2}\right)>0.
    \end{equation*}
    Then, there exist constants $C>0$ and $\beta \in (0,1)$ such that the ratio $h\coloneqq u_{1}/u_{2}\in C^{\beta}\left(\overline{B_{r/2}\cap\Omega}\right)$, and the following holds:
    \begin{equation*}
        \frac{1}{C}\le h(x)\le C,\qquad |h(x)-h(y)|\le C\left(\frac{|x-y|}{r}\right)^{\beta}\qquad \forall x,y\in B_{r/2}\cap \Omega.
    \end{equation*}
\end{thm}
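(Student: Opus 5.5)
The statement is the boundary Harnack principle (Theorem \ref{thm:BHP}), quoted from the literature. The plan is the standard one: reduce to comparison with a fixed barrier built from the Green function or $s$-harmonic measure, then iterate an oscillation decay for the ratio.

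\emph{Normalization.} By scaling and translation we may take $r=1$. Normalize $u_1(e_n/2)=u_2(e_n/2)=1$. Write $u_i=v_i+w_i$, where $v_i$ solves the problem in $B_{3/4}\cap\Omega$ with exterior datum $u_i\mathbbm{1}_{B_1\setminus B_{3/4}}$ (the interior piece), and $w_i$ carries the tail $u_i\mathbbm{1}_{B_1^c}$.

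\emph{Step 1: the tail pieces.} The functions $w_i$ are nonnegative. They satisfy $(-\Delta)^s w_i\ge0$, and their size in $B_{1/2}\cap\Omega$ is controlled by the tail integral. The nonlocal interior Harnack inequality bounds this tail integral by $C u_i(e_n/2)$.

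\emph{Step 2: two-sided comparison with a barrier.} The key estimate is
\[
\frac1C\,\varphi(x)\le u_i(x)\le C\,\varphi(x)\qquad\forall x\in B_{1/2}\cap\Omega .
\]
Here $\varphi$ is the $s$-harmonic function in $B_{3/4}\cap\Omega$ with exterior datum $\mathbbm 1_{B_1\setminus B_{3/4}}$, or equivalently the Green function of a fixed Lipschitz subdomain with pole near $e_n/2$. The tools are as follows.
\begin{itemize}[leftmargin=*]
\item Theorem \ref{thm:holder-boundary-lip-domain}, the Carleson-type bound, gives $u_i\le C$ in $B_{3/4}\cap\Omega$.
\item The interior Harnack inequality applied along Harnack chains gives comparability at corkscrew points.
\end{itemize}
Since the kernel is nonlocal, the lower bound is easy. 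By the comparison principle, $u_i\ge c\int_{B_1\setminus B_{3/4}}u_i\,P^x$, and $u_i$ is bounded below on a corkscrew ball in the exterior annulus, which yields $u_i\ge c\varphi$.

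The upper bound is the main obstacle. It requires showing that $u_i$ decays at the boundary no faster than $\varphi$ and no slower. I would adapt the Bogdan box argument. Compare $u_i$ with the $s$-harmonic measure of $B_\rho(\xi)^c$ relative to the box $\Omega\cap B_\rho(\xi)$, uniformly in $\xi\in\partial\Omega\cap B_{1/2}$ and $\rho$. The inputs are the Carleson estimate and the Ikeda--Watanabe formula $P^x(y)=\int G(x,z)|z-y|^{-n-2s}dz$. Together these show that the harmonic measure of the far region is comparable to $\int_{B_\rho}G(x,z)\,dz\cdot\rho^{-2s}$, which is a quantity independent of $u_i$.

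\emph{Step 3: oscillation decay of the ratio.} Two-sided comparability of $u_1$ and $u_2$ with $\varphi$ gives $C^{-1}\le h\le C$. The Hölder estimate follows by iteration. At each dyadic scale $2^{-k}$ around $\xi\in\partial\Omega$, apply the comparability to the nonnegative functions $u_1-m_k u_2$ and $M_k u_2-u_1$, where $m_k,M_k$ are the infimum and supremum of $h$. These functions are not nonnegative outside the ball. That negative-tail error is handled by the tail bound of Step 1, summed geometrically over earlier scales. The result is $M_{k+1}-m_{k+1}\le(1-\theta)(M_k-m_k)+C2^{-k\gamma}$. This gives $|h(x)-h(\xi)|\le C|x-\xi|^\beta$, and combining it with interior Hölder estimates away from the boundary yields the full $C^\beta$ bound.
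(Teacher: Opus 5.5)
The paper does not prove Theorem \ref{thm:BHP}; it quotes it from \cite{bogdan1997boundaryHarnack}, \cite{bogdan1999representationMartin} and \cite[Chapter 3]{XROXFRbook}. Your outline follows Bogdan's box method from those sources, so the overall architecture is right. The gap is that the step you yourself call the main obstacle does not follow from the tools you name.

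Step 2 hinges on the following estimate, for $U=\Omega\cap B_\rho(\xi)$ and $x\in B_{\rho/2}(\xi)\cap\Omega$:
\[
\omega_U^x\bigl(B_\rho(\xi)^c\bigr)\le C\rho^{-2s}\int_U G_U(x,z)\,dz .
\]
This includes the lower bound you call easy. Knowing $u_i\ge c$ on a corkscrew ball $B'$ only gives $u_i\ge c\,\omega_D^x(B')$. Passing to $c\varphi$ requires $\omega_D^x(B')\gtrsim\omega_D^x(B_1\setminus B_{3/4})$, which is the same estimate. Likewise, the ``equivalently'' between your two choices of $\varphi$ is itself a boundary Harnack statement.

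The Ikeda--Watanabe formula gives only $\omega_U^x(B_\rho(\xi)^c)=\int_U G_U(x,z)\,\kappa(z)\,dz$ with $\kappa(z)\approx\dist(z,\partial B_\rho(\xi))^{-2s}$. This weight is unbounded along the lateral boundary $\Omega\cap\partial B_\rho(\xi)$. You would need to bound the part of the integral with $z$ near that lateral boundary by $\rho^{-2s}\int_U G_U(x,\cdot)$, uniformly as $x\to\partial\Omega$. That means comparing the function $x\mapsto\int_{U\setminus B_{\rho/2}(\xi)}G_U(x,z)\kappa(z)\,dz$, which is $s$-harmonic in $U\cap B_{\rho/2}(\xi)$, with the torsion function $x\mapsto\int_U G_U(x,z)\,dz$ near $\partial\Omega$. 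That is again a boundary Harnack problem. The Carleson estimate bounds $u_i$ on the annulus but says nothing about how $G_U(x,z)$ decays in $x$.

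The missing idea is a test-function (Dynkin) bound, the same trick the paper uses in the proof of Lemma \ref{lem:comparison-omega-G}. Take $\psi\in C^\infty(\R^n;[0,1])$ with $\psi=0$ in $B_{\rho/2}(\xi)$ and $\psi=1$ outside $B_\rho(\xi)$, so that $|(-\Delta)^s\psi|\lesssim\rho^{-2s}$. Since $\int\psi\,d\omega_U^x=\psi(x)-\int_U G_U(x,z)(-\Delta)^s\psi(z)\,dz$, the displayed bound follows.

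With this bound the argument closes at a single scale:
\begin{itemize}
\item Write $u_i(x)=\int_{B_\rho(\xi)^c}u_i\,d\omega_U^x$ and split the region of integration.
\item On $B_{2\rho}(\xi)\setminus B_\rho(\xi)$, use Carleson together with the bound above.
\item On $B_{2\rho}(\xi)^c$, use Ikeda--Watanabe and Lemma \ref{lem:half-harnack-tails} at $A_\rho(\xi)$.
\item For the lower bound, apply Ikeda--Watanabe on a corkscrew ball in $B_{2\rho}(\xi)\setminus B_\rho(\xi)$.
\end{itemize}
Together these give $u_i(x)\approx\rho^{-2s}u_i(A_\rho(\xi))\int_U G_U(x,z)\,dz$. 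Hence $C^{-1}\le h\le C$ follows at once, with no iteration.

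Two smaller points. In Step 3, the negative parts of $u_1-m_ku_2$ on earlier annuli produce an error of order $\sum_{j<k}2^{-(k-j)\alpha}(M_j-m_j)$, using the lower bound in Theorem \ref{thm:holder-boundary-lip-domain}. This is not an absolute $C2^{-k\gamma}$, so you must absorb it, for instance by iterating along scales $2^{-Nk}$ with $N$ large or by an inductive hypothesis on $M_j-m_j$. In Step 1 the sign is reversed: $w_i\ge 0$ vanishing on $B_{3/4}\setminus\Omega$ gives $(-\Delta)^s w_i\le 0$ in $B_{3/4}$.
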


\paragraph{The $s$-harmonic measure: Poisson kernel and Green function.}
Next, we recall the construction of $s$-harmonic measure. 
Let $\Omega\subset \R^{n}$ be a Lipschitz domain. For every bounded continuous function $g\in C_{b}(\Omega^{c})$, there is a unique distributional solution $u_{g}\in C_{b}(\R^{n})$ of the Dirichlet problem below (see, for instance, \cite{bogdan1999representationMartin,abatangelo2013large,XROXFRbook})
\begin{equation*}
        \left\{
    \begin{array}{rclll}
         (-\Delta)^{s}u_{g}&=&0\quad &\text{in $\Omega$},\\
            u_{g}&=&g\quad &\text{in $\Omega^{c}$}.
    \end{array}
    \right.
\end{equation*}
Note that $g\mapsto u_{g}$ is linear, $u_{1}\equiv 1$, and $\lVert u_{g}\rVert_{L^{\infty}}\le \lVert g\rVert_{L^{\infty}}$ by the maximum principle. 
In particular, for every $x\in \Omega$, duality defines the probability measure $\omega_{\Omega}^{x}\in\mathscr{P}(\R^{n})$ concentrated on $\Omega^{c}$ such that
\begin{equation*}
    \int_{\Omega^{c}} g\,d\omega_{\Omega}^{x}=u_{g}(x),\qquad \forall g\in C_{b}(\Omega^{c}).
\end{equation*}
$\omega_{\Omega}^{x}$ is called the $s$-harmonic measure of $\Omega$ with pole $x$. Compared with the notation used in the introduction, we drop the subscript $s$, since only nonlocal problems will be considered in the rest of the paper.

For every $x\in \Omega$, $\omega^{x}_{\Omega}$ is absolutely continuous with respect to $\mathscr{L}^{n}\res \Omega^{c}$ by \cite[Lemma 6]{bogdan1997boundaryHarnack}, and the density
\begin{equation*}
    P_{\Omega}^{x}\coloneqq \frac{d\omega^{x}_{\Omega}}{d\mathscr{L}^{n}}:\interior \Omega^{c}\to [0,\infty)
\end{equation*}
is called the Poisson kernel of $\Omega$ with pole $x$. The function $\Omega \times \interior \Omega^{c}\ni (x,y)\mapsto P^{x}_{\Omega}(y)$ is in fact $C^{\infty}$ (\cite[Remark 2]{bogdan1997boundaryHarnack}).

Finally, for every $x\in \Omega$, the Green function of $\Omega$ with pole $x$ is defined as
\begin{equation}\label{eq:def-green-function}
    G_{\Omega}^{x}(y)\coloneqq \Phi_{n,s}(x-y)-R_{\Omega}^{x}(y),
\end{equation}
where $\Phi_{n,s}$ is the fundamental solution of $(-\Delta)^{s}$ in the full space $\R^{n}$ from \eqref{eq:def-fond-sol-frac-lap}, and $R_{\Omega}^{x}$ is the solution of 
\begin{equation*}
        \left\{
    \begin{array}{rclll}
         (-\Delta)^{s}R_{\Omega}^{x}&=&0\quad &\text{in $\Omega$},\\
            R_{\Omega}^{x}&=&\Phi_{n,s}(x-\cdot)\quad &\text{in $\Omega^{c}$}.
    \end{array}
    \right.
\end{equation*}
We have $G^{x}_{\Omega}(y)=G^{y}_{\Omega}(x)$ for every $x,y\in \Omega$, $x\neq y$, and moreover $G_{\Omega}^{x}$ is a distributional solution of
\begin{equation*}
        \left\{
    \begin{array}{rclll}
         (-\Delta)^{s}G_{\Omega}^{x}&=&\delta_{x}\quad &\text{in $\Omega$},\\
            G_{\Omega}^{x}&=&0\quad &\text{in $\Omega^{c}$}.
    \end{array}
    \right.
\end{equation*}
In addition, $P_{\Omega}^{x}$ and $G_{\Omega}^{x}$ are related by the formula
\begin{equation}\label{eq:poisson-kernel-integral-formula}
    P_{\Omega}^{x}(y)=-(-\Delta)^{s}G_{\Omega}^{x}(y)= c_{n,s}\int_{\Omega}\frac{G^{x}_{\Omega}(z)}{|z-y|^{n+2s}}dz\qquad \forall y\in \interior\Omega^{c}.
\end{equation}
From the definition of $G_{\Omega}^{x}$ and the maximum principle one infers the following basic estimates:
    \begin{align}
        G^{x}_{\Omega}(y)& \le \frac{\kappa_{n,s}}{|x-y|^{n-2s}}\qquad \forall x,y \in \Omega,\label{eq:basic-estimate-green-function-1}\\
        G^{x}_{\Omega}(y)& \gtrsim \frac{\kappa_{n,s}}{|x-y|^{n-2s}}\qquad \forall x,y \in \Omega: |x-y|\le \frac{\delta_{\Omega}(x)}{2}\label{eq:basic-estimate-green-function-2}.
\end{align}
In the sequel, when clear from the context, we will often drop the subscript $\Omega$ in $\omega^{x}_{\Omega}, P^{x}_{\Omega}$ and $G^{x}_{\Omega}$.

The following lemma proves a stability result for $s$-harmonic measure, Poisson kernel, and Green function under small perturbations of the domain in the Hausdorff topology. 
\begin{lem}\label{lem:approximation-smooth-domains}
    Let $\Omega_{j}\subset \R^{n}$ be a sequence of bounded domains with a common Lipschitz character converging to $\Omega\subset \R^{n}$ in the Hausdorff sense. Then, for every $x\in\Omega$ and all sufficiently large $j$, let $G_{j}^{x}$, $G^{x}$ denote the Green functions and $P^{x}_{j}$, $P^{x}$ the Poisson kernels with pole $x$ for $\Omega_{j}$ and $\Omega$, respectively. We have
    \begin{equation*}
        G_{j}^{x}\to G^{x}\quad \text{locally uniformly in $\R^{n}\setminus \{x\}$}\qquad \text{and}\qquad P^{x}_{j}\to P^{x}\quad \text{locally uniformly in $\interior \Omega^{c}$}.
    \end{equation*}
    In particular, calling $\omega_{j}^{x}, \omega^{x}\in \mathscr{P}(\R^{n})$ the $s$-harmonic measures with pole $x$ for the domain $\Omega_{j}$ and $\Omega$ respectively, we have
    \begin{equation*}
        \lVert\omega_{j}^{x}-\omega^{x}\rVert_{{\rm TV}}\to 0.
    \end{equation*}
\end{lem}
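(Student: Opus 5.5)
Fix $x\in\Omega$ and write $R_j^x$, $R^x$ for the regular parts in \eqref{eq:def-green-function}, so that $G_j^x-G^x=R^x-R_j^x$. The plan is to prove uniform convergence of the Green functions from equicontinuity plus identification of the limit, then pass to Poisson kernels via \eqref{eq:poisson-kernel-integral-formula}, and finally to total variation by Scheffé. Since $\Omega_j\to\Omega$ in the Hausdorff sense with a common Lipschitz character $(\bar r,L,S)$, we have $x\in\Omega_j$ with $\delta_{\Omega_j}(x)\ge \delta_\Omega(x)/2$ for $j$ large, so $G_j^x$ is defined. Moreover, every compact subset of $\Omega$ lies in $\Omega_j$ eventually, and every compact subset of $\interior\Omega^c$ lies in $\interior\Omega_j^c$ eventually.

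\textbf{Step 1 (compactness).} By \eqref{eq:basic-estimate-green-function-1}, the $G_j^x$ are bounded by $\kappa_{n,s}|x-\cdot|^{2s-n}$, and they vanish outside $\Omega_j$. Away from the pole, interior regularity of $s$-harmonic functions gives locally uniform H\"older (indeed smooth) bounds on compact subsets of $\Omega\setminus\{x\}$. Near $\partial\Omega_j$, Theorem \ref{thm:holder-boundary-lip-domain} applies in balls of radius comparable to $\min\{\bar r,|x-\cdot|\}$. Its constants depend only on $n,s,L$, so it gives $G_j^x\le C(\delta_{\Omega_j}/r)^\alpha\sup G_j^x$ near the boundary and a uniform $C^\alpha$ modulus there. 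Hence $\{G_j^x\}$ is equicontinuous and bounded on compact subsets of $\R^n\setminus\{x\}$, and by Arzel\`a--Ascoli a subsequence converges locally uniformly to some $G$. This $G$ vanishes on $\Omega^c$: points of $\interior\Omega^c$ are eventually outside $\Omega_j$, and points of $\partial\Omega$ are at distance $o(1)$ from $\partial\Omega_j$, so the boundary decay forces $G=0$ there.

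\textbf{Step 2 (identification).} Write $G=\Phi_{n,s}(x-\cdot)-R$, where $R$ is the locally uniform limit of $R_j^x$ (bounded near $x$ by the maximum principle). Test against $\varphi\in C_c^\infty(\Omega)$: since $\supp\varphi\subset\Omega_j$ eventually, $\int G_j^x(-\Delta)^s\varphi=\varphi(x)$. Dominated convergence, using the integrable bound from Step 1 and the decay $|(-\Delta)^s\varphi|\lesssim (1+|y|)^{-n-2s}$, passes this to the limit. So $R$ is continuous, $s$-harmonic in $\Omega$, and equals $\Phi_{n,s}(x-\cdot)$ in $\Omega^c$. Uniqueness of bounded continuous solutions of the Dirichlet problem then gives $R=R^x$, hence $G=G^x$. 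Because the limit is unique, the whole sequence converges.

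\textbf{Step 3 (Poisson kernels).} Fix a compact $K\subset\interior\Omega^c$, so $\dist(K,\Omega_j)\ge d>0$ for $j$ large. By \eqref{eq:poisson-kernel-integral-formula}, for $y\in K$,
\[
|P_j^x(y)-P^x(y)|\le c_{n,s}\int|G_j^x-G^x|(z)\,|z-y|^{-n-2s}\,dz .
\]
The integrand is supported in $\Omega\cup\Omega_j$, a uniformly bounded set at distance at least $d$ from $y$. The integral tends to $0$ by locally uniform convergence away from $x$ together with the integrable domination near $x$. This gives uniform convergence on $K$.

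\textbf{Step 4 (total variation).} Since $\mathscr{L}^n(\partial\Omega)=0$, the $P_j^x\to P^x$ a.e. on $\Omega^c$. The mass of $\omega_j^x$ on $\Omega\setminus\Omega_j$ tends to $0$: this set lies within distance $o(1)$ of $\partial\Omega_j$, and $\omega_j^x$ of a thin boundary layer is uniformly small by the boundary H\"older decay (uniform in $j$). All the measures are probability measures, so Scheff\'e's lemma yields $\lVert\omega_j^x-\omega^x\rVert_{\rm TV}\to0$.

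\textbf{Main obstacle.} I expect the hardest part to be the uniformity near the moving boundaries: the equicontinuity in Step 1 and the thin-layer estimate in Step 4. Both rest on constants in Theorem \ref{thm:holder-boundary-lip-domain} that depend only on the common Lipschitz character.
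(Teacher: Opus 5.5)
Your proposal is correct and follows essentially the paper's route: uniform boundary H\"older estimates plus Arzel\`a--Ascoli, identification of the limit by uniqueness for the exterior problem with bounded continuous data, then the integral formula for $P^x$ and Scheff\'e. The only cosmetic difference is that the paper runs the compactness argument on the bounded regular parts $\Phi_{n,s}(\cdot-x)-G_j^x$, which avoids treating the pole separately.

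The thin-layer estimate in your Step 4 is superfluous, and that is fortunate: bounding $\int G_j^x\delta^{-2s}$ over a thin layer using only $G_j^x\lesssim\delta^{\alpha}$ works only if $\alpha>2s-1$. You do not need it, because every fixed $z\in\Omega$ lies in $\Omega_j$ for large $j$. Hence $P_j^x\mathbbm{1}_{\Omega_j^c}\to P^x\mathbbm{1}_{\Omega^c}$ a.e.\ on all of $\R^n$, and Scheff\'e needs nothing beyond this and the equality of total masses.
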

\begin{proof}
    By the boundary H\"{o}lder regularity of $s$-harmonic functions (Theorem \ref{thm:holder-boundary-lip-domain}) and the maximum principle, the functions $\Phi_{n,s}(\cdot-x)-G^{x}_{j}$ are uniformly bounded, uniformly equicontinuous in $\R^{n}$, and $s$-harmonic in their respective domains $\Omega_{j}$. Therefore, up to a subsequence, they converge locally uniformly to a limiting continuous function which is $s$-harmonic in $\Omega$ and equals $\Phi_{n,s}(\cdot-x)$ in $\Omega^{c}$, and thus necessarily coincides with $\Phi_{n,s}(\cdot-x)-G^{x}$. This proves the uniform convergence of the Green functions. At this point, applying the integral formula in \eqref{eq:poisson-kernel-integral-formula}, we deduce that $P^{x}_{j}$ converge to $P^{x}$ locally uniformly in $\interior \Omega^{c}$. Finally, since the Poisson kernel is the density of harmonic measure with respect to Lebesgue, Scheffé's lemma implies the convergence of $\omega^{x}_{j}$ to $\omega^{x}$ in total variation.
\end{proof}

\paragraph{Fundamental comparison lemmas.}
We conclude this section with some useful lemmas comparing $s$-harmonic measure of boundary balls and pointwise values of Green function and Poisson kernel in corres\-ponding corkscrew points. Some of these results were already contained in \cite{bogdan1997boundaryHarnack,caffarelli2018bounds}. For the reader's convenience, we give below complete proofs based only on the estimates contained in this section.

We begin with the comparison between harmonic measure of a boundary ball and the Green function at a corresponding interior corkscrew point.
\begin{lem}\label{lem:comparison-omega-G}
    Let $\Omega \subset \R^{n}$ be a bounded Lipschitz domain, and let $x_{0}\in \Omega$. Then, there exists $r_{0}>0$ depending only on the Lipschitz character of $\Omega$ such that, for all $\xi\in \partial \Omega$ and $r\in (0,r_{0})$ with $|x_{0}-\xi|>2r$, the following holds:
    \begin{equation}\label{eq:comparison-omega-G}
        \omega^{x_{0}}(B_{r}(\xi))\approx r^{n-2s}G^{x_{0}}(A_{r}(\xi)),
    \end{equation}
    where the comparability constants depend only on $n,s$, and the Lipschitz character of $\Omega$.
\end{lem}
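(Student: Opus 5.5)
We prove the two inequalities in \eqref{eq:comparison-omega-G} separately. Both rely on the representation \eqref{eq:poisson-kernel-integral-formula} of $P^{x_0}$ through $G^{x_0}$ and on the boundary estimates of Theorems \ref{thm:holder-boundary-lip-domain} and \ref{thm:BHP}. Take $r_0\ll \bar r$. By symmetry, $G^{x_0}=G^{\cdot}(x_0)$ is nonnegative and $s$-harmonic in $\Omega\setminus\{x_0\}$, and it vanishes in $\Omega^c$. Since $|x_0-\xi|>2r$, it is therefore a valid function in Theorem \ref{thm:holder-boundary-lip-domain} on the ball $B_{2r}(\xi)$, after a possible small adjustment of radii (e.g.\ using $B_{3r/2}(\xi)$). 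Harnack chains in the Lipschitz domain make the value at $A_r(\xi)$ comparable to the value at $\xi+re_n$ in local coordinates. They also show that $G^{x_0}$ is comparable at corkscrew points of comparable scale.

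\textbf{Lower bound.} Write $\omega^{x_0}(B_r(\xi))\ge \int_{B_r(\xi)\cap \Omega^c}P^{x_0}(y)\,dy$. The mass on $\partial\Omega$ is zero by absolute continuity. Using \eqref{eq:poisson-kernel-integral-formula}, restrict the $z$-integral to the ball $B_{cr/2}(A_{r/2}(\xi))$. On that ball, Harnack gives $G^{x_0}\approx G^{x_0}(A_r(\xi))$, and for $y\in B_r(\xi)\setminus\Omega$ we have $|z-y|\lesssim r$. Hence $P^{x_0}(y)\gtrsim r^{n}r^{-n-2s}G^{x_0}(A_r(\xi))$ on the exterior corkscrew ball $B_{cr/2}(A'_{r/2}(\xi))\subset B_r(\xi)\cap\Omega^c$. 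That ball has volume $\approx r^n$, so integrating gives $\omega^{x_0}(B_r(\xi))\gtrsim r^{n-2s}G^{x_0}(A_r(\xi))$.

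\textbf{Upper bound.} Here we use a barrier. Choose $\varphi\in C^\infty_c(B_{2r}(\xi))$ with $0\le\varphi\le1$, $\varphi=1$ on $B_r(\xi)$, and $|D^2\varphi|\lesssim r^{-2}$, so that $|(-\Delta)^s\varphi|\lesssim r^{-2s}$ on $\R^n$. Let $u_\varphi$ be the $s$-harmonic extension of $\varphi|_{\Omega^c}$. Then $\omega^{x_0}(B_r(\xi))\le u_\varphi(x_0)$. Set $v\coloneqq \varphi-u_\varphi$. This function vanishes in $\Omega^c$ and solves $(-\Delta)^s v=(-\Delta)^s\varphi$ in $\Omega$, so by the Green representation
\[
u_\varphi(x_0)=\varphi(x_0)-\int_\Omega G^{x_0}(z)(-\Delta)^s\varphi(z)\,dz,
\]
with $\varphi(x_0)=0$ once $|x_0-\xi|>2r$; we may shrink the support slightly to ensure this. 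The integrand is bounded by $r^{-2s}G^{x_0}$ on $B_{2r}(\xi)$ and by $r^{n}|z-\xi|^{-n-2s}G^{x_0}(z)$ far away.

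\textbf{Main obstacle.} The difficulty is the region of the integral where $G^{x_0}$ is not directly controlled by $G^{x_0}(A_r(\xi))$. On $B_{2r}(\xi)\cap\Omega$, Theorem \ref{thm:holder-boundary-lip-domain} gives $G^{x_0}\lesssim G^{x_0}(A_{2r}(\xi))\approx G^{x_0}(A_r(\xi))$, and this piece contributes $\lesssim r^{n-2s}G^{x_0}(A_r(\xi))$. The tail over dyadic annuli $B_{2^{k+1}r}\setminus B_{2^k r}$ is the delicate part. It requires growth bounds of the form $G^{x_0}(A_{2^kr}(\xi))\lesssim 2^{k\gamma}G^{x_0}(A_r(\xi))$ with $\gamma<2s$. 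We would obtain these from the lower bound in Theorem \ref{thm:holder-boundary-lip-domain}, which gives $\gamma=2s-\alpha$, for annuli that avoid $x_0$. Near $x_0$, we would instead use $G^{x_0}\le\kappa|x_0-z|^{2s-n}$ combined with $|x_0-\xi|>2r$ and the bounded size of $\Omega$.

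A simpler alternative that avoids the tail sum is to choose $\varphi$ so that $-(-\Delta)^s\varphi\ge0$ away from $B_{2r}$, since then the far contribution has a favorable sign. If that is not available, we rely on the geometric series $\sum 2^{k(2s-\alpha)}2^{-k(n+2s)}2^{kn}$, which converges.
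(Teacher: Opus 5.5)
Your lower bound is correct and takes a different route from the paper. The upper bound, however, has a genuine gap in the tail of $\int_\Omega G^{x_0}\,(-(-\Delta)^s\varphi)$: near the pole you plan to use $G^{x_0}(z)\le\kappa_{n,s}|z-x_0|^{2s-n}$, and at large scales ``bounded size of $\Omega$''. The statement requires constants independent of $x_0$. That pointwise bound ignores the boundary decay of $G^{x_0}$ at points near $x_0$ but close to $\partial\Omega$.

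Concretely, take $|x_0-\xi|=3r$ and $\delta(x_0)=\delta_0\ll r$. For $z\in B_r(x_0)\cap\Omega$ one has $-(-\Delta)^s\varphi(z)=c_{n,s}\int\varphi(y)|z-y|^{-n-2s}dy\gtrsim c_{n,s}r^{-2s}$. So the best your bound gives for this piece is $c_{n,s}r^{-2s}\int_{B_r(x_0)\cap\Omega}|z-x_0|^{2s-n}dz\approx c_{n,s}$. On the other hand, apply Theorem~\ref{thm:holder-boundary-lip-domain} to $G^{A_r(\xi)}$ near the boundary point closest to $x_0$, together with \eqref{eq:basic-estimate-green-function-1}. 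This gives $r^{n-2s}G^{x_0}(A_r(\xi))=r^{n-2s}G^{A_r(\xi)}(x_0)\lesssim(\delta_0/r)^{\alpha}$, which tends to $0$ as $\delta_0\to0$. So your constant would depend on $\delta(x_0)$, and the same happens at large scales.

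Your proposed alternative does not help. For any $0\le\varphi\in C^\infty_c$, $-(-\Delta)^s\varphi>0$ off $\supp\varphi$, so the far contribution is nonnegative. It is exactly the term that needs an upper bound, so its sign is unfavorable, not favorable.

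There is also a smaller slip. With $\supp\varphi\subset B_{2r}(\xi)$, the bound $G^{x_0}\lesssim G^{x_0}(A_r(\xi))$ on $B_{2r}(\xi)\cap\Omega$ fails when $x_0$ is near $\partial B_{2r}(\xi)$. Theorem~\ref{thm:holder-boundary-lip-domain} on $B_{2r}(\xi)$ only controls $B_r(\xi)$. The paper takes $\supp\varphi\subset B_{4r/3}(\xi)$, which keeps a margin from the pole, and uses $|(-\Delta)^s\varphi|\lesssim r^{-2s}\mathbbm{1}_{B_{3r/2}(\xi)}+c_{n,s}r^n(r^{n+2s}+|x-\xi|^{n+2s})^{-1}$.

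The missing idea is the half-Harnack inequality, Lemma~\ref{lem:half-harnack-tails}. Rescale it to the ball $B_{cr}(A_r(\xi))\subset\Omega\setminus\{x_0\}$: there $G^{x_0}$ is $s$-harmonic, and it is nonnegative on all of $\R^n$. Since $r^{n+2s}+|z-\xi|^{n+2s}\gtrsim (cr)^{n+2s}+|z-A_r(\xi)|^{n+2s}$ and $c_{n,s}\lesssim 1-s$, this gives
\[
c_{n,s}\,r^{n}\int_{\R^n}\frac{G^{x_0}(z)}{r^{n+2s}+|z-\xi|^{n+2s}}\,dz\lesssim r^{n-2s}G^{x_0}(A_r(\xi)).
\]
This single estimate covers the neighborhood of the pole and all large scales, using only global positivity. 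Its constants are independent of $x_0$ and uniform as $s\to1^-$, and no dyadic decomposition is needed.

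As for your lower bound, the paper argues differently. It first shows $\omega^{A_r(\xi)}(B_r(\xi))\approx1$, then runs a nonlocal maximum-principle comparison of $\omega^{\cdot}(B_r(\xi))$ with $C_0^{-1}r^{n-2s}G^{\cdot}(A_r(\xi))$. You instead integrate \eqref{eq:poisson-kernel-integral-formula} over an exterior corkscrew ball, which is the first half of the proof of Lemma~\ref{lem:comparison-P-G}. Your route is shorter, but it loses the factor $c_{n,s}$, which is of order $1-s$, so it is not robust as $s\to1^-$; the statement does allow this. You should shrink your balls to $B_{cr/4}$ so that Harnack applies.
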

\begin{proof}
    In what follows, $r_{0}$ is chosen sufficiently small with respect to the localization radius of the domain. 
    Let us call $u(x)\coloneqq \omega^{x}(B_{r}(\xi))$, identified with the $s$-harmonic function in $\Omega$ that has $\mathbbm{1}_{\Omega^{c}\cap B_{r}(\xi)}$ as exterior datum. 
    
    We first prove that $u(A_{r}(\xi))\approx 1$. Note that $v\coloneqq 1-u \in [0,1]$ is continuous in $B_{r}(\xi)$ and solves the problem
    \begin{equation*}
    \left\{
    \begin{array}{rclll}
         (-\Delta)^{s}v&=&0\quad &\text{in $B_{r}(\xi)\cap \Omega$},\\
            v&=&0\quad &\text{in $B_{r}(\xi)\setminus \Omega$}.
    \end{array}
    \right.
    \end{equation*}
    Therefore, by Theorem \ref{thm:holder-boundary-lip-domain}, given $\eps\in (0,1)$ we have $v(A_{\eps r}(\xi))\lesssim v(A_{r}(\xi))\eps^{\alpha}\le \eps^{\alpha}$. Hence, for $\eps$ small enough, $v(A_{\eps r}(\xi))\le 1/2$, or $u(A_{\eps r}(\xi))\ge 1/2$. At this point, calling $c\in (0,1)$ the constant that appears in the corkscrew condition \eqref{eq:corkscrew-condition}, a Harnack chain argument gives
    \begin{equation}\label{eq:bound-below-HM-corkscrew-comparability-proof}
       u(z)\approx 1\qquad \forall z\in B_{cr/2}(A_{r}(\xi)).
    \end{equation}

    Now we show that $u(x_{0})\gtrsim r^{n-2s}G^{x_{0}}(A_{r}(\xi))$. 
    For a constant $C_{0}>0$ to be chosen sufficiently large later, we consider the function
    \begin{equation*}
        w(x)\coloneqq \frac{1}{C_{0}}r^{n-2s}G^{x}(A_{r}(\xi))-u(x).
    \end{equation*}
    We wish to prove that $w\le 0$ in $\Omega \setminus B_{cr/4}(A_{r}(\xi))$. Notice that $w$ is nonpositive in $\Omega^{c}$. In addition, combining \eqref{eq:bound-below-HM-corkscrew-comparability-proof} with \eqref{eq:basic-estimate-green-function-1} and choosing $C_{0}$ sufficiently large, we may enforce
    \begin{equation}\label{eq:bound-w-below-proof-comparison}
        w(x)\le -c_{1}\qquad \forall x\in B_{cr/2}(A_{r}(\xi))\setminus B_{cr/4}(A_{r}(\xi)). 
    \end{equation}
    Finally, since $u\ge 0$, integrating \eqref{eq:basic-estimate-green-function-1} we also get
    \begin{equation}\label{eq:integral-bound-w-above-proof-comparison}
        \int_{B_{cr/4}(A_{r}(\xi))}w\le \frac{\kappa_{n,s}}{C_{0}}r^{n-2s}\int_{B_{cr/4}(A_{r}(\xi))}\frac{1}{|x-A_{r}(\xi)|^{n-2s}}\, dx\lesssim \frac{r^{n}}{C_{0}}.
    \end{equation}
    Therefore, if $w$ had a positive supremum in $\overline{\Omega \setminus B_{cr/4}(A_{r}(\xi))}$, this would be achieved at some point $z\in \Omega \setminus B_{cr/2}(A_{r}(\xi))$, and we would have
    \begin{align*}
        0=(-\Delta)^{s}w(z)&=c_{n,s} \,{\rm P.V.} \int_{\R^{n}}\frac{w(z)-w(x)}{|x-z|^{n+2s}}\,dx\\
        &\ge c_{n,s} \int_{B_{cr/2}(A_{r}(\xi))\setminus B_{cr/4}(A_{r}(\xi))}\frac{w(z)-w(x)}{|z-x|^{n+2s}}\,dx +c_{n,s} \int_{B_{cr/4}(A_{r}(\xi))}\frac{w(z)-w(x)}{|z-x|^{n+2s}}\,dx \\
        &\gtrsim c_{n,s}\frac{1}{|z-A_{r}(\xi)|^{n+2s}}\left(c_{1}r^{n}-\frac{r^{n}}{C_{0}}\right),
    \end{align*}
    where we used the maximality of $z$, the fact that $w(z)> 0$, and equations \eqref{eq:bound-w-below-proof-comparison}, \eqref{eq:integral-bound-w-above-proof-comparison}. Choosing $C_{0}$ sufficiently large, the right-hand side can be made strictly positive, leading to the desired contradiction. 

    Next, we prove $u(x_{0})\lesssim r^{n-2s}G^{x_{0}}(A_{r}(\xi))$. Consider a smooth cutoff function $\varphi\in C^{\infty}_{c}(B_{4r/3}(\xi))$ such that $\varphi \in [0,1]$, $\varphi \equiv 1$ in $B_{r}(\xi)$, and $|\nabla^{2}\varphi|\lesssim r^{-2}$. Note that 
    \begin{equation*}
        |(-\Delta)^{s}\varphi(x)|\lesssim \frac{1}{r^{2s}}\mathbbm{1}_{B_{3r/2}(\xi)}(x)+c_{n,s}\frac{r^{n}}{r^{n+2s}+\left|x-\xi\right|^{n+2s}}\qquad \forall x\in \R^{n}.
    \end{equation*}
    Then, since $x_{0}\notin \supp \varphi$, we obtain
    \begin{align*}
        u(x_{0})&\le \int_{\Omega^{c}}P^{x_{0}}\varphi=-\int_{\R^{n}}(-\Delta)^{s}G^{x_{0}}\varphi=-\int_{\R^{n}}G^{x_{0}}(-\Delta)^{s} \varphi\\
        &\lesssim \frac{1}{r^{2s}}\int_{B_{3r/2}(\xi)}G^{x_{0}}+c_{n,s}r^{n}\int_{\R^{n}}\frac{G^{x_{0}}(x)}{r^{n+2s}+|x-\xi|^{n+2s}}\,dx.
    \end{align*}
    Using Theorem \ref{thm:holder-boundary-lip-domain} and Lemma \ref{lem:half-harnack-tails}, respectively, both terms in the right-hand side can be estimated by $r^{n-2s}G^{x_{0}}(A_{r}(\xi))$. This concludes the proof.
\end{proof}
The preceding comparison immediately implies the local doubling property of $s$-harmonic measure away from its pole:
\begin{lem}\label{lem:doubling}
    Let $\Omega \subset \R^{n}$ be a bounded Lipschitz domain, and let $x_{0}\in \Omega$. Then, there exists $r_{0}>0$ depending only on the Lipschitz character of $\Omega$ such that, for all $\xi\in \partial \Omega$ and $r\in (0,r_{0})$ with $|x_{0}-\xi|>4r$, the following holds:
    \begin{equation*}
        \omega^{x_{0}}(B_{2r}(\xi))\lesssim \omega^{x_{0}}(B_{r}(\xi)),
    \end{equation*}
    with constant depending only on $n,s$, and the Lipschitz character of $\Omega$.
\end{lem}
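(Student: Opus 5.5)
The plan is to apply Lemma \ref{lem:comparison-omega-G} at the two scales $r$ and $2r$, and then compare the Green function at the two corkscrew points $A_{r}(\xi)$ and $A_{2r}(\xi)$. Take $r_{0}$ to be half of the radius given by Lemma \ref{lem:comparison-omega-G}, shrunk further if needed so that $2r<\bar r/2$. Then for $r\in(0,r_{0})$ with $|x_{0}-\xi|>4r=2\cdot(2r)$, the lemma applies at scale $2r$ as well as at scale $r$. This gives
\begin{equation*}
    \omega^{x_{0}}(B_{2r}(\xi))\approx (2r)^{n-2s}G^{x_{0}}(A_{2r}(\xi)),\qquad \omega^{x_{0}}(B_{r}(\xi))\approx r^{n-2s}G^{x_{0}}(A_{r}(\xi)).
\end{equation*}
The claim therefore reduces to showing
\begin{equation*}
    G^{x_{0}}(A_{2r}(\xi))\lesssim G^{x_{0}}(A_{r}(\xi)).
\end{equation*}

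To prove this comparison, I would treat $G^{x_{0}}$ as a nonnegative function that is $s$-harmonic in $\Omega\cap B_{3r}(\xi)$ and vanishes in $B_{3r}(\xi)\setminus\Omega$. The pole stays outside this ball because $|x_{0}-\xi|>4r$. The two corkscrew points both lie in $\Omega\cap B_{2r}(\xi)$ and satisfy $\delta\ge c r$. Moreover, $A_{2r}(\xi)$ and $A_{r}(\xi)$ can be joined by a Harnack chain inside $\Omega\cap B_{5r/2}(\xi)$ made of a bounded number (depending only on $L$) of balls of radius comparable to $r$, each with its double still contained in $\Omega\setminus\{x_0\}$. Along such a chain one applies the interior Harnack inequality for nonnegative $s$-harmonic functions (nonnegative in all of $\R^{n}$, since $G^{x_{0}}\ge0$ everywhere). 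This gives $G^{x_{0}}(A_{2r}(\xi))\approx G^{x_{0}}(A_{r}(\xi))$ with constants depending only on $n,s,L$. The same chain argument was already used in the proof of Lemma \ref{lem:comparison-omega-G}.

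The only delicate point is making sure the chain avoids the pole $x_{0}$. This follows from $|x_{0}-\xi|>4r$, since every ball in the chain lies in $B_{3r}(\xi)$. The nonlocal Harnack inequality needs global nonnegativity, which $G^{x_0}\ge0$ provides. Combining the two displays then gives $\omega^{x_{0}}(B_{2r}(\xi))\lesssim 2^{n-2s}\,r^{n-2s}G^{x_{0}}(A_{r}(\xi))\lesssim\omega^{x_{0}}(B_{r}(\xi))$, which is the doubling estimate.
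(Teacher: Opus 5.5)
Your proposal is correct and is essentially the paper's own argument: apply Lemma~\ref{lem:comparison-omega-G} at scales $r$ and $2r$ (admissible since $|x_0-\xi|>4r$), then compare $G^{x_0}(A_{2r}(\xi))$ with $G^{x_0}(A_r(\xi))$ via the Harnack inequality. One minor caveat: for an arbitrary admissible choice of corkscrew points and large $L$, a steep ridge can separate $A_r(\xi)$ from $A_{2r}(\xi)$ inside $\Omega\cap B_{5r/2}(\xi)$, so the chain you describe need not exist there. This is harmless for two reasons: the statement does not involve corkscrew points, so you may take $A_t(\xi)=\xi+te_n$ in the local chart and chain along the vertical segment, where $\delta\ge t/\sqrt{1+L^2}$; alternatively, since $G^{x_0}\ge 0$ on all of $\R^n$, the nonlocal Harnack inequality compares the balls $B_{cr/2}(A_r(\xi))$ and $B_{cr/2}(A_{2r}(\xi))$ directly, with no chain needed.
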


\begin{proof}
    Let $r_{0}>0$ be the constant given by Lemma \ref{lem:comparison-omega-G}. Then, applying \eqref{eq:comparison-omega-G} and the Harnack inequality, we find
    \begin{equation*}
        \omega^{x_{0}}(B_{2r}(\xi))\approx (2r)^{n-2s} G^{x_{0}}(A_{2r}(\xi))\approx r^{n-2s}G^{x_{0}}(A_{r}(\xi))\approx \omega^{x_{0}}(B_{r}(\xi)),
    \end{equation*}
    as desired.
\end{proof}
\begin{rmk}
    In the previous lemma, doubling is proved only for sufficiently small boundary balls far from the pole. However, if we allow the constant to also depend on $\delta(x_{0})$, the doubling condition holds for boundary balls of any radius. This is because $\omega^{x_{0}}$ is a probability measure, and one can use \eqref{eq:comparison-omega-G} and the Harnack inequality to give a uniform lower bound (depending only on $n,s$, the Lipschitz character, and $\delta(x_{0})$) on the harmonic measure of boundary balls with radius comparable to $\min\{r_{0}, \delta(x_{0})\}$. 
\end{rmk}
The next estimate records how the harmonic measure changes as the pole varies. Equation \eqref{eq:change-of-pole-omega} below is often called a ``change of pole formula''.
\begin{lem}\label{lem:change-of-pole}
    Let $\Omega \subset \R^{n}$ be a bounded Lipschitz domain, and let $x_{0}\in \Omega$. Then, there are constants $r_{0}>0$ and $C_{0}>1$ depending only on the Lipschitz character of $\Omega$ such that, for all $\xi,\xi'\in \partial \Omega$ and $0<C_{0}r'<r<r_{0}$ with $|x_{0}-\xi|>2r$, and $|\xi'-\xi|<r-r'$, the following holds:
    \begin{equation}\label{eq:change-of-pole-omega}
        \omega^{A_{r}(\xi)}(B_{r'}(\xi'))\approx \frac{\omega^{x_{0}}(B_{r'}(\xi'))}{\omega^{x_{0}}(B_{r}(\xi))},
    \end{equation}
    where the comparability constants depend only on $n,s$, and the Lipschitz character of $\Omega$.
\end{lem}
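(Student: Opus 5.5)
By Lemma \ref{lem:comparison-omega-G}, the statement reduces to a comparison of Green functions at corkscrew points. Applied with pole $A_r(\xi)$ and ball $B_{r'}(\xi')$ (allowed since $|A_r(\xi)-\xi'|\ge cr\gtrsim C_0 r'>2r'$ once $C_0$ is large), it gives
\begin{equation*}
\omega^{A_r(\xi)}(B_{r'}(\xi'))\approx r'^{\,n-2s}G^{A_r(\xi)}(A_{r'}(\xi')).
\end{equation*}
Applied with pole $x_0$, it gives
\begin{equation*}
\omega^{x_0}(B_{r'}(\xi'))\approx r'^{\,n-2s}G^{x_0}(A_{r'}(\xi')),\qquad \omega^{x_0}(B_r(\xi))\approx r^{n-2s}G^{x_0}(A_r(\xi)).
\end{equation*}
Hence it suffices to prove
\begin{equation*}
G^{A_r(\xi)}(A_{r'}(\xi'))\approx \frac{G^{x_0}(A_{r'}(\xi'))}{r^{n-2s}G^{x_0}(A_r(\xi))}.
\end{equation*}
By the symmetry of the Green function, we may regard both sides as functions of the variable $y$ evaluated at $y=A_{r'}(\xi')$.

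We set $u_1(y)=G^{A_r(\xi)}(y)$ and $u_2(y)=G^{x_0}(y)$. Both are nonnegative, vanish in $\Omega^c$, and are $s$-harmonic in $\Omega\cap B_{\rho}(\xi)$ with $\rho=c r/2$, say; this uses $\delta(A_r(\xi))\ge cr$ and $|x_0-\xi|>2r$. In local Lipschitz coordinates at $\xi$, we apply the boundary Harnack principle (Theorem \ref{thm:BHP}) on $B_\rho(\xi)$, after shrinking $\rho$ by a fixed factor so that $B_\rho(\xi)$ lies in the chart. It gives $u_1/u_2\approx u_1(P)/u_2(P)$ throughout $B_{\rho/2}(\xi)\cap\Omega$, where $P$ is the reference corkscrew point at scale $\rho$. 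Since $|\xi'-\xi|<r-r'$, the point $A_{r'}(\xi')$ need not lie in $B_{\rho/2}(\xi)$. I would therefore use the BHP centered at $\xi'$ instead, on a ball $B_{\rho}(\xi')$ with $\rho\approx r$ small enough that $A_r(\xi)\notin B_{2\rho}(\xi')$. This requires $|A_r(\xi)-\xi'|\gtrsim r$, and here care is needed, since $A_r(\xi)$ could be close to $\xi'$. I would first try to fix this by replacing $A_r(\xi)$ with a Harnack-equivalent corkscrew point chosen far from $\xi'$, which changes $u_1$ only by comparable factors, or by shrinking the admissible range via $C_0$. If the point is close, the BHP is instead applied near $\xi'$ at a smaller scale comparable to $\dist(\xi',A_r(\xi))$, chained with Harnack. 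Taking $C_0r'\le\rho/4$ puts $A_{r'}(\xi')$ in the good region.

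Next we evaluate the ratio at $P=A_\rho(\xi')$. For $u_1$: $|P-A_r(\xi)|\approx r$, and both points lie at distance $\gtrsim r$ from $\partial\Omega$. We take a Harnack chain of length bounded by the Lipschitz character from $P$ to a point $z$ with $|z-A_r(\xi)|=\delta(A_r(\xi))/2$. Combining it with \eqref{eq:basic-estimate-green-function-1} and \eqref{eq:basic-estimate-green-function-2} gives $u_1(P)\approx r^{-(n-2s)}$. For $u_2$, Harnack chains give $u_2(P)\approx G^{x_0}(A_r(\xi))$; this uses that $x_0$ lies at distance $\gtrsim r$ from the chain, guaranteed by $|x_0-\xi|>2r$ after choosing corkscrew constants appropriately. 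Then the BHP yields
\begin{equation*}
\frac{u_1(A_{r'}(\xi'))}{u_2(A_{r'}(\xi'))}\approx \frac{r^{-(n-2s)}}{G^{x_0}(A_r(\xi))},
\end{equation*}
which is exactly the reduced claim.

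The main obstacle is the geometric bookkeeping: ensuring that the Harnack chains avoid both poles, and that the BHP ball around $\xi'$ excludes the pole $A_r(\xi)$ while containing $A_{r'}(\xi')$. I expect this to be resolved by choosing $r_0$ small relative to $\bar r$ and $C_0$ large, depending only on $L$.
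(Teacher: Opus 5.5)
Your proposal is correct and follows essentially the paper's route. Three applications of Lemma~\ref{lem:comparison-omega-G} reduce the claim to a ratio of Green functions, and the boundary Harnack principle for $G^{A_r(\xi)}$ and $G^{x_0}$ moves that ratio to a reference point at scale $\approx r$; there \eqref{eq:basic-estimate-green-function-1}, \eqref{eq:basic-estimate-green-function-2} and Harnack chains give $r^{2s-n}$ and $G^{x_0}(A_r(\xi))$. The paper uses the reference point $A_{r/2}(\xi)$, while you center the boundary Harnack principle at $\xi'$ and use $A_\rho(\xi')$, which is slightly more careful. Your worry that $A_r(\xi)$ might lie close to $\xi'$ is moot: as you noted yourself, $|A_r(\xi)-\xi'|\ge\delta(A_r(\xi))\ge cr$, so the fallback constructions are unnecessary and a fixed multiple of $c^{-1}$ suffices for $C_0$, as in the paper.
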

\begin{proof}
    Let $r_{0}$ be given by Lemma \ref{lem:comparison-omega-G}, and let $c\in (0,1)$ be the constant appearing in the corkscrew condition \eqref{eq:corkscrew-condition}. We set $C_{0}=4c^{-1}$ and notice that $|A_{r}(\xi)-\xi'|\ge cr=4C_{0}^{-1}r>4r'$. Therefore, we may apply repeatedly Lemma \ref{lem:comparison-omega-G} to deduce the following comparability estimates:
    \begin{equation*}
    \begin{gathered}
        \omega^{A_{r}(\xi)}(B_{r'}(\xi'))\approx (r')^{n-2s}G^{A_{r}(\xi)}(A_{r'}(\xi')),\qquad \omega^{x_{0}}(B_{r'}(\xi'))\approx (r')^{n-2s}G^{x_{0}}(A_{r'}(\xi')),\\
        \omega^{x_{0}}(B_{r}(\xi))\approx {r}^{n-2s}G^{x_{0}}(A_{r}(\xi)).
    \end{gathered}
    \end{equation*}
    Then, applying the boundary Harnack principle (Theorem \ref{thm:BHP}) to the functions $G^{A_{r}(\xi)}$ and $G^{x_{0}}$ away from their poles, we deduce that 
    \begin{equation*}
        \frac{\omega^{A_{r}(\xi)}(B_{r'}(\xi'))}{\omega^{x_{0}}(B_{r'}(\xi'))}\approx \frac{G^{A_{r}(\xi)}(A_{r'}(\xi'))}{G^{x_{0}}(A_{r'}(\xi'))}\approx \frac{G^{A_{r}(\xi)}(A_{r/2}(\xi))}{G^{x_{0}}(A_{r/2}(\xi))}\approx \frac{r^{2s-n}}{G^{x_{0}}(A_{r}(\xi))}\approx \frac{1}{\omega^{x_{0}}(B_{r}(\xi))},
    \end{equation*}
    as desired, where in the penultimate step we used \eqref{eq:basic-estimate-green-function-2} and the Harnack inequality.     
\end{proof}

\begin{rmk}\label{rmk:change-of-pole-green-functions}
    In the framework of Lemma \ref{lem:change-of-pole}, we deduce the following change of pole formula for the Green function:
    \begin{equation}\label{eq:change-of-pole-green-function}
        \frac{G^{x_{0}}(A_{r'}(\xi'))}{G^{A_{r}(\xi)}(A_{r'}(\xi'))}\approx r^{n-2s}G^{x_{0}}(A_{r}(\xi)).
    \end{equation}
    By the boundary Harnack principle, we may send $r'$ to zero and conclude that the H\"{o}lder continuous function $G^{x_{0}}/G^{A_{r}(\xi)}$ is comparable to the same quantity at the boundary: 
    \begin{equation}\label{eq:change-of-pole-green-function-boundary}
        \lim_{\Omega\ni z\to \xi'}\frac{G^{x_{0}}(z)}{G^{A_{r}(\xi)}(z)}\approx r^{n-2s}G^{x_{0}}(A_{r}(\xi))\qquad \forall \xi'\in \partial \Omega\cap B_r(\xi).
    \end{equation}
\end{rmk}

We will also need to compare the value of the Green function at an interior corkscrew point, with the value of the Poisson kernel at the corresponding exterior one. 
\begin{lem}\label{lem:comparison-P-G}
     Let $\Omega \subset \R^{n}$ be a bounded Lipschitz domain, and let $x_{0}\in \Omega$. Then, there exists $r_{0}>0$ depending only on the Lipschitz character of $\Omega$ such that, for all $\xi\in \partial \Omega$ and $r\in (0,r_{0})$ with $|x_{0}-\xi|>2r$, the following holds:
    \begin{equation}\label{eq:comparison-P-G}
        P^{x_{0}}(A_{r}'(\xi))\approx (1-s)\frac{G^{x_{0}}(A_{r}(\xi))}{r^{2s}},
    \end{equation}
    where the comparability constants depend only on $n,s$, and the Lipschitz character of $\Omega$, and are uniform as $s\to 1^-$. \end{lem}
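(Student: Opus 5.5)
The starting point is the integral formula \eqref{eq:poisson-kernel-integral-formula}:
\begin{equation*}
P^{x_0}(y)=c_{n,s}\int_\Omega \frac{G^{x_0}(z)}{|z-y|^{n+2s}}\,dz,\qquad y=A'_r(\xi).
\end{equation*}
We estimate this integral from below and from above by $r^{-2s}G^{x_0}(A_r(\xi))$. The factor $(1-s)$ then comes from the normalizing constant: $c_{n,s}\approx s(1-s)$ uniformly as $s\to 1^-$, since $1/\Gamma(1-s)\approx 1-s$. Throughout, $r_0$ is taken small relative to the localization radius, so that all relevant balls lie inside a single Lipschitz graph chart and stay away from the pole (recall $|x_0-\xi|>2r$).

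\textbf{Lower bound.} Restrict the integral to the ball $B_{cr/2}(A_r(\xi))$, where $c$ is the corkscrew constant. This ball lies in $\Omega$ and is at distance $\lesssim r$ from $y$, so $|z-y|\lesssim r$ there. On this ball, the Harnack inequality gives $G^{x_0}(z)\approx G^{x_0}(A_r(\xi))$; this is legitimate because $G^{x_0}$ is $s$-harmonic and nonnegative away from $x_0$, and the ball is well separated from $x_0$ once $r_0$ is small. Hence
\begin{equation*}
P^{x_0}(y)\gtrsim c_{n,s}\, r^n\, r^{-n-2s}\, G^{x_0}(A_r(\xi)).
\end{equation*}

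\textbf{Upper bound.} Split $\Omega$ into $\Omega\cap B_{2r}(\xi)$ and $\Omega\setminus B_{2r}(\xi)$.

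On $\Omega\cap B_{2r}(\xi)$ we have $|z-y|\ge\delta(y)\ge cr$, because $y$ is an exterior corkscrew point. Moreover, the boundary Hölder bound (Theorem \ref{thm:holder-boundary-lip-domain}) applied at scale $\approx 4r$, combined with Harnack chains, gives $G^{x_0}\lesssim G^{x_0}(A_r(\xi))$ on $B_{2r}(\xi)$. This piece therefore contributes $\lesssim c_{n,s}r^{-2s}G^{x_0}(A_r(\xi))$.

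On the far region, $|z-y|\approx|z-\xi|$, and we need a tail bound of the form
\begin{equation*}
\int_{\Omega\setminus B_{2r}(\xi)}\frac{G^{x_0}(z)}{|z-\xi|^{n+2s}}\,dz\lesssim r^{-2s}G^{x_0}(A_r(\xi)),
\end{equation*}
with constant uniform as $s\to1$. This is exactly the half-Harnack tail estimate (Lemma \ref{lem:half-harnack-tails}) already invoked in the proof of Lemma \ref{lem:comparison-omega-G}. If it is only available with the smoothed weight $r^n/(r^{n+2s}+|x-\xi|^{n+2s})$, the two weights are comparable on the region $|z-\xi|\ge 2r$, so it applies directly.

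\textbf{Main obstacle.} The delicate part is the uniformity in $s\to1^-$ of the upper bound. The near region is harmless: the fixed gap $|z-y|\ge cr$ means no singular integral of the kernel appears, only a bounded volume factor. The tail, however, must not produce a $1/(1-s)$ loss. One could attempt a dyadic decomposition over annuli $B_{2^{k+1}r}\setminus B_{2^kr}$, with the Hölder growth $G^{x_0}\lesssim 2^{k(2s-\alpha)}\cdots$ controlling each annulus. Summing such a series, though, risks constants degenerating as $s\to1$. I would therefore rely on Lemma \ref{lem:half-harnack-tails}, assuming it is stated with constants uniform as $s\to1^-$. If it is not, the fallback is to bound the tail instead through $(-\Delta)^s$ of a cutoff, as in the proof of Lemma \ref{lem:comparison-omega-G}, where the extra $c_{n,s}$ factor supplies the needed $(1-s)$.
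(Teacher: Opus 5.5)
Your lower bound matches the paper's, but the upper bound fails exactly at the step you flagged. Lemma~\ref{lem:half-harnack-tails} has the factor $(1-s)$ on its left-hand side. Applying it (rescaled) to $G^{x_0}$ on $B_{cr}(A_r(\xi))$ therefore only gives
\[
\int_{\R^n}\frac{G^{x_0}(z)}{r^{n+2s}+|z-\xi|^{n+2s}}\,dz\lesssim\frac{1}{1-s}\,\frac{G^{x_0}(A_r(\xi))}{r^{2s}}.
\]
Multiplying by $c_{n,s}\approx 1-s$ you obtain $P^{x_0}(A'_r(\xi))\lesssim r^{-2s}G^{x_0}(A_r(\xi))$, so the factor $1-s$ asserted in the lemma is lost.

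The issue is not whether $C$ is uniform (it is). The $(1-s)$ in the half-Harnack inequality is sharp for general nonnegative $s$-harmonic functions: the extension of $\mathbbm{1}_{B_3\setminus B_2}$ from $B_1$ has size $\approx 1-s$ in $B_{1/2}$, while its $L^1(\R^n,w_s)$-norm is $\approx 1$. So no tail bound that ignores the structure of $G^{x_0}$ can give what you need.

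In Lemma~\ref{lem:comparison-omega-G} the loss is harmless only because the target contains no factor $1-s$; there the $c_{n,s}$ in the tail of $(-\Delta)^s\varphi$ is spent absorbing the $1/(1-s)$. For the same reason your fallback gives nothing new. A cutoff around $B_r(\xi)$ leads back to the same half-Harnack tail with no spare $1-s$, and it controls $\omega^{x_0}(B_r(\xi))$ rather than $P^{x_0}(A'_r(\xi))$. A cutoff near $A'_r(\xi)$ just reproduces \eqref{eq:poisson-kernel-integral-formula}. So your argument yields the comparability at best with constants that degenerate as $s\to1^-$.

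What is missing is a tail estimate that uses the specific behaviour of $G^{x_0}$, which is what the paper supplies. It sets $r_1=\min\{r_0,|x_0-\xi|/2\}>r$.

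\begin{itemize}
\item On $B_{r_1}(\xi)$, Theorem~\ref{thm:holder-boundary-lip-domain} gives $G^{x_0}\lesssim (t/r)^{2s-\alpha}G^{x_0}(A_r(\xi))$ on $B_t(\xi)$. Integrating in polar coordinates gives $r^{\alpha-2s}\int_r^{r_1}t^{-\alpha-1}\,dt\lesssim r^{-2s}$. This is precisely the dyadic sum you discarded, and it is harmless: it is geometric with ratio $2^{-\alpha}$, so it does not degenerate as long as $\alpha$ stays bounded below.
\item On $\Omega\setminus B_{r_1}(\xi)$, the paper first reduces to scale $r_1$ via $G^{x_0}(A_r(\xi))r^{-2s}\gtrsim G^{x_0}(A_{r_1}(\xi))r_1^{-2s}$. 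It then bounds $G^{x_0}$ explicitly: by $\kappa_{n,s}|z-x_0|^{2s-n}$ near the pole, through the change-of-pole formula \eqref{eq:change-of-pole-green-function} at intermediate distances from $x_0$, and by $G^{x_0}(A_{r_1}(\xi))(|z-x_0|/r_1)^{2s-n}$ far away. Every piece is an explicit integral with no $1/(1-s)$.
\end{itemize}

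The choice of $r_1$ also repairs a smaller flaw in your near-region step. Only $|x_0-\xi|>2r$ is assumed, so $x_0$ may lie in $B_{4r}(\xi)$ arbitrarily close to $B_{2r}(\xi)$. Then the boundary H\"older estimate at scale $4r$ is not available, and $\sup_{B_{2r}(\xi)}G^{x_0}/G^{x_0}(A_r(\xi))$ is not uniformly bounded. The region near the pole has to be handled through its integrable singularity, not by a sup bound.
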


\begin{proof}
    Let $r_{0}$ be sufficiently small with respect to the localization radius of the  domain. We start from formula \eqref{eq:poisson-kernel-integral-formula} computed at the point $A_{r}'(\xi)$:
    \begin{equation*}
        P^{x_{0}}(A_{r}'(\xi))= c_{n,s}\int_{\Omega}\frac{G^{x_{0}}(z)}{|z-A_{r}'(\xi)|^{n+2s}}\,dz.
    \end{equation*}
    To prove the first inequality, note that for all $z\in B_{cr/2}(A_{r}(\xi))$ we have $|z-A_{r}'(\xi)|\approx r$, thus by Harnack, $G^{x_{0}}(z)\approx G^{x_{0}}(A_{r}(\xi))$. Therefore,
    \begin{equation*}
        P^{x_{0}}(A_{r}'(\xi))\ge c_{n,s}\int_{B_{cr/2}(A_{r}(\xi))}\frac{G^{x_{0}}(z)}{|z-A_{r}'(\xi)|^{n+2s}}\gtrsim (1-s)\frac{G^{x_{0}}(A_{r}(\xi))}{r^{2s}}.
    \end{equation*}
    
   Let us now prove the opposite inequality. We fix $r_{1}\coloneqq \min\left\{r_{0},|x_{0}-\xi|/2\right\}>r$. Then, by Theorem \ref{thm:holder-boundary-lip-domain}, we have 
    \begin{equation*}
        G^{x_{0}}(z)\lesssim G^{x_{0}}(A_{t}(\xi))\lesssim G^{x_{0}}(A_{r}(\xi))\left(\frac{t}{r}\right)^{2s-\alpha}\qquad \forall z\in B_{t}(\xi),\quad \forall t\in [r,r_{1}].
    \end{equation*}
    As a consequence,
    \begin{align*}
        \int_{B_{r_{1}}(\xi)}\frac{G^{x_{0}}(z)}{|z-A_{r}'(\xi)|^{n+2s}}\,dz &\le \int_{B_{r}(\xi)}\frac{G^{x_{0}}(z)}{|z-A_{r}'(\xi)|^{n+2s}}\,dz+\int_{r}^{r_{1}}\int_{\partial B_{t}(\xi)}\frac{G^{x_{0}}(z)}{|z-A_{r}'(\xi)|^{n+2s}}\,d\mathcal{H}^{n-1}(z)\,dt\\
        &\lesssim G^{x_{0}}(A_{r}(\xi))\left(|B_{r}(\xi)| r^{-n-2s}+ r^{\alpha-2s}\int_{r}^{r_{1}}t^{-\alpha-1}\,dt\right)\lesssim G^{x_{0}}(A_{r}(\xi)) r^{-2s}.
    \end{align*}
    To conclude, we need to show that the integral over the set $\Omega\setminus B_{r_{1}}(\xi)$ enjoys a similar bound. First of all, we observe that due to Theorem \ref{thm:holder-boundary-lip-domain}, it holds
    \begin{equation*}
        \frac{G^{x_{0}}(A_{r}(\xi))}{r^{2s}}\gtrsim \frac{G^{x_{0}}(A_{r_{1}}(\xi))}{r_{1}^{2s}}. 
    \end{equation*}
    Therefore, it suffices to prove that 
    \begin{equation*}
        \int_{\Omega \setminus B_{r_{1}}(\xi)}\frac{G^{x_{0}}(z)}{|z-A_{r}'(\xi)|^{n+2s}}\,dz\lesssim \frac{G^{x_{0}}(A_{r_{1}}(\xi))}{r_{1}^{2s}}.
    \end{equation*}
    We then distinguish two cases. If $\delta(x_{0})\approx r_{1}$, then by Harnack and \eqref{eq:basic-estimate-green-function-2}, we have $G^{x_{0}}(A_{r_{1}}(\xi))\gtrsim r_{1}^{2s-n}$. As a consequence, using \eqref{eq:basic-estimate-green-function-1} we derive
    \begin{align*}
        \int_{\Omega \setminus B_{r_{1}}(\xi)}\frac{G^{x_{0}}(z)}{|z-A_{r}'(\xi)|^{n+2s}}\,dz&\le \int_{\left(\Omega \setminus B_{r_{1}}(\xi)\right)\cap B_{r_{1}}(x_{0})}\frac{G^{x_{0}}(z)}{|z-A_{r}'(\xi)|^{n+2s}}\,dz\\
        &\quad\,+\int_{\left(\Omega \setminus B_{r_{1}}(\xi)\right)\setminus B_{r_{1}}(x_{0})}\frac{G^{x_{0}}(z)}{|z-A_{r}'(\xi)|^{n+2s}}\,dz\\
        &\lesssim \frac{1}{r_{1}^{n+2s}}\int_{0}^{r_{1}}\rho^{2s-1}\,d\rho+ \int_{r_{1}}^{\infty}\rho^{-n-1}\,d\rho\lesssim \frac{1}{r_{1}^{n}}\lesssim \frac{G^{x_{0}}(A_{r_{1}}(\xi))}{r_{1}^{2s}},
    \end{align*}
    as desired. Let us now assume instead that $\delta(x_{0})\ll r_{1}$. For every $t\in [C\delta(x_{0}),r_{1}]$ we denote by $y_{t}\in \Omega$ a point such that $\delta(y_{t})\approx |y_{t}-x_{0}|= t$. It is convenient to split the integral over the three domains
    \begin{equation*}
        \left(\Omega \setminus B_{r_{1}}(\xi)\right)\cap B_{C\delta(x_{0})}(x_{0}),\quad \left(\Omega \setminus B_{r_{1}}(\xi)\right)\cap \left(B_{r_{1}}(x_{0})\setminus B_{C\delta(x_{0})}(x_{0})\right),\quad \left(\Omega \setminus B_{r_{1}}(\xi)\right)\setminus B_{r_{1}}(x_{0}).
    \end{equation*}
    For the first, by \eqref{eq:basic-estimate-green-function-1} we obtain
    \begin{align*}
        \int_{\left(\Omega \setminus B_{r_{1}}(\xi)\right)\cap B_{C\delta(x_{0})}(x_{0})}\frac{G^{x_{0}}(z)}{|z-A_{r}'(\xi)|^{n+2s}}\,dz\lesssim \frac{1}{r_{1}^{n+2s}}\int_{0}^{C\delta(x_{0})}\rho^{2s-1}\,d\rho\lesssim \frac{\delta(x_{0})^{2s}}{r_{1}^{n+2s}}\lesssim \frac{G^{x_{0}}(A_{r_{1}}(\xi))}{r_{1}^{2s}},
    \end{align*}
    where in the last step we used Theorem \ref{thm:holder-boundary-lip-domain} and \eqref{eq:basic-estimate-green-function-2} to get $G^{x_{0}}(A_{r_{1}}(\xi))/\delta(x_{0})^{2s}\gtrsim G^{y_{r_{1}}}(A_{r_{1}}(\xi))/r_{1}^{2s}\approx r_{1}^{-n}$.
    For the second, notice that an application of the change of pole formula \eqref{eq:change-of-pole-green-function} along with Theorem \ref{thm:holder-boundary-lip-domain} and \eqref{eq:basic-estimate-green-function-2} gives, for all $t\in [C\delta(x_{0}), r_{1}]$ and all $z\in \Omega\cap \partial B_{t}(x_{0})$,
    \begin{equation*}
        G^{x_{0}}(z)\lesssim G^{x_{0}}(y_{t})\approx \frac{G^{x_{0}}(A_{r_{1}}(\xi))}{G^{y_{t}}(A_{r_{1}}(\xi))t^{n-2s}}\lesssim \frac{G^{x_{0}}(A_{r_{1}}(\xi))r_{1}^{2s-\alpha}}{G^{y_{r_{1}}}(A_{r_{1}}(\xi))t^{n-\alpha}}\lesssim \frac{G^{x_{0}}(A_{r_{1}}(\xi))r_{1}^{n-\alpha}}{t^{n-\alpha}}.
    \end{equation*}
    Therefore, 
    \begin{equation*}
        \int_{\left(\Omega \setminus B_{r_{1}}(\xi)\right)\cap \left(B_{r_{1}}(x_{0})\setminus B_{C\delta(x_{0})}(x_{0})\right)}\frac{G^{x_{0}}(z)}{|z-A_{r}'(\xi)|^{n+2s}}\,dz\lesssim \frac{G^{x_{0}}(A_{r_{1}}(\xi))}{r_{1}^{2s+\alpha}}\int_{C\delta(x_{0})}^{r_{1}}t^{\alpha-1}\,dt\lesssim \frac{G^{x_{0}}(A_{r_{1}}(\xi))}{r_{1}^{2s}}.
    \end{equation*}
    Finally, for the integral over the third domain, we notice that for all $z\in \left(\Omega \setminus B_{r_{1}}(\xi)\right)\setminus B_{r_{1}}(x_{0})$, by Theorem \ref{thm:holder-boundary-lip-domain} applied to the functions $G^{\cdot}(z)$, $G^{\cdot}(A_{r_{1}}(\xi))$, and the basic estimates \eqref{eq:basic-estimate-green-function-1}, \eqref{eq:basic-estimate-green-function-2} we have
    \begin{equation*}
        G^{x_{0}}(z)\approx G^{x_{0}}(A_{r_{1}}(\xi))\frac{G^{y_{r_{1}/2}}(z)}{G^{y_{r_{1}/2}}(A_{r_{1}}(\xi))}\lesssim G^{x_{0}}(A_{r_{1}}(\xi)) \frac{|z-x_{0}|^{2s-n}}{r_{1}^{2s-n}}.
    \end{equation*}
    As a consequence, 
    \begin{equation*}
        \int_{\left(\Omega \setminus B_{r_{1}}(\xi)\right)\setminus B_{r_{1}}(x_{0})}\frac{G^{x_{0}}(z)}{|z-A_{r}'(\xi)|^{n+2s}}\,dz\lesssim \frac{G^{x_{0}}(A_{r_{1}}(\xi))}{r_{1}^{2s-n}}\int_{r_{1}}^{\infty}\rho^{-n-1}\,d\rho \lesssim \frac{G^{x_{0}}(A_{r_{1}}(\xi))}{r_{1}^{2s}}.
    \end{equation*}
    This concludes the proof. 
\end{proof}

\begin{rmk}
    The basic estimates contained in this section will be used extensively in the rest of the paper. Sometimes, in the sequel, we will invoke some of these results in combination with the Harnack inequality even when the geometric setup (positioning of the pole, relation between radii, etc.) does not match the assumptions in the corresponding statements exactly. This, however, will be done only when dealing with nonnegative $s$-harmonic functions, provided one can reduce to the precise required framework after a Harnack-chain argument.
\end{rmk}

\section{Dahlberg's Theorem for the fractional Laplacian}\label{sec:Dahlberg}
In this section, we establish Theorem \ref{thm:Dahlberg-intro} in the more general form stated in Theorem \ref{thm:Dahlberg-general}. We follow the overall strategy outlined in Section \ref{subsec-ideas-proof}, supplemented by the estimates from Section \ref{subsec:harmonic-measure}.  

\subsection[L2 estimates for fractional normal derivatives]{\texorpdfstring{$L^{2}$-estimates}{L2 estimates} for fractional normal derivatives}
A key tool for the derivation of $L^{2}$-estimates for fractional normal derivatives is the Pohozaev-type identity contained in the following  lemma. Equation \eqref{eq:Pohozaev} is in the spirit of the fractional Pohozaev identities of \cite{rosoton2014pohozaev}, but it is adapted here to the Green function setting (see also \cite{dieb2025note}). Recall the definition of fractional normal derivatives from \eqref{eq:def-fractional-normal-der}, and observe that if $\Omega$ is smooth, then $\partial_{\nu}^{s} G^{x_{0}}$ is well-defined and continuous on $\partial \Omega$; see \cite{rosoton2014dirichlet}. 

\begin{lem}\label{lem:Pohozaev}
    Let $\Omega\subset \R^{n}$ be a smooth bounded domain, and let $\nu$ denote the outer unit normal to $\partial\Omega$. Then, for every $x_{0}\in \Omega$, the following formula holds:
    \begin{equation}\label{eq:Pohozaev}
        \int_{\partial \Omega}(\partial_{\nu}^{s}G^{x_{0}}(\xi))^{2}(\xi-x_{0})\cdot \nu \,d\mathcal{H}^{n-1}(\xi)=\frac{\kappa_{n,s}(n-2s)}{\Gamma(1+s)^{2}}\int_{\Omega^{c}}\frac{P^{x_{0}}(y)}{|y-x_{0}|^{n-2s}}\,dy,
    \end{equation}
    where $\kappa_{n,s}>0$ is the constant that appears in \eqref{eq:def-fond-sol-frac-lap}.
\end{lem}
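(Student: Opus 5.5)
We plan to derive \eqref{eq:Pohozaev} from the bilinear fractional Pohozaev identity of Ros-Oton--Serra \cite{rosoton2014pohozaev}. That identity states that for $u,v$ vanishing in $\Omega^c$ with $u/\delta^s, v/\delta^s$ continuous up to $\partial\Omega$,
\begin{equation*}
\int_\Omega (x\cdot\nabla u)(-\Delta)^s v+\int_\Omega (x\cdot\nabla v)(-\Delta)^s u = \frac{2s-n}{2}\int_\Omega\big(u(-\Delta)^s v+v(-\Delta)^s u\big)-\Gamma(1+s)^2\int_{\partial\Omega}\frac{u}{\delta^s}\frac{v}{\delta^s}(x\cdot\nu).
\end{equation*}
We will apply it with $u=v=G^{x_0}$ and origin translated to $x_0$. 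Since $G^{x_0}$ has a singularity at the pole and its fractional Laplacian is a Dirac mass, we cannot plug it in directly. We therefore regularize. Split $G^{x_0}=\Phi_{n,s}(\cdot-x_0)-R^{x_0}$ inside $\Omega$. Alternatively, use the mollified Green function $G_\eps:=\int G^{z}\rho_\eps(z-x_0)\,dz$, which solves $(-\Delta)^sG_\eps=\rho_\eps$ in $\Omega$ and vanishes outside. Since $\rho_\eps$ is smooth and compactly supported in $\Omega$ for small $\eps$, $G_\eps$ fits the hypotheses of \cite{rosoton2014pohozaev}, and $\partial^s_\nu G_\eps\to\partial^s_\nu G^{x_0}$ uniformly on $\partial\Omega$ by boundary regularity from \cite{rosoton2014dirichlet}.

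With $u=G_\eps$ and $f=\rho_\eps$, the identity reads
\begin{equation*}
\Gamma(1+s)^2\int_{\partial\Omega}(\partial_\nu^sG_\eps)^2(\xi-x_0)\cdot\nu=-2\int_\Omega((x-x_0)\cdot\nabla G_\eps)\rho_\eps-(n-2s)\int_\Omega G_\eps\rho_\eps .
\end{equation*}
The right-hand side is where we identify the Poisson kernel term.

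Write $G_\eps=\Phi_\eps-R_\eps$ with $\Phi_\eps=\Phi_{n,s}*\rho_\eps$ (centred at $x_0$) and $R_\eps$ the $s$-harmonic function in $\Omega$ with exterior datum $\Phi_\eps$.

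\textbf{Whole-space part.} The pure $\Phi_\eps$ contribution is exactly the Pohozaev identity in $\R^n$ with no boundary term. Since $\Phi_\eps$ is a Riesz potential, the scaling identity $\int (x\cdot\nabla\Phi_\eps)\rho_\eps=-\frac{n-2s}{2}\int\Phi_\eps\rho_\eps$ holds, equivalently by homogeneity of degree $2s-n$ of the kernel. Hence the $\Phi_\eps$ terms cancel exactly, although each diverges as $\eps\to0$.

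\textbf{Regular part.} What remains is $2\int((x-x_0)\cdot\nabla R_\eps)\rho_\eps+(n-2s)\int R_\eps\rho_\eps$. Since $R_\eps\to R^{x_0}$ smoothly near $x_0$, this converges to
\begin{equation*}
2(x-x_0)\cdot\nabla R^{x_0}\big|_{x=x_0}+(n-2s)R^{x_0}(x_0).
\end{equation*}
The first term vanishes at $x=x_0$, so the limit is $(n-2s)R^{x_0}(x_0)$. Finally,
\begin{equation*}
R^{x_0}(x_0)=\int_{\Omega^c}P^{x_0}(y)\Phi_{n,s}(y-x_0)\,dy=\kappa_{n,s}\int_{\Omega^c}\frac{P^{x_0}(y)}{|y-x_0|^{n-2s}}\,dy .
\end{equation*}
Dividing by $\Gamma(1+s)^2$ gives \eqref{eq:Pohozaev}. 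Along the way we will check sign conventions: $\nu$ is the outer normal, and the overall sign comes out positive, consistent with star-shaped domains.

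\textbf{Main obstacle.} The delicate step is the cancellation of the singular terms. In the mollified computation, the cross terms between $\Phi_\eps$ and $R_\eps$ must be tracked carefully, since the quadratic form in $G_\eps$ is not simply additive in the decomposition.

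The cleanest route is to keep the identity in the mixed form: $\int (x\cdot\nabla G_\eps)\rho_\eps$, with $\rho_\eps$ on only one side. This form is linear in $G_\eps$, so no cross terms arise. It also requires a version of the bilinear Pohozaev identity in which $(-\Delta)^sG_\eps=\rho_\eps$ only inside $\Omega$, which is exactly the setting of \cite{rosoton2014pohozaev}.

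If that identity is unavailable in bilinear form, the fallback is to polarize the standard one. Alternatively, one can excise a small ball $B_\eps(x_0)$ and compute the local flux of $\Phi_{n,s}$ through $\partial B_\eps$ directly. We expect this to be the main technical obstacle.
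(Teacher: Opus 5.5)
Your argument is correct, and it takes a different route from the paper's. The paper never deals with the pole directly. It applies the dilation generator only to the regular part, $v^{x_0}(y)=(y-x_0)\cdot\nabla R^{x_0}(y)$. This function is $s$-harmonic in $\Omega$, equals $(2s-n)\kappa_{n,s}|y-x_0|^{2s-n}$ in $\interior\Omega^c$, and blows up at $\partial\Omega$ like $\delta^{s-1}$ with trace $\partial_\nu^{s-1}v^{x_0}=s\,\partial_\nu^sG^{x_0}\,(\xi-x_0)\cdot\nu$. The paper then evaluates the representation formula for such ``large'' $s$-harmonic functions (Abatangelo; Chan et al.) at $z=x_0$, where $v^{x_0}(x_0)=0$. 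This gives the identity at once, with the constant $\Gamma(s)\Gamma(1+s)\,s=\Gamma(1+s)^2$ supplied by that formula.

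You instead mollify the pole and apply the standard Ros-Oton--Serra Pohozaev identity to $G_\eps$. The whole-space part cancels exactly by the degree-$(2s-n)$ homogeneity of the Riesz kernel; your scaling identity $\int((x-x_0)\cdot\nabla\Phi_\eps)\rho_\eps=-\frac{n-2s}{2}\int\Phi_\eps\rho_\eps$ is right, and so are your signs. The regular part converges to $(n-2s)R^{x_0}(x_0)=(n-2s)\kappa_{n,s}\int_{\Omega^c}P^{x_0}(y)|y-x_0|^{2s-n}\,dy$.

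The paper's route is a one-line computation, but it rests on the heavier theory of large solutions. It also needs a boundary expansion of $\nabla G^{x_0}$ to identify the $(s-1)$-trace. Yours uses only the published Pohozaev identity plus elementary limits. It also carries over verbatim to the anisotropic operators of Section~\ref{sec:general_kernels} (factor $\mathcal{A}_\Theta(\nu)$, with $\Phi_\Theta$ homogeneous of degree $2s-n$). That is in fact how the paper justifies the anisotropic version there.

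Two small remarks.

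\textbf{The anticipated ``main obstacle'' does not arise.} Since $(-\Delta)^sG_\eps=\rho_\eps$ in $\Omega$, the ordinary quadratic identity with $u=G_\eps$ already has interior terms linear in $G_\eps$. So the splitting $G_\eps=\Phi_\eps-R_\eps$ produces no cross terms, and you need no bilinear version, polarization, or excision.

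\textbf{One step deserves an explicit line:} the uniform convergence $\partial_\nu^sG_\eps\to\partial_\nu^sG^{x_0}$ on $\partial\Omega$. Since $\partial_\nu^sG_\eps=\int\partial_\nu^sG^{z}\,\rho_\eps(z-x_0)\,dz$, it suffices that $\partial_\nu^sG^z\to\partial_\nu^sG^{x_0}$ uniformly as $z\to x_0$. This holds because $G^z-G^{x_0}$ is $s$-harmonic in a collar of $\partial\Omega$ and vanishes outside $\Omega$. Boundary regularity in smooth domains then bounds $(G^z-G^{x_0})/\delta^s$ there by its supremum on a slightly larger collar plus a weighted $L^1$ tail, and both tend to zero.
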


\begin{proof} Let $R^{x_{0}}$ be the function introduced in \eqref{eq:def-green-function}. 
Since $R^{x_{0}}$ is smooth in $\R^{n}\setminus \partial \Omega$, we can define, almost everywhere, 
\begin{equation*}
    v^{x_{0}}(y)\coloneqq \nabla R^{x_{0}}(y)\cdot (y-x_{0}).
\end{equation*}
Moreover, we have the identity
\begin{equation*}
    (-\Delta)^{s}v^{x_{0}}(y)= 2s(-\Delta)^{s}R^{x_{0}}(y)+(y-x_{0})\cdot \nabla (-\Delta)^{s}R^{x_{0}}(y)=0\qquad \forall y\in \Omega.
\end{equation*}
Then, a straightforward computation shows that $v^{x_{0}}$ solves the following problem:
\begin{equation*}
    \left\{
    \begin{array}{rclll}
         (-\Delta)^{s}v^{x_{0}}&=&0\quad &\text{in $\Omega$},\\
            v^{x_{0}}(y)&=&\kappa_{n,s}(2s-n)\frac{1}{|y-x_{0}|^{n-2s}}\quad &\forall y\in \interior \Omega^{c},\\
            \partial_{\nu}^{s-1}v^{x_{0}}(\xi)&=&s\partial_{\nu}^{s}G^{x_{0}}(\xi)(\xi-x_{0})\cdot \nu\quad &\forall \xi\in \partial \Omega.
    \end{array}
    \right.
\end{equation*}
Therefore, by \cite[Theorem 1.2.3]{abatangelo2013large} and \cite[Appendix B]{chan2021blow}, $v^{x_{0}}$ admits the following representation:
\begin{equation*}
    v^{x_{0}}(z)= \int_{\interior \Omega^{c}}\frac{\kappa_{n,s}(2s-n)}{|y-x_{0}|^{n-2s}}P^{z}(y)\, dy+\Gamma(s)\Gamma(1+s)\int_{\partial\Omega}s\partial_{\nu}^{s}G^{x_{0}}(\xi)(\xi-x_{0})\cdot \nu \partial_{\nu}^{s}G^{z}(\xi)\, d\mathcal{H}^{n-1}(\xi)\qquad \forall z\in \Omega.
\end{equation*}
The desired identity is obtained by evaluating the latter at $z=x_{0}$, noting that $v^{x_{0}}(x_{0})=0$, and using $s\Gamma(s)=\Gamma(1+s)$.
\end{proof}

The Pohozaev identity above yields the following scale-invariant quadratic estimate for the fractional normal derivative of the Green function. In the proposition below, we still work under the assumption that the boundary is smooth, in order to make sense of the fractional normal derivative at the boundary. However, the constants in \eqref{eq:L2-a-priori-boundary-estimate} below depend on the geometry of $\Omega$ only through its Lipschitz character. 

\begin{prop}\label{prop:L2-normal-a-priori}
    Let $\Omega\subset \R^{n}$ be a smooth bounded domain, and let $x_{0}\in \Omega$.  Then, there exists $r_{0}>0$ depending only on the Lipschitz character of $\Omega$ such that, for all $\xi\in \partial \Omega$ and $r\in (0,r_{0})$ with $|x_{0}-\xi|>2r$, the following holds:
    \begin{equation}\label{eq:L2-a-priori-boundary-estimate}
        \left(\fint_{B_{r}(\xi)\cap \partial\Omega}(\partial_{\nu}^{s}G^{x_{0}})^{2}d\mathcal{H}^{n-1}\right)^{1/2}\approx \frac{G^{x_{0}}(A_{r}(\xi))}{r^{s}},
    \end{equation}
    where the comparability constants depend only on $n,s$, and the Lipschitz character of $\Omega$.
\end{prop}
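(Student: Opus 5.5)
The plan is to prove the two inequalities in \eqref{eq:L2-a-priori-boundary-estimate} separately. The lower bound is elementary; the upper bound comes from the Pohozaev identity of Lemma~\ref{lem:Pohozaev}, applied not in $\Omega$ but in an auxiliary star-shaped domain, followed by a change of pole.

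For the lower bound I will use Cauchy--Schwarz, $\fint(\partial_\nu^s G^{x_0})^2\ge(\fint\partial_\nu^s G^{x_0})^2$, together with a pointwise bound $\partial_\nu^s G^{x_0}(\xi')\gtrsim G^{x_0}(A_r(\xi))/r^s$ on a large portion of $B_r(\xi)\cap\partial\Omega$. This pointwise bound follows from \eqref{eq:change-of-pole-green-function-boundary}, which reduces it to a lower bound $\partial_\nu^s G^{A_r(\xi)}\gtrsim r^{2s-n}r^{-s}$ near $\xi$. I will obtain the latter by comparing $G^{A_r(\xi)}$ in an interior ball tangent at $\xi'$ with the explicit Green function of that ball. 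Smoothness gives interior balls, but not with radius controlled by the Lipschitz character. The substitute is to integrate instead: by the coarea formula, $\int_{B_r\cap\partial\Omega}\partial^s_\nu G\,d\mathcal H^{n-1}$ is comparable to a limit of $t^{-s}\fint_0^t\int_{\{\delta=\tau\}\cap B_r}G$. Lower bounds on $G$ near the boundary then follow from Theorem~\ref{thm:holder-boundary-lip-domain}, but with exponent $2s-\alpha$, which is too weak. I therefore expect the cleanest route to be duality. Take the $s$-harmonic measure of a small exterior set and apply \eqref{eq:poisson-kernel-integral-formula}; this expresses $\omega^{x_0}$ near $\xi$ through $\int\partial_\nu^s G^{x_0}\cdot(\text{kernel }\sim d^{-s})$. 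Then use Lemma~\ref{lem:comparison-omega-G} to get $\int_{B_r\cap\partial\Omega}\partial_\nu^sG^{x_0}\gtrsim r^{n-1-s}G^{x_0}(A_r(\xi))$ (up to harmless enlargement of the ball via doubling).

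For the upper bound I first localize. By \eqref{eq:change-of-pole-green-function-boundary}, on $B_r(\xi)\cap\partial\Omega$ we have $\partial_\nu^s G^{x_0}\approx r^{n-2s}G^{x_0}(A_r(\xi))\,\partial_\nu^s G^{A_r(\xi)}$. Indeed, the ratio $G^{x_0}/G^{A_r(\xi)}$ extends continuously up to the boundary and is comparable to that constant, and the ratio of fractional normal derivatives equals this boundary limit. It therefore suffices to prove $\int_{B_r(\xi)\cap\partial\Omega}(\partial_\nu^sG^{y})^2\lesssim r^{n-1}r^{2(n-2s)}\cdot r^{-2s}\cdot r^{-2(n-2s)}\,r^{2(2s-n)}$, i.e.\ $\lesssim r^{-2n+2s+n-1}$, with pole $y=A_r(\xi)$. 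By scaling, take $r=1$.

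Next I will construct a smooth domain $D\subset\Omega$ that is star-shaped with respect to $B_{c/2}(y)$ and whose boundary contains $B_{1/2}(\xi)\cap\partial\Omega$. Inside the Lipschitz graph chart with small $r_0$, take the region between the graph and a cap, $D=\{x_n>\phi(x')\}\cap\{\text{cylinder}\}$ with smoothed top and sides. Because the graph has Lipschitz constant $L$ and $y$ sits at height $\gtrsim1$ above it, one gets $(\xi'-y)\cdot\nu\ge c(L)>0$ on all of $\partial D$. Applying Lemma~\ref{lem:Pohozaev} in $D$ then gives $\int_{\partial D}(\partial^s_\nu G_D^y)^2\lesssim\int_{D^c}P_D^y(z)|z-y|^{2s-n}dz\lesssim1$, since $P_D^y$ is a probability density and $|z-y|\gtrsim1$ off $D$.

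It remains to compare $G^y_\Omega$ with $G^y_D$ on the common patch. The difference $G^y_\Omega-G^y_D\ge0$ is $s$-harmonic in $D$ and vanishes on $B_{1/2}(\xi)\setminus\Omega$. It is bounded by a constant (by \eqref{eq:basic-estimate-green-function-1}, away from $y$), and it vanishes at the common boundary. I do not expect it to be small relative to $G_D^y$, but Theorem~\ref{thm:BHP} in $D$ near $\xi$, comparing it with $G^y_D$ itself (normalized at a corkscrew point, where both are $\lesssim1$ and $G_D^y\gtrsim1$), gives $G^y_\Omega\lesssim G^y_D$ on $B_{1/4}(\xi)\cap\Omega$. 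Hence $\partial_\nu^sG^y_\Omega\lesssim\partial_\nu^sG^y_D$ there. A covering by balls of radius $r/4$ and Lemma~\ref{lem:doubling}-type Harnack chains then recover $B_r$. I expect the main obstacle to be the star-shaped construction with smooth boundary and uniform constants, together with the lower bound without interior ball control. I would handle the latter through the Poisson-kernel duality described above.
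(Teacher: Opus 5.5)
Your upper bound follows the paper's argument: change of pole via \eqref{eq:change-of-pole-green-function-boundary}, then Lemma~\ref{lem:Pohozaev} in an auxiliary smooth star-shaped domain that agrees with $\Omega$ on a boundary patch, then the boundary Harnack principle to compare the two Green functions. One geometric correction is needed. If $y=A_r(\xi)$ sits at height $\approx r$ over an $L$-Lipschitz graph, then $(\xi'-y)\cdot\nu>0$ is only guaranteed for $|\xi'-\xi|\lesssim r/L$. So the common patch cannot be all of $B_{r/2}(\xi)\cap\partial\Omega$. You must first localize to $B_{c_0r}(\xi)$ with $c_0=c_0(L)$ small, as the paper does with a cone of aperture depending on $L$, and then cover. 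This fix is harmless.

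The genuine gap is the lower bound, which is not elementary. Reducing it to an $L^1$ bound by Cauchy--Schwarz is legitimate, but your duality argument does not prove that $L^1$ bound.
\begin{itemize}
\item Formula \eqref{eq:poisson-kernel-integral-formula} expresses $P^{x_0}$, and hence $\omega^{x_0}(B_r(\xi))$, through the interior values $G^{x_0}(z)$ at all distances $\delta(z)\in(0,r)$.
\item Bounding this from above by $\int\partial_\nu^sG^{x_0}\cdot d^{-s}$ requires $G^{x_0}(z)\lesssim\delta(z)^s\times(\text{an average of }\partial_\nu^sG^{x_0}\text{ near the foot point of }z)$ at every such scale.
\item For a smooth domain controlled only through its Lipschitz character, $G/\delta^s$ is close to its boundary trace only below the uncontrolled curvature scale.
\item The averaged form of that comparison at scale $t$ is exactly the ($L^1$ form of the) lower bound you are trying to prove, at scale $t$. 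This is precisely how Theorem~\ref{thm:uniform-L2-Green-function} is deduced from the proposition. So the argument is circular.
\end{itemize}
Restricting to a thin exterior layer $\{0<\delta<t\}$, with $t$ below the curvature scale so that $P^{x_0}\delta^s\approx c\,\partial_\nu^sG^{x_0}$ holds, does not help. You would then need $\omega^{x_0}(B_r(\xi)\cap\{0<\delta<t\})\gtrsim (t/r)^{1-s}\,\omega^{x_0}(B_r(\xi))$ uniformly in $t$. Lemmas~\ref{lem:comparison-omega-G}, \ref{lem:comparison-P-G} and Theorem~\ref{thm:holder-boundary-lip-domain} only give $(t/r)^{1-\alpha}$ with $\alpha<s$, which is the same exponent loss you already diagnosed.

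The missing idea is to get the lower bound from the Pohozaev identity as well, by moving the pole to $(1-\eps)\xi$, at distance $\eps r$ from $\xi$, inside the star-shaped domain $\widetilde\Omega$.
\begin{itemize}
\item The right-hand side of \eqref{eq:Pohozaev} is then $\gtrsim(\eps r)^{2s-n}$, because $\widetilde\omega^{(1-\eps)\xi}(B_{\eps r}(\xi))\approx1$.
\item Boundary H\"older decay in the pole variable gives $\partial^s_{\widetilde\nu}\widetilde G^{(1-\eps)\xi}\lesssim\eps^{\alpha}\,\partial^s_{\widetilde\nu}\widetilde G^{0}$ off $B_{c_0r}(\xi)$. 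Combined with the global upper bound on $\partial\widetilde\Omega$ already proved, the far part of the boundary integral is $\lesssim\eps^{2\alpha}r^{2s-1-n}$.
\item Choosing $\eps$ small isolates $\int_{\partial\widetilde\Omega\cap B_{c_0r}(\xi)}(\partial^s_{\widetilde\nu}\widetilde G^{(1-\eps)\xi})^2\gtrsim r^{2s-1-n}$.
\item Since $\eps$ is now a fixed constant, Harnack chains and the boundary Harnack principle transfer this back to the pole $A_r(\xi)$.
\end{itemize}
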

\begin{proof}
   As a consequence of \eqref{eq:change-of-pole-green-function-boundary}, we have $\partial_{\nu}^{s}G^{x_{0}}\approx \partial_{\nu}^{s}G^{A_{r}(\xi)} r^{n-2s} G^{x_{0}}(A_{r}(\xi))$. Moreover, since $\Omega$ is Lipschitz, $\mathcal{H}^{n-1}(\partial \Omega \cap B_{r}(\xi))\approx r^{n-1}$. Therefore, \eqref{eq:L2-a-priori-boundary-estimate} can be rewritten equivalently as
    \begin{equation}\label{eq:L2-a-priori-boundary-estimate-simplified}
        \int_{\partial\Omega \cap B_{r}(\xi)}(\partial_{\nu}^{s}G^{A_{r}(\xi)})^{2}\,d\mathcal{H}^{n-1}\approx r^{2s-1-n}.
    \end{equation}
    A localization argument based on the Harnack inequality shows that it is sufficient to prove \eqref{eq:L2-a-priori-boundary-estimate-simplified} with domain of integration in the left-hand side replaced by $\partial \Omega\cap B_{c_{0}r}(\xi)$, where $c_{0}$ is a small number depending only on the parameters of the problem. Up to rigid motion, we may assume that $A_{r}(\xi)=0$, $\xi=-re_{n}$. Let $\Gamma\subset \R^{n}$ be the cone 
    \begin{equation*}
        \Gamma\coloneqq\left\{x\in \R^{n}:|x|<-x_{n}\sqrt{1+\alpha^{2}}\right\}.
    \end{equation*}
    Here $\alpha>0$ is chosen sufficiently small depending only on the Lipschitz constant of the domain, in such a way that $x\cdot \nu \gtrsim r$ for all $x\in \partial \Omega \cap \Gamma$. Then, calling $c_{0}\coloneqq\tfrac{\alpha}{4\sqrt{1+\alpha^{2}}}$, we have $B_{4c_{0}r}(\xi)\subset \Gamma$. In the rest of the proof, we use a comparison argument based on Lemma \ref{lem:Pohozaev} to prove the desired comparability.

    Let $\widetilde\Omega\subset \Omega$ be a smooth star-shaped domain such that $\partial\widetilde\Omega\cap B_{2c_{0}r}(\xi)=\partial \Omega \cap B_{2c_{0}r}(\xi)$, such that $\diam \widetilde{\Omega}\lesssim r$, $\widetilde \delta(0)\gtrsim r$, and $x\cdot \widetilde \nu \approx r$ for all $x\in \partial \widetilde \Omega$. Here $\widetilde \delta\coloneqq \delta_{\widetilde \Omega}$, and $\widetilde \nu$ is the outer unit normal to $\partial \widetilde \Omega$; see Figure \ref{fig:auxiliary-domain}. Calling $\widetilde{G}$ the Green function for the domain $\widetilde \Omega$, by the boundary Harnack principle (Theorem \ref{thm:BHP}) and the basic estimates for Green functions \eqref{eq:basic-estimate-green-function-1} and \eqref{eq:basic-estimate-green-function-2}, we have
    \begin{equation*}
        \partial_{\nu}^{s}G^{0}(\xi')\approx \partial_{\widetilde\nu}^{s}\widetilde{G}^{0}(\xi')\qquad \forall \xi'\in B_{c_{0}r}(\xi)\cap\partial \Omega = B_{c_{0}r}(\xi)\cap\partial \widetilde\Omega.  
    \end{equation*}
    Therefore, it will suffice to bound the square integral of $\partial_{\widetilde\nu}^{s}\widetilde{G}^{0}$ over $B_{c_{0}r}(\xi)\cap\partial \widetilde\Omega$.

     Note that $|y|\gtrsim r$ for all $y\in \widetilde{\Omega}^{c}$ and $\xi'\cdot \widetilde\nu \gtrsim r$ for all $\xi'\in \partial \widetilde\Omega$, thanks to the above hypotheses on the domain $\widetilde \Omega$. Therefore, by \eqref{eq:Pohozaev}, we may bound
    \begin{equation}\label{eq:bound-L2-desG-starshaped}
        \int_{\partial \widetilde\Omega}(\partial_{\widetilde\nu}^{s}\widetilde{G}^{0})^{2}\, d\mathcal{H}^{n-1}\lesssim \frac{1}{r}\int_{\partial \widetilde\Omega}(\partial_{\widetilde\nu}^{s}\widetilde G^{0}(\xi'))^{2}\xi'\cdot \widetilde{\nu}\,d\mathcal{H}^{n-1}(\xi') \approx \frac{1}{r}\int_{\widetilde\Omega^{c}}\frac{\widetilde P^{0}(y)}{|y|^{n-2s}}\,dy\lesssim r^{2s-1-n}\int_{\widetilde \Omega^{c}}\widetilde P^{0}=r^{2s-1-n}.
    \end{equation}
    Hence, it only remains to prove the bound from below.  
    Let $\eps\in (0,c_{0}/2)$ be a small number to be chosen later. For every $y\in \widetilde \Omega\setminus B_{c_{0}r}(\xi)$, the function $x\mapsto \widetilde G^{x}(y)\ge 0$ is $s$-harmonic in $\widetilde \Omega \cap B_{c_{0}r}(\xi)$ and vanishes continuously in $\widetilde\Omega^{c}\cap B_{c_{0}r}(\xi)$. Therefore, having $|\xi-(1-\eps)\xi|\le |\xi-(1-c_{0}/2)\xi|\le rc_{0}/2$, we can apply the H\"{o}lder estimate up to the boundary from Theorem \ref{thm:holder-boundary-lip-domain}, and obtain
    \begin{equation*}
        \widetilde G^{(1-\eps)\xi}(y)\lesssim \eps^{\alpha}\widetilde G^{(1-c_{0}/2)\xi}(y)\qquad \forall y\in \widetilde\Omega\setminus B_{c_{0}r}(\xi).
    \end{equation*}
    For $y\in \widetilde\Omega\setminus B_{c_{0}r}(\xi)$ sufficiently close to the boundary, the Harnack inequality guarantees $\widetilde G^{(1-c_{0}/2)\xi}(y)\lesssim \widetilde G^{0}(y)$. Therefore, taking fractional normal derivatives, we also find  
    \begin{equation}\label{eq:closepole-farpoint-desG-starshaped-pointwise}
        \partial_{\widetilde\nu}^{s}\widetilde G^{(1-\eps)\xi}(\xi')\lesssim  \eps^{\alpha}\partial_{\widetilde\nu}^{s}\widetilde G^{0}(\xi')\qquad \forall \xi' \in \partial \widetilde\Omega \setminus B_{c_{0}r}(\xi).
    \end{equation}
    Putting \eqref{eq:bound-L2-desG-starshaped} and \eqref{eq:closepole-farpoint-desG-starshaped-pointwise} together, we get
    \begin{equation}\label{eq:closepole-farpoint-desG-starshaped-integral}
        \int_{\partial \widetilde\Omega \setminus B_{c_{0}r}(\xi)}(\partial_{\widetilde\nu}^{s}\widetilde G^{(1-\eps)\xi})^{2}\, d\mathcal{H}^{n-1}\lesssim \eps^{2\alpha}\int_{\partial \widetilde \Omega}(\partial_{\widetilde\nu}^{s}\widetilde G^{0})^{2}\, d\mathcal{H}^{n-1}\lesssim \eps^{2\alpha}r^{2s-1-n}.
    \end{equation}
   At this point, a further application of \eqref{eq:Pohozaev} gives
   \begin{align*}
       \int_{\partial \widetilde\Omega\cap B_{c_{0}r}(\xi)}(\partial_{\widetilde\nu}^{s}\widetilde G^{(1-\eps)\xi})^{2} \, d\mathcal{H}^{n-1}&\gtrsim \frac{1}{r}\int_{\partial \widetilde\Omega\cap B_{c_{0}r}(\xi)}(\partial_{\widetilde\nu}^{s}\widetilde G^{(1-\eps)\xi}(\xi'))^{2}(\xi'- (1-\eps)\xi)\cdot \widetilde\nu \, d\mathcal{H}^{n-1}(\xi')\\
       &= \frac{\kappa_{n,s}(n-2s)}{\Gamma(1+s)^{2}}\frac{1}{r}\int_{\widetilde \Omega^{c}}\frac{\widetilde P^{(1-\eps)\xi}(y)}{|y-(1-\eps)\xi|^{n-2s}}\, dy\\
       &\quad\,- \frac{1}{r}\int_{\partial \widetilde\Omega \setminus B_{c_{0}r}(\xi)}(\partial_{\widetilde\nu}^{s}\widetilde G^{(1-\eps)\xi}(\xi'))^{2}(\xi'- (1-\eps)\xi)\cdot \widetilde\nu \, d\mathcal{H}^{n-1}(\xi')\\
       &\ge C_{1}\eps^{2s-n}r^{2s-1-n}-C_{2}\eps^{2\alpha} r^{2s-1-n}, 
   \end{align*}
   where in the last step we used $\widetilde\omega^{(1-\eps)\xi}(B_{\eps r}(\xi))\approx 1$ and \eqref{eq:closepole-farpoint-desG-starshaped-integral}. 
   Choosing $\eps\coloneqq\left(C_{1}/2C_{2}\right)^{1/(n+2\alpha-2s)}\wedge (c_{0}/2)$, and arguing by Harnack chains, we finally get
   \begin{equation*}
      \int_{\partial \widetilde\Omega\cap B_{c_{0}r}(\xi)}(\partial_{\widetilde\nu}^{s}\widetilde G^{0})^{2}\, d\mathcal{H}^{n-1}\gtrsim \int_{\partial \widetilde\Omega\cap B_{c_{0}r}(\xi)}(\partial_{\widetilde\nu}^{s}\widetilde G^{(1-\eps)\xi})^{2}\, d\mathcal{H}^{n-1}\gtrsim r^{2s-1-n},
   \end{equation*}
   which is the desired bound from below. This concludes the proof.
\end{proof}

\begin{figure}[H]
\centering
\includegraphics[width=0.6\textwidth]{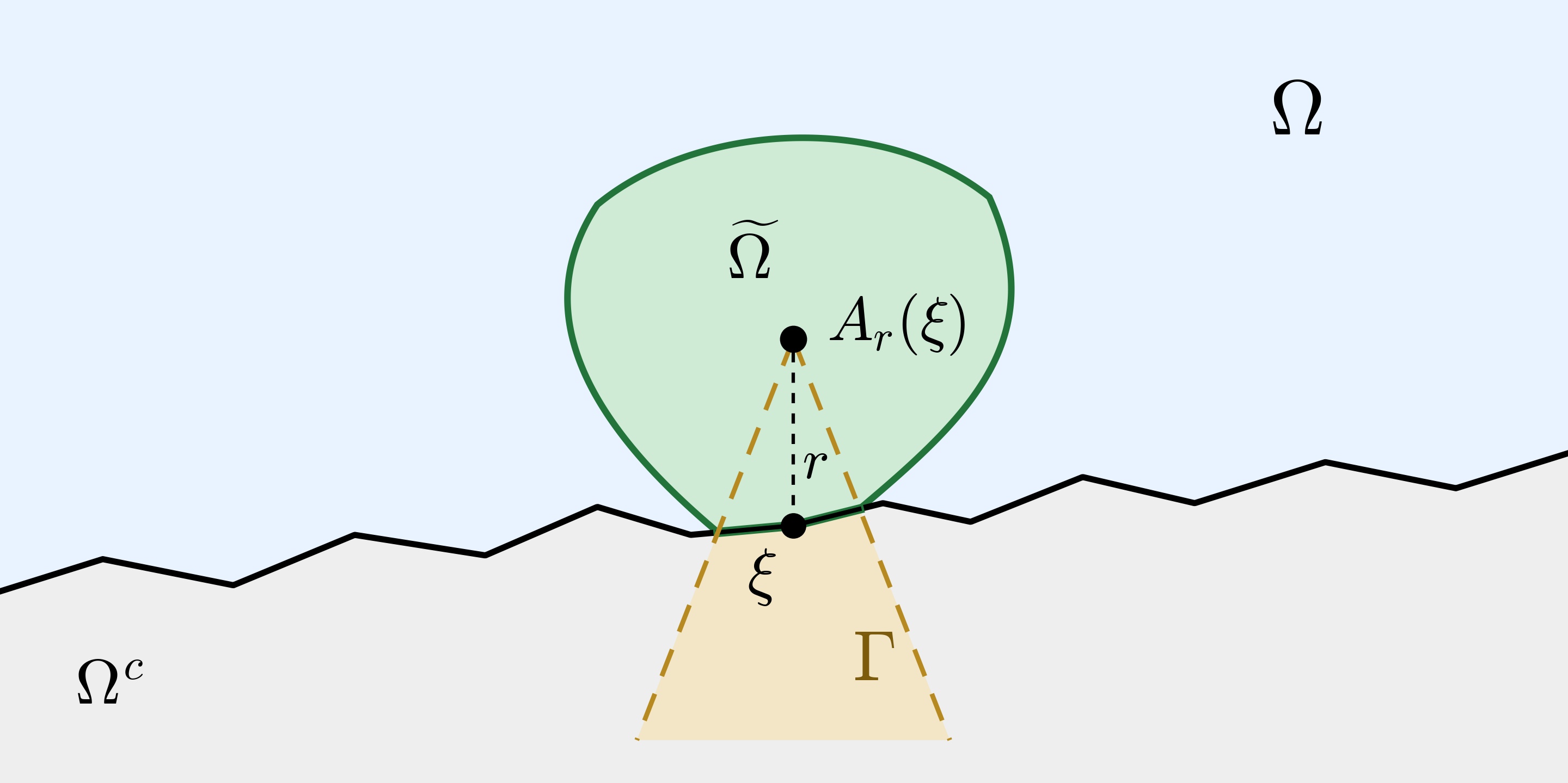}
\caption{The auxiliary star-shaped domain $\widetilde \Omega$ in the proof of Proposition \ref{prop:L2-normal-a-priori}.}
\label{fig:auxiliary-domain}
\end{figure}

\subsection[Reverse-Holder inequality for the s-harmonic measure]{Reverse-H\"{o}lder inequality for the $s$-harmonic measure}

We now transfer the boundary square estimate derived for smooth domains in Proposition \ref{prop:L2-normal-a-priori} to parallel level sets
inside general Lipschitz domains.
\begin{thm}\label{thm:uniform-L2-Green-function}
    Let $\Omega\subset \R^{n}$ be a bounded Lipschitz domain, and let $x_{0}\in \Omega$. Then, there exist $r_{0}>0$ and $c_{0}\in (0,1)$ depending only on the Lipschitz character of $\Omega$ such that, for all $\xi\in \partial \Omega$ and $r\in (0,r_{0})$ with $|x_{0}-\xi|>2r$, the following holds:
    \begin{equation}\label{eq:Linfty-L2-Green-function}
        \frac{1}{t^{s}}\left(\fint_{B_{r}(\xi)\cap \Omega\cap \{\delta=t\}}(G^{x_{0}})^{2}d\mathcal{H}^{n-1}\right)^{1/2}\approx \frac{G^{x_{0}}(A_{r}(\xi))}{r^{s}}\qquad \forall t\in (0,c_{0}r),
    \end{equation}
    where the comparability constants depend only on $n,s$, and the Lipschitz character of $\Omega$. 
\end{thm}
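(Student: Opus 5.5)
Since $\{\delta_\Omega=t\}\cap\Omega$ is the boundary of the inner parallel set $\Omega_t\coloneqq\{x\in\Omega:\delta(x)>t\}$, the plan is to reduce \eqref{eq:Linfty-L2-Green-function} to Proposition~\ref{prop:L2-normal-a-priori} applied to smooth approximations of $\Omega_t$ (or, more conveniently, of a Lipschitz domain squeezed between $\Omega_t$ and $\Omega$). Dividing by $G^{x_0}(A_r(\xi))$ and using the change of pole formula \eqref{eq:change-of-pole-green-function} together with the boundary Harnack principle, the estimate is equivalent, up to constants, to
\begin{equation*}
\frac{1}{t^{2s}}\fint_{B_r(\xi)\cap\{\delta=t\}}(G^{A_r(\xi)})^2\,d\mathcal H^{n-1}\approx r^{2(2s-n)}\,r^{-2s}.
\end{equation*}
So it suffices to treat the pole $A_r(\xi)$ at scale $r$, with a normalized geometry.

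\textbf{Upper bound.} First I pass from the Green function to the boundary quotient $G^{x_0}/\delta^s$. Fix $t\in(0,c_0r)$ and a point $x\in\{\delta=t\}\cap B_r(\xi)$, and let $\xi_x\in\partial\Omega$ be a nearest boundary point. I would like to compare $G^{x_0}(x)/t^s$ with averages of $\partial_\nu^s G_j^{x_0}$ over $\partial\Omega_j\cap B_{Ct}(\xi_x)$ for smooth approximating domains $\Omega_j\to\Omega$ with uniform Lipschitz character (Lemma~\ref{lem:approximation-smooth-domains}). The key pointwise bound, and the main obstacle, is
\begin{equation*}
\frac{G_j(x)}{t^s}\lesssim\Big(\fint_{B_{Ct}(\xi_x)\cap\partial\Omega_j}(\partial^s_\nu G_j)^2\Big)^{1/2},
\end{equation*}
or more precisely its converse-type version at scale $t$. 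I would try to obtain it from Proposition~\ref{prop:L2-normal-a-priori} applied at scale $t$ around $\xi_x$: that proposition gives the boundary square average at scale $t$ as $\approx G_j(A_t(\xi_x))/t^s$, and Harnack gives $G_j(A_t(\xi_x))\approx G_j(x)$, since $x$ is a corkscrew point at scale $t$. This is a two-sided comparison, which is why the statement can be an $\approx$. One must check that the change of pole constraints $|x_0-\xi_x|>2t$ hold, which follows from $t<c_0r$ and $|x_0-\xi|>2r$.

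\textbf{Integration.} Next, I cover $\{\delta=t\}\cap B_r(\xi)$ by a Vitali/Besicovitch family of balls of radius $\approx t$; the $\mathcal H^{n-1}$ measure of $\{\delta=t\}$ in each ball is $\approx t^{n-1}$, by the Lipschitz graph structure of level sets of the distance at scales $\lesssim t$, possibly after replacing $\delta$ by a regularized distance. Summing gives
\begin{equation*}
\int_{\{\delta=t\}\cap B_r}\Big(\frac{G_j}{t^s}\Big)^2\approx\int_{\partial\Omega_j\cap B_{r'}}(\partial^s_\nu G_j)^2
\end{equation*}
with $r'\approx r$, up to bounded overlap. Applying Proposition~\ref{prop:L2-normal-a-priori} at scale $r$ then gives $\approx r^{n-1}(G_j(A_r)/r^s)^2$. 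For the lower bound I restrict to a sub-ball $B_{r/2}$, whose contribution is already comparable by the proposition and doubling.

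\textbf{Passing to the limit.} Finally I let $j\to\infty$. The left side involves only $G_j$ on the fixed compact level set $\{\delta=t\}$, which is at positive distance from $\partial\Omega$, so locally uniform convergence $G_j\to G$ passes to the limit. The pointwise comparisons used were uniform in $j$, since they depend only on the Lipschitz character. The delicate point throughout is ensuring that $\partial\Omega_j$ near $\xi_x$ is still at distance $\approx t$ from $x$ for large $j$, which holds since Hausdorff convergence eventually beats any fixed $t>0$.
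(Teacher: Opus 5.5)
Your proposal is correct and follows essentially the paper's argument. Harnack identifies $G^{x_0}$ at a point of $\{\delta=t\}$ with its value at a corkscrew point at scale $t$, and Proposition~\ref{prop:L2-normal-a-priori} at scale $t$ turns this into a local boundary square average of $\partial_\nu^s G^{x_0}$. Summing over the level set (your bounded-overlap covering plays the role of the paper's Fubini step) and applying the proposition again at scale $\approx r$ gives both bounds, with Lemma~\ref{lem:approximation-smooth-domains} handling the passage from smooth to Lipschitz domains.

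Two minor points. First, you keep the level set of the fixed $\delta_\Omega$ and approximate only the Green functions. This is a slightly cleaner way to carry out the limit than the paper's bare reduction to smooth $\Omega$, but then you should take $\xi_x$ on $\partial\Omega_j$ rather than on $\partial\Omega$. Second, the change-of-pole reduction at the start is not needed for the rest of the argument.
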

\begin{proof}
    Thanks to the approximation result from Lemma \ref{lem:approximation-smooth-domains}, it suffices to consider the case in which $\Omega$ is a smooth domain with prescribed Lipschitz character.
     Provided that $r_{0}$ and $c_{0}$ are chosen sufficiently small depending only on the Lipschitz character of $\Omega$, Proposition \ref{prop:L2-normal-a-priori} gives
   \begin{equation*}
       \fint_{B_{t}(\xi')\cap \partial\Omega}(\partial_{\nu}^{s}G^{x_{0}})^{2}d\mathcal{H}^{n-1}\approx \frac{\left(G^{x_{0}}(A_{t}(\xi'))\right)^{2}}{t^{2s}}\qquad \forall \xi' \in B_{r/2}(\xi)\cap \partial \Omega,\quad  \forall t\in (0,c_{0}r).
   \end{equation*}
   Then, by the properties of corkscrew points \eqref{eq:corkscrew-condition}, the Harnack inequality, Fubini's theorem, and again Proposition \ref{prop:L2-normal-a-priori},
   \begin{align*}
       \frac{1}{t^{2s}}\int_{B_{r/2}(\xi)\cap \Omega\cap \{\delta=t\}}(G^{x_{0}})^{2}d\mathcal{H}^{n-1}&\approx\int_{B_{r/2}(\xi)\cap \partial\Omega}\fint_{B_{t}(\xi')\cap \partial\Omega}(\partial_{\nu}^{s}G^{x_{0}})^{2}\,d\mathcal{H}^{n-1}\,d\mathcal{H}^{n-1}(\xi')\\
       &\approx \int_{B_{r/2}(\xi)\cap \partial \Omega}(\partial_{\nu}^{s}G^{x_{0}})^{2}\,d\mathcal{H}^{n-1}\approx r^{n-1-2s}\left(G^{x_{0}}(A_{r/2}(\xi))\right)^{2},
   \end{align*}
   where in the second step we used the fact that squared integrals of $\partial_{\nu}^{s}G^{x_{0}}$ over small boundary balls with the same center and comparable radii are equivalent (by Proposition \ref{prop:L2-normal-a-priori} and the Harnack inequality), while in the third step we used $\mathcal{H}^{n-1}(B_{r/2}(\xi)\cap \partial \Omega)\approx r^{n-1}$, which is uniform for $\Omega$ in a prescribed Lipschitz class. Rearranging terms, we deduce \eqref{eq:Linfty-L2-Green-function} with radius $r/2$, and the general result follows from a localization argument. 
\end{proof}

Combining the Green function estimate from Theorem \ref{thm:uniform-L2-Green-function} with the comparison result from Lemma \ref{lem:comparison-P-G} gives the corresponding square estimate outside the domain.
\begin{cor}\label{cor:uniform-L2-Poisson-kernel}
    Let $\Omega\subset \R^{n}$ be a bounded Lipschitz domain, and let $x_{0}\in \Omega$. Then, there exist $r_{0}>0$ and $c_{0}\in (0,1)$ depending only on the Lipschitz character of $\Omega$ such that, for all $\xi\in \partial \Omega$ and $r\in (0,r_{0})$ with $|x_{0}-\xi|>2r$, the following holds:
    \begin{equation}\label{eq:Linfty-L2-Poisson-kernel}
       \frac{t^{s}}{1-s} \left(\fint_{B_{r}(\xi)\cap \Omega^{c}\cap \{\delta=t\}}(P^{x_{0}})^{2}d\mathcal{H}^{n-1}\right)^{1/2}\approx \frac{\omega^{x_{0}}(B_{r}(\xi))}{r^{n-s}}\qquad \forall t\in (0,c_{0}r),
    \end{equation}
    where the comparability constants depend only on $n,s$, and the Lipschitz character of $\Omega$. 
\end{cor}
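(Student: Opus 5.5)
Using Lemma \ref{lem:comparison-omega-G}, the right-hand side of \eqref{eq:Linfty-L2-Poisson-kernel} equals, up to constants, $r^{-s}G^{x_{0}}(A_{r}(\xi))$. So the claim becomes the exterior counterpart of Theorem \ref{thm:uniform-L2-Green-function}:
\begin{equation*}
    \frac{t^{s}}{1-s}\left(\fint_{B_{r}(\xi)\cap\Omega^{c}\cap\{\delta=t\}}(P^{x_{0}})^{2}\,d\mathcal{H}^{n-1}\right)^{1/2}\approx \frac{1}{t^{s}}\left(\fint_{B_{r}(\xi)\cap\Omega\cap\{\delta=t\}}(G^{x_{0}})^{2}\,d\mathcal{H}^{n-1}\right)^{1/2}.
\end{equation*}
As in that theorem, Lemma \ref{lem:approximation-smooth-domains} lets me work with smooth approximating domains of fixed Lipschitz character. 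The idea is to compare $P^{x_{0}}$ pointwise at exterior points with $G^{x_{0}}$ at nearby interior points at the same scale. Both level sets $\{\delta=t\}$ on either side of $\partial\Omega$ are covered by balls $B_{t}(\xi')$ with $\xi'\in\partial\Omega\cap B_{r}(\xi)$. Each such piece has $\mathcal{H}^{n-1}$-measure $\approx t^{n-1}$, uniformly in the Lipschitz class.

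\emph{Lower bound.} For $y\in\Omega^{c}$ with $\delta(y)=t$, let $\xi'$ be a nearest boundary point. I will use the formula \eqref{eq:poisson-kernel-integral-formula} and integrate only over a ball of radius $\approx t$ around an interior corkscrew point $A_{Ct}(\xi')$, exactly as in the first half of Lemma \ref{lem:comparison-P-G}. By Harnack this gives
\begin{equation*}
    P^{x_{0}}(y)\gtrsim (1-s)\,t^{-2s}G^{x_{0}}(A_{t}(\xi')).
\end{equation*}
Squaring, averaging over the exterior level set, and using Theorem \ref{thm:uniform-L2-Green-function} at scale $t$ together with Fubini over boundary balls (as in that proof) yields the lower bound. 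An even simpler route is to note that $G^{x_{0}}(A_{t}(\xi'))\gtrsim t^{-s}\big(\fint_{\{\delta=t\}\cap B_{t}(\xi')}(G^{x_{0}})^{2}\big)^{1/2}$ up to constants, by Theorem \ref{thm:uniform-L2-Green-function} at scale $t$.

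\emph{Upper bound (main obstacle).} A pointwise bound $P^{x_{0}}(y)\lesssim(1-s)t^{-2s}G^{x_{0}}(A_{t}(\xi'))$ holds for every such $y$. It comes from the proof of Lemma \ref{lem:comparison-P-G}: that proof only used $\delta(A'_{r})\gtrsim r$, which holds for $y$ with $r=t$. Squaring then gives
\begin{equation*}
    \fint_{\Omega^{c}\cap\{\delta=t\}\cap B_{r}(\xi)}(P^{x_{0}})^{2}\lesssim (1-s)^{2}t^{-4s}\fint_{\xi'\in\partial\Omega\cap B_{r}(\xi)}G^{x_{0}}(A_{t}(\xi'))^{2}.
\end{equation*}
The difficulty is controlling the square average of $G^{x_{0}}(A_{t}(\xi'))$ over $\xi'$ at the fixed small scale $t$, uniformly in $t$. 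Here I use the smooth-domain comparability from the proof of Theorem \ref{thm:uniform-L2-Green-function}: $t^{-2s}G^{x_{0}}(A_{t}(\xi'))^{2}\approx\fint_{B_{t}(\xi')\cap\partial\Omega}(\partial_{\nu}^{s}G^{x_{0}})^{2}$, which is Proposition \ref{prop:L2-normal-a-priori} at scale $t$. Integrating in $\xi'$ by Fubini, the right-hand side becomes
\begin{equation*}
    (1-s)^{2}t^{-2s}\fint_{B_{2r}(\xi)\cap\partial\Omega}(\partial_{\nu}^{s}G^{x_{0}})^{2}\approx (1-s)^{2}t^{-2s}r^{-2s}G^{x_{0}}(A_{r}(\xi))^{2}.
\end{equation*}
Multiplying by $t^{2s}/(1-s)^{2}$ and applying Lemma \ref{lem:comparison-omega-G} gives the claim. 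The enlarged ball is handled by doubling and Harnack, exactly as in Theorem \ref{thm:uniform-L2-Green-function}.

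Finally, the estimate passes to the Lipschitz limit. For fixed $t>0$, the level sets $\{\delta_{j}=t\}$ converge suitably, and $P_{j}^{x_{0}}\to P^{x_{0}}$ locally uniformly in $\interior\Omega^{c}$ by Lemma \ref{lem:approximation-smooth-domains}. I expect this limit step to need some care, for instance by averaging over a thin band of $t$ values to avoid issues with $\mathcal{H}^{n-1}$ convergence of level sets. I also need to check that the constants are uniform as $s\to1^{-}$, which follows from the $(1-s)$ factor in Lemma \ref{lem:comparison-P-G}.
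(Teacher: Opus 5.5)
Your ingredients are the paper's own. The paper proves the corollary in one line from \eqref{eq:Linfty-L2-Green-function}, Lemma~\ref{lem:comparison-omega-G} and Lemma~\ref{lem:comparison-P-G}. Spelled out: every $y\in\Omega^{c}\cap\{\delta=t\}$ is an exterior corkscrew point at scale $t$ for a nearest boundary point $\xi'$, so Lemma~\ref{lem:comparison-P-G} gives the two-sided pointwise bound $P^{x_{0}}(y)\approx(1-s)t^{-2s}G^{x_{0}}(A_{t}(\xi'))$. By Harnack, $G^{x_{0}}(A_{t}(\xi'))$ is comparable to the values of $G^{x_{0}}$ on $\Omega\cap\{\delta=t\}\cap B_{t}(\xi')$. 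Hence the exterior square average of $P^{x_{0}}$ is comparable to $(1-s)^{2}t^{-4s}$ times the interior square average of $G^{x_{0}}$ on a comparable ball. Theorem~\ref{thm:uniform-L2-Green-function} and Lemma~\ref{lem:comparison-omega-G} then finish. All of this happens directly in the Lipschitz domain, because Theorem~\ref{thm:uniform-L2-Green-function} is already established there. No smooth approximation is needed for the corollary.

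The place where you deviate, the upper bound, is not a real obstacle, and your detour leaves a loose end. The square average of $G^{x_{0}}(A_{t}(\xi'))$ over $\xi'$ is handled by exactly the Harnack identification you already need for the lower bound. After that, Theorem~\ref{thm:uniform-L2-Green-function} (on a slightly larger ball, adjusted by Harnack) bounds it by $(t/r)^{2s}G^{x_{0}}(A_{r}(\xi))^{2}$, uniformly in $t$.

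Going through Proposition~\ref{prop:L2-normal-a-priori} instead just reproves Theorem~\ref{thm:uniform-L2-Green-function} for smooth domains. It then forces you to pass exterior level-set integrals to the Lipschitz limit, which you do not carry out. The band-averaging fix you suggest would only give the estimate averaged in $t$, not for every fixed $t\in(0,c_{0}r)$ as stated. Lower semicontinuity of the weighted area of the uniformly Lipschitz level sets could rescue the upper bound, but there is no reason to take this route.

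Your ``simpler route'' is also misstated. Theorem~\ref{thm:uniform-L2-Green-function} cannot be applied on the level $\{\delta=t\}$ at scale $t$, since the level must lie below $c_{0}$ times the scale. The correct relation follows from Harnack alone: $G^{x_{0}}(A_{t}(\xi'))\approx\bigl(\fint_{\Omega\cap\{\delta=t\}\cap B_{t}(\xi')}(G^{x_{0}})^{2}\,d\mathcal{H}^{n-1}\bigr)^{1/2}$, with no factor $t^{-s}$. With that spurious factor, your final lower bound would be off by $t^{-s}$.
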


\begin{proof}
    The equivalence in \eqref{eq:Linfty-L2-Poisson-kernel} follows immediately from \eqref{eq:Linfty-L2-Green-function} after using Lemma \ref{lem:comparison-omega-G} to compare the harmonic measure of a ball with the Green function at the interior corkscrew point, and Lemma \ref{lem:comparison-P-G} to compare the Poisson kernel at an exterior corkscrew point with the Green function at the corresponding interior one. 
\end{proof}

We have now all the ingredients to prove Dahlberg's theorem for the fractional Laplacian (Theorem \ref{thm:Dahlberg-intro}). In fact, we prove the following more general version of the reverse-H\"{o}lder estimate \eqref{eq:reverse-holder-intro}, which is valid for exponents slightly above $2$ and for the whole family of weights
\begin{equation}\label{eq:def-sigma-beta}
    \sigma_{\beta}\coloneqq (1-\beta)\frac{\delta^{-\beta}}{1+\delta^{n+2s-\beta}}\mathscr{L}^{n}\res \Omega^{c},\qquad \beta\in (2s-1,s].
\end{equation}

\begin{thm}\label{thm:Dahlberg-general}
    Let $\Omega\subset \R^{n}$ be a bounded Lipschitz domain, and let $x_{0}\in \Omega$. Given $\beta \in (2s-1,s]$, let $\sigma_{\beta}$ be the measure in \eqref{eq:def-sigma-beta}. There is $p_{0}>2$ depending only on $n,s, \beta$, and the Lipschitz character of $\Omega$ such that, for all $p\in [2,p_{0})$, and all balls $B\subset \R^{n}$ centered on $\partial \Omega$, we have
    \begin{equation}\label{eq:reverse-holder-general}
        \left(\fint_{B} \left(\frac{d\omega^{x_{0}}}{d\sigma_{\beta}}\right)^{p}d\sigma_{\beta}\right)^{1/p}\le C\frac{\omega^{x_{0}}(B)}{\sigma_{\beta} (B)},
    \end{equation}
    where $C$ depends only on $n, s, \beta, p$, the Lipschitz character of $\Omega$, and $\delta(x_{0})$.
\end{thm}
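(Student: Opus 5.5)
First I would prove the case $p=2$ for small boundary balls far from the pole, and then remove these restrictions and upgrade the exponent with the weighted Gehring lemma.

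Fix $\xi\in\partial\Omega$ and $r<r_0$ with $|x_0-\xi|>2r$, and set $B=B_r(\xi)$. On $B\cap\Omega^c$ we have $\delta\le r\lesssim 1$, so $d\sigma_\beta\approx(1-\beta)\delta^{-\beta}\,dy$ there, and
\begin{equation*}
\frac{d\omega^{x_0}}{d\sigma_\beta}\approx\frac{P^{x_0}\delta^{\beta}}{1-\beta}.
\end{equation*}
Hence the left-hand side of \eqref{eq:reverse-holder-general} for $p=2$ is controlled by $(1-\beta)^{-1}\int_{B\cap\Omega^c}(P^{x_0})^2\delta^{\beta}\,dy$. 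I would split this integral into the layers $\{\delta<c_0r\}$ and $\{\delta\ge c_0r\}$.

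\textbf{Near layer.} By the coarea formula ($|\nabla\delta|=1$ a.e.), the near layer equals $\int_0^{c_0r}t^{\beta}\int_{B\cap\{\delta=t\}}(P^{x_0})^2\,d\mathcal H^{n-1}\,dt$. Corollary \ref{cor:uniform-L2-Poisson-kernel} gives
\begin{equation*}
\int_{B\cap\{\delta=t\}}(P^{x_0})^2\,d\mathcal H^{n-1}\lesssim \mathcal H^{n-1}(B\cap\{\delta=t\})\,(1-s)^2t^{-2s}\,\omega^{x_0}(B)^2r^{2s-2n}.
\end{equation*}
Here $\mathcal H^{n-1}(B\cap\{\delta=t\})\lesssim r^{n-1}$ uniformly in $t$, since level sets of the distance function near a Lipschitz graph are Lipschitz graphs with uniform bounds. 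Integrating $t^{\beta-2s}$ over $(0,c_0r)$ converges precisely because $\beta>2s-1$, giving $r^{\beta-2s+1}/(\beta-2s+1)$. The near layer is therefore $\lesssim \omega^{x_0}(B)^2r^{\beta+1-2n}$, up to factors in $s,\beta$.

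\textbf{Far layer.} On $\{\delta\ge c_0r\}\cap B$, interior Harnack for $P^{x_0}$ in $\interior\Omega^c$ (through \eqref{eq:poisson-kernel-integral-formula}), together with Lemmas \ref{lem:comparison-P-G} and \ref{lem:comparison-omega-G}, gives $P^{x_0}\lesssim(1-s)\omega^{x_0}(B)r^{-n}$. This yields the same bound for the far layer.

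\textbf{Conclusion for $p=2$.} Since $\sigma_\beta(B\cap\Omega^c)\approx r^{n-\beta}$ (by coarea with the same level-set bounds, the factor $1-\beta$ cancelling against the integral of $t^{-\beta}$), we get
\begin{equation*}
\frac{1}{\sigma_\beta(B)}\int_B\Big(\frac{d\omega}{d\sigma_\beta}\Big)^2d\sigma_\beta\lesssim \frac{\omega(B)^2r^{\beta+1-2n}}{r^{n-\beta}}\cdot\frac{1}{r^{2\beta+1-2n}}\cdots
\end{equation*}
The exponents should balance to $\omega(B)^2/\sigma_\beta(B)^2\approx\omega(B)^2r^{2\beta-2n}$, which I would check directly. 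The factors $(1-s)^2/(1-\beta)$ are harmless for fixed $\beta$ (and cancel when $\beta=s$).

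\textbf{Removing the restrictions.} For balls with $r\ge r_0$, or with $|x_0-\xi|\le 2r$, I would cover $B\cap\{\delta<r_0'\}$ by finitely many admissible small balls, whose number depends on $S$, $r_0$ and $\delta(x_0)$. Away from $\partial\Omega$, $P^{x_0}$ is bounded with tail decay $\lesssim\delta^{-n-2s}$, and this is matched by the weight $1+\delta^{n+2s-\beta}$. For large balls, $\omega^{x_0}(B)\gtrsim_{\delta(x_0)}1$ by the remark after Lemma \ref{lem:doubling}. This is where the dependence on $\delta(x_0)$ enters.

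\textbf{Exponent above $2$.} To pass to $p\in(2,p_0)$, I would apply the weighted Gehring lemma (Lemma \ref{lem:gehring}) to $f=d\omega/d\sigma_\beta$ on the space $(\Omega^c,\sigma_\beta)$ with boundary-centered balls. This requires doubling of $\sigma_\beta$ on such balls, which follows from $\sigma_\beta(B_r(\xi))\approx r^{n-\beta}$. The lemma also has to handle balls not centered on $\partial\Omega$: for those, $\delta$ is comparable throughout the ball and a local $L^\infty$ bound via Harnack suffices.

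I expect the main obstacle to be the Gehring step. One must verify that reverse Hölder on boundary-centered balls alone, plus the interior behavior, fits the hypotheses of Lemma \ref{lem:gehring}, and that $p_0$ stays uniform as $\beta\to 2s-1$ is excluded.
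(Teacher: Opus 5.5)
Your proposal follows the paper's proof essentially step by step. It uses coarea plus Corollary~\ref{cor:uniform-L2-Poisson-kernel} on $\{\delta<c_0r\}$, and the pointwise bound $P^{x_0}\approx(1-s)r^{-n}\omega^{x_0}(B_r(\xi))$ on $\{\delta\ge c_0r\}$, which is also what supplies hypothesis (ii) of Lemma~\ref{lem:gehring} after local straightening. It then applies Gehring for $p>2$, and for large balls a finite covering together with $d\omega^{x_0}/d\sigma_\beta\lesssim 1$ on $\{\delta\gtrsim r_0\}$.

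The only slip is bookkeeping. The near layer is $\lesssim\omega^{x_0}(B)^2r^{\beta-n}$, not $r^{\beta+1-2n}$, because you dropped the factor $\mathcal{H}^{n-1}(B\cap\{\delta=t\})\lesssim r^{n-1}$. Dividing by $\sigma_\beta(B)\approx r^{n-\beta}$ then gives exactly $\omega^{x_0}(B)^2r^{2\beta-2n}\approx(\omega^{x_0}(B)/\sigma_\beta(B))^2$, so your unfinished conclusion display closes as expected.
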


\begin{proof}
    Let $\beta \in (2s-1,s]$ and let $B=B_{r}(\xi)$ with $\xi\in \partial \Omega$ and $r>0$ be a boundary ball. We first prove \eqref{eq:reverse-holder-general} in the case in which $r<r_{0}$ and $p=2$. Here $r_{0}$ is chosen sufficiently small with respect to the localization radius of the domain and $\delta(x_{0})$. In this case, we may split the integral into two parts:
    \begin{equation*}
        \int_{B_{r}(\xi)}\left(\frac{d\omega^{x_{0}}}{d\sigma_{\beta}}\right)^{2}d\sigma_{\beta}\approx \underbrace{\frac{1}{1-\beta}\int_{B_{r}(\xi)\cap \Omega^{c}\cap \{\delta < c_{0}r\}}(P^{x_{0}})^{2}\delta^{\beta}}_{I}+ \underbrace{\frac{1}{1-\beta}\int_{B_{r}(\xi)\cap \Omega^{c}\cap \{\delta \ge  c_{0}r\}}(P^{x_{0}})^{2}\delta^{\beta}}_{II}.
    \end{equation*}
    To bound $I$, we use \eqref{eq:Linfty-L2-Poisson-kernel} and the coarea formula:
    \begin{equation}\label{eq:bound-I-rev-hold}
    \begin{aligned}
        I&= \frac{1}{1-\beta}\int_{0}^{c_{0}r}t^{\beta}\int_{B_{r}(\xi)\cap \Omega^{c}\cap \{\delta =t\}}(P^{x_{0}})^{2}\, d\mathcal{H}^{n-1}\, dt
        \\
        &\approx \frac{\omega^{x_{0}}(B_{r}(\xi))^{2}}{r^{n+1-2s}}\frac{(1-s)^{2}}{1-\beta}\int_{0}^{c_{0}r}t^{\beta-2s}\,dt\approx \frac{\omega^{x_{0}}(B_{r}(\xi))^{2}}{r^{n-\beta}},
    \end{aligned}
    \end{equation}
    where in the equivalence we also used the fact that $\mathcal{H}^{n-1}(B_{r}(\xi)\cap \Omega^{c} \cap \{\delta=t\})\approx r^{n-1}$ for all $t\in (0,c_{0}r)$. To bound $II$ instead, we notice that $P^{x_{0}}(y)\approx (1-s)r^{-n}\omega^{x_{0}}(B_{r}(\xi))$ for all $y\in B_{r}(\xi)\cap \Omega^{c}\cap \{\delta\ge c_{0}r\}$, thanks to Lemmas \ref{lem:comparison-omega-G} and \ref{lem:comparison-P-G}. Therefore, we have
    \begin{equation}\label{eq:bound-II-rev-hold}
        II\approx \frac{(1-s)^{2}}{1-\beta}\frac{\omega^{x_{0}}(B_{r}(\xi))^{2}}{r^{2n-\beta}}|B_{r}(\xi)\cap \Omega^{c}\cap \{\delta\ge c_{0}r\}|\approx \frac{\omega^{x_{0}}(B_{r}(\xi))^{2}}{r^{n-\beta}}. 
    \end{equation}
    Putting together \eqref{eq:bound-I-rev-hold} and \eqref{eq:bound-II-rev-hold}, and using the fact that $\sigma_{\beta}(B_{r}(\xi))\approx r^{n-\beta}$, we derive \eqref{eq:reverse-holder-general} in the case $p=2$.

    Next, still working with boundary balls of sufficiently small radius $r<r_{0}$, using the Gehring-type result from Lemma \ref{lem:gehring} we improve the exponent of the reverse-H\"{o}lder \eqref{eq:reverse-holder-general} from $2$ to any $p\in [2,p_{0})$, for some $p_{0}>2$. Although Lemma \ref{lem:gehring} is stated for dyadic cubes in $(0,1]^{n}$, after a proper local straightening of the Lipschitz boundary, the same result applies to our situation. Under these transformations, in the notation of Lemma \ref{lem:gehring}, $\sigma$ corresponds to $\sigma_{\beta}\approx (1-\beta)\delta^{-\beta}$, $f$ corresponds to $d\omega^{x_{0}}/d\sigma_{\beta}\approx P^{x_{0}}\delta^{\beta}/(1-\beta)$, assumption $(i)$ with exponent $2$ is the reverse-H\"{o}lder we just proved, and assumption $(ii)$ follows from $P^{x_{0}}(y)\approx (1-s)r^{-n}\omega^{x_{0}}(B_{r}(\xi))$ for $y\in B_{r}(\xi)\cap \Omega^{c}$, $\delta(y)\gtrsim r$, which is obtained combining Lemmas \ref{lem:comparison-omega-G} and \ref{lem:comparison-P-G}.

    Finally, let us see how to treat the case of boundary balls with radius $r$ larger than $r_{0}$. In this case, the right-hand side in \eqref{eq:reverse-holder-general} will be approximately bounded below by $1$, so it only remains to show that the left-hand side is approximately bounded above by $1$. To do that, we split 
    \begin{equation*}
        U^{-}\coloneqq B_{r}(\xi)\cap \Omega^{c}\cap \{\delta<r_{0}/4\}\subseteq \bigcup_{j=1}^{N}B_{r_{0}/2}(\xi_{j}),\qquad U^{+}\coloneqq B_{r}(\xi)\cap \Omega^{c}\cap \{\delta\ge r_{0}/4\},
    \end{equation*}
    where $\xi_{j}$ are points on $\partial \Omega$ and $N\in \N$ depends only on the Lipschitz character of $\Omega$. Then, on the one hand, using the first part of the proof, we have
    \begin{equation}\label{eq:reverse-big-balls-1}
        \int_{U^{-}}\left(\frac{d\omega^{x_{0}}}{d\sigma_{\beta}}\right)^{p}d\sigma_{\beta}\le \sum_{j=1}^{N}\sigma_{\beta}(B_{r_{0}/2}(\xi_{j}))\fint_{B_{r_{0}/2}(\xi_{j})}\left(\frac{d\omega^{x_{0}}}{d\sigma_{\beta}}\right)^{p}d\sigma_{\beta}\lesssim \sum_{j=1}^{N} \frac{\omega^{x_{0}}(B_{r_{0}/2}(\xi_{j}))^{p}}{\sigma_{\beta}(B_{r_{0}/2}(\xi_{j}))^{p-1}}\lesssim 1.
    \end{equation}
    On the other hand, since $P^{x_{0}}\approx (1-s)\delta^{-n-2s}$ and $d\sigma_{\beta}/d\mathscr{L}^{n}\approx (1-\beta)\delta^{-n-2s}$ on $U^{+}$, we can estimate
    \begin{equation}\label{eq:reverse-big-balls-2}
        \int_{U^{+}}\left(\frac{d\omega^{x_{0}}}{d\sigma_{\beta}}\right)^{p}d\sigma_{\beta}\approx \sigma_{\beta}(U^{+})\lesssim 1. 
    \end{equation}
    Putting \eqref{eq:reverse-big-balls-1} and \eqref{eq:reverse-big-balls-2} together, and using $\sigma_{\beta}(B_{r}(\xi))\gtrsim 1$ due to $r\ge r_{0}$, we deduce that the left-hand side in \eqref{eq:reverse-holder-general} is bounded above by a constant. This concludes the proof.
\end{proof}

\begin{proof}[Proof of Theorem \ref{thm:Dahlberg-intro}] This is Theorem \ref{thm:Dahlberg-general} in the special case $\beta=s$, $p=2$.
\end{proof}

\section{The Dirichlet problem in Lipschitz domains}\label{sec:Dir_pb}

The goal of this section is to develop the framework in which to formulate the fractional Dirichlet problem with unbounded exterior data in rough domains, and in particular to prove our second main result, Theorem~\ref{thm:Dirichlet-solvability-intro}; see Theorem~\ref{thm:Dirichlet-problem-general} for a more general statement.

Our analysis is based on the notion of a general Poisson integral, obtained by integrating the Martin kernel against an arbitrary finite measure defined in the exterior of the domain. After introducing suitable nonlocal analogs of the classical non-tangential maximal operator, we prove one of the key results of this part, Lemma \ref{lem:non-tangential-maximal-function-bound}, which provides a pointwise comparison between the non-tangential maximal function of a general Poisson integral and a boundary-centered Hardy-Littlewood maximal function of the corresponding data with respect to $s$-harmonic measure. This bound, together with the reverse-H\"{o}lder inequality from Theorem \ref{thm:Dahlberg-general}, constitutes the main ingredient in the derivation of \eqref{eq:L2-estimate-dirichlet-intro}. 

The uniqueness part in Theorem \ref{thm:Dirichlet-solvability-intro} follows from the general criterion established in Lemma \ref{lem:general-uniqueness}, which identifies the integrability of the nonlocal non-tangential maximal function with respect to harmonic measure as a condition for well-posedness of the exterior Dirichlet problem.  

\subsection{General Poisson integrals and non-tangential maximal functions}

To treat general measure-valued exterior data in the Dirichlet problem, we use the Martin kernel
normalized at a fixed reference pole; we refer the reader to \cite{bogdan1999representationMartin} and references therein for some background on this construction in Lipschitz domains. 
\begin{defn}
    Let $\Omega\subset \R^{n}$ be a bounded Lipschitz domain, and let $x_{0}\in \Omega$. The Martin kernel of $\Omega$ with reference point $x_{0}$ is the function $K^{x_{0}}:\Omega\times \Omega^{c}\to (0,\infty)$ defined as
    \begin{equation*}
        K^{x_{0}}(x,y)\coloneqq \lim_{r\to 0^{+}}\frac{\omega^{x}(B_{r}(y))}{\omega^{x_{0}}(B_{r}(y))}.
    \end{equation*}
\end{defn}

\begin{rmk}\label{rmk:martin-kernel}
By \cite[Lemmas 6 and 7]{bogdan1999representationMartin}, for any $x_{0}\in \Omega$, the Martin kernel $K^{x_{0}}$ is well-defined and continuous in $\Omega \times \Omega^{c}$, $K^{x_{0}}(x_{0},y)=1$, and 
    \begin{equation*}
        K^{x_{0}}(x,y)= \begin{cases}
            \frac{P^{x}(y)}{P^{x_{0}}(y)}\qquad &\text{if $y\in \interior \Omega^{c}$},\\
            \underset{\Omega\ni z\to y}{\lim}\frac{G^{x}(z)}{G^{x_{0}}(z)}\qquad &\text{if $y\in \partial \Omega$}.
        \end{cases}
    \end{equation*}
In addition, if we extend $K^{x_{0}}(\cdot, y)$ to a distribution in $\R^{n}$ defined as
\begin{equation*}
    \widetilde K^{x_{0}}(\cdot, y)\coloneqq\begin{cases}
   \mathbbm{1}_{\Omega} K^{x_{0}}(\cdot,y)+\frac{\delta_{y}}{P^{x_{0}}(y)}\qquad &\text{if $y\in \interior \Omega^{c}$},\\
    K^{x_{0}}(\cdot,y)\mathbbm{1}_{\Omega}\qquad &\text{if $y\in \partial \Omega$},
\end{cases}
\end{equation*}
then $(-\Delta)^{s}\widetilde{K}^{x_{0}}(\cdot,y)=0$ in $\Omega$ distributionally, and $\widetilde{K}^{x_{0}}(\cdot, y)$ vanishes continuously in $\Omega^{c}\setminus \{y\}$. In the sequel, we will not distinguish between $K^{x_{0}}$ and $\widetilde{K}^{x_{0}}$ for simplicity of notation. 
\end{rmk}

The continuity of the Martin kernel with respect to the exterior variable allows it to be tested against arbitrary finite measures defined in the exterior. The resulting function is the corresponding Poisson integral, which is $s$-harmonic in the interior variable.  
\begin{defn}
    Let $\Omega\subset \R^{n}$ be a bounded Lipschitz domain, and let $x_{0}\in \Omega$. For every finite measure $\nu \in \mathcal{M}(\Omega^{c})$, the Poisson integral of $\nu$ with respect to the reference point $x_{0}$ is the function $H^{x_{0}}\nu:\Omega\to \R$ defined as
    \begin{equation*}
        H^{x_{0}}\nu(x)\coloneqq \int_{\Omega^{c}}K^{x_{0}}(x,y)d\nu(y)\qquad \forall x\in \Omega.
    \end{equation*}
\end{defn} 

\begin{rmk}
    By Remark \ref{rmk:martin-kernel}, $H^{x_{0}}\nu$ is well-defined in $\Omega$ for any $x_{0}\in \Omega$ and any $\nu \in \mathcal{M}(\Omega^{c})$. Moreover, if we extend $H^{x_{0}}\nu$ to the distribution in $\R^{n}$ defined as
    \begin{equation*}
        \widetilde{H}^{x_{0}}\nu\coloneqq \mathbbm{1}_{\Omega}H^{x_{0}}\nu +\frac{1}{P^{x_{0}}(\cdot)}\nu \res (\interior \Omega^{c}),
    \end{equation*}
    then $(-\Delta)^{s}\widetilde{H}^{x_{0}}\nu =0$ in $\Omega$, distributionally. Also here, for simplicity, we will not distinguish between $H^{x_{0}}\nu$ and $\widetilde{H}^{x_{0}}\nu$ in the sequel.
\end{rmk}

We now introduce several geometric objects that play an important role in the definition of solvability classes for the exterior Dirichlet problem. We begin with the following nonlocal analogs of the classical notions of non-tangential approach region and non-tangential maximal function; see Figure \ref{fig:nonlocal-cone}.
\begin{defn}\label{def:non-tangential-maximal-function}
    Let $\Omega\subset \R^{n}$ be any open set. For every $y \in \Omega^{c}$, the \emph{non-tangential region relative to $y$} is defined as
    \begin{equation*}
        \Gamma(y)\coloneqq \left\{x\in \Omega: \delta(x)\ge \frac{|x-y|}{2}\right\}.
    \end{equation*}
    For every $v:\Omega\to \R$, the \emph{non-tangential maximal function of $v$} is the function $v^{*}:\Omega^{c}\to [0,\infty]$ defined as
    \begin{equation*}
        v^{*}(y)\coloneqq \begin{cases}
            \sup_{x\in \Gamma(y)}|v(x)|\qquad &\text{if $\Gamma(y)\neq \varnothing$},\\
            0\qquad &\text{if $\Gamma(y)=\varnothing$},
        \end{cases}\qquad \forall y \in \Omega^{c}.
    \end{equation*}
\end{defn}

\begin{rmk}
    By the definition of non-tangential cone region, $\Gamma(y)=\varnothing$ for all $y\in \Omega^{c}$ such that $\delta(y)>\diam \Omega$. In particular, for every $v:\Omega\to \R$, the non-tangential maximal function $v^{*}:\Omega^{c}\to [0,\infty]$ is supported in $\Omega^{c}\cap \{\delta\le \diam \Omega\}$. 
\end{rmk}

\begin{figure}[H]
\centering
\includegraphics[width=0.5\textwidth]{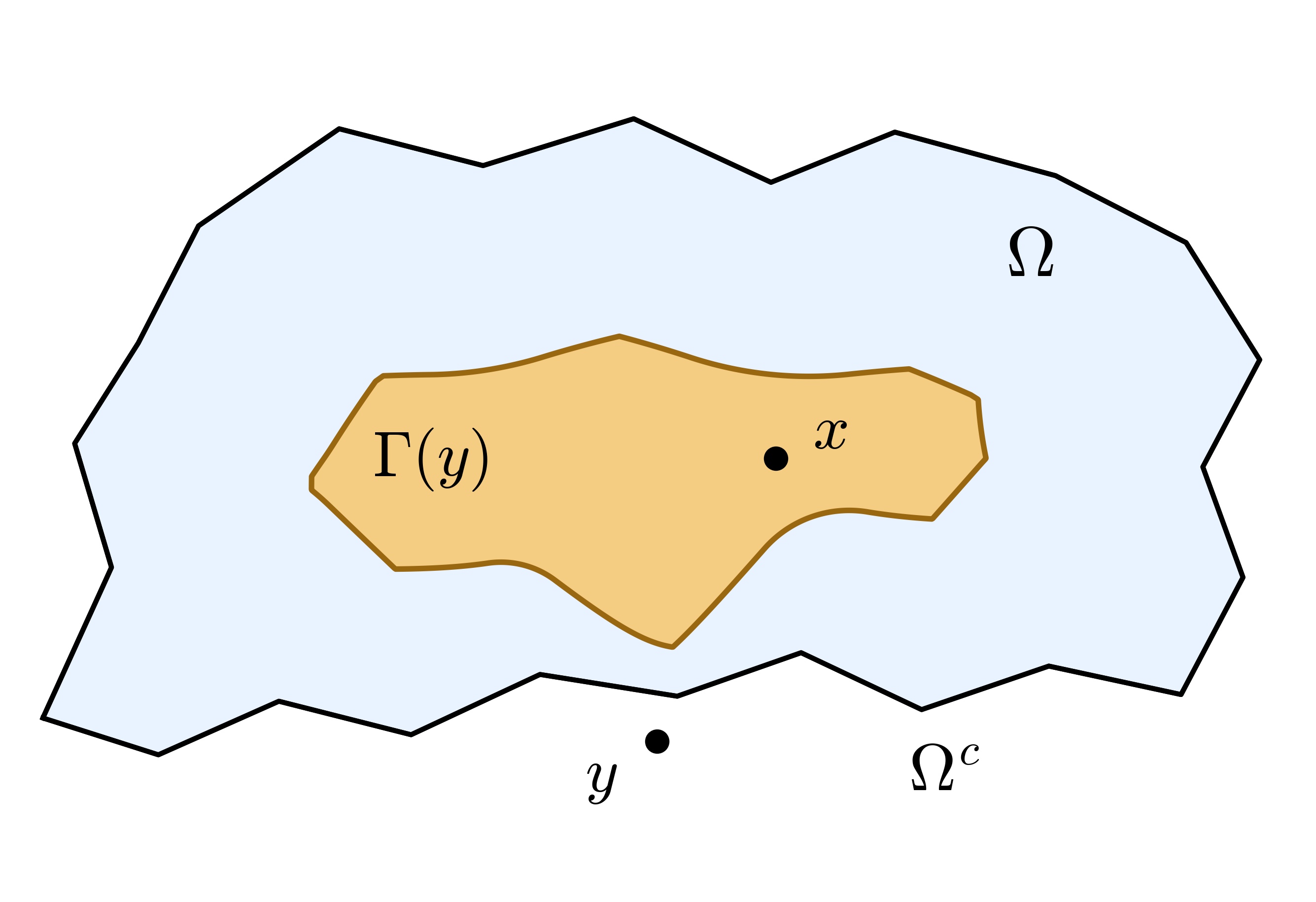}
\caption{Schematic picture of the nonlocal non-tangential region $\Gamma(y)$.}
\label{fig:nonlocal-cone}
\end{figure}

As shown in the following elementary lemma, for Lipschitz domains, exterior integral control of the non-tangential maximal function implies (and, in fact, is much stronger than) the corresponding interior estimate.

\begin{lem}\label{lem:comparison-inside-outside-nontangential}
    Let $\Omega\subset \R^{n}$ be a bounded Lipschitz domain. Then, for every $v:\Omega\to \R$ we have 
    \begin{equation}\label{eq:geometric-inequality-nontang-max-function}
        \int_{\Omega}\sup_{B_{\delta(x)/4}(x)}|v|\,dx\le C\int_{\Omega^{c}}v^{*}(y)\,dy,
    \end{equation}
    where $C$ depends only on the Lipschitz character of $\Omega$. 
\end{lem}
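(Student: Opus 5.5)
The idea is to associate to each interior point $x\in\Omega$ a set $E(x)\subset\Omega^{c}$ of exterior points $y$ whose nonlocal cone $\Gamma(y)$ contains the whole ball $B_{\delta(x)/4}(x)$. The set should have Lebesgue measure comparable to $\delta(x)^{n}$. Then $\sup_{B_{\delta(x)/4}(x)}|v|\le v^{*}(y)$ for every $y\in E(x)$, and averaging gives
\begin{equation*}
\sup_{B_{\delta(x)/4}(x)}|v|\le \frac{C}{\delta(x)^{n}}\int_{E(x)}v^{*}(y)\,dy.
\end{equation*}
Integrating in $x$ and applying Fubini reduces \eqref{eq:geometric-inequality-nontang-max-function} to the geometric bound
\begin{equation*}
\sup_{y\in\Omega^{c}}\int_{\{x\in\Omega:\, y\in E(x)\}}\delta(x)^{-n}\,dx\le C.
\end{equation*}

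For the construction, fix $x\in\Omega$, let $\xi\in\partial\Omega$ be a closest boundary point (so $|x-\xi|=\delta(x)$), and take $E(x)=B_{c\delta(x)}(A'_{c'\delta(x)}(\xi))\cap \Omega^{c}$. Here the exterior corkscrew point comes from \eqref{eq:corkscrew-condition}, and $c,c'$ are small constants depending on $L$. For $y\in E(x)$ and $z\in B_{\delta(x)/4}(x)$:
\begin{itemize}
\item $\delta(z)\ge \tfrac34\delta(x)$;
\item $|z-y|\le |z-x|+|x-\xi|+|\xi-y|\le \delta(x)/4+\delta(x)+2c'\delta(x)$.
\end{itemize}
These two bounds alone give $|z-y|\le\frac32\delta(x)$ only when $c'$ is tiny, and then $2\delta(z)\ge\frac32\delta(x)\ge|z-y|$, so $z\in\Gamma(y)$. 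Since $c'$ may be taken small (the corkscrew condition holds at every scale below $\bar r/2$), this works for $\delta(x)<\bar r$. The exterior corkscrew ball has measure $\approx (c c'\delta(x))^{n}$, which is the required lower bound on $|E(x)|$.

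Points with $\delta(x)\ge \bar r$ need separate handling. They lie in a compact set of measure at most $S^{n}$. For these I would choose $\xi$ and use a corkscrew at scale $c'\bar r$: the same triangle inequality still works, and $|E(x)|\gtrsim \bar r^{n}\gtrsim \delta(x)^{n}$ with constants depending on $S/\bar r$.

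For the overlap bound, fix $y\in\Omega^{c}$. If $y\in E(x)$, then $|x-y|\le 2\delta(x)$ and $\delta(y)\le \operatorname{dist}(y,\xi)\lesssim c'\delta(x)$; moreover $\delta(y)\gtrsim \delta(x)$, since $y$ lies near an exterior corkscrew point of size $\approx c'\delta(x)$. Consequently the set of such $x$ lies in $B_{C\delta(y)}(y)$, where $\delta(x)\approx\delta(y)$, and so
\begin{equation*}
\int_{\{x:\,y\in E(x)\}}\delta(x)^{-n}\,dx\lesssim \delta(y)^{-n}\,\delta(y)^{n}=C.
\end{equation*}

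The main point to check is the lower bound $\delta(y)\gtrsim\delta(x)$ for $y\in E(x)$. This requires choosing $c$ smaller than the corkscrew constant, so that $E(x)$ stays at distance $\gtrsim c'\delta(x)$ from $\partial\Omega$; with that choice everything else is routine bookkeeping.
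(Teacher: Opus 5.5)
Your proof is correct, but it is organized differently from the paper's.

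The paper handles $\{\delta\ge r_0\}$ by averaging $v^*$ over a single boundary ball of fixed radius $c_0$, which is your construction frozen at one scale, and closes that part using $|\Omega|<\infty$. It handles the layer $\{\delta\le r_0\}$ with finitely many bilipschitz maps $F_i:U_i\subset\Omega^c\to V_i\subset\Omega$ satisfying $B_{\delta(F_i(y))/4}(F_i(y))\subset\Gamma(y)$, followed by a change of variables. You instead run the averaging at every scale $\delta(x)$ and replace the change of variables by a Whitney-type bounded-overlap count via Fubini.

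Your version uses only the exterior corkscrew condition and the identity $|x-\xi|=\delta(x)$ for a nearest point. It is therefore insensitive to the size of $L$, and it avoids constructing the maps $F_i$, which the paper asserts without proof. The natural candidate, vertical reflection across the graph (with any stretching), actually fails near a planar face of slope $L\ge\sqrt{3}$: there $|F(y)-y|>\sqrt{1+L^2}\,\delta(F(y))\ge 2\delta(F(y))$, so $F(y)\notin\Gamma(y)$. The paper's route, once the $F_i$ are granted, gives an injective transfer with no overlap bookkeeping.

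Two small fixes are needed.
\begin{enumerate}
\item To get $\delta(y)\gtrsim\delta(x)$ on $E(x)$, the radius of $E(x)$ must be below the corkscrew constant times $c'\delta(x)$, not times $\delta(x)$. Your volume count $(cc'\delta(x))^n$ suggests you intended radius $cc'\delta(x)$, which is fine.
\item Since $\xi$ and $A'$ are non-canonical choices, joint measurability for Fubini is not automatic. Apply Fubini only after enlarging $E(x)$, for a suitable $c''>0$, to the Borel set $\{y\in\Omega^c:\ |x-y|\le 2\delta(x),\ \delta(y)\ge c''\min\{\delta(x),\bar r\}\}$. This set contains $E(x)$, and because $\delta$ is $1$-Lipschitz it satisfies the same overlap bound.
\end{enumerate}
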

\begin{proof}
    Let $r_{0}>0$ be sufficiently small with respect to the Lipschitz character of $\Omega$. Then, there is a small constant $c_{0}>0$ depending only on $r_{0}$ such that, for any $x\in \Omega\cap \{\delta\ge r_{0}\}$, there is a boundary ball $B^{x}=B_{c_{0}}(\xi_{x})$ with $\xi_{x}\in \partial \Omega$ such that $B_{\delta(x)/4}(x)\subset \Gamma(y)$ for all $y\in B^{x}$. In particular 
    \begin{equation}\label{eq:bound-inside-geometric-lemma}
    \begin{aligned}
        \int_{\Omega\cap \{\delta\ge r_{0}\}}\sup_{B_{\delta(x)/4}(x)}|v|\,dx&\le \int_{\Omega\cap \{\delta\ge r_{0}\}}\fint_{B^{x}\cap \Omega^{c}}v^{*}(y)\,dy\,dx\\
        &\lesssim \frac{|\Omega\cap \{\delta\ge r_{0}\}|}{c_{0}^{n}}\int_{\Omega^{c}}v^{*}(y)\,dy\lesssim \int_{\Omega^{c}}v^{*}(y)\,dy.
    \end{aligned}
    \end{equation}
    On the other hand, $\Omega\cap \{\delta\le r_{0}\}$ can be covered by the images $V_{i}$ of $N$ bilipschitz maps $F_{i}:U_{i}\to V_{i}$ where $U_{i}\subset \Omega^{c}$, $V_{i}\subset \Omega$, and $B_{\delta(F_{i}(y))/4}(F_{i}(y))\subset \Gamma(y)$ for all $y\in U_{i}$. Here $N$ and the Lipschitz constants of $F_{i}$ depend only on the Lipschitz character of $\Omega$. Therefore, by the change of variables formula
    \begin{equation}\label{eq:bound-boundary-geometric-lemma}
        \begin{aligned}
            \int_{\Omega\cap \{\delta\le r_{0}\}}\sup_{B_{\delta(x)/4}(x)}|v|\,dx &\le \sum_{i=1}^{N}\int_{V_{i}}\sup_{B_{\delta(x)/4}(x)}|v|\,dx\\
            &\lesssim \sum_{i=1}^{N} \int_{U_{i}} \sup_{B_{\delta(F_{i}(y))/4}(F_{i}(y))}|v|\,dy\le \sum_{i=1}^{N}\int_{U_{i}}v^{*}(y)\,dy\lesssim \int_{\Omega^{c}}v^{*}(y)\,dy.
        \end{aligned}
    \end{equation}
    Putting \eqref{eq:bound-inside-geometric-lemma} and \eqref{eq:bound-boundary-geometric-lemma} together, we deduce the desired inequality.
\end{proof}
\begin{rmk}\label{rmk:exterior-interior-weighted}
     We shall also use an exterior-interior comparison of distance-weighted $L^{q}$-norms. Let $\beta\ge 0$ and $q\ge 1$. Then, given $u:\Omega\to \R$, by the definition of the non-tangential region, we have $(|u|^{q}\delta^{-\beta})^{*}\lesssim (u^{*})^{q}\delta^{-\beta}$. Therefore, Lemma \ref{lem:comparison-inside-outside-nontangential} yields  
     \begin{equation*}
         \lVert u\rVert_{L^{q}(\Omega,\delta^{-\beta})}\lesssim \lVert u^{*}\rVert_{L^{q}(\Omega^{c},\delta^{-\beta})}.
     \end{equation*}
\end{rmk}

Next, we introduce a Hardy-Littlewood maximal operator built from boundary-centered balls. When the reference measure is doubling on such balls, this operator enjoys the usual $L^{p}$ estimates for maximal functions; see Lemma \ref{lem:maximal-inequality-general-weights}.

\begin{defn}\label{def:boundary-balls-maximal-operator}
    For any open set $\Omega\subset \R^{n}$, we denote by $\mathcal{B}(\Omega)$ the set of boundary balls
\begin{equation*}
    \mathcal{B}(\Omega)\coloneqq \{B_{r}(\xi):\xi\in \partial\Omega,\, r>0\}.
\end{equation*}
For any choice of finite measures $\nu \in \mathcal{M}(\Omega^{c})$ and $\mu\in \mathcal{M}_{+}(\Omega^{c})$ such that $\mu(B)>0$ for all $B\in \mathcal{B}(\Omega)$, the boundary maximal function $M^{\Omega}_{\mu}\nu: \Omega^{c}\to [0,\infty]$ is defined as 
\begin{equation*}
    M^{\Omega}_{\mu}\nu(y)\coloneqq\sup_{y\in B\in \mathcal{B}(\Omega)}\frac{|\nu|(B)}{\mu(B)}\qquad \forall y\in \Omega^{c}.
\end{equation*}
\end{defn}

In the following lemma we show that the non-tangential maximal function of the Poisson integral corresponding to a given finite exterior measure $\nu$ is pointwise controlled by the boundary
maximal operator of $\nu$ with respect to harmonic measure.

\begin{lem}\label{lem:non-tangential-maximal-function-bound}
Let $\Omega\subset \R^{n}$ be a bounded Lipschitz domain, and let $x_{0}\in \Omega$. For any finite measure $\nu \in \mathcal{M}(\Omega^{c})$, the following holds:
\begin{equation*}
    (H^{x_{0}}\nu)^{*}(y)\lesssim M^{\Omega}_{\omega^{x_{0}}}\nu (y)\qquad \forall y \in \Omega^{c},
\end{equation*}
where the constant depends only on $n,s$, the Lipschitz character of $\Omega$, and $\delta(x_{0})$. Moreover, if $\nu$ is nonnegative, then 
\begin{equation*}
    (H^{x_{0}}\nu)^{*}(y)\approx M^{\Omega}_{\omega^{x_{0}}}\nu (y)\qquad \forall y \in \Omega^{c},\quad \Gamma(y)\neq \varnothing.
\end{equation*}
\end{lem}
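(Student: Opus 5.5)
Fix $y\in\Omega^{c}$ with $\Gamma(y)\neq\varnothing$ and $x\in\Gamma(y)$, and set $d\coloneqq\delta(x)$. Then $|x-y|\le 2d$, and there is $\xi\in\partial\Omega$ with $|x-\xi|=d$, so $y\in B_{3d}(\xi)$. Since $|H^{x_0}\nu|\le H^{x_0}|\nu|$, it suffices to treat $\nu\ge 0$. The plan is to prove a pointwise Martin kernel estimate of the form
\begin{equation*}
K^{x_0}(x,z)\lesssim \sum_{k\ge 0} 2^{-k\gamma}\,\frac{\mathbbm{1}_{B_{2^{k}C_1 d}(\xi)}(z)}{\omega^{x_0}(B_{2^{k}C_1 d}(\xi))}
\end{equation*}
for some $\gamma>0$ and all $z\in\Omega^{c}$, with the terms having $2^k C_1 d\gtrsim \min\{r_0,\delta(x_0)\}$ absorbed into a bounded number of terms. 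Integrating against $\nu$ then gives
\[
H^{x_0}\nu(x)\lesssim \sum_k 2^{-k\gamma}\,\frac{\nu(B_{2^kC_1d}(\xi))}{\omega^{x_0}(B_{2^kC_1d}(\xi))}\lesssim M^{\Omega}_{\omega^{x_0}}\nu(y),
\]
because every ball $B_{2^kC_1d}(\xi)$ is a boundary ball containing $y$ once $C_1\ge 3$. Taking the supremum over $x\in\Gamma(y)$ yields the upper bound.

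To prove the kernel estimate, we write $K^{x_0}(x,z)=\lim_{\rho\to0}\omega^x(B_\rho(z))/\omega^{x_0}(B_\rho(z))$ and split according to the dyadic annulus of $\xi$ in which $z$ lies.
\begin{itemize}
\item \emph{Near region, $z\in B_{C_1d}(\xi)$.} The point $x$ is a corkscrew point at scale $d$, up to a Harnack chain. The change of pole formula (Lemma \ref{lem:change-of-pole}), with $r=C_1d$ and $r'=\rho\to0$, gives $K^{x_0}(x,z)\approx 1/\omega^{x_0}(B_{C_1d}(\xi))$. For $z$ in the interior of $\Omega^c$ at distance $\gtrsim d$ from $\partial\Omega$, the same bound follows from Lemmas \ref{lem:comparison-omega-G} and \ref{lem:comparison-P-G}, since $P^x(z)/P^{x_0}(z)$ can be compared with the boundary ratios.
\item \emph{Far region, $z\in B_{2^{k+1}C_1d}(\xi)\setminus B_{2^kC_1d}(\xi)$.} Set $R=2^kC_1d$. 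The map $w\mapsto K^{x_0}(w,z)$ is nonnegative, $s$-harmonic in $\Omega\cap B_{R/2}(\xi)$, and vanishes in $B_{R/2}(\xi)\setminus\Omega$. Theorem \ref{thm:holder-boundary-lip-domain} therefore gives $K^{x_0}(x,z)\lesssim (d/R)^{\alpha}K^{x_0}(A_{R/4}(\xi),z)$. The factor $K^{x_0}(A_{R/4}(\xi),z)$ is bounded by $\lesssim 1/\omega^{x_0}(B_{CR}(\xi))$ by the near-region argument at scale $R$. Doubling (Lemma \ref{lem:doubling}) then provides $\gamma=\alpha$.
\item \emph{Large scales.} Once $R$ exceeds $\min\{r_0,\delta(x_0)\}$, Harnack together with $K^{x_0}(x_0,\cdot)=1$ bounds $K^{x_0}(\cdot,z)$ by a constant depending on $\delta(x_0)$. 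This stage also needs care about the tails of $K^{x_0}(\cdot,z)$ (as an $s$-harmonic function it is not purely local); these tails are handled through the same Poisson kernel comparisons.
\end{itemize}

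For the lower bound when $\nu\ge0$, take any boundary ball $B=B_r(\xi')\ni y$ with $r\lesssim r_0$. We look for a point $x\in\Gamma(y)$ with $K^{x_0}(x,z)\gtrsim 1/\omega^{x_0}(B)$ for all $z\in B$.
\begin{itemize}
\item If $r\gtrsim \delta(y)$, choose $x$ as an interior corkscrew point at scale $\approx r$ near $\xi'$. Then $\delta(x)\approx r\gtrsim |x-y|$, but $\Gamma(y)$ requires $\delta(x)\ge|x-y|/2$ exactly. To arrange this, pick the corkscrew for a ball around the boundary point nearest $y$ so that the constants fit, and use Harnack to pass within the Whitney region. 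The lower half of the change of pole comparability then gives the kernel bound, and hence $H^{x_0}\nu(x)\gtrsim \nu(B)/\omega^{x_0}(B)$.
\item If $r\ll\delta(y)$, then $B$ is contained in a comparable boundary ball of radius $\approx\delta(y)$, and doubling reduces this case to the previous one.
\end{itemize}
Large balls are again handled by the $\delta(x_0)$-dependent constants.

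The main obstacle is the kernel estimate for $z$ in the far region and for $z$ deep in the interior of $\Omega^{c}$. There the exterior variable is not on $\partial\Omega$, so the boundary Harnack principle does not apply directly to $K^{x_0}(\cdot,z)$. I would first try to handle this via the representation $K^{x_0}(x,z)=P^x(z)/P^{x_0}(z)$ and formula \eqref{eq:poisson-kernel-integral-formula}, which reduces the estimate to ratios of Green functions $G^x/G^{x_0}$. Those ratios are controlled by Remark \ref{rmk:change-of-pole-green-functions} and boundary H\"older decay.
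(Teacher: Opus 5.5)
Your upper bound is essentially the paper's argument. You use dyadic annuli around a boundary point at scale $\approx\delta(x)$. On the first ball, $K^{x_0}(x,z)=P^{x}(z)/P^{x_0}(z)$ is turned by Lemma~\ref{lem:comparison-P-G} (at scale $\delta(z)$, for both poles) into a ratio $G^{x}/G^{x_0}$ at an interior corkscrew point, which Remark~\ref{rmk:change-of-pole-green-functions} shows is $\approx 1/\omega^{x_0}(B_{d}(\xi))$. This is exactly what removes the ``obstacle'' you flag for interior $z$ with $\delta(z)\ll d$. On the $k$-th annulus, Theorem~\ref{thm:holder-boundary-lip-domain} applied to $K^{x_0}(\cdot,z)$ gives the factor $2^{-k\alpha}$. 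Large scales follow from Harnack and $K^{x_0}(x_0,\cdot)=1$; there is no tail issue, since the nonlocal Harnack inequality only needs global nonnegativity. Taking $\xi$ to be the boundary point nearest to $x$, rather than a point on the segment $[x,y]$, is immaterial.

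The gap is in the reverse inequality for $\nu\ge0$. The step ``pick the corkscrew \dots\ so that the constants fit'' cannot be carried out for a general Lipschitz character. The aperture of $\Gamma(y)$ is fixed, corkscrew constants degenerate as $L$ grows, and Harnack chains do not help because $(H^{x_0}\nu)^*(y)$ only sees values at points of $\Gamma(y)$. In fact the two-sided bound fails in this generality.

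Let $\Omega\subset\R^2$ be the unit disk with a thin triangular spike attached, with tip $p=(3/2,0)$, so that $\Omega\cap B_{1/2}(p)$ is a sector with vertex $p$ and opening angle $\theta<60^\circ$. Let $x_0=0$.
\begin{itemize}
\item For $x\in\Omega\cap B_{1/2}(p)$ one has $\delta(x)\le|x-p|\sin(\theta/2)<|x-p|/2$. Hence $\Gamma(p)\subset\{\delta\ge 1/4\}$, while $0\in\Gamma(p)$, so $\Gamma(p)\neq\varnothing$.
\item For $\nu_\rho\coloneqq\omega^{x_0}\res B_\rho(p)$ one has $H^{x_0}\nu_\rho(x)=\omega^{x}(B_\rho(p))$.
\item Harnack then gives $(H^{x_0}\nu_\rho)^*(p)\lesssim\omega^{x_0}(B_\rho(p))\to0$ as $\rho\to0$, whereas $M^{\Omega}_{\omega^{x_0}}\nu_\rho(p)\ge1$.
\end{itemize}

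The lower bound therefore needs an extra hypothesis. One option is that $\Gamma(y)$ contains a point $x$ with $\delta(x)\approx|x-y|\approx\rho$ at every scale $\rho\in(\delta(y),r_0)$; this holds for small $L$, or if the aperture of $\Gamma$ may depend on $L$. Under it, your two-sided near-region bound plus doubling does give the result.

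The paper's one-line argument (``choose $x\in\Gamma(y)$ such that $B_r(\xi)$ achieves a positive fraction of the supremum'') tacitly makes the same assumption. So the defect is shared with the paper, but your proposed fix does not repair it.

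Finally, your subcase $r\ll\delta(y)$ is vacuous: every boundary ball containing $y$ has radius larger than $\delta(y)$.
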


\begin{proof}
 Fix $y\in \Omega^{c}$. If $\Gamma(y)=\varnothing$ there is nothing to prove, so let us assume that $\delta(y)\le \diam \Omega$ and $\Gamma(y)\neq \varnothing$. Take $x\in \Gamma(y)$, and call $r\coloneqq |x-y|\in [\delta(x),2\delta(x)]$. 

In the sequel, $r_{0}>0$ will be chosen sufficiently small with respect to the localization radius of the domain and $\delta(x_{0})$. Suppose first that $r\ge r_{0}$. Then, the fact that $K^{x_{0}}(x_{0},z)=1$ and the Harnack inequality applied to $K^{x_{0}}(\cdot,z)$ yield
    \begin{equation*}
        K^{x_{0}}(x,z)\approx 1\qquad \forall z\in \Omega^{c}.
    \end{equation*}
    In particular, since $\omega^{x_{0}}$ is a probability measure, in this case we obtain
    \begin{equation*}
        |H^{x_{0}}\nu(x)|=\left|\int_{\Omega^{c}}K^{x_{0}}(x,z)\,d\nu(z)\right|\lesssim |\nu|(\R^{n})\le M^{\Omega}_{\omega^{x_{0}}}\nu(y).
    \end{equation*}
    
    We now consider the case $r<r_{0}$. Let $\xi \in \partial \Omega$ be a point in the segment that connects $x$ to $y$, and note that we must have $|x-\xi|\in [r/2,r]$. We then define
    \begin{equation*}
        R_{0}\coloneqq B_{r}(\xi)\cap \Omega^{c},\qquad R_{k}\coloneqq(B_{2^{k}r}(\xi)\setminus B_{2^{k-1}r}(\xi))\cap \Omega^{c}\quad \forall k\ge 1.
    \end{equation*}
    Let $\bar k$ be the largest nonnegative integer for which $2^{\bar k}r\le r_{0}$. 
    We claim that 
    \begin{equation}\label{eq:claim-lem-max-functions}
        \sup_{z\in R_{k}}K^{x_{0}}(x,z)\lesssim 2^{-k\alpha}\frac{1}{\omega^{x_{0}}(B_{2^{k}r}(\xi))}\qquad\forall k\in \{0,\dots,\bar k\}.
    \end{equation}
    We first prove \eqref{eq:claim-lem-max-functions} in the case $k=0$. Take any $z\in B_{r}(\xi)\cap \Omega^{c}$, and let $\xi'\in \partial \Omega$ be such that $|z-\xi'|=\delta(z)=:r'$. Then, using the comparison Lemmas \ref{lem:comparison-omega-G}, \ref{lem:comparison-P-G}, and the change of pole formula from Lemma \ref{lem:change-of-pole}, we derive the following chain of equivalences:
    \begin{equation}\label{eq:equivalence-martin-first-ball}
        K^{x_{0}}(x,z)=\frac{P^{x}(z)}{P^{x_{0}}(z)}\approx \frac{G^{x}(A_{r'}(\xi'))}{G^{x_{0}}(A_{r'}(\xi'))}\approx \frac{1}{\omega^{x_{0}}(B_{r}(\xi))}.
    \end{equation}
    Let now $k\in \{1,\dots,\bar k\}$, and take $z\in R_{k}$. By Remark \ref{rmk:martin-kernel}, $K^{x_{0}}(\cdot,z)$ is $s$-harmonic in $B_{2^{k-1}r}(\xi)\cap \Omega$ and vanishes continuously in $B_{2^{k-1}r}(\xi)\cap \Omega^{c}$. Therefore, after applying Theorem \ref{thm:holder-boundary-lip-domain} and adjusting with the Harnack inequality, we get
    \begin{equation*}
        K^{x_{0}}(x,z)\lesssim \left(\frac{\delta(x)}{\delta(A_{2^{k}r}(\xi))}\right)^{\alpha}K^{x_{0}}(A_{2^{k}r}(\xi),z)\approx 2^{-k\alpha}\frac{1}{\omega^{x_{0}}(B_{2^{k}r}(\xi))},
    \end{equation*}
    where the last equivalence follows from the same argument as in the case $k=0$, with $r$ now replaced by $2^{k}r$. This concludes the proof of claim \eqref{eq:claim-lem-max-functions}. 

    Arguing as in case $k=\bar k$ above, and noting that, by Harnack, $K^{x_{0}}(A_{2^{\bar k}r}(\xi),z)\approx 1$, we also get
    \begin{equation}\label{eq:tails-lem-max-functions}
        \sup_{\Omega^{c}\setminus B_{2^{\bar k}r}(\xi)}K^{x_{0}}(x,\cdot)\lesssim 1.
    \end{equation}
    Finally, putting \eqref{eq:claim-lem-max-functions} and \eqref{eq:tails-lem-max-functions} together, and using the fact that $y\in B_{2^{k}r}(\xi)$ for all $k\ge 0$, we find
    \begin{align*}
        |H^{x_{0}}\nu(x)|&=\left|\int_{\Omega^{c}}K^{x_{0}}(x,z)\,d\nu(z)\right|\\
        &\le \sum_{k=0}^{\bar k}\sup_{R_{k}}K^{x_{0}}(x,\cdot)|\nu|(R_{k})+ |\nu|(\Omega^{c}\setminus B_{2^{\bar k}r}(\xi))\sup_{\Omega^{c}\setminus B_{2^{\bar k}r}(\xi)}K^{x_{0}}(x,\cdot)\\
        &\lesssim \sum_{k=0}^{\bar k}2^{-k\alpha}\frac{|\nu|(B_{2^{k}r}(\xi))}{\omega^{x_{0}}(B_{2^{k}r}(\xi))}+|\nu|(\R^{n})\lesssim M^{\Omega}_{\omega^{x_{0}}}\nu(y). 
    \end{align*}
    By the arbitrariness of $x\in \Gamma(y)$, we get the desired inequality. To prove that this becomes an equivalence when $\nu$ is nonnegative, for a given $y\in \Omega^{c}$ for which $\Gamma(y)\neq \varnothing$, with the notations of the above proof, it suffices to choose $x\in \Gamma(y)$ such that $B_{r}(\xi)$ achieves a positive fraction of the supremum in the definition of maximal operator. Then, the equivalence in \eqref{eq:equivalence-martin-first-ball} allows us to conclude that $H^{x_{0}}\nu(x)\gtrsim M^{\Omega}_{\omega^{x_{0}}}\nu (y)$.
\end{proof}

\subsection[Lq solvability of the Dirichlet problem]{\texorpdfstring{$L^{q}$-solvability}{Lq solvability} of the Dirichlet problem}

The following lemma provides a general uniqueness criterion for the exterior Dirichlet problem, formulated in terms of the integrability of the nonlocal non-tangential maximal function with respect to $s$-harmonic measure. The sharpness of this condition is discussed in Remark \ref{rmk:sharpness-uniqueness-criterion}.
\begin{lem}\label{lem:general-uniqueness}
    Let $\Omega\subset \R^{n}$ be a bounded Lipschitz domain, and let $u\in L^{1}(\R^{n},w_{s})$ be a distributional solution of $(-\Delta)^{s}u=0$ in $\Omega$ such that $u=0$ in $\Omega^{c}$. If $u^{*}\in L^{1}(\Omega^{c}, \omega^{x_{0}})$ for some $x_{0}\in \Omega$, then  $u\equiv 0$.
\end{lem}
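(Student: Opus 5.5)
The plan is to show $u(x)=0$ for each $x\in\Omega$ by exhausting $\Omega$ with interior subdomains and passing to the limit in their Poisson representations. Let $\Omega_t\coloneqq\{\delta>t\}$ for small $t>0$. More precisely, I would use smooth (or at least Lipschitz, uniformly in $t$) approximations $\Omega_t\Subset\Omega$ with common Lipschitz character, converging to $\Omega$ in the Hausdorff sense. By interior regularity, $u$ is smooth in $\Omega$. It is then a classical solution in $\Omega_t$ with exterior datum $u$ in $\Omega_t^c$; this datum vanishes on $\Omega^c$ and is bounded and continuous on $\Omega\setminus\Omega_t$ near $\partial\Omega_t$. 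Hence, for $x\in\Omega_t$,
\begin{equation*}
    u(x)=\int_{\Omega_t^{c}}u\,d\omega^{x}_{\Omega_t}=\int_{\Omega\setminus\Omega_t}u(z)\,P^{x}_{\Omega_t}(z)\,dz .
\end{equation*}
Uniqueness for bounded continuous data in $\Omega_t$ (the tails are handled by $u\in L^{1}(\R^n,w_s)$ and $u=0$ far away) justifies this identity. It then suffices to show that the right-hand side tends to $0$ as $t\to0$ along a suitable sequence.

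To bound it, I would compare $P^{x}_{\Omega_t}(z)$ for $z\in\Omega\setminus\Omega_t$ with the harmonic measure of $\Omega$. Using formula \eqref{eq:poisson-kernel-integral-formula} with $G_{\Omega_t}^x\le G_\Omega^x$, one gets $P^{x}_{\Omega_t}(z)\le c_{n,s}\int_{\Omega_t}G^{x}_\Omega(w)|w-z|^{-n-2s}dw$. This has the same structure as the Poisson kernel of $\Omega$ at an exterior point, but evaluated at an interior point $z$ within distance $\lesssim t$ of $\partial\Omega_t$. Next I would split the layer $\Omega\setminus\Omega_t$ into the thin part near $\partial\Omega_t$ and the rest, and bound $|u(z)|$ pointwise by $u^*(y)$ for $y\in\Omega^c$ such that $z\in\Gamma(y)$. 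Averaging over those $y$, whose set has measure $\approx\delta(z)^n$ near $\partial\Omega$, gives
\begin{equation*}
    |u(x)|\lesssim\int_{\Omega^{c}\cap\{\delta\lesssim t\}}u^{*}(y)\,\mathcal{K}_t(y)\,dy,
\end{equation*}
with an effective kernel $\mathcal{K}_t(y)$. The aim is to prove $\mathcal{K}_t(y)\lesssim P^{x}_\Omega(y')$ for comparable exterior points $y'$, uniformly in $t$. I would obtain this from Lemmas \ref{lem:comparison-omega-G} and \ref{lem:comparison-P-G}, the boundary Harnack principle (to compare $G_{\Omega_t}$ with $G_\Omega$ near the boundary), and a dyadic decomposition as in Lemma \ref{lem:non-tangential-maximal-function-bound}. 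Summing boundary-ball pieces via Lemma \ref{lem:comparison-omega-G}, the bound becomes $|u(x)|\lesssim\int_{\Omega^c\cap\{\delta\lesssim t\}}u^*\,d\omega^x$ (up to the pole change $x\leftrightarrow x_0$, which Harnack handles). This tends to $0$ by dominated convergence, since $u^*\in L^1(\omega^{x_0})$ and $\omega^{x_0}(\partial\Omega)=0$ by absolute continuity.

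The main obstacle is the uniform domination step: controlling the mass that $\omega^x_{\Omega_t}$ puts on the layer $\Omega\setminus\Omega_t$ by $\omega^x_\Omega$-mass of a thin exterior layer, while also transferring values of $u$ from interior points to $u^*$ at exterior vertices. One complication is that points $z$ deep in the layer (with $\delta(z)\ll t$) see $\Omega_t$ only at distance $\gtrsim t$. There $P^x_{\Omega_t}(z)$ behaves like $t^{-2s}G(A_t)$ times a profile, and one must check that integrating $|u|$ against this profile is bounded by $\omega^x$ of boundary balls of radius $\sim t$ times the average of $u^*$. My first attempt would be to cover $\partial\Omega$ by balls $B_t(\xi_j)$ with bounded overlap. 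For each $j$, I would bound the contribution from $B_{Ct}(\xi_j)\cap(\Omega\setminus\Omega_t)$ by $\omega^x(B_t(\xi_j))\fint_{B_{Ct}(\xi_j)\cap\Omega^c\cap\{\delta\approx t\}}u^*$, and then use that $P^x_\Omega\approx t^{-n}\omega^x(B_t(\xi_j))$ on that exterior region (as in the proof of Theorem \ref{thm:Dahlberg-general}). I would treat the far contributions with the $2^{-k\alpha}$ decay from Theorem \ref{thm:holder-boundary-lip-domain}.
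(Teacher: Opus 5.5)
Your strategy is genuinely different from the paper's and it is viable, but the domination step, as you sketch it, contains two errors.

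The paper never leaves $\Omega$. It writes $u(x_0)=\int_\Omega G^{x_0}(-\Delta)^s(u\varphi_\eps)$ with a cutoff $\varphi_\eps$ vanishing on $\{\delta\le\eps/2\}$, and expands this via the carr\'e du champ into $\int_\Omega uB_s(G^{x_0},\varphi_\eps)-\int_\Omega G^{x_0}B_s(u,\varphi_\eps)$. Both terms go to zero by dominated convergence, using the Caccioppoli inequality (Lemma~\ref{lem:Caccioppoli}) on Whitney balls and the domination $\int_{\{\delta<r_0\}}G^{x_0}\delta^{-2s}\sup_{B_{\delta/2}(\cdot)}|u|\lesssim\int u^*\,d\omega^{x_0}$ from Lemma~\ref{lem:comparison-P-G}.

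Your exhaustion argument is the nonlocal version of the classical approximating-domains proof. It needs no energy estimates and no product rule, only maximum-principle and harmonic-measure comparisons. It also gives the quantitative bound $|u(x)|\lesssim\int_{\Omega^c\cap\{\delta\le Ct\}}u^*\,d\omega^x$ for all small $t$. The underlying mechanism is the same in both proofs: interior Whitney regions, suitably weighted, are dominated by the $\omega^x$-mass of the corresponding exterior regions.

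\textbf{First error.} Near $\partial\Omega_t$ you cannot use $P^x_{\Omega_t}(z)\le c_{n,s}\int_{\Omega_t}G^x_\Omega(w)|w-z|^{-n-2s}\,dw$. On $\Omega_t$ near such $z$ one has $G^x_\Omega\approx G^x_\Omega(A_t(\xi_j))$, because $G_\Omega$ does not vanish on $\partial\Omega_t$; for the same reason the boundary Harnack principle cannot compare it with $G^x_{\Omega_t}$ there. So the right-hand side is $\gtrsim G^x_\Omega(A_t(\xi_j))\,\delta_{\Omega_t}(z)^{-2s}$, which is not integrable across the layer when $s\ge 1/2$.

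Work with masses instead. For $C'$ large, $h\coloneqq\omega^{\cdot}_\Omega(B_{C't}(\xi_j))$ satisfies $h\ge\frac12\mathbbm{1}_{B_{Ct}(\xi_j)}$ on $\Omega_t^c$, by the H\"older argument in Lemma~\ref{lem:comparison-omega-G}. Representing $h$ in $\Omega_t$ then gives $\omega^x_{\Omega_t}(B_{Ct}(\xi_j))\le 2\,\omega^x_\Omega(B_{C't}(\xi_j))$.
\begin{itemize}
\item On the part of the layer within $t/2$ of $\partial\Omega_t$, where $\delta\approx t$, combine this mass bound with $\sup|u|\lesssim t^{-n}\int_{B_{C''t}(\xi_j)\cap\Omega^c}u^*\,dy$.
\item On the rest of the layer, use the pointwise bound $P^x_{\Omega_t}\lesssim t^{-n}\omega^x_{\Omega_t}(B_{Ct}(\xi_j))$. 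This holds because, by \eqref{eq:poisson-kernel-integral-formula}, $P^x_{\Omega_t}$ is roughly constant on $B_{t/8}(z)$ when $\delta_{\Omega_t}(z)\ge t/2$.
\end{itemize}

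\textbf{Second error.} Your per-ball bound, with $u^*$ averaged over $\{\delta\approx t\}$, is false for the deep part $\{\delta\ll t\}$ of the layer. Since $\Gamma(y)\subset\{\delta\ge\delta(y)/2\}$, its right-hand side only sees $u$ on $\{\delta\gtrsim t\}$. A difference of two Martin kernels whose boundary poles are at distance $\eta\ll t$, near a flat piece of $\partial\Omega$, violates it.

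Instead, bound $\int_{B_{Ct}(\xi_j)\cap\{\delta\le t\}}|u|\lesssim\int_{B_{C''t}(\xi_j)\cap\Omega^c}u^*\,dy$ using a local version of Lemma~\ref{lem:comparison-inside-outside-nontangential}. Then use only the lower bound $P^x_\Omega\gtrsim t^{-n}\omega^x_\Omega(B_t(\xi_j))$. This lower bound holds on all of $B_{C''t}(\xi_j)\cap\Omega^c$, at every depth: restrict \eqref{eq:poisson-kernel-integral-formula} to the corkscrew ball $B_{ct}(A_t(\xi_j))$ and apply Lemma~\ref{lem:comparison-omega-G}.

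With doubling (Lemma~\ref{lem:doubling}) and bounded overlap, this yields $|u(x)|\lesssim\int_{\Omega^c\cap\{\delta\le C''t\}}u^*\,d\omega^x\to0$. The $2^{-k\alpha}$ tail sums then become unnecessary.
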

\begin{proof}
    First of all, note that since harmonic measures with different poles are comparable, $u^{*}\in L^{1}(\Omega^{c},\omega^{x})$ for all $x\in \Omega$. Therefore, it suffices to prove that $u(x_{0})=0$. For any $\eps\in (0,\delta(x_{0}))$, we consider a smooth function $\varphi_{\eps}\in C^{\infty}(\R^{n})$ with values in $[0,1]$ such that
    \begin{equation*}
        \varphi_{\eps}=1\quad \text{in $\{x\in \R^{n}: \delta(x)\ge \eps\}$},\qquad \varphi_{\eps}=0 \quad \text{in $\{x\in \R^{n}: \delta(x)\le \eps/2\}$},\qquad |\nabla \varphi_{\eps}|\lesssim \eps^{-1}.
    \end{equation*} 
    Using the fact that $\varphi_{\eps}(x_{0})=1$ and the $s$-harmonicity of $u$, we may write 
    \begin{align*}
        u(x_{0})&=u(x_{0})\varphi_{\eps}(x_{0})\\&= \int_{\Omega}G^{x_{0}}(-\Delta)^{s}(u\varphi_{\eps})
       = \int_{\Omega}uB_{s}(G^{x_{0}},\varphi_{\eps})-\int_{\Omega}G^{x_{0}}B_{s}(u,\varphi_{\eps})=:I_{\eps}+II_{\eps}.
    \end{align*}
    
    We will prove that $II_{\eps}\to 0$ as $\eps\to 0^{+}$. A similar argument with $u$ and $G^{x_{0}}$ swapped shows that the same conclusion holds for $I_{\eps}$, concluding the proof.
    By Fubini's theorem, we can bound
    \begin{align*}
        |II_{\eps}|&\le \int_{\Omega\cap \{\delta\ge r_{0}\}}G^{x_{0}}|B_{s}(u,\varphi_{\eps})|+ \int_{\Omega\cap \{\delta< r_{0}\}}G^{x_{0}}|B_{s}(u,\varphi_{\eps})|\\ 
        &\lesssim \int_{\Omega\cap \{\delta\ge r_{0}\}}G^{x_{0}}|B_{s}(u,\varphi_{\eps})|+\int_{\Omega\cap \{\delta< r_{0}\}}\fint_{B_{\delta(z)/8}(z)}G^{x_{0}}|B_{s}(u,\varphi_{\eps})|=:II_{\eps}^{1}+II_{\eps}^{2}.
    \end{align*}
    Here $r_{0}>0$ is chosen sufficiently small with respect to the localization radius of the domain and $\delta(x_{0})$. 
    
    Let us treat $II_{\eps}^{1}$ first. Since $x\in \Gamma(y)$ for all $y\in \Omega^{c}$ in some boundary ball with radius  comparable to $\delta(x)$, thanks to the assumption $u^{*}\in L^{1}(\Omega^{c},\omega^{x_{0}})$, $|u(x)|$ will be uniformly bounded for $x\in \{\delta\ge r_{0}\}$. Then
    for $\eps\ll r_{0}$ and $x\in \Omega \cap \{\delta\ge r_{0}\}$, by the definition of $\varphi_{\eps}$, we have 
    \begin{align*}
        |B_{s}(u,\varphi_{\eps})(x)|\lesssim \int_{\{\delta\le \eps\}}\frac{|u(x)-u(y)|}{|x-y|^{n+2s}}\,dy\le \left(\frac{2}{r_{0}}\right)^{n+2s}\left(|u(x)||\{\delta\le \eps\}|+\int_{\{\delta\le \eps\}}|u|\,dy\right),
    \end{align*}
    which is uniformly bounded in $\{\delta\ge r_{0}\}$ by the previous observation and the fact that $u\in L^{1}(\Omega)$, and converges to zero pointwise as $\eps\to 0^{+}$. Then, the basic bound \eqref{eq:basic-estimate-green-function-1} together with dominated convergence theorem implies that $II_{\eps}^{1}\to 0$ as $\eps\to 0^{+}$.

    Next we consider $II_{\eps}^{2}$. Given $z\in \Omega\cap \{\delta\le r_{0}\}$, by the properties of $\varphi_{\eps}$, we can bound
    \begin{equation}\label{eq:bound-Hs-phi-eps-uniqueness}
        [\varphi_{\eps}]_{H^{s}(B_{\delta(z)/4}(z))}^{2}\lesssim \delta(z)^{n-2s}\mathbbm{1}_{\{\delta\le 4\eps/3\}}(z).
    \end{equation}
    Moreover, by rescaling the Caccioppoli inequality from Lemma \ref{lem:Caccioppoli}, we obtain
    \begin{equation}\label{eq:bound-Hs-u-uniqueness}
        [u]_{H^{s}(B_{\delta(z)/4}(z))}^{2}\lesssim \delta(z)^{-2s}\lVert u\rVert_{L^{2}(B_{\delta(z)/2}(z))}^{2}+\lVert u\rVert_{L^{1}(B_{\delta(z)/2}(z))}\int_{\R^{n}}\frac{|u(y)|}{\delta(z)^{n+2s}+|y-z|^{n+2s}}\,dy. 
    \end{equation}
    We then split
     \begin{align*}
         \fint_{B_{\delta(z)/8}(z)}|B_{s}(u,\varphi_{\eps})|&\lesssim \delta(z)^{-n}[u]_{H^{s}(B_{\delta(z)/4}(z))}[\varphi_{\eps}]_{H^{s}(B_{\delta(z)/4}(z))}\\
         &\quad\,+\frac{c_{n,s}}{2}\fint_{B_{\delta(z)/8}(z)}\int_{B^{c}_{\delta(z)/4}(z)}\frac{|u(x)-u(y)||\varphi_{\eps}(x)-\varphi_{\eps}(y)|}{|x-y|^{n+2s}}\,dy\,dx.\\
     \end{align*}
     Using \eqref{eq:bound-Hs-phi-eps-uniqueness} and \eqref{eq:bound-Hs-u-uniqueness}, we see that the right-hand side converges to zero as $\eps\to 0^{+}$ for every fixed $z\in \{\delta< r_{0}\}$, and that for all $z\in \{\delta<r_{0}\}$ and all $\eps$, it is bounded, up to a constant, by the function
     \begin{equation*}
         \zeta(z)\coloneqq \delta(z)^{-2s}\sup_{B_{\delta(z)/2}(z)}|u|+\int_{\Omega}\frac{|u(y)|}{\delta(z)^{n+2s}+|y-z|^{n+2s}}\,dy.
     \end{equation*}
     Since in addition $G^{x_{0}}(x)\approx G^{x_{0}}(z)$ for all $x\in B_{\delta(z)/8}(z)$ by the Harnack inequality, 
     the dominated convergence theorem implies that $II_{\eps}^{2}\to 0$ as $\eps\to 0^{+}$ provided that
     \begin{equation}\label{eq:integral-DCT-uniqueness}
         \int_{\Omega\cap \{\delta< r_{0}\}}G^{x_{0}}(z)\zeta(z)\,dz<\infty.
     \end{equation}
     However,
     by Lemma \ref{lem:comparison-P-G} and the definition of non-tangential maximal function we have 
     \begin{equation*}
         \int_{\Omega\cap \{\delta<r_{0}\}}G^{x_{0}}(z)\delta(z)^{-2s}\sup_{B_{\delta(z)/2}(z)}|u|\,dz\lesssim \int_{\Omega^{c}}P^{x_{0}}(y)u^{*}(y)\,dy=\lVert u^{*}\rVert_{L^{1}(\Omega^{c},\omega^{x_{0}})}<\infty.
     \end{equation*}
     Similarly, by Fubini's theorem 
     \begin{align*}
         \int_{\Omega\cap \{\delta<r_{0}\}}G^{x_{0}}(z)\int_{\Omega\cap \{\delta\le 2r_{0}\}}\frac{|u(y)|}{\delta(z)^{n+2s}+|y-z|^{n+2s}}\,dy\,dz &\le  \int_{\Omega\cap \{\delta\le 2r_{0}\}}\int_{\Omega}\frac{|u(y)|G^{x_{0}}(z)}{\delta(z)^{n+2s}+|y-z|^{n+2s}}\,dz\,dy\\
         &\lesssim \int_{\Omega^{c}}u^{*}(y')P^{x_{0}}(y')\,dy'.
     \end{align*}
     Here we also used the fact that $\delta(z)+|z-y|\approx |z-y'|$ for all $z\in \Omega$, $y\in \Omega \cap \{\delta\le 2r_{0}\}$ and $y'\in \Omega^{c}$ such that $\delta(y')\approx |y-y'|\approx \delta(y)$. 
     Since we already saw that $u$ is uniformly bounded in $\Omega \cap \{\delta\ge 2r_{0}\}$, this concludes the proof of \eqref{eq:integral-DCT-uniqueness}. 
\end{proof}

\begin{rmk}\label{rmk:sharpness-uniqueness-criterion}
    The integrability condition on the non-tangential maximal function with respect to harmonic measure is actually sharp in Lemma \ref{lem:general-uniqueness}. Consider for instance the function $u\coloneqq H^{x_{0}}\nu$ with $\nu=\delta_{\xi}$, for some $\xi\in \partial \Omega$: then $u\not\equiv 0$, but it vanishes in the exterior and is $s$-harmonic in $\Omega$. As a consequence of Lemma \ref{lem:non-tangential-maximal-function-bound}, for every $r\in (0,r_{0})$ we find
    \begin{equation*}
        u^{*}(y)\approx \frac{1}{\omega^{x_{0}}(B_{r}(\xi))}\qquad \forall y\in \Omega^{c}\cap \left(B_{r}(\xi)\setminus B_{r/2}(\xi)\right). 
    \end{equation*}
    Using the fact that $r^{n-\alpha}\lesssim \omega^{x_{0}}(B_{r}(\xi))\lesssim r^{n-2s+\alpha}$ (as a consequence of Lemma \ref{lem:comparison-omega-G} and Theorem \ref{thm:holder-boundary-lip-domain}), one deduces that $u^{*}\in L^{1}(\Omega^{c})$, which in turn implies $u\in L^{1}(\Omega)$ thanks to Lemma \ref{lem:comparison-inside-outside-nontangential}, but $u^{*}\notin L^{1}(\Omega^{c},\omega^{x_{0}})$.
\end{rmk}

Combining the reverse-H\"{o}lder result from Theorem \ref{thm:Dahlberg-general}, the maximal estimate in Lemma \ref{lem:non-tangential-maximal-function-bound}, and the 
uniqueness criterion from Lemma \ref{lem:general-uniqueness} gives the following $L^{q}$-solvability theory for the Dirichlet problem:
\begin{thm}\label{thm:Dirichlet-problem-general}
    Let $\Omega\subset \R^{n}$ be a bounded Lipschitz domain. Given $\beta \in (2s-1,s]$, let $\sigma_{\beta}$ be the measure in \eqref{eq:def-sigma-beta}. Then, there exists $q_{0}\in [1,2)$ depending only on $n, s, \beta$, and the Lipschitz character of $\Omega$, such that the following holds. For every $q>q_{0}$ and every $g\in L^{q}(\Omega^{c}, \sigma_{\beta})$, the Poisson integral 
    \begin{equation}\label{eq:Poisson-solution-statement-Lp-solvability}
        u(x)\coloneqq \int_{\Omega^{c}}P^{x}(y)g(y)\,dy
    \end{equation}
    is well-defined, and once extended by $u=g$ on $\Omega^c$, is the unique distributional solution of
    \begin{equation*}
    \left\{
    \begin{array}{rclll}
         (-\Delta)^{s}u&=&0\quad &\text{in $\Omega$},\\
            u&=&g\quad &\text{in $\Omega^{c}$},\\
            \lVert u^{*}\rVert_{L^{q}(\Omega^{c}, \sigma_{\beta})}&<&\infty.
    \end{array}
    \right.
    \end{equation*}
    Moreover, it holds 
    \begin{equation}\label{eq:Lq-bound-maximal-function}
        \lVert u^{*}\rVert_{L^{q}(\Omega^{c},\sigma_{\beta})}\le C\lVert g\rVert_{L^{q}(\Omega^{c},\sigma_{\beta})},
    \end{equation}
    where the constant $C>0$ depends only on $n, s, \beta, q$, and the Lipschitz character of $\Omega$.
\end{thm}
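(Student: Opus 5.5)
The plan is to fix a reference pole $x_0\in\Omega$ with $\delta(x_0)$ comparable to the inradius, so that the dependence on $\delta(x_0)$ in Theorem \ref{thm:Dahlberg-general} and Lemma \ref{lem:non-tangential-maximal-function-bound} becomes dependence on the Lipschitz character only. We write $k\coloneqq d\omega^{x_0}/d\sigma_\beta$. Theorem \ref{thm:Dahlberg-general} gives $k\in RH_p(\sigma_\beta)$ on boundary balls for every $p\in[2,p_0)$, and we set $q_0\coloneqq p_0'$. Given $g\in L^q(\Omega^c,\sigma_\beta)$ with $q>q_0$, we consider the finite measure $\nu\coloneqq g\,\omega^{x_0}=g k\,\sigma_\beta$. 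We check $|\nu|(\Omega^c)<\infty$ by Hölder's inequality, $\int|g|k\,d\sigma_\beta\le\|g\|_{L^q(\sigma_\beta)}\|k\|_{L^{q'}(\sigma_\beta)}$. Here $\|k\|_{L^{q'}(\sigma_\beta)}<\infty$ follows from the reverse Hölder inequality on finitely many boundary balls covering $\{\delta<r_0\}$, together with $P^{x_0}\approx(1-s)\delta^{-n-2s}$ far away as in \eqref{eq:reverse-big-balls-2}. By Remark \ref{rmk:martin-kernel}, $H^{x_0}\nu(x)=\int K^{x_0}(x,y)g(y)P^{x_0}(y)\,dy=\int P^x(y)g(y)\,dy=u(x)$. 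The integral over $\partial\Omega$ vanishes because $\omega^{x_0}$ charges only $\interior\Omega^c$. Hence $u$ is well defined, and by the remark following the definition of the Poisson integral, the extension by $g$ solves $(-\Delta)^s u=0$ in $\Omega$ distributionally. Local integrability of $u$ follows from Lemma \ref{lem:comparison-inside-outside-nontangential} once the maximal bound below is established.

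For the estimate \eqref{eq:Lq-bound-maximal-function}, Lemma \ref{lem:non-tangential-maximal-function-bound} gives $u^*\lesssim M^\Omega_{\omega^{x_0}}(g\omega^{x_0})$ pointwise, and $u^*$ is supported in $\{\delta\le\diam\Omega\}$. We apply Hölder's inequality on each boundary ball $B$ with exponents $r,r'$, where $r'<p_0$ is chosen so that $q>r>q_0$:
\[
\frac{1}{\omega^{x_0}(B)}\int_B|g|k\,d\sigma_\beta\le\frac{\sigma_\beta(B)}{\omega^{x_0}(B)}\Bigl(\fint_B|g|^r d\sigma_\beta\Bigr)^{1/r}\Bigl(\fint_B k^{r'}d\sigma_\beta\Bigr)^{1/r'}\lesssim\bigl(M^\Omega_{\sigma_\beta}(|g|^r\sigma_\beta)\bigr)^{1/r}.
\]
The last inequality uses \eqref{eq:reverse-holder-general} with $p=r'$. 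Since $q/r>1$ and $\sigma_\beta$ is doubling on boundary balls, Lemma \ref{lem:maximal-inequality-general-weights} gives $\|(M^\Omega_{\sigma_\beta}|g|^r)^{1/r}\|_{L^q(\sigma_\beta)}\lesssim\|g\|_{L^q(\sigma_\beta)}$, which is \eqref{eq:Lq-bound-maximal-function}. The doubling of $\sigma_\beta$ on boundary balls follows from $\sigma_\beta(B_r(\xi))\approx r^{n-\beta}$ for small $r$, which holds for $\beta<1$. For large radii it is a bounded comparison, since $\sigma_\beta$ is finite: $\beta<1$ controls the singularity at $\partial\Omega$, and the factor $\delta^{-(n+2s)}$ controls the tail.

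For uniqueness, let $v$ be another solution with $\|v^*\|_{L^q(\sigma_\beta)}<\infty$. Then $w\coloneqq u-v$ vanishes in $\Omega^c$, is $s$-harmonic in $\Omega$, and satisfies $w^*\le u^*+v^*\in L^q(\sigma_\beta)$. By the same Hölder argument, $\int w^*\,d\omega^{x_0}\le\|w^*\|_{L^q(\sigma_\beta)}\|k\|_{L^{q'}(\sigma_\beta)}<\infty$, because $q'<p_0$ and $w^*$ is supported in a bounded set. We also need $w\in L^1(\R^n,w_s)$. This follows from Remark \ref{rmk:exterior-interior-weighted}: $\int_\Omega|w|\lesssim\int_{\Omega^c\cap\{\delta\le\diam\Omega\}}w^*$, which is finite since $\sigma_\beta$ has density bounded below on that bounded set. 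Lemma \ref{lem:general-uniqueness} then gives $w\equiv0$.

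The main obstacle is bookkeeping rather than any single estimate: the reverse-Hölder inequality must be available for the dual exponent $r'$ uniformly over all boundary balls, including large ones. Theorem \ref{thm:Dahlberg-general} already covers large balls, so what remains is to verify carefully the doubling and maximal-function hypotheses for $\sigma_\beta$ on boundary balls of all radii. If needed, I would treat large balls separately by noting that $M^\Omega_{\sigma_\beta}$ restricted to radii $\ge r_0$ is bounded by a constant multiple of $\|g\|_{L^r(\sigma_\beta)}$, and $\sigma_\beta(\{\delta\le\diam\Omega\})<\infty$.
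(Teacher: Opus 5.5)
Your proposal is correct and follows the paper's proof essentially step by step: finiteness of $g\,\omega^{x_0}$ via H\"older and Theorem~\ref{thm:Dahlberg-general}, the identification $u=H^{x_0}(g\,\omega^{x_0})$, the pointwise bound of Lemma~\ref{lem:non-tangential-maximal-function-bound}, H\"older on boundary balls with an intermediate exponent $r\in(q_0,q)$ (the paper takes $\tilde q=(q+q_0)/2$) followed by Lemma~\ref{lem:maximal-inequality-general-weights}, and uniqueness via Lemma~\ref{lem:general-uniqueness}. The only inaccuracy is your claim that $\sigma_\beta$ has density bounded below on $\{\delta\le\diam\Omega\}$, which fails for $\beta<0$ (allowed when $s<1/2$); in the uniqueness step this is harmless, since $w=u-v\in L^1(\R^n,w_s)$ is automatic once $u$ and $v$ are distributional solutions.
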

\begin{proof}
    Let us fix a reference point $x_{0}\in \Omega$ whose distance from the boundary is approximately bounded below by the localization radius of the domain. Let $p_{0}>2$ be the exponent given by Theorem \ref{thm:Dahlberg-general}, and set $q_{0}\coloneqq p_{0}'<2$, the H\"{o}lder conjugate of $p_{0}$. Given $q>q_{0}$ and $g\in L^{q}(\Omega^{c},\sigma_{\beta})$, let $p=q'<p_{0}$, and notice that by Theorem \ref{thm:Dahlberg-general} and H\"{o}lder inequality, we have
    \begin{equation*}
        \int_{\Omega^{c}}|g|\,d\omega^{x_{0}}=\int_{\Omega^{c}}|g|\frac{d\omega^{x_{0}}}{d\sigma_{\beta}}\,d\sigma_{\beta}\le \left(\int_{\Omega^{c}}|g|^{q}\,d\sigma_{\beta}\right)^{1/q}\left(\int_{\Omega^{c}}\left(\frac{d\omega^{x_{0}}}{d\sigma_{\beta}}\right)^{p}d\sigma_{\beta}\right)^{1/p}\lesssim \lVert g\rVert_{L^{q}(\Omega^{c},\sigma_{\beta})}.
    \end{equation*}
    Therefore $g\omega^{x_{0}}$ is a well-defined finite measure in $\Omega^{c}$ and the function $u$ in \eqref{eq:Poisson-solution-statement-Lp-solvability} is well-defined and coincides with the Poisson integral $u=H^{x_{0}}(g\omega^{x_{0}})$. In particular, by Lemma \ref{lem:non-tangential-maximal-function-bound}, we have $u^{*}\lesssim M^{\Omega}_{\omega^{x_{0}}}(g\omega^{x_{0}})$. Now, let $\tilde{q}\coloneqq (q+q_{0})/2>q_{0}$ and $\tilde{p}\coloneqq\tilde{q}'<p_{0}$. For every boundary ball $B\in \mathcal{B}(\Omega)$, \eqref{eq:reverse-holder-general} implies
    \begin{align*}
        \fint_{B}|g|\,d\omega^{x_{0}}&=\frac{\sigma_{\beta}(B)}{\omega^{x_{0}}(B)}\fint_{B}|g|\frac{d\omega^{x_{0}}}{d\sigma_{\beta}}\,d\sigma_{\beta}\\
        &\le \frac{\sigma_{\beta}(B)}{\omega^{x_{0}}(B)} \left(\fint_{B}|g|^{\tilde q}\,d\sigma_{\beta}\right)^{1/\tilde q}\left(\fint_{B}\left(\frac{d\omega^{x_{0}}}{d\sigma_{\beta}}\right)^{\tilde p}d\sigma_{\beta}\right)^{1/\tilde p}\lesssim \left(\fint_{B}|g|^{\tilde q}\,d\sigma_{\beta}\right)^{1/\tilde q}.
    \end{align*}
    Therefore, $u^{*}\lesssim M^{\Omega}_{\omega^{x_{0}}}(g\omega^{x_{0}})\lesssim \left(M^{\Omega}_{\sigma_{\beta}}(|g|^{\tilde q}\sigma_{\beta})\right)^{1/\tilde q}$, and \eqref{eq:Lq-bound-maximal-function} follows from the strong $L^{r}(\Omega^{c},\sigma_{\beta})$-estimate   with $r=q/\tilde{q}>1$ on the maximal operator $h\mapsto M^{\Omega}_{\sigma_{\beta}}(h \sigma_{\beta})$ proved in Lemma \ref{lem:maximal-inequality-general-weights}.

    Uniqueness follows directly from Lemma \ref{lem:general-uniqueness} after noticing that by H\"{o}lder inequality, $d\omega^{x_{0}}/d\sigma_{\beta}\in L^{p}(\Omega^{c},\sigma_{\beta})$ and $u^{*}\in L^{q}(\Omega^{c},\sigma_{\beta})$ together imply $u^{*}\in L^{1}(\Omega^{c},\omega^{x_{0}})$.
\end{proof}

\begin{proof}[Proof of Theorem \ref{thm:Dirichlet-solvability-intro}]
    This is Theorem \ref{thm:Dirichlet-problem-general} in the special case $\beta=s, q=2$.
\end{proof}

\section{Proofs of the corollaries}\label{sec:proof-corollaries}
In this section, we prove the corollaries stated in Section \ref{subsec:applications}. We start with Corollary \ref{cor:Linfty-L2-parallel-sets-harmonic}, which we establish as a consequence of the square estimate for the Green function in Theorem \ref{thm:uniform-L2-Green-function} together with a localization argument based on the boundary Harnack principle. 

\begin{proof}[Proof of Corollary \ref{cor:Linfty-L2-parallel-sets-harmonic}]
    Let us consider the auxiliary bounded Lipschitz domain $\widetilde\Omega\coloneqq \Omega \cap B_{3/4}$. We let $v\in C(B_1)\cap L^{1}(\R^{n},w_{s})$ be the nonnegative solution of the Dirichlet problem 
    \begin{equation*}
    \left\{
    \begin{array}{rclll}
         (-\Delta)^{s}v&=&0\quad &\text{in $\widetilde \Omega$},\\
            v&=&|u|\quad &\text{in $\widetilde{\Omega}^{c}$}.
    \end{array}
    \right.
    \end{equation*}
    Then, the maximum principle applied to $v+ u$ and $v-u$ gives $|u|\le v$ in $\widetilde \Omega$. Moreover, at the corkscrew point $e_{n}/2 \in \widetilde \Omega$, $v$ can be bounded by
    \begin{align*}
        v\left(\frac{e_{n}}{2}\right)&= \int_{\widetilde{\Omega}^{c}}P^{e_{n}/2}_{\widetilde \Omega}(y)|u(y)|\,dy \\
        &=\int_{B_{1}\setminus B_{3/4}}P^{e_{n}/2}_{\widetilde \Omega}(y)|u(y)|\,dy+ \int_{B_{1}^{c}}P^{e_{n}/2}_{\widetilde \Omega}(y)|u(y)|\,dy\lesssim \lVert u\rVert_{L^{\infty}(B_{1})}+(1-s)\lVert u\rVert_{L^{1}(\R^{n},w_{s})}.
    \end{align*}
    On the other hand, by \eqref{eq:basic-estimate-green-function-2}, the Green function of $\widetilde \Omega$ with pole $2e_{n}/3$ satisfies $G^{2e_{n}/3}_{\widetilde \Omega}(e_{n}/2)\approx 1$. Then, the boundary Harnack principle (Theorem \ref{thm:BHP}) gives
    \begin{equation*}
        |u(x)|\le v(x)\lesssim  \frac{v(e_{n}/2)}{G^{2e_{n}/3}_{\widetilde \Omega}(e_{n}/2)}G^{2e_{n}/3}_{\widetilde\Omega}(x)\lesssim \left(\lVert u\rVert_{L^{\infty}(B_{1})}+(1-s)\lVert u\rVert_{L^{1}(\R^{n},w_{s})}\right)G^{2e_{n}/3}_{\widetilde\Omega}(x)\qquad \forall x\in \Omega \cap B_{1/2},
    \end{equation*}
    and the desired conclusion follows immediately from Theorem \ref{thm:uniform-L2-Green-function}.  
\end{proof}

We now turn to Corollary \ref{cor:Sobolev-Dirichlet}. The argument relies on a localized application of the fractional Caccioppoli inequality (Lemma \ref{lem:Caccioppoli}), combined with the $L^{2}$-estimate for the non-tangential maximal function from Theorem \ref{thm:Dirichlet-solvability-intro}.
\begin{proof}[Proof of Corollary \ref{cor:Sobolev-Dirichlet}]
    Let us denote by $K:\Omega\to [0,\infty]$ the function
    \begin{equation*}
        K(x)\coloneqq\int_{\Omega}\frac{(u(y)-u(x))^{2}}{|y-x|^{n+2s}}\,dy\qquad \forall x\in \Omega. 
    \end{equation*}
    We divide the proof into two steps.  

    \smallskip
    \noindent \textbf{Step 1:} We first prove a weighted integral estimate for the function $K$:
    \begin{equation}\label{eq:weighted-integral-carré-Hbeta}
        \int_{\Omega}K(x)\delta(x)^{2s-\beta}\,dx\lesssim \lVert g\rVert_{L^{2}(\Omega^{c}, \sigma_{\beta})}^{2}.
    \end{equation}
    Given any $z\in \Omega$, rescaling the Caccioppoli inequality from Lemma \ref{lem:Caccioppoli}, we may bound
    \begin{align*}
            \fint_{B_{\delta(z)/8}(z)}K(x)\,dx
            \lesssim  \int_{\R^{n}}\frac{u^{2}(y)}{\delta(z)^{n+2s}+|y-z|^{n+2s}}\,dy. 
    \end{align*}
    On the other hand, an elementary geometric argument gives 
    \begin{align*}
        \int_{\Omega}\frac{\delta(z)^{2s-\beta}}{\delta(z)^{n+2s}+|y-z|^{n+2s}}\,dz\approx \frac{\delta(y)^{-\beta}}{1+\delta(y)^{n+2s-\beta}}\qquad \forall y\in \R^{n}\setminus \partial \Omega.
    \end{align*}
    Therefore, by Fubini's theorem, 
    \begin{align*}
        \int_{\Omega}K(x)\delta(x)^{2s-\beta}\,dx&\approx \int_{\Omega}\delta(z)^{2s-\beta}\fint_{B_{\delta(z)/8}(z)} K(x)\,dx\,dz\\
        &\lesssim \int_{\R^{n}}u(y)^{2}\int_{\Omega}\frac{\delta(z)^{2s-\beta}}{\delta(z)^{n+2s}+|y-z|^{n+2s}}\,dz\,dy\lesssim \int_{\Omega}u^{2}\delta^{-\beta}+\int_{\Omega^{c}}g^{2}d\sigma_{\beta}. 
    \end{align*}
    The inequality \eqref{eq:weighted-integral-carré-Hbeta} is then obtained after noting that  $\lVert u\rVert_{L^{2}(\Omega,\delta^{-\beta})}\lesssim\lVert u^{*}\rVert_{L^{2}(\Omega^{c},\sigma_{\beta})}\lesssim \lVert g\rVert_{L^{2}(\Omega^{c},\sigma_{\beta})}$, due to Remark \ref{rmk:exterior-interior-weighted} and Theorem \ref{thm:Dirichlet-problem-general}.
    
    \smallskip
    
    \noindent \textbf{Step 2:} Next, we use \eqref{eq:weighted-integral-carré-Hbeta} and an interpolation argument to conclude the bound \eqref{eq:bound-Hbetahalfs-dirichlet} for the $H^{\beta/2}(\Omega)$-norm of the solution. Let $S\subset \Omega\times \Omega$ be the set
    \begin{equation*}
        S\coloneqq \left\{(x,y)\in \Omega\times \Omega: |x-y|< \min \{\delta(x),\delta(y)\}\right\}.
    \end{equation*}
    Then, on the one hand
     \begin{equation}\label{eq:first-term-interpolation-Hbeta}
        \int\int_{S}\frac{(u(x)-u(y))^{2}}{|x-y|^{n+\beta}}\,dx\,dy\le \int\int_{S}\frac{(u(x)-u(y))^{2}}{|x-y|^{n+2s}}(\delta(x)^{2s-\beta}+\delta(y)^{2s-\beta})\,dx\,dy
        \le 2\int_{\Omega}K(x)\delta(x)^{2s-\beta}\,dx,
    \end{equation}
    where in the last step we used the symmetry of the double integral.
    
    On the other hand, since for any $(x,y)\in (\Omega\times \Omega)\setminus S$ we have $|x-y|\ge \max\{\delta(x), \delta(y)\}/2$, then
    \begin{equation}\label{eq:second-term-interpolation-Hbeta}
        \begin{aligned}
        \int\int_{(\Omega\times \Omega)\setminus S}\frac{(u(x)-u(y))^{2}}{|x-y|^{n+\beta}}\,dx\,dy &\le 2\int\int_{(\Omega\times \Omega)\setminus S}\frac{u(x)^{2}+u(y)^{2}}{|x-y|^{n+\beta}}\,dx\,dy\\
        &\le 4 \int_{\Omega}u(x)^{2}\int_{\Omega \setminus B_{\delta(x)/2}(x)}\frac{1}{|x-y|^{n+\beta}}\,dy\,dx\lesssim \int_{\Omega}u^{2}\delta^{-\beta}.
    \end{aligned}    
    \end{equation}
    Putting together  \eqref{eq:weighted-integral-carré-Hbeta}, \eqref{eq:first-term-interpolation-Hbeta}, \eqref{eq:second-term-interpolation-Hbeta}, and using again that $\lVert u\rVert_{L^{2}(\Omega,\delta^{-\beta})}\lesssim \lVert g\rVert_{L^{2}(\Omega^{c},\sigma_{\beta})}$, we finally obtain 
    \begin{equation*}
        [u]_{H^{\beta/2}(\Omega)}^{2}\lesssim \int_{\Omega}K(x)\delta(x)^{2s-\beta}\,dx+\int_{\Omega}u^{2}\delta^{-\beta}\lesssim \lVert g\rVert_{L^{2}(\Omega^{c},\sigma_{\beta})}^{2},
    \end{equation*}
    as desired.
\end{proof}

Finally, to prove Corollary \ref{cor:optimal-Sobolev-Poisson}, we use a duality argument to convert the Dirichlet estimates into Sobolev regularity for the inhomogeneous problem.

\begin{proof}[Proof of Corollary \ref{cor:optimal-Sobolev-Poisson}]
Let us first consider the case $\beta\ge 3s/2$.
We set $\gamma\coloneqq\beta-2s$ and $\theta\coloneqq -2\gamma=4s-2\beta$. Then $\gamma\in[-s/2,\min\{1/2-s,0\})$ and $\theta\in (\max\{2s-1,0\},s]$. Since $-\gamma\in (0,1/2)$, the fractional Hardy inequality from \cite[Theorem 1.1, case T2]{dyda2004fractional} implies that $H^{-\gamma}(\Omega^{c})$ embeds continuously in $L^{2}(\Omega^{c},\sigma_{\theta})$. As a consequence, by Theorem \ref{thm:Dirichlet-problem-general} and Corollary \ref{cor:Sobolev-Dirichlet}, for every $g\in H^{-\gamma}(\Omega^{c})$, the Poisson integral solution $w_{g}$ of the Dirichlet problem with exterior datum $g$ is well-defined, and the following estimate holds:
\begin{equation*}
    \lVert w_{g}\rVert_{H^{-\gamma}(\Omega)}\lesssim \lVert g\rVert_{L^{2}(\Omega^{c},\sigma_{\theta})}\lesssim \lVert g\rVert_{H^{-\gamma}(\Omega^{c})}.
\end{equation*}
Therefore, after noting that $\langle f, w_{g}\rangle_{\Omega}=-\langle (-\Delta)^{s}u,g\rangle_{\Omega^{c}}$, by duality we derive
\begin{equation*}
    \lVert (-\Delta)^{s}u\rVert_{H^{\gamma}(\Omega^{c})}\lesssim \lVert f\rVert_{H^{\gamma}(\Omega)}.
\end{equation*}
At this point, since $H^{\gamma}(\Omega)$ and $H^{\gamma}(\Omega^{c})$ embed continuously in $H^{\gamma}(\R^{n})$ through the zero-extension operator for $\gamma\in (-1/2, 0)$, classical Fourier analysis together with Poincaré inequality give
    \begin{align*}
        \lVert u\rVert_{H^{\beta}(\R^{n})}
        &\lesssim \lVert (-\Delta)^{s}u\rVert_{H^{\gamma}(\R^{n})}\\
        &\lesssim \lVert f\rVert_{H^{\gamma}(\Omega)}+\lVert (-\Delta)^{s}u\rVert_{H^{\gamma}(\Omega^{c})}
        \lesssim \lVert f\rVert_{H^{\gamma}(\Omega)},
    \end{align*}   
as desired. This concludes the proof in the case $\beta \in [3s/2,\min\{s+1/2,2s\})$.

Now, when $\beta=s$, the bound $\lVert u\rVert_{H^{s}(\R^{n})}\lesssim \lVert f\rVert_{H^{-s}(\Omega)}$ follows immediately from the variational structure of the problem. The conclusion for the intermediate exponents $\beta \in (s,3s/2)$ then follows from linear interpolation.
\end{proof}

\section{More general kernels}
\label{sec:general_kernels}

In this final section, we describe the modifications needed to extend the results proved in this paper to more general stable operators of order $2s$ that are comparable to the fractional Laplacian. We consider operators of the type 

\begin{equation*}
    \mathcal{L}_{\Theta}u(x)\coloneqq {\rm P.V.}\int_{\R^{n}}(u(x)-u(y))K_{\Theta}(x,y)\,dy,
    \qquad
    K_{\Theta}(x,y)\coloneqq\frac{c_{n,s}}{|x-y|^{n+2s}}\Theta\left(\frac{y-x}{|y-x|}\right),
\end{equation*}
where the anisotropy $\Theta$ is even and uniformly bounded above and below:
\begin{equation}\label{eq:ellipticity-LTheta}
    0<\lambda\le \Theta(\vartheta)\le \Lambda<\infty,
    \qquad
    \Theta(\vartheta)=\Theta(-\vartheta)\qquad \text{for a.e. $\vartheta\in \Sph^{n-1}$}.
\end{equation}
The evenness assumption in \eqref{eq:ellipticity-LTheta} makes $\mathcal{L}_{\Theta}$ self-adjoint. Thus $\mathcal{L}_{\Theta}$ is associated with the symmetric bilinear form
\begin{equation*}
    \mathcal{E}_{\Theta}(u,v)\coloneqq\frac{1}{2}\int_{\R^{n}}\int_{\R^{n}}(u(x)-u(y))(v(x)-v(y))K_{\Theta}(x,y)\,dx\,dy,
\end{equation*}
and, by \eqref{eq:ellipticity-LTheta}, this form is comparable to the usual $H^{s}(\R^{n})$ seminorm. We shall also use the pointwise carré du champ
\begin{equation*}
    B_{\Theta}(u,v)(x)\coloneqq\frac{1}{2}\int_{\R^{n}}(u(x)-u(y))(v(x)-v(y))K_{\Theta}(x,y)\,dy,
\end{equation*}
for which the product identity becomes
\begin{equation*}
    \mathcal{L}_{\Theta}(uv)=u\mathcal{L}_{\Theta}v+v\mathcal{L}_{\Theta}u-2B_{\Theta}(u,v)
\end{equation*}
whenever the quantities are well-defined. Given $f\in \mathscr{D}'(\Omega)$, we say that $u\in L^{1}(\R^{n},w_{s})$ is a distributional solution of $\mathcal{L}_{\Theta}u=f$ in $\Omega$ whenever
\begin{equation*}
    \int_{\R^{n}}u\mathcal{L}_{\Theta}\varphi=\langle f,\varphi\rangle\qquad \forall \varphi\in C^{\infty}_{c}(\Omega).
\end{equation*}
If $f\in H^{-s}(\Omega)$, we say that $u\in H^{s}_{\loc}(\Omega)\cap L^{1}(\R^{n},w_{s})$ is a weak solution whenever
\begin{equation*}
    \mathcal{E}_{\Theta}(u,\varphi)=\langle f,\varphi\rangle\qquad \forall \varphi\in C^{\infty}_{c}(\Omega).
\end{equation*}
The fundamental solution of $\mathcal{L}_{\Theta}$ in $\R^{n}$ will be denoted by $\Phi_{\Theta}$. It is homogeneous of degree $2s-n$ and satisfies
\begin{equation}\label{eq:potential-kernel-LTheta}
    c\frac{\kappa_{n,s}}{|x|^{n-2s}}\le \Phi_{\Theta}(x)\le C\frac{\kappa_{n,s}}{|x|^{n-2s}}\qquad \forall x\in \R^{n}\setminus\{0\},
\end{equation}
with constants $c,C>0$ depending only on $n,s,\lambda,\Lambda$; see \cite{bogdan2007estimates}. The Harnack inequality and the boundary regularity results in Lipschitz domains used in Section \ref{sec:prelim} extend to these kernels. In particular, Theorems \ref{thm:holder-boundary-lip-domain} and \ref{thm:BHP} hold, with $(-\Delta)^{s}$ replaced by $\mathcal{L}_{\Theta}$, for nonnegative $\mathcal{L}_{\Theta}$-harmonic functions vanishing in the exterior side of a Lipschitz patch, with constants depending also on $\lambda,\Lambda$; see \cite{bass2002harnack,bogdan2007estimates,bogdan2015boundary}.

\paragraph{Basic potential theory.}

For every $g\in C_{b}(\Omega^{c})$, the exterior Dirichlet problem with $\mathcal{L}_{\Theta}$ in place of $(-\Delta)^{s}$ admits a unique distributional solution $u_{g}\in C_{b}(\R^{n})$ (see, for instance, \cite[Chapter 3]{XROXFRbook}). For $x\in \Omega$, we define the $\mathcal{L}_{\Theta}$-harmonic measure $\omega_{\Theta,\Omega}^{x}\in \mathscr{P}(\R^{n})$, concentrated on $\Omega^{c}$, by
\begin{equation*}
    \int_{\Omega^{c}}g\,d\omega_{\Theta,\Omega}^{x}=u_{g}(x)\qquad \forall g\in C_{b}(\Omega^{c}).
\end{equation*}
The restriction of $\omega_{\Theta,\Omega}^{x}$ to $\interior \Omega^{c}$ is absolutely continuous with respect to $\mathscr{L}^{n}$. Its density, the Poisson kernel, is denoted by
\begin{equation*}
    P_{\Theta,\Omega}^{x}\coloneqq\frac{d\omega_{\Theta,\Omega}^{x}}{d\mathscr{L}^{n}}:\interior\Omega^{c}\to [0,\infty).
\end{equation*}
The Green function with pole $x\in \Omega$ is defined as
\begin{equation*}
    G_{\Theta,\Omega}^{x}(y)\coloneqq\Phi_{\Theta}(x-y)-R_{\Theta,\Omega}^{x}(y),
\end{equation*}
where $R_{\Theta,\Omega}^{x}$ is $\mathcal{L}_{\Theta}$-harmonic in $\Omega$ and equals $\Phi_{\Theta}(x-\cdot)$ in $\Omega^{c}$. Then, $G^{x}_{\Theta}(y)=G^{y}_{\Theta}(x)$ for all $x,y\in \Omega$, $x\neq y$, and $G_{\Theta,\Omega}^{x}$ solves $\mathcal{L}_{\Theta}G_{\Theta,\Omega}^{x}=\delta_{x}$ in $\Omega$ and $G_{\Theta,\Omega}^{x}=0$ in $\Omega^{c}$. The Poisson kernel and the Green function are related by the formula
\begin{equation*}
    P_{\Theta,\Omega}^{x}(y)=\int_{\Omega}G_{\Theta,\Omega}^{x}(z)K_{\Theta}(z,y)\,dz\qquad \forall y\in \interior\Omega^{c}.
\end{equation*}
Moreover, \eqref{eq:potential-kernel-LTheta} and the maximum principle yield the basic estimates
\begin{gather*}
    G_{\Theta,\Omega}^{x}(y)\le C\frac{\kappa_{n,s}}{|x-y|^{n-2s}}\qquad \forall x,y\in \Omega,\\
    G_{\Theta,\Omega}^{x}(y)\gtrsim \frac{\kappa_{n,s}}{|x-y|^{n-2s}}\qquad \forall x,y\in \Omega: |x-y|\le \frac{\delta_{\Omega}(x)}{2}.
\end{gather*}
In the sequel, when clear from the context, we drop the subscript $\Omega$. With these replacements, the proofs of Lemmas \ref{lem:approximation-smooth-domains}--\ref{lem:comparison-P-G} remain valid. In particular, for $x_{0}\in \Omega$, $\xi\in \partial \Omega$, and $0<r<r_{0}$ such that $|x_{0}-\xi|>2r$, one has the basic comparability estimates
\begin{gather*}
    \omega_{\Theta}^{x_{0}}(B_{r}(\xi))\approx r^{n-2s}G_{\Theta}^{x_{0}}(A_{r}(\xi)),\\[2pt]
    \omega_{\Theta}^{x_{0}}(B_{2r}(\xi))\lesssim \omega_{\Theta}^{x_{0}}(B_{r}(\xi)),\\[2pt]
    P_{\Theta}^{x_{0}}(A'_{r}(\xi))\approx (1-s)r^{-2s}G_{\Theta}^{x_{0}}(A_{r}(\xi)),
\end{gather*}
as well as the change of pole formulas, for $\xi'\in \partial \Omega$, $0<r'<cr$, $|\xi'-\xi|<r-r'$:
\begin{gather*}
    \omega_{\Theta}^{A_{r}(\xi)}(B_{r'}(\xi'))\approx \frac{\omega_{\Theta}^{x_{0}}(B_{r'}(\xi'))}{\omega_{\Theta}^{x_{0}}(B_{r}(\xi))},\\
    \lim_{\Omega\ni z\to \xi'}\frac{G_{\Theta}^{x_{0}}(z)}{G_{\Theta}^{A_{r}(\xi)}(z)}\approx r^{n-2s}G_{\Theta}^{x_{0}}(A_{r}(\xi)).
\end{gather*}
All constants depend on the same geometric quantities as in Section \ref{sec:prelim}, and also on $\lambda,\Lambda$.

\paragraph{Pohozaev identity and reverse-H\"{o}lder estimates.}
The only point in Section \ref{sec:Dahlberg} that requires an adaptation is the Pohozaev identity. For $e\in \Sph^{n-1}$, set
\begin{equation*}
    \mathcal{A}_{\Theta}(e)\coloneqq\rho_{n,s}\int_{\Sph^{n-1}}|e\cdot \vartheta|^{2s}\Theta(\vartheta)\,d\mathcal{H}^{n-1}(\vartheta),\qquad \rho_{n,s}=\frac{\Gamma(\frac{n+2s}{2})}{2\pi^{\frac{n-1}{2}}\Gamma(s+\frac{1}{2})}.
\end{equation*}
The normalizing constant is chosen so that $\mathcal{A}_{\Theta}(e)=1$ for $\Theta\equiv 1$, while in general \eqref{eq:ellipticity-LTheta} gives $0<\lambda\le \mathcal{A}_{\Theta}(e)\le \Lambda<\infty$ uniformly in $e\in \Sph^{n-1}$. The anisotropic Pohozaev identities and integration by parts formulas of \cite{rosoton2017pohozaev} give the following replacement for Lemma \ref{lem:Pohozaev}: if $\Omega$ is smooth and $\nu$ denotes the outer unit normal to $\partial\Omega$, then, for every $x_{0}\in \Omega$,
\begin{equation*}
    \int_{\partial\Omega}\mathcal{A}_{\Theta}(\nu)(\partial_{\nu}^{s}G_{\Theta}^{x_{0}}(\xi))^{2}(\xi-x_{0})\cdot \nu\,d\mathcal{H}^{n-1}(\xi)
    =\frac{n-2s}{\Gamma(1+s)^{2}}\int_{\Omega^{c}}P_{\Theta}^{x_{0}}(y)\Phi_{\Theta}(y-x_{0})\,dy.
\end{equation*}
Thus, compared with \eqref{eq:Pohozaev}, the boundary integral acquires the angular factor $\mathcal{A}_{\Theta}(\nu)$ and the right-hand side contains the anisotropic fundamental solution $\Phi_{\Theta}$. The proof of Proposition \ref{prop:L2-normal-a-priori} then goes through with $G^{x_{0}},P^{x_{0}}$ replaced by $G_{\Theta}^{x_{0}},P_{\Theta}^{x_{0}}$, since $\mathcal{A}_{\Theta}$ is bounded above and below and $\Phi_{\Theta}$ satisfies \eqref{eq:potential-kernel-LTheta}. We then derive, for $\xi\in \partial \Omega$ and $0<r<r_{0}$ such that $|\xi-x_{0}|>2r$:
\begin{equation*}
    \left(\fint_{B_{r}(\xi)\cap \partial\Omega}(\partial_{\nu}^{s}G_{\Theta}^{x_{0}})^{2}d\mathcal{H}^{n-1}\right)^{1/2}\approx \frac{G_{\Theta}^{x_{0}}(A_{r}(\xi))}{r^{s}}.
\end{equation*}
Consequently, Theorem \ref{thm:uniform-L2-Green-function} and Corollary \ref{cor:uniform-L2-Poisson-kernel} also remain valid, and for $t\in (0,c_{0}r)$ we get
\begin{gather*}
    \frac{1}{t^{s}}\left(\fint_{B_{r}(\xi)\cap \Omega\cap \{\delta=t\}}(G_{\Theta}^{x_{0}})^{2}d\mathcal{H}^{n-1}\right)^{1/2}\approx \frac{G_{\Theta}^{x_{0}}(A_{r}(\xi))}{r^{s}},\\
    \frac{t^{s}}{1-s}\left(\fint_{B_{r}(\xi)\cap \Omega^{c}\cap \{\delta=t\}}(P_{\Theta}^{x_{0}})^{2}d\mathcal{H}^{n-1}\right)^{1/2}\approx \frac{\omega_{\Theta}^{x_{0}}(B_{r}(\xi))}{r^{n-s}}.
\end{gather*}
The geometric arguments used in the proof of Theorem \ref{thm:Dahlberg-general} are unchanged. We therefore obtain the following analog of Theorem \ref{thm:Dahlberg-general}:

\begin{thm}\label{thm:Dahlberg-general-LTheta}
    Let $\Omega\subset \R^{n}$ be a bounded Lipschitz domain, and let $x_{0}\in \Omega$. Given $\beta\in (2s-1,s]$, let $\sigma_{\beta}$ be the measure in \eqref{eq:def-sigma-beta}. There is $p_{0}>2$ depending only on $n,s,\beta$, the Lipschitz character of $\Omega$, and $\lambda,\Lambda$ such that, for all $p\in [2,p_{0})$, and all balls $B\subset \R^{n}$ centered on $\partial \Omega$, we have
    \begin{equation}\label{eq:reverse-holder-general-LTheta}
        \left(\fint_{B}\left(\frac{d\omega_{\Theta}^{x_{0}}}{d\sigma_{\beta}}\right)^{p}d\sigma_{\beta}\right)^{1/p}\le C\frac{\omega_{\Theta}^{x_{0}}(B)}{\sigma_{\beta}(B)},
    \end{equation}
    where $C$ depends only on $n,s,\beta,p$, the Lipschitz character of $\Omega$, $\delta(x_{0})$, and $\lambda,\Lambda$.
\end{thm}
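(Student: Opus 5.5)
The plan is to repeat the proof of Theorem \ref{thm:Dahlberg-general} verbatim, with $(-\Delta)^{s}$ replaced by $\mathcal{L}_{\Theta}$. The anisotropic versions of the level-set square estimates stated just above, together with the comparability lemmas, are the only inputs that change. Fix $\beta\in(2s-1,s]$ and a boundary ball $B=B_{r}(\xi)$. As a first step, I take $r<r_{0}$ (small relative to the localization radius and $\delta(x_{0})$) and $p=2$. I split the integral of $(d\omega_{\Theta}^{x_{0}}/d\sigma_{\beta})^{2}\,d\sigma_{\beta}\approx (1-\beta)^{-1}(P_{\Theta}^{x_{0}})^{2}\delta^{\beta}$ over $B\cap\Omega^{c}$ into two pieces, according to whether $\delta<c_{0}r$ or $\delta\ge c_{0}r$.

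On the near region I apply the coarea formula in $t=\delta$ and the anisotropic analogue of Corollary \ref{cor:uniform-L2-Poisson-kernel}. This gives
\[
\int_{B\cap\Omega^{c}\cap\{\delta=t\}}(P^{x_{0}}_{\Theta})^{2}\,d\mathcal{H}^{n-1}\approx (1-s)^{2}t^{-2s}r^{n-1}\,r^{2s-2n}\,\omega_{\Theta}^{x_{0}}(B)^{2},
\]
using $\mathcal{H}^{n-1}(B\cap\Omega^{c}\cap\{\delta=t\})\approx r^{n-1}$. Integrating against $t^{\beta}$ on $(0,c_{0}r)$ yields the factor $\int_{0}^{c_{0}r}t^{\beta-2s}\,dt\approx r^{\beta-2s+1}/(\beta-2s+1)$. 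This is finite precisely because $\beta>2s-1$, and the prefactor $(1-s)^{2}/(1-\beta)$ is harmless. On the far region I use $P_{\Theta}^{x_{0}}\approx(1-s)r^{-n}\omega_{\Theta}^{x_{0}}(B)$, which follows from the anisotropic comparison estimates listed above (Harnack, $\omega_{\Theta}\approx r^{n-2s}G_{\Theta}(A_{r})$, and $P_{\Theta}(A_{r}')\approx(1-s)r^{-2s}G_{\Theta}(A_{r})$). Both regions then contribute $\approx\omega_{\Theta}^{x_{0}}(B)^{2}r^{\beta-n}$. Since $\sigma_{\beta}(B)\approx r^{n-\beta}$, this is exactly the $p=2$ reverse-H\"older inequality \eqref{eq:reverse-holder-general-LTheta}.

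Next, I self-improve to $p\in[2,p_{0})$ with the weighted Gehring Lemma \ref{lem:gehring}, after locally straightening the Lipschitz boundary as in the isotropic case. Hypothesis (i) of that lemma is the $RH_{2}$ bound just proved, and hypothesis (ii) is the pointwise comparability of $P_{\Theta}^{x_{0}}$ at distance $\gtrsim r$. Neither hypothesis involves $\Theta$ except through constants, so $p_{0}$ depends additionally only on $\lambda,\Lambda$.

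For $r\ge r_{0}$, I cover $B\cap\Omega^{c}\cap\{\delta<r_{0}/4\}$ by $N$ small boundary balls, to which the small-ball estimate applies. On $\{\delta\ge r_{0}/4\}$ I use $P_{\Theta}^{x_{0}}\approx(1-s)\delta^{-n-2s}$. This follows from the formula $P_{\Theta}^{x}(y)=\int_{\Omega}G_{\Theta}^{x}K_{\Theta}(\cdot,y)$ together with $\lambda\le\Theta\le\Lambda$. Combined with $\sigma_{\beta}(B)\gtrsim1$, this bounds the left-hand side by a constant.

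The only point needing genuine care is that the anisotropic Pohozaev-based estimates, hence the level-set bounds, hold with constants depending on $\lambda,\Lambda$. That has already been arranged above through $\lambda\le\mathcal{A}_{\Theta}\le\Lambda$ and \eqref{eq:potential-kernel-LTheta}. So I expect no real obstacle; the main task is bookkeeping of the constants.
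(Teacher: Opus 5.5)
Your proposal is correct and follows the paper's route. Once the anisotropic Pohozaev identity yields the level-set square estimates for $G_\Theta^{x_0}$ and $P_\Theta^{x_0}$, the paper carries over the proof of Theorem \ref{thm:Dahlberg-general} unchanged, with constants depending additionally on $\lambda,\Lambda$: the coarea splitting at $\delta\approx c_0 r$ for $p=2$, the Gehring self-improvement, and the covering argument for large balls.
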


\paragraph{Martin kernels, Poisson integrals, and the Dirichlet problem.}
The Martin-kernel formalism from Section \ref{sec:Dir_pb} is unchanged. Fix $x_{0}\in \Omega$. We define the Martin kernel associated with $\mathcal{L}_{\Theta}$ by
\begin{equation*}
    K_{\Theta}^{x_{0}}(x,y)\coloneqq\lim_{r\to 0^{+}}\frac{\omega_{\Theta}^{x}(B_{r}(y))}{\omega_{\Theta}^{x_{0}}(B_{r}(y))}=\begin{cases}
        \displaystyle\frac{P_{\Theta}^{x}(y)}{P_{\Theta}^{x_{0}}(y)}\qquad &\text{if $y\in \interior \Omega^{c}$},\\[1.2em]
        \displaystyle\underset{\Omega\ni z\to y}{\lim}\frac{G_{\Theta}^{x}(z)}{G_{\Theta}^{x_{0}}(z)}\qquad &\text{if $y\in \partial\Omega$}\end{cases}\qquad x\in \Omega,\quad y\in \Omega^{c}.
\end{equation*}
The existence of the limit follows from the boundary Harnack principle and the Martin-boundary theory for stable processes in Lipschitz domains; see \cite{chen1998martin,bogdan2015boundary}.

For every finite measure $\nu\in \mathcal{M}(\Omega^{c})$, we define the corresponding Poisson integral by
\begin{equation*}
    H_{\Theta}^{x_{0}}\nu(x)\coloneqq\int_{\Omega^{c}}K_{\Theta}^{x_{0}}(x,y)d\nu(y)\qquad \forall x\in \Omega.
\end{equation*}
the proofs of Lemmas \ref{lem:non-tangential-maximal-function-bound} and \ref{lem:general-uniqueness} remain valid: for every finite measure $\nu\in \mathcal{M}(\Omega^{c})$,
\begin{equation}\label{eq:maximal-function-bound-LTheta}
    (H_{\Theta}^{x_{0}}\nu)^{*}(y)\lesssim M_{\omega_{\Theta}^{x_{0}}}^{\Omega}\nu(y)\qquad \forall y\in \Omega^{c},
\end{equation}
with equivalence when $\nu\ge 0$ and $\Gamma(y)\neq \varnothing$; if $u\in L^{1}(\R^{n},w_{s})$ is a distributional solution of $\mathcal{L}_{\Theta}u=0$ in $\Omega$, $u=0$ in $\Omega^{c}$, and $u^{*}\in L^{1}(\Omega^{c},\omega_{\Theta}^{x_{0}})$ for some $x_{0}\in \Omega$, then $u\equiv 0$. 

Combining \eqref{eq:reverse-holder-general-LTheta}, \eqref{eq:maximal-function-bound-LTheta}, and the uniqueness criterion gives the analog of Theorem \ref{thm:Dirichlet-problem-general}:
\begin{thm}\label{thm:Dirichlet-problem-general-LTheta}
    Let $\Omega\subset \R^{n}$ be a bounded Lipschitz domain. Given $\beta\in (2s-1,s]$, let $\sigma_{\beta}$ be the measure in \eqref{eq:def-sigma-beta}. Then, there exists $q_{0}\in [1,2)$ depending only on $n,s,\beta$, the Lipschitz character of $\Omega$, and $\lambda,\Lambda$, such that the following holds. For every $q>q_{0}$ and every $g\in L^{q}(\Omega^{c},\sigma_{\beta})$, the Poisson integral
    \begin{equation}\label{eq:Poisson-solution-statement-Lp-solvability-LTheta}
        u(x)\coloneqq\int_{\Omega^{c}}P_{\Theta}^{x}(y)g(y)\,dy
    \end{equation}
     is well-defined, and once extended by $u=g$ on $\Omega^c$, is the unique distributional solution of
    \begin{equation*}
    \left\{
    \begin{array}{rclll}
         \mathcal{L}_{\Theta}u&=&0\quad &\text{in $\Omega$},\\
            u&=&g\quad &\text{in $\Omega^{c}$},\\
            \lVert u^{*}\rVert_{L^{q}(\Omega^{c},\sigma_{\beta})}&<&\infty.
    \end{array}
    \right.
    \end{equation*}
    Moreover, it holds
    \begin{equation*}
        \lVert u^{*}\rVert_{L^{q}(\Omega^{c},\sigma_{\beta})}\le C\lVert g\rVert_{L^{q}(\Omega^{c},\sigma_{\beta})},
    \end{equation*}
    where the constant $C>0$ depends only on $n,s,\beta,q$, the Lipschitz character of $\Omega$, and $\lambda,\Lambda$.
\end{thm}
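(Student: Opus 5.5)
The plan is to repeat verbatim the proof of Theorem \ref{thm:Dirichlet-problem-general}, with $P^{x}$, $\omega^{x_0}$, $H^{x_0}$, $K^{x_0}$ replaced by their $\Theta$-counterparts. Since all the ingredients have already been transferred to $\mathcal{L}_\Theta$ in this section, what remains is to check that the argument uses only those ingredients.

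\textbf{Step 1: well-posedness of $u$ and a representation formula.}
Fix $x_0\in\Omega$ with $\delta(x_0)$ comparable to the localization radius. Let $p_0>2$ be given by Theorem \ref{thm:Dahlberg-general-LTheta} and set $q_0:=p_0'$. For $q>q_0$ put $p=q'<p_0$.
\begin{itemize}
\item Hölder's inequality together with \eqref{eq:reverse-holder-general-LTheta} on a ball $B\supset\Omega\cup\{\delta\le\diam\Omega\}$ gives $\int|g|\,d\omega_\Theta^{x_0}\lesssim\|g\|_{L^q(\sigma_\beta)}$. The far tail is handled as in the big-ball case of Theorem \ref{thm:Dahlberg-general}, using $P_\Theta^{x_0}\approx(1-s)\delta^{-n-2s}$ there.
\item Hence $g\,\omega_\Theta^{x_0}$ is a finite measure, and $u=H_\Theta^{x_0}(g\,\omega_\Theta^{x_0})$. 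This uses $K_\Theta^{x_0}(x,y)=P_\Theta^x(y)/P_\Theta^{x_0}(y)$ for $y\in\interior\Omega^c$, and the fact that $\partial\Omega$ is Lebesgue-null.
\item Distributional $\mathcal L_\Theta$-harmonicity of $u$, extended by $g$, follows from that of the kernel $\widetilde K_\Theta^{x_0}(\cdot,y)$, by Fubini's theorem. This is the $\Theta$-analogue of Remark \ref{rmk:martin-kernel}, whose proof used only the Green-function/Poisson-kernel relation, which holds here.
\end{itemize}

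\textbf{Step 2: the a priori estimate.}
By \eqref{eq:maximal-function-bound-LTheta}, $u^*\lesssim M^\Omega_{\omega_\Theta^{x_0}}(g\,\omega_\Theta^{x_0})$. Let $\tilde q=(q+q_0)/2$. On each boundary ball, Hölder's inequality and \eqref{eq:reverse-holder-general-LTheta} with exponent $\tilde q'$ give
\[
\fint_B|g|\,d\omega_\Theta^{x_0}\lesssim\Big(\fint_B|g|^{\tilde q}\,d\sigma_\beta\Big)^{1/\tilde q}.
\]
Therefore $u^*\lesssim\big(M^\Omega_{\sigma_\beta}(|g|^{\tilde q}\sigma_\beta)\big)^{1/\tilde q}$. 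Lemma \ref{lem:maximal-inequality-general-weights}, applied in $L^{q/\tilde q}(\sigma_\beta)$, then yields the $L^q$ bound. That lemma concerns only $\sigma_\beta$, which is independent of $\Theta$, so it applies unchanged.

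\textbf{Step 3: uniqueness.}
If $u_1,u_2$ are two solutions, then $w=u_1-u_2$ is $\mathcal L_\Theta$-harmonic in $\Omega$ and vanishes in $\Omega^c$. Moreover $w^*\in L^q(\sigma_\beta)$ and $d\omega_\Theta^{x_0}/d\sigma_\beta\in L^p(\sigma_\beta)$, so Hölder's inequality gives $w^*\in L^1(\omega_\Theta^{x_0})$. The $\Theta$-uniqueness criterion stated after \eqref{eq:maximal-function-bound-LTheta} then gives $w\equiv0$.

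\textbf{Main obstacle.}
The only point needing care is the $\Theta$-version of Lemma \ref{lem:general-uniqueness}. Its proof uses the product rule for $\mathcal L_\Theta$ with the carré du champ $B_\Theta$, which was recorded above. It also uses the fractional Caccioppoli inequality of Lemma \ref{lem:Caccioppoli}, whose energy proof relies only on a kernel comparable to $|x-y|^{-n-2s}$ and so should transfer with constants depending on $\lambda,\Lambda$. I would verify this first.

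A second check concerns the constants: $C$ must not depend on $\delta(x_0)$. This holds because $x_0$ is fixed with $\delta(x_0)$ comparable to the Lipschitz character, exactly as in the isotropic case.
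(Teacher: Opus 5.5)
Your proposal is correct and follows the paper's route: the paper gets this theorem by rerunning the proof of Theorem \ref{thm:Dirichlet-problem-general} with the anisotropic reverse-H\"older estimate \eqref{eq:reverse-holder-general-LTheta}, the maximal-function bound \eqref{eq:maximal-function-bound-LTheta}, the $\sigma_\beta$-maximal inequality of Lemma \ref{lem:maximal-inequality-general-weights}, and the $\mathcal{L}_\Theta$-uniqueness criterion, whose Caccioppoli ingredient the paper records in energy form for $\mathcal{E}_\Theta$. Your remarks on the far tail of $\Omega^c$ and on distributional $\mathcal{L}_\Theta$-harmonicity via the Martin kernel only spell out details the paper leaves implicit.
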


\paragraph{Corollaries of the theory.}
The corollaries stated in Section \ref{subsec:applications} also extend verbatim to the anisotropic setting. First, the square estimates above for $G^{x_{0}}_{\Theta}$ on parallel level sets together with the boundary Harnack principle for $\mathcal{L}_{\Theta}$ \cite{bogdan2015boundary, XROXFRbook} give: 

\begin{cor}\label{cor:Linfty-L2-parallel-sets-LTheta}
    Let $\Omega=\{x=(x',x_{n}):x_{n}>\phi(x')\}$ be a Lipschitz epigraph, where $\phi:\R^{n-1}\to \R$ is $L$-Lipschitz and $\phi(0)=0$. Let $u\in C(B_{1})\cap L^{1}(\R^{n},w_{s})$ be a solution of 
    \begin{equation*}
    \left\{
    \begin{array}{rclll}
         \mathcal{L}_{\Theta}u&=&0\quad &\text{in $B_{1}\cap \Omega$},\\
            u&=&0\quad &\text{in $B_{1}\setminus \Omega$}.
    \end{array}
    \right.
    \end{equation*}
    Then, there is $c_{0}\in (0,1)$ depending only on $L$ such that 
     \begin{equation*}
        \left(\fint_{B_{1/2}\cap \Omega\cap \{\delta=t\}}\left(\frac{u}{\delta^{s}}\right)^{2}\,d\mathcal{H}^{n-1}\right)^{1/2}\le C\left(\lVert u\rVert_{L^{\infty}(B_{1})}+(1-s)\lVert u\rVert_{L^{1}(\R^{n},w_{s})}\right)\qquad \forall t\in (0,c_{0}),
    \end{equation*}
    where $C$ depends only on $n,s$, $L$, $\lambda, \Lambda$, and is uniform as $s\to 1^{-}$.
\end{cor}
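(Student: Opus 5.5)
The plan is to follow the proof of Corollary \ref{cor:Linfty-L2-parallel-sets-harmonic} essentially verbatim, with $(-\Delta)^{s}$ replaced by $\mathcal{L}_{\Theta}$. The anisotropic ingredients needed have all been recorded above: the Green function estimates, the boundary Harnack principle for $\mathcal{L}_{\Theta}$, and the anisotropic square estimate on parallel level sets, i.e.\ the analogue of Theorem \ref{thm:uniform-L2-Green-function}.

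\textbf{Step 1: reduction to a nonnegative solution.} Set $\widetilde\Omega\coloneqq\Omega\cap B_{3/4}$; this is a bounded Lipschitz domain whose character depends only on $L$. Let $v$ solve $\mathcal{L}_{\Theta}v=0$ in $\widetilde\Omega$ with $v=|u|$ in $\widetilde\Omega^{c}$. Applying the maximum principle to $v\pm u$ gives $|u|\le v$ in $\widetilde\Omega$.

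\textbf{Step 2: bound at the corkscrew point.} Write
\begin{equation*}
v(e_{n}/2)=\int_{\widetilde\Omega^{c}}P^{e_{n}/2}_{\Theta,\widetilde\Omega}|u|,
\end{equation*}
and split the domain of integration into $B_{1}\setminus B_{3/4}$ and $B_{1}^{c}$.
\begin{itemize}
\item On $B_{1}\setminus B_{3/4}$, use that $P^{e_{n}/2}_{\Theta,\widetilde\Omega}\,dy$ is a probability measure, which bounds this piece by $\lVert u\rVert_{L^{\infty}(B_{1})}$.
\item On $B_{1}^{c}$, use $P^{e_{n}/2}_{\Theta,\widetilde\Omega}(y)=\int_{\widetilde\Omega}G_{\Theta}(z)K_{\Theta}(z,y)\,dz$ together with $\Theta\le\Lambda$, $c_{n,s}\approx 1-s$, and $\int_{\widetilde\Omega} G_{\Theta}\lesssim 1$. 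This gives $P\lesssim(1-s)(1+|y|)^{-n-2s}$, hence a contribution $\lesssim(1-s)\lVert u\rVert_{L^{1}(\R^{n},w_{s})}$.
\end{itemize}
Keeping the factor $1-s$ explicit in this tail step is exactly what makes the constants uniform as $s\to1^{-}$.

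\textbf{Step 3: comparison with the Green function.} By the basic lower Green bound, $G^{2e_{n}/3}_{\Theta,\widetilde\Omega}(e_{n}/2)\approx1$. The boundary Harnack principle for $\mathcal{L}_{\Theta}$ near $\partial\Omega\cap B_{3/4}$, combined with Harnack chains inside, then yields
\begin{equation*}
|u|\le v\lesssim v(e_{n}/2)\,G^{2e_{n}/3}_{\Theta,\widetilde\Omega}\qquad\text{in }\Omega\cap B_{1/2}.
\end{equation*}

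\textbf{Step 4: square estimate on level sets.} Cover $B_{1/2}\cap\partial\Omega$ by finitely many boundary balls $B_{r}(\xi_{j})$ of a fixed small radius $r$ depending on $L$, with $|\xi_{j}-2e_{n}/3|>2r$. On each ball apply the anisotropic level-set estimate for $G_{\Theta}$ in $\widetilde\Omega$. Since $\widetilde\Omega$ and $\Omega$ agree near $B_{1/2}\cap\partial\Omega$, the distances $\delta_{\widetilde\Omega}=\delta_{\Omega}$ there for small $t$. The estimate gives
\begin{equation*}
t^{-s}\Bigl(\fint (G_{\Theta})^{2}\Bigr)^{1/2}\lesssim G_{\Theta}(A_{r}(\xi_{j}))\,r^{-s}\lesssim 1,
\end{equation*}
for all $t<c_{0}r$, which determines $c_{0}$ in terms of $L$ only. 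Summing over $j$, and using $\mathcal{H}^{n-1}(B_{1/2}\cap\{\delta=t\})\approx1$, gives the claimed estimate for every $t\in(0,c_{0})$.

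\textbf{Main obstacle.} The routine steps are Steps 1 through 4 as transcriptions of the isotropic argument. The delicate point is the portion of the level set $B_{1/2}\cap\{\delta=t\}$ away from the boundary patch. The sup-norm-only bound there is handled by the bounds $G_{\Theta}\lesssim1$ away from the pole and $\delta\approx t$. One must also check that the constants are uniform in $s\to1^{-}$, which requires that the boundary Harnack and level-set constants for $\mathcal{L}_{\Theta}$ are themselves stable in that limit.
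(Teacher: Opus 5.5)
Your proposal is correct and follows the paper's argument essentially verbatim. The steps match one for one: the auxiliary domain $\widetilde\Omega=\Omega\cap B_{3/4}$, the domination $|u|\le v$ by the maximum principle, the Poisson-kernel bound for $v(e_{n}/2)$ split over $B_{1}\setminus B_{3/4}$ and $B_{1}^{c}$, the boundary Harnack comparison with $G^{2e_{n}/3}_{\Theta,\widetilde\Omega}$, and the level-set Green function estimate, where your Step 4 simply spells out the covering the paper leaves implicit.

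One small remark: your ``main obstacle'' is vacuous. For $t<c_{0}$, every point of $B_{1/2}\cap\Omega\cap\{\delta=t\}$ lies within distance $t$ of $\partial\Omega\cap B_{1/2+t}$, so it suffices to let the balls in Step 4 cover $\partial\Omega\cap B_{1/2+c_{0}}$ rather than $\partial\Omega\cap B_{1/2}$. In any case, a bound of the form $G_{\Theta}\lesssim 1$ alone would only give $t^{-s}$, not a bound uniform in $t$.
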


Next, the Caccioppoli inequality from Lemma \ref{lem:Caccioppoli} is replaced by its energy version for $\mathcal{E}_{\Theta}$. Namely, if $u\in H^{s}_{\loc}(B_{1})\cap L^{1}(\R^{n},w_{s})$ is a weak solution of $\mathcal{L}_{\Theta}u=0$ in $B_{1}$, then
\begin{equation*}
    \int_{B_{1/2}}\int_{B_{1/2}}(u(x)-u(y))^{2}K_{\Theta}(x,y)\,dx\,dy
    \le C\left(\lVert u\rVert_{L^{2}(B_{1})}^{2}+(1-s)\lVert u\rVert_{L^{1}(B_{1})}\lVert u\rVert_{L^{1}(\R^{n},w_{s})}\right),
\end{equation*}
where $C$ depends only on $n,s,\lambda,\Lambda$.
Consequently, the proof of Corollary \ref{cor:Sobolev-Dirichlet} remains valid and we obtain the following:
\begin{cor}\label{cor:Sobolev-Dirichlet-LTheta}
    Let $\Omega\subset \R^{n}$ be a bounded Lipschitz domain, and let $\beta\in (\max\{2s-1, 0\},s]$. Given $g\in L^{2}(\Omega^{c},\sigma_{\beta})$, let $u$ be the Poisson integral solution defined in \eqref{eq:Poisson-solution-statement-Lp-solvability-LTheta} of the Dirichlet problem $\mathcal{L}_{\Theta}(u)=0$ in $\Omega$ with exterior datum $g$. Then, $u\in H^{\beta/2}(\Omega)$ and 
    \begin{equation*}
        \lVert u\rVert_{H^{\beta/2}(\Omega)}\le C\lVert g\rVert_{L^{2}(\Omega^{c},\sigma_{\beta})},
    \end{equation*}
    where $C$ depends only on $n,s,\beta$, the Lipschitz character of $\Omega$, and $\lambda,\Lambda$. 
\end{cor}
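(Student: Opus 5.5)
The plan is to rerun, essentially verbatim, the two-step proof of Corollary \ref{cor:Sobolev-Dirichlet}. The only inputs that change are the anisotropic solvability estimate from Theorem \ref{thm:Dirichlet-problem-general-LTheta} (with $q=2$) and the energy Caccioppoli inequality for $\mathcal{E}_{\Theta}$ stated just above. Since $\lambda\le\Theta\le\Lambda$, we have $K_{\Theta}(x,y)\approx c_{n,s}|x-y|^{-n-2s}$. So we may work with the same isotropic quantity
\begin{equation*}
K(x)\coloneqq\int_{\Omega}\frac{(u(y)-u(x))^{2}}{|y-x|^{n+2s}}\,dy,
\end{equation*}
at the cost of constants depending on $\lambda,\Lambda$.

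\textbf{Step 1.} For $z\in\Omega$ we rescale the $\mathcal{E}_{\Theta}$-Caccioppoli inequality from $B_1$ to $B_{\delta(z)/4}(z)$. Here $u$ is a weak solution there, by interior regularity of $\mathcal{L}_\Theta$-harmonic functions. Combining this with the comparability of kernels, as in the isotropic proof, should give
\begin{equation*}
\fint_{B_{\delta(z)/8}(z)}K\,dx\lesssim\int_{\R^n}\frac{u(y)^2}{\delta(z)^{n+2s}+|y-z|^{n+2s}}\,dy.
\end{equation*}
The product term $\lVert u\rVert_{L^1}\lVert u\rVert_{L^1(w_s)}$ is absorbed by Cauchy–Schwarz into the weighted $L^2$ tail with the same kernel $(\delta(z)^{n+2s}+|y-z|^{n+2s})^{-1}$. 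Nothing in the geometric part is operator dependent: the identity $\int_\Omega \delta(z)^{2s-\beta}(\delta(z)^{n+2s}+|y-z|^{n+2s})^{-1}dz\approx \delta(y)^{-\beta}/(1+\delta(y)^{n+2s-\beta})$ and Fubini are used unchanged. This yields
\begin{equation*}
\int_\Omega K\,\delta^{2s-\beta}\lesssim\int_\Omega u^2\delta^{-\beta}+\lVert g\rVert^2_{L^2(\Omega^c,\sigma_\beta)}.
\end{equation*}
Remark \ref{rmk:exterior-interior-weighted} is purely geometric. Together with the anisotropic bound $\lVert u^*\rVert_{L^2(\Omega^c,\sigma_\beta)}\lesssim\lVert g\rVert_{L^2(\Omega^c,\sigma_\beta)}$ from Theorem \ref{thm:Dirichlet-problem-general-LTheta}, it controls $\int_\Omega u^2\delta^{-\beta}$, and hence $\lVert u\rVert_{L^2(\Omega)}$.

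\textbf{Step 2.} We split $\Omega\times\Omega$ into the near-diagonal set $S=\{|x-y|<\min\{\delta(x),\delta(y)\}\}$ and its complement, exactly as in \eqref{eq:first-term-interpolation-Hbeta}–\eqref{eq:second-term-interpolation-Hbeta}. On $S$ we use $|x-y|^{-n-\beta}\le|x-y|^{-n-2s}(\delta(x)^{2s-\beta}+\delta(y)^{2s-\beta})$, which holds because $\beta\le s<2s$. Off $S$ we bound the integrand by $u(x)^2+u(y)^2$ and integrate the kernel outside $B_{\delta(x)/2}(x)$ to get $\int_\Omega u^2\delta^{-\beta}$. These bounds involve no operator, so $[u]_{H^{\beta/2}(\Omega)}\lesssim\lVert g\rVert_{L^2(\Omega^c,\sigma_\beta)}$.

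\textbf{Main obstacle.} The one delicate point is Step 1: rescaling the $\mathcal{E}_{\Theta}$-Caccioppoli inequality correctly, including its nonlocal tail term, and checking that $u$ lies in $H^s_{\loc}(\Omega)$ so that the weak formulation applies. I would settle the latter first, via interior regularity for $\mathcal{L}_\Theta$ and the local boundedness of $u$ supplied by $u^*\in L^2$. After that, all constants depend only on $n,s,\beta$, the Lipschitz character, and $\lambda,\Lambda$.
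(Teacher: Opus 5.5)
Your proposal is correct and follows the paper's route: the paper obtains this corollary by observing that the two-step proof of Corollary \ref{cor:Sobolev-Dirichlet} goes through verbatim once Lemma \ref{lem:Caccioppoli} is replaced by the $\mathcal{E}_{\Theta}$-Caccioppoli inequality and Theorem \ref{thm:Dirichlet-problem-general} by Theorem \ref{thm:Dirichlet-problem-general-LTheta}, which is what you do. One bookkeeping fix: rescaling the Caccioppoli inequality to $B_{\delta(z)/4}(z)$ only controls the energy on $B_{\delta(z)/8}(z)$, so you should either average $K$ over a smaller ball such as $B_{\delta(z)/16}(z)$ or rescale to $B_{\delta(z)/2}(z)$, so that all remaining pairs satisfy $|x-y|\gtrsim \delta(z)+|y-z|$ and the crude bound by $u(x)^2+u(y)^2$ applies.
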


Finally, the Fourier symbol of $\mathcal{L}_{\Theta}$ is given by
\begin{equation*}
    m_{\Theta}(\xi)= |\xi|^{2s}\mathcal{A}_{\Theta}(\xi/|\xi|)\qquad \forall \xi \in \R^{n}\setminus \{0\}.
\end{equation*}
Therefore, $\lambda|\xi|^{2s}\le m_{\Theta}(\xi)\le \Lambda |\xi|^{2s}$, and the duality and Fourier-analysis arguments in the proof of Corollary \ref{cor:optimal-Sobolev-Poisson} are unchanged after replacing $(-\Delta)^{s}$ by $\mathcal{L}_{\Theta}$; see also \cite{rosoton2016regularity,rosoton2016dirichlet}. We then obtain the following analogue of Corollary \ref{cor:optimal-Sobolev-Poisson}:
\begin{cor}\label{cor:optimal-Sobolev-Poisson-LTheta}
    Let $\Omega\subset \R^{n}$ be a bounded Lipschitz domain, and let $\beta\in [s,\min\{s+1/2,2s\})$. Given $f\in H^{\beta-2s}(\Omega)$, let $u\in H^{s}(\R^{n})$ be the weak solution of the Poisson problem
   \begin{equation*}
       \left\{
    \begin{array}{rclll}
         \mathcal{L}_{\Theta}u&=&f\quad &\text{in $\Omega$},\\
            u&=&0\quad &\text{in $\Omega^{c}$}.\\
    \end{array}
    \right.
   \end{equation*}
   Then, $u\in H^{\beta}(\R^{n})$ and
   \begin{equation*}
       \lVert u\rVert_{H^{\beta}(\R^{n})}\le C\lVert f\rVert_{H^{\beta-2s}(\Omega)},
   \end{equation*}
   where $C$ depends only on $n,s,\beta$, the Lipschitz character of $\Omega$, and $\lambda,\Lambda$.
\end{cor}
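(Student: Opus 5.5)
The plan is to replicate the proof of Corollary \ref{cor:optimal-Sobolev-Poisson} with $(-\Delta)^{s}$ replaced by $\mathcal{L}_{\Theta}$. The only inputs that change are the Dirichlet theory (Theorem \ref{thm:Dirichlet-problem-general-LTheta} and Corollary \ref{cor:Sobolev-Dirichlet-LTheta}), and the Fourier multiplier $m_{\Theta}$, which is comparable to $|\xi|^{2s}$. I would split the range of $\beta$ into the same three regimes: the endpoint $\beta=s$, the upper range $\beta\in[3s/2,\min\{s+1/2,2s\})$, and the intermediate range $\beta\in(s,3s/2)$.

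\textbf{Endpoint $\beta=s$.} Test the weak formulation with $u$ itself. The coercivity $\mathcal{E}_{\Theta}(u,u)\ge \lambda\,c\,[u]_{H^{s}(\R^{n})}^{2}$ follows from \eqref{eq:ellipticity-LTheta}. Combined with the Poincaré inequality for functions vanishing outside the bounded set $\Omega$, this gives $\lVert u\rVert_{H^{s}(\R^{n})}\lesssim \lVert f\rVert_{H^{-s}(\Omega)}$.

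\textbf{Upper range, $\beta\ge 3s/2$.} Set $\gamma=\beta-2s\in[-s/2,\min\{1/2-s,0\})$ and $\theta=-2\gamma\in(\max\{2s-1,0\},s]$. For $g\in H^{-\gamma}(\Omega^{c})$, the fractional Hardy inequality of \cite{dyda2004fractional} (valid since $-\gamma<1/2$) gives $g\in L^{2}(\Omega^{c},\sigma_{\theta})$. Then Theorem \ref{thm:Dirichlet-problem-general-LTheta} and Corollary \ref{cor:Sobolev-Dirichlet-LTheta} with weight index $\theta$ produce the $\mathcal{L}_{\Theta}$-harmonic extension $w_{g}$ with
$$\lVert w_{g}\rVert_{H^{-\gamma}(\Omega)}\lesssim \lVert g\rVert_{H^{-\gamma}(\Omega^{c})}.$$

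Next I need the duality identity $\langle f,w_{g}\rangle_{\Omega}=-\langle \mathcal{L}_{\Theta}u,g\rangle_{\Omega^{c}}$. It comes from the symmetric form: $\mathcal{E}_{\Theta}(u,w_{g})=\langle f,w_{g}\rangle_{\Omega}$, since $u$ vanishes outside $\Omega$. Also, $\mathcal{E}_{\Theta}(u,w_{g})=\int_{\R^{n}}w_{g}\,\mathcal{L}_{\Theta}u$, and this splits into the interior part plus the exterior part on which $w_{g}=g$. The interior part is handled by the $\mathcal{L}_{\Theta}$-harmonicity of $w_{g}$, paired against $u\in H^{s}_{0}$. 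Here the evenness of $\Theta$ is what makes $\mathcal{E}_{\Theta}$ symmetric.

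I would first justify the identity for smooth $f$ and smooth compactly supported $g$, and then extend by density. This density step is the main technical point, since $w_{g}$ need not lie in $H^{s}$ globally. I would handle it by approximating $\Omega$ by smooth domains with the same Lipschitz character, using Lemma \ref{lem:approximation-smooth-domains} in its $\Theta$-version, and passing to the limit using the uniform bounds.

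Taking the supremum over $g$ then yields
$$\lVert \mathcal{L}_{\Theta}u\rVert_{H^{\gamma}(\Omega^{c})}\lesssim \lVert f\rVert_{H^{\gamma}(\Omega)}.$$
For $\gamma\in(-1/2,0]$, zero extension embeds $H^{\gamma}(\Omega)$ and $H^{\gamma}(\Omega^{c})$ into $H^{\gamma}(\R^{n})$, so $\lVert\mathcal{L}_{\Theta}u\rVert_{H^{\gamma}(\R^{n})}\lesssim\lVert f\rVert_{H^{\gamma}(\Omega)}$. Finally, since $\lambda|\xi|^{2s}\le m_{\Theta}\le\Lambda|\xi|^{2s}$, Plancherel gives $[u]_{\dot H^{\beta}}\lesssim\lVert \mathcal{L}_{\Theta}u\rVert_{\dot H^{\gamma}}$. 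The low frequencies and the $L^{2}$ part are controlled by the endpoint estimate from the case $\beta=s$.

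\textbf{Intermediate range.} For $\beta\in(s,3s/2)$, the solution operator $f\mapsto u$ is linear and bounded $H^{-s}(\Omega)\to H^{s}(\R^{n})$ and $H^{-s/2}(\Omega)\to H^{3s/2}(\R^{n})$. Complex interpolation of these two bounds gives the remaining exponents, using that the negative-order spaces on a Lipschitz domain with $|\gamma|<1/2$ form an interpolation scale.

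All constants depend on $\Theta$ only through $\lambda$ and $\Lambda$, since they enter only via coercivity, the symbol bounds, and the $\Theta$-versions of the theorems invoked above.
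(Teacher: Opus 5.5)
Your proposal follows the paper's proof essentially verbatim. It uses the same three regimes, the Hardy embedding $H^{-\gamma}(\Omega^{c})\hookrightarrow L^{2}(\Omega^{c},\sigma_{\theta})$, Theorem~\ref{thm:Dirichlet-problem-general-LTheta} and Corollary~\ref{cor:Sobolev-Dirichlet-LTheta} at weight index $\theta=-2\gamma$, the duality identity, zero extension, the symbol bounds $\lambda|\xi|^{2s}\le m_{\Theta}\le\Lambda|\xi|^{2s}$, and interpolation for $\beta\in(s,3s/2)$.

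One slip needs fixing in the duality step. The two evaluations of $\mathcal{E}_{\Theta}(u,w_{g})$ are $0$, by harmonicity of $w_{g}$ tested against $u$ (which vanishes outside $\Omega$), and $\langle f,w_{g}\rangle_{\Omega}+\langle\mathcal{L}_{\Theta}u,g\rangle_{\Omega^{c}}$, by splitting $\mathcal{L}_{\Theta}u$. So your claim $\mathcal{E}_{\Theta}(u,w_{g})=\langle f,w_{g}\rangle_{\Omega}$ is false as written, though the identity you state at the end is correct.

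Also, no approximation of $\Omega$ by smooth domains is needed. For $g\in C^{\infty}_{c}(\interior\Omega^{c})$ one already has $w_{g}\in H^{s}(\R^{n})$, and these $g$ suffice for the supremum defining the dual norm.
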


\addcontentsline{toc}{section}{Appendix}

\appendix
\section*{Appendix}

The appendix is divided into two parts. The first collects two lemmas on $s$-harmonic functions, while the second contains some harmonic analysis results for doubling weights.
\refstepcounter{section}

\subsection[Some useful lemmas on s-harmonic functions]{Some useful lemmas on $s$-harmonic functions}

\paragraph{A bound on tails.}
The following result, also known as the ``half-Harnack inequality'', gives a bound on the nonlocal tail of a nonnegative $s$-harmonic function in terms of its interior values. For a proof, we refer the reader to \cite[Theorem 3.3.1]{XROXFRbook}.
\begin{lem}\label{lem:half-harnack-tails}
    Let $u\in C(B_{1})\cap L^{1}(\R^{n},w_{s})$ be nonnegative in $\R^{n}$ and $s$-harmonic in $B_{1}$. Then, the following estimate holds:
    \begin{equation*}
       (1-s)\lVert u\rVert_{L^{1}(\R^{n},w_{s})}\le  C\inf_{B_{1/2}}u,
    \end{equation*}
    where $C$ depends only on $n,s$, and is uniform as $s\to 1^{-}$.
\end{lem}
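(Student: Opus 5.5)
The plan is to test the equation against a smooth bump and then use the interior Harnack inequality for nonnegative $s$-harmonic functions. The distributional formulation applies directly, since $u\in L^{1}(\R^{n},w_{s})$ and $\eta\in C^{\infty}_{c}(B_{1})$.

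Fix $\eta\in C^{\infty}_{c}(B_{1/4})$ with $0\le\eta\le1$ and $\eta\equiv1$ on $B_{1/8}$. Since $u$ is $s$-harmonic in $B_{1}$, we have $\int_{\R^{n}}u\,(-\Delta)^{s}\eta=0$. The two regions contribute as follows.

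For $x\notin B_{1/4}$ we have $\eta(x)=0$, so
$$(-\Delta)^{s}\eta(x)=-c_{n,s}\int_{B_{1/4}}\frac{\eta(y)}{|x-y|^{n+2s}}\,dy\le -c\,c_{n,s}\,\frac{1}{1+|x|^{n+2s}},$$
because $\eta\ge1$ on $B_{1/8}$ and $|x-y|\lesssim 1+|x|$ there.

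For $x\in B_{1/4}$, the second-order Taylor bound on $\eta$ near $x$, plus the trivial bound far away, gives $|(-\Delta)^{s}\eta(x)|\le C\,c_{n,s}\bigl(\tfrac{1}{1-s}+\tfrac1s\bigr)\|\eta\|_{C^{2}}$. Since $c_{n,s}\approx s(1-s)$, this is bounded uniformly as $s\to1^{-}$.

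Rearranging the identity and using $u\ge0$ gives
$$c\,c_{n,s}\int_{B_{1/4}^{c}}\frac{u(x)}{1+|x|^{n+2s}}\,dx\le\int_{B_{1/4}}u\,|(-\Delta)^{s}\eta|\le C\sup_{B_{1/4}}u.$$
The piece $(1-s)\int_{B_{1/4}}u\,w_{s}$ is trivially bounded by $C\sup_{B_{1/4}}u$. As $c_{n,s}\gtrsim(1-s)$ with a constant depending on $s$ only through a lower bound away from $0$, we obtain $(1-s)\|u\|_{L^{1}(\R^{n},w_{s})}\le C\sup_{B_{1/4}}u$.

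It remains to replace $\sup_{B_{1/4}}u$ by $\inf_{B_{1/2}}u$. For this I would invoke the interior Harnack inequality for functions that are nonnegative in all of $\R^{n}$ and $s$-harmonic in $B_{1}$, namely $\sup_{B_{1/2}}u\le C\inf_{B_{1/2}}u$.

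I expect the main obstacle to be keeping every constant uniform as $s\to1^{-}$. This requires the Harnack constant to be robust in the limit, which is the classical robust Harnack inequality for the fractional Laplacian (e.g.\ via Caffarelli--Silvestre or the XROXFR book). It also requires tracking the factor $c_{n,s}\sim(1-s)$ consistently in both the interior bound on $(-\Delta)^{s}\eta$ and the tail lower bound. The dependence on $s$ near $0$ is allowed by the statement.
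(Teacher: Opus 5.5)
Your proof is correct, but it takes a different route from the usual proof of this half-Harnack inequality. The paper itself gives no argument and only cites \cite[Theorem 3.3.1]{XROXFRbook}.

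The usual proof touches $u$ from below. Take $\eta\in C^{\infty}_{c}(B_{3/4})$ with $0\le\eta\le 1$ and $\eta\equiv 1$ on $B_{1/2}$, and let $t\ge 0$ be the largest constant with $u\ge t\eta$ in $\R^{n}$. Then $t\le\inf_{B_{1/2}}u$, and $u-t\eta$ vanishes at some $x_{0}\in B_{3/4}$. Evaluating the equation at $x_{0}$ gives
\[
c_{n,s}\int_{\R^{n}}\frac{(u-t\eta)(y)}{|x_{0}-y|^{n+2s}}\,dy=t\,(-\Delta)^{s}\eta(x_{0})\le Ct.
\]
Since $u-t\eta\ge 0$ and $w_{s}(y)\lesssim |x_{0}-y|^{-n-2s}$, this bounds $(1-s)\lVert u\rVert_{L^{1}(\R^{n},w_{s})}$ by $Ct\le C\inf_{B_{1/2}}u$ directly. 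The bound is uniform as $s\to 1^{-}$ and uses no Harnack inequality.

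Your duality identity instead controls the tail only by $\int_{B_{1/4}}u\le C\sup_{B_{1/4}}u$. So you need the full interior Harnack inequality with $s$-robust constants as an additional input. For the fractional Laplacian that input is available independently, for example from the explicit Poisson kernel of $B_{3/4}$: its ratios $P(x,y)/P(0,y)$ for $|x|<1/2$, $|y|>3/4$ are bounded above and below independently of $s$. Your argument is therefore complete. However:
\begin{itemize}
\item it invokes a result stronger than the one being proved;
\item it applies only to solutions, not to supersolutions $(-\Delta)^{s}u\ge -K$;
\item for the anisotropic operators $\mathcal{L}_{\Theta}$ of Section~\ref{sec:general_kernels}, where no explicit Poisson kernel exists, you would have to check that the Harnack inequality you cite is not itself derived from this half-Harnack bound.
\end{itemize}
In exchange, your route uses only the distributional formulation. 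It never evaluates $(-\Delta)^{s}u$ pointwise at a touching point, which for a merely continuous $u$ requires interior regularity or the viscosity framework.
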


\paragraph{Caccioppoli inequality.}
Next, we prove a nonlocal version of the well-known Caccioppoli inequality for weak solutions.

\begin{lem}\label{lem:Caccioppoli}
    Let $u\in H^{s}_{\loc}(B_{1})\cap L^{1}(\R^{n}, w_{s})$ be a weak solution of $(-\Delta)^{s}u=0$ in $B_{1}$. 
    Then, we have
    \begin{equation*}
        [u]_{H^{s}(B_{1/2})}^{2}\le C\left(\lVert u\rVert_{L^{2}(B_{1})}^{2}+ (1-s)\lVert u\rVert_{L^{1}(B_{1})}\lVert u\rVert_{L^{1}(\R^{n},w_{s})}\right),
    \end{equation*}
    where $C$ depends only on $n,s$, and is uniform as $s\to 1^{-}$.
\end{lem}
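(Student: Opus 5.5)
The plan is the standard cutoff-and-test argument, tracking the nonlocal tail explicitly and keeping every constant uniform in $s$. Fix a cutoff $\eta\in C^\infty_c(B_{3/4})$ with $0\le\eta\le1$, $\eta\equiv1$ on $B_{1/2}$ and $|\nabla\eta|\lesssim1$. After mollification if necessary, $\varphi\coloneqq\eta^2u$ is an admissible test function in the weak formulation, since $u\in H^s_{\loc}(B_1)$ and the cutoff has compact support. This gives
\begin{equation*}
0=\frac{c_{n,s}}{2}\int_{\R^n}\int_{\R^n}\frac{(u(x)-u(y))(\eta^2(x)u(x)-\eta^2(y)u(y))}{|x-y|^{n+2s}}\,dx\,dy .
\end{equation*}

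First I will split $\R^n\times\R^n$ into $B_1\times B_1$ and the two symmetric off-diagonal pieces $B_1\times B_1^c$ and $B_1^c\times B_1$. The region $B_1^c\times B_1^c$ contributes nothing, because $\varphi$ vanishes there.

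On $B_1\times B_1$, I will use the algebraic identity
\begin{equation*}
(a-b)(\eta_x^2a-\eta_y^2b)\ge \tfrac12(\eta_xa-\eta_yb)^2-C(\eta_x-\eta_y)^2(a^2+b^2).
\end{equation*}
This bounds the local part from below by $\tfrac12[\eta u]_{H^s(B_1)}^2$, up to an error of size
\begin{equation*}
c_{n,s}\int_{B_1}u(x)^2\int_{B_1}\frac{|\eta(x)-\eta(y)|^2}{|x-y|^{n+2s}}\,dy\,dx .
\end{equation*}
The inner integral is $\lesssim c_{n,s}\int_{B_2}|z|^{2-n-2s}\,dz\approx c_{n,s}/(1-s)\lesssim1$, and this bound is uniform as $s\to1^-$ because $c_{n,s}\approx s(1-s)$. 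The error is therefore $\lesssim\|u\|_{L^2(B_1)}^2$. Since $\eta\equiv1$ on $B_{1/2}$, the quantity $[\eta u]_{H^s(B_1)}^2$ controls $[u]_{H^s(B_{1/2})}^2$.

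On the nonlocal piece, $x\in B_1$ and $y\notin B_1$, so the integrand reduces to $\eta^2(x)u(x)(u(x)-u(y))/|x-y|^{n+2s}$. The term containing $\eta^2(x)u(x)^2$ is nonnegative, so it can be dropped or kept harmlessly. The remaining cross term is bounded by
\begin{equation*}
c_{n,s}\int_{B_{3/4}}|u(x)|\int_{B_1^c}\frac{|u(y)|}{|x-y|^{n+2s}}\,dy\,dx .
\end{equation*}
For $x\in B_{3/4}$ and $y\in B_1^c$ we have $|x-y|\gtrsim 1+|y|$, so $|x-y|^{-n-2s}\lesssim (1+|y|^{n+2s})^{-1}$. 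The cross term is therefore $\lesssim c_{n,s}\|u\|_{L^1(B_1)}\|u\|_{L^1(\R^n,w_s)}\lesssim(1-s)\|u\|_{L^1(B_1)}\|u\|_{L^1(\R^n,w_s)}$, which is exactly the second term in the statement.

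The one delicate point is keeping everything uniform as $s\to1^-$. In the local error term, the divergence $1/(1-s)$ of $\int_{B_2}|z|^{2-n-2s}\,dz$ must be cancelled precisely by the factor $(1-s)$ inside $c_{n,s}$. In the tail term, only the prefactor $c_{n,s}$ may appear, with no additional kernel divergence, and this holds because $|x-y|$ is bounded away from zero there. A second, purely technical point is justifying the test function: one approximates $u$ by mollifications, or uses the fact that $\eta^2u$ lies in $H^s$ with compact support in $B_1$ together with the finiteness of the tail integral, and then argues by density.
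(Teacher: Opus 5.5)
Your proof is correct, but it is organized differently from the paper's. The paper localizes first. With $\psi\in C^\infty_c(B_1)$, $\psi\equiv1$ on $B_{3/4}$, it sets $v=u\psi$ and writes $(-\Delta)^s v=f\coloneqq u(-\Delta)^s\psi-2B_s(u,\psi)$ in $B_1$. It then bounds $|f|\lesssim(1-s)\bigl(|u|+\lVert u\rVert_{L^1(\R^n,w_s)}\bigr)$ pointwise on $B_{2/3}$, using that $\psi(x)-\psi(y)=0$ for $y\in B_{3/4}$. Finally it tests this equation with $v\varphi^2$ for a second cutoff $\varphi\in C^\infty_c(B_{2/3})$, using Young's inequality and $B_s(\varphi,\varphi)\lesssim1$. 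So in the paper the nonlocal tail is absorbed once into the source term $f$, and the energy argument runs for a compactly supported function over all of $\R^n\times\R^n$.

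You instead test the original equation directly with $\eta^2u$ and split $\R^n\times\R^n$ into $B_1\times B_1$ and the off-diagonal pieces. The tail then appears as the cross term $\eta^2(x)u(x)u(y)$, and you discard the nonnegative term $\eta^2(x)u(x)^2$. Your route is more economical. It uses one cutoff and avoids justifying the weak product rule for $(-\Delta)^s(u\psi)$ when $u$ is only in $H^s_{\loc}(B_1)$. Your algebraic inequality is in fact an exact identity with a better constant: $(a-b)(p^2a-q^2b)=(pa-qb)^2-ab(p-q)^2$.

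The paper's route buys a structure parallel to the classical local Caccioppoli proof, with an equation for the truncation that has an explicit right-hand side. It isolates the same mechanism you identify: the factor $(1-s)$ in front of the tail comes from $c_{n,s}$ acting on a kernel that is separated from the diagonal. Both arguments need the density step to test with a non-smooth function; the paper passes over it silently, whereas you flag it and sketch the approximation.
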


\begin{proof}
    Let $\psi\in C^{\infty}_{c}(B_{1})$ be a standard cutoff function such that $\psi\in [0,1]$, $\psi\equiv 1$ in $B_{3/4}$, and $|\nabla \psi|\lesssim 1$. We consider the function $v\coloneqq u\psi$, which vanishes outside of $B_{1}$, and is a weak solution of
    \begin{equation}\label{eq:equation-v-caccioppoli-localization}
        (-\Delta)^{s}v=f\quad \text{in $B_{1}$},\qquad f\coloneqq u(-\Delta)^{s}\psi-2B_{s}(u,\psi).
    \end{equation}
    Note that, by the definition of $\psi$, for every $x\in B_{2/3}$ we have 
    \begin{equation*}
        |B_{s}(u,\psi)(x)|\le \frac{c_{n,s}}{2}\int_{B_{3/4}^{c}}\frac{|u(x)-u(y)||\psi(x)-\psi(y)|}{|x-y|^{n+2s}}\,dy \lesssim (1-s)\left(|u(x)|+\int_{\R^{n}}\frac{|u(y)|}{1+|y|^{n+2s}}\,dy\right).
    \end{equation*}
    Therefore, the right-hand side $f$ can be bounded pointwise in $B_{2/3}$ by
    \begin{equation}\label{eq:unif-bound-rhs-caccioppoli-localization}
        |f(x)|\lesssim (1-s)\left(|u(x)|+\lVert u\rVert_{L^{1}(\R^{n},w_{s})}\right)\qquad \forall x\in B_{2/3}.
    \end{equation}
    
    Now we consider another standard cutoff function $\varphi \in C^{\infty}_{c}(B_{2/3})$ such that $\varphi\in [0,1]$, $\varphi\equiv1$ in $B_{1/2}$, and $|\nabla \varphi|\lesssim 1$. Testing equation \eqref{eq:equation-v-caccioppoli-localization} with $v\varphi^{2}$, and using the identity
    \[v(x)\varphi^{2}(x)-v(y)\varphi^{2}(y)=\varphi^{2}(x)(v(x)-v(y))+v(y)(\varphi(x)-\varphi(y))(\varphi(x)+\varphi(y)).\]
    we obtain
    \begin{align}\label{eq:equation-after-testing-caccioppoli}
        \int_{\R^{n}}\varphi^{2}B_{s}(v,v)=-\frac{c_{n,s}}{2}\int_{\R^{n}}\int_{\R^{n}}\frac{v(y)(v(x)-v(y))(\varphi(x)-\varphi(y))(\varphi(x)+\varphi(y))}{|x-y|^{n+2s}}\,dx\,dy+\int_{B_{1}}fv\varphi^{2}=:I+II.
    \end{align}
    By Young's inequality, the first term in the right-hand side of \eqref{eq:equation-after-testing-caccioppoli} can be controlled by
    \begin{equation}\label{eq:bound-I-caccioppoli}
        \begin{aligned}
        I&\le \frac{c_{n,s}}{8}\int_{\R^{n}}\int_{\R^{n}}\frac{(v(x)-v(y))^{2}(\varphi^{2}(x)+\varphi^{2}(y))}{|x-y|^{n+2s}}\,dx\,dy+\frac{c_{n,s}}{2}\int_{\R^{n}}\int_{\R^{n}}\frac{v^{2}(y)(\varphi(x)-\varphi(y))^{2}}{|x-y|^{n+2s}}\,dx\,dy\\
        &\le \frac{1}{2}\int_{\R^{n}}\varphi^{2}B_{s}(v,v)+\int_{\R^{n}}v^{2}B_{s}(\varphi, \varphi)\le \frac{1}{2}\int_{\R^{n}}\varphi^{2}B_{s}(v,v)+C\lVert u\rVert_{L^{2}(B_{1})}^{2},
        \end{aligned}
    \end{equation}
    where in the second step we used the symmetry of the first double integral. On the other hand, thanks to the pointwise bound in \eqref{eq:unif-bound-rhs-caccioppoli-localization} and the fact that $\varphi$ is supported in $B_{2/3}$, the second term in the right-hand side of \eqref{eq:equation-after-testing-caccioppoli} can be bounded by
    \begin{equation}\label{eq:bound-II-caccioppoli}
        II\lesssim (1-s)\int_{B_{2/3}}|v|(|u|+\lVert u\rVert_{L^{1}(\R^{n},w_{s})})\le (1-s)\left(\lVert u\rVert_{L^{2}(B_{1})}^{2}+ \lVert u\rVert_{L^{1}(B_{1})}\lVert u\rVert_{L^{1}(\R^{n},w_{s})}\right).
    \end{equation}
    Finally, substituting \eqref{eq:bound-I-caccioppoli} and \eqref{eq:bound-II-caccioppoli} into \eqref{eq:equation-after-testing-caccioppoli} and rearranging terms we obtain
    \begin{align*}
        [u]_{H^{s}(B_{1/2})}^{2}\le \int_{\R^{n}}\varphi^{2}B_{s}(v,v)\lesssim \lVert u\rVert_{L^{2}(B_{1})}^{2}+ (1-s)\lVert u\rVert_{L^{1}(B_{1})}\lVert u\rVert_{L^{1}(\R^{n},w_{s})}, 
    \end{align*}
    as desired.
\end{proof}

\subsection{Harmonic analysis tools}

\paragraph{A Gehring-type lemma.} In the following lemma, we prove the self-improvement of the reverse-H\"{o}lder inequality for a certain class of weights that satisfy a specific anisotropic doubling condition. The dyadic framework and the main strategy of proof follow the ideas presented in \cite{AndersonWeirich2018}. 

We call $\mathcal{D}$ the set of all half-open dyadic subcubes of $(0,1]^{n-1}$. For every $Q\in \mathcal{D}$, we denote by $\ell(Q)$ the side length of $Q$. A cube $\hat{Q}\in \mathcal{D}$ is the parent of $Q$ if $Q\subset \hat{Q}$ and $\ell(\hat{Q})=2\ell(Q)$. We say that $Q'\in \mathcal{D}$ is a descendant of $Q\in \mathcal{D}$ if $Q'\subseteq Q$ and denote the set of all descendants of $Q$ by
\begin{equation*}
    \mathcal{D}(Q)\coloneqq\{Q'\in \mathcal{D}:Q'\subseteq Q\}.
\end{equation*}
The tent over $Q$ is defined as $T(Q)\coloneqq Q\times (0,\ell(Q)]\subseteq (0,1]^{n}$. The upper half of the tent is denoted by $U(Q)\coloneqq Q\times (\ell(Q)/2, \ell(Q)]$; see Figure \ref{fig:dyadic-gehring}.

\begin{figure}[H]
\centering
\includegraphics[width=0.5\textwidth]{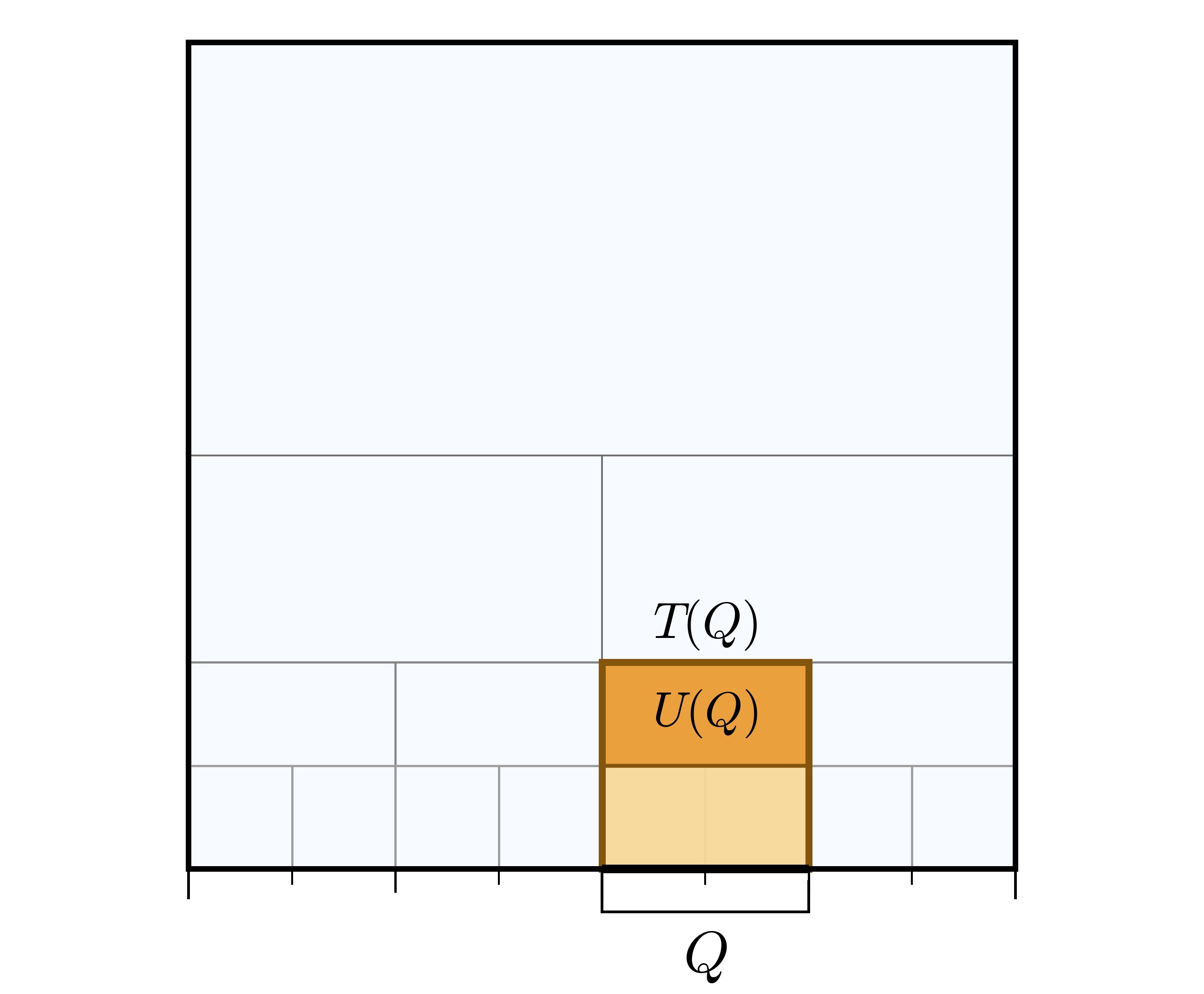}
\caption{The dyadic framework.}
\label{fig:dyadic-gehring}
\end{figure}

\begin{lem}\label{lem:gehring}
    Let $\sigma \in \mathcal{M}_{+}((0,1]^{n})$ be a finite nonnegative measure such that 
    \begin{equation*}
        0<\sigma(T(\hat{Q}))\le \bar{C} \sigma(T(Q))\qquad \forall Q\in \mathcal{D}, \quad \ell(Q)\le 1/2.
    \end{equation*}
    Given $f:(0,1]^{n}\to [0,\infty)$, 
    suppose that there exist $p\in (1,\infty)$ and $C_{0}, C_{1}\ge 1$ such that the following conditions hold:
    \begin{itemize}
        \item [(i)] $L^{p}$-reverse-H\"{o}lder inequality in tents: 
        \begin{equation*}
            \left(\fint_{T(Q)}f^{p}d\sigma\right)^{1/p}\le C_{0}\fint_{T(Q)}fd\sigma\qquad \forall Q\in \mathcal{D}.
        \end{equation*}
        \item [(ii)] Pointwise bound in the upper half of tents:
        \begin{equation*}
            \frac{1}{C_{1}}\fint_{T(Q)}fd\sigma\le f(x)\le C_{1}\fint_{T(Q)}fd\sigma\qquad \forall Q\in \mathcal{D},\quad\forall x\in U(Q).
        \end{equation*}
    \end{itemize}
    Then, there are $\eps>0$ and $C_{2}>1$ depending only on $n, \bar C, p, C_{0}$, and $C_{1}$, such that, for every $Q\in \mathcal{D}$, the $L^{p+\eps}$-reverse-H\"{o}lder inequality holds:
    \begin{equation}\label{eq:reverse-holder-ppluseps-gehring}
        \left(\fint_{T(Q)}f^{p+\eps}d\sigma\right)^{1/(p+\eps)}\le C_{2}\fint_{T(Q)}fd\sigma\qquad \forall Q\in \mathcal{D}.
    \end{equation}
\end{lem}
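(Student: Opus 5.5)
The plan is a Calderón–Zygmund stopping-time argument on the dyadic tree, following the classical Gehring scheme adapted to tents as in \cite{AndersonWeirich2018}. Fix $Q_0\in\mathcal{D}$ and normalize $\fint_{T(Q_0)}f\,d\sigma=1$. For every $x\in T(Q_0)$ there is a unique $Q(x)\in\mathcal{D}(Q_0)$ with $x\in U(Q(x))$. The sets $U(Q)$, $Q\in\mathcal{D}(Q_0)$, partition $T(Q_0)$. By (ii), $f(x)\approx a_{Q(x)}\coloneqq\fint_{T(Q(x))}f\,d\sigma$. Consequently the level sets $\{f>\lambda\}$ are, up to constants, unions of upper halves $U(Q)$ with $a_Q\gtrsim\lambda$. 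The goal is a good-$\lambda$ type inequality
\begin{equation*}
\int_{\{f>\lambda\}\cap T(Q_0)}f^{p}\,d\sigma\le C\lambda^{p-1}\int_{\{f>c\lambda\}\cap T(Q_0)}f\,d\sigma\qquad\forall\lambda\ge\lambda_0,
\end{equation*}
with $\lambda_0$ a large multiple of $1$. The standard Gehring integration in $\lambda$ then yields $f\in L^{p+\eps}(T(Q_0),\sigma)$ with the stated bound.

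To get this inequality, for $\lambda\ge\lambda_0$ I select the maximal cubes $\{Q_j\}\subset\mathcal{D}(Q_0)$ with $a_{Q_j}>\lambda$. Since $a_{Q_0}=1<\lambda$, each $Q_j\neq Q_0$, so its parent satisfies $a_{\hat Q_j}\le\lambda$. The doubling hypothesis on tents gives $a_{Q_j}\le\bar C\,\sigma(T(\hat Q_j))a_{\hat Q_j}/\sigma(T(\hat Q_j))\cdot$ (more precisely $\int_{T(Q_j)}f\le\int_{T(\hat Q_j)}f$ together with $\sigma(T(\hat Q_j))\le\bar C\sigma(T(Q_j))$), hence $\lambda<a_{Q_j}\le\bar C\lambda$. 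Next I check that $\{f>C_1\bar C\lambda\}\cap T(Q_0)\subset\bigcup_j T(Q_j)$. Indeed, if $x\in U(Q)$ with $f(x)>C_1\bar C\lambda$, then $a_Q>\bar C\lambda\ge\lambda$, so $Q$ lies inside some maximal $Q_j$, and $U(Q)\subset T(Q_j)$. The tents $T(Q_j)$ are pairwise disjoint because the $Q_j$ are. Applying (i) on each tent then gives
\begin{equation*}
\int_{T(Q_j)}f^p\,d\sigma\le C_0^p\sigma(T(Q_j))a_{Q_j}^p\le C_0^p(\bar C\lambda)^{p-1}\int_{T(Q_j)}f\,d\sigma .
\end{equation*}
It remains to bound $\int_{T(Q_j)}f\,d\sigma$ by the integral of $f$ over $T(Q_j)\cap\{f>c\lambda\}$. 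For this I use (ii) on the upper half. From (i) with exponent $1$ and the lower bound in (ii), $f\ge a_{Q_j}/C_1>\lambda/C_1$ on $U(Q_j)$. I also need $\sigma(U(Q_j))\gtrsim\sigma(T(Q_j))$.

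The main obstacle is this last point: the hypotheses only control tents, not upper halves. If the upper half can be small, I will instead follow the standard split $\int_{T(Q_j)}f=\int_{T(Q_j)\cap\{f\le\eta\lambda\}}f+\int_{T(Q_j)\cap\{f>\eta\lambda\}}f$. The first piece is at most $\eta\lambda\,\sigma(T(Q_j))<\eta\int_{T(Q_j)}f$ since $a_{Q_j}>\lambda$. Taking $\eta=1/2$ and absorbing gives $\int_{T(Q_j)}f\le2\int_{T(Q_j)\cap\{f>\lambda/2\}}f$, so neither (ii) nor a measure bound on upper halves is needed here. Summing over $j$ produces the good-$\lambda$ inequality with $\lambda$ replaced by $C_1\bar C\lambda$ on the left and $c=1/(2C_1\bar C)$ after rescaling. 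The role of (ii) is to make the level-set inclusion work: it guarantees that large values of $f$ are detected by large tent averages.

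Finally comes the Gehring integration step. I multiply the inequality by $\eps\lambda^{\eps-1}$ and integrate over $\lambda\in(\lambda_0,\infty)$. To avoid infinite quantities, I first apply the argument to a truncation $\min\{f,M\}$ in the integrals; (i) and the selection are unaffected when stated for $f$ itself. This gives
\begin{equation*}
\int_{T(Q_0)}f^{p+\eps}\,d\sigma\le C\lambda_0^{\eps}\int_{T(Q_0)}f^p\,d\sigma+C'\frac{\eps}{p-1+\eps}\int_{T(Q_0)}f^{p+\eps}\,d\sigma .
\end{equation*}
For small $\eps$, depending only on $n,\bar C,p,C_0,C_1$, the last term is absorbed. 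Letting $M\to\infty$ and using (i) together with the normalization yields \eqref{eq:reverse-holder-ppluseps-gehring}.
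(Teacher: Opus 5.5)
Your argument is correct, but it follows a different route from the paper.

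\textbf{Your route.} You run the classical good-$\lambda$ form of Gehring's lemma. For each level $\lambda\ge 1$, with $\langle f\rangle_{Q_0}=1$:
\begin{itemize}
\item you take the maximal cubes $Q_j$ with tent average above $\lambda$, and cap their averages by $\bar C\lambda$ via tent doubling;
\item you use the upper bound in (ii) in place of Lebesgue differentiation to get $\{f>C_1\bar C\lambda\}\cap T(Q_0)\subset\bigcup_j T(Q_j)$;
\item you apply (i) on each stopping tent, then integrate in $\lambda$ and absorb.
\end{itemize}

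\textbf{The paper's route.} The paper fixes one large $\lambda$ and uses a two-sided stopping time: it stops when $\langle f\rangle_{Q'}$ leaves $(\lambda^{-1}\langle f\rangle_Q,\lambda\langle f\rangle_Q)$. It shows the stopped tents carry at most a fraction $c<1$ of $\sigma(T(Q))$. Iterating gives $\int_{G_k(Q)}f^p\,d\sigma\le a^{k-1}\int_{T(Q)}f^p\,d\sigma$. The paper plays this against $\sup_{G_k(Q)}f\le C_1\bar C^{k-1}\lambda^k\langle f\rangle_Q$ and sums a geometric series.

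\textbf{Trade-off.} The paper's scheme bounds $\int f^{p+\eps}\,d\sigma$ directly, with no absorption and hence no truncation. However, it uses both sides of (ii): the lower bound is what makes $G_1(Q)$ carry a fixed fraction of $\int_{T(Q)}f^p\,d\sigma$. Yours needs only the upper bound, at the price of the absorption step.

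\textbf{Making the absorption rigorous.} Do not feed $\min\{f,M\}$ into the good-$\lambda$ inequality, since your selection argument does not directly provide it for the truncated function. Instead, integrate the level only over $(\lambda_0,M)$. Both sides then involve $\int_{T(Q_0)}f^p\min\{f,M\}^{\eps}\,d\sigma$. This is finite because $f\in L^p(T(Q_0),\sigma)$ by (i), so absorbing is legitimate, and monotone convergence as $M\to\infty$ concludes.
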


\begin{proof}
    In the sequel, we use the notation $\langle g\rangle _{Q}$ to denote the average $\fint_{T(Q)}gd\sigma$. 
    Let $\lambda>1$ be some number that will be chosen sufficiently large later. We call $\mathcal{J}(Q)\subset \mathcal{D}(Q)$ the maximal disjoint family of descendants $Q'\in \mathcal{D}(Q)$ such that
    \begin{equation}\label{eq:admissible-property-gehring}
        \langle f\rangle_{Q'}\notin \left(\lambda^{-1}\langle f\rangle _{Q}, \lambda \langle f\rangle _{Q}\right).
    \end{equation}
    More precisely, we let $Q'\in \mathcal{J}(Q)$ if and only if \eqref{eq:admissible-property-gehring} holds for $Q'$, but it does not hold for any $Q''\in \mathcal{D}(Q)$ such that $Q'\subsetneq Q''$. Notice in particular that $Q\notin \mathcal{J}(Q)$ and that any two elements of $\mathcal{J}(Q)$ are disjoint.
    We may split $\mathcal{J}(Q)$ as the disjoint union of the two families $\mathcal{J}^{+}(Q)$ and $\mathcal{J}^{-}(Q)$, where
    \begin{equation*}
        \mathcal{J}^{+}(Q)\coloneqq\{Q'\in \mathcal{J}(Q): \langle f\rangle_{Q'}\ge \lambda\langle f\rangle_{Q}\},\qquad \mathcal{J}^{-}(Q)\coloneqq\{Q'\in \mathcal{J}(Q): \langle f\rangle_{Q'}\le \lambda^{-1}\langle f\rangle_{Q}\}.
    \end{equation*}
    
    We also introduce the sets 
    \begin{gather*}
        \mathcal{B}^{+}(Q)\coloneqq\bigcup_{Q'\in \mathcal{J}^{+}(Q)}T(Q'),\qquad \mathcal{B}^{-}(Q)\coloneqq\bigcup_{Q'\in \mathcal{J}^{-}(Q)}T(Q'),\qquad \mathcal{B}(Q)\coloneqq \mathcal{B}^{+}(Q)\cup \mathcal{B}^{-}(Q),\\[3pt] G(Q)\coloneqq T(Q)\setminus \mathcal{B}(Q).
    \end{gather*}
    Note that $T(Q)=\mathcal{B}^{+}(Q)\cup \mathcal{B}^{-}(Q)\cup G(Q)$, and that the three sets are disjoint. 
    In the sequel, we will make extensive use of the following two facts:
    \begin{itemize}
        \item [(a)] For every $Q'\in \mathcal{J}(Q)$, we have $\langle f\rangle_{Q'}\le \bar C\lambda \langle f\rangle_{Q}$. In fact, calling $\hat{Q'}$ the parent of $Q'$, by the maximality of $\mathcal{J}(Q)$ we know that $\hat{Q'}$ does not satisfy \eqref{eq:admissible-property-gehring}. Therefore
        \begin{equation*}
            \langle f\rangle_{Q'}\le \frac{\sigma(T(\hat{Q'}))}{\sigma(T(Q'))}\langle f\rangle_{\hat{Q'}}\le \bar C\lambda \langle f \rangle_{Q}.
        \end{equation*}
        \item [(b)] We have 
        \begin{equation*}
            \frac{1}{C_{1}\lambda}\langle f\rangle_{Q}\le f(x)\le C_{1}\lambda \langle f\rangle_{Q}\qquad \forall x\in G(Q).
        \end{equation*}
        Indeed, if $x\in G(Q)$, then there is a unique $Q'\in \mathcal{D}(Q)$ such that $x\in U(Q')$. Moreover, $Q'$ does not satisfy \eqref{eq:admissible-property-gehring}, for otherwise we would have $x\in T(Q'')\subseteq \mathcal{B}(Q)$ for some $Q'\subseteq Q''\in \mathcal{J}(Q)$. The claim then follows from assumption $(ii)$. 
    \end{itemize}
    The rest of the proof is divided into three steps.
    
    \smallskip
    \noindent \textbf{Step 1:} In this step we fix $\lambda=2\max\{(3\bar C C_{0}^{p})^{\frac{1}{p-1}},3\}$ and we prove that 
    \begin{equation}\label{eq:gehring-outcome-step1}
        \sigma(\mathcal{B}(Q))\le c\sigma(T(Q))\qquad\forall Q\in \mathcal{D},
    \end{equation}
    where $c\coloneqq 1-(3C_{1}\lambda)^{-1}\in (0,1)$.
    Suppose by contradiction that there is some $Q\in \mathcal{D}$ for which $\sigma(\mathcal{B}(Q))> c\sigma(T(Q))$. Then, 
    \begin{equation*}
        \sigma(G(Q))< (1-c)\sigma(T(Q))=\frac{\sigma(T(Q))}{3C_{1}\lambda}.
    \end{equation*}
    This, together with point (b) above, implies
    \begin{equation}\label{eq:gehring-bound-integral-G}
        \int_{G(Q)}fd\sigma\le C_{1}\lambda\langle f\rangle_{Q}\sigma(G(Q))<\frac{1}{3}\int_{T(Q)}fd\sigma.
    \end{equation}
    On the other hand, since $\lambda>3$, we have
    \begin{equation}\label{eq:gehring-bound-integral-B-}
        \int_{\mathcal{B}^{-}(Q)}fd\sigma =\sum_{Q'\in \mathcal{J}^{-}(Q)}\langle f\rangle_{Q'}\sigma(T(Q'))\le \lambda^{-1}\langle f\rangle_{Q}\sigma(\mathcal{B}^{-}(Q))<\frac{1}{3}\int_{T(Q)}fd\sigma.
    \end{equation}
    Since $G(Q), \mathcal{B}^{+}(Q)$, and $\mathcal{B}^{-}(Q)$ are disjoint, from \eqref{eq:gehring-bound-integral-G} and \eqref{eq:gehring-bound-integral-B-} we deduce  
    \begin{equation}\label{eq:gehring-bound-integral-B+}
        \int_{\mathcal{B}^{+}(Q)}fd\sigma >\frac{1}{3}\int_{T(Q)}fd\sigma.
    \end{equation}
    By property (a) above, we also find
    \begin{equation}\label{eq:gehring-bound-fintegral-B+}
        \fint_{\mathcal{B}^{+}(Q)}fd\sigma= \frac{1}{\sigma(\mathcal{B}^{+}(Q))}\sum_{Q'\in \mathcal{J}^{+}(Q)}\sigma(T(Q'))\langle f\rangle_{Q'}\le \bar C\lambda \langle f\rangle_{Q}.
    \end{equation}
    As a consequence of \eqref{eq:gehring-bound-integral-B+} and \eqref{eq:gehring-bound-fintegral-B+} we deduce
    \begin{equation}\label{eq:gehring-bound-below-sigma-B+}
        \sigma(\mathcal{B}^{+}(Q))\ge \frac{1}{\bar C\lambda \langle f\rangle_{Q}}\int_{\mathcal{B}^{+}(Q)}fd\sigma>\frac{1}{3\bar C\lambda \langle f\rangle_{Q}}\int_{T(Q)}fd\sigma=\frac{\sigma(T(Q))}{3\bar C\lambda}.
    \end{equation}
    The contradiction now comes from combining the reverse-H\"{o}lder inequality in assumption $(i)$ with Jensen's inequality, the definition of $\mathcal{B}^{+}(Q)$, the lower bound in \eqref{eq:gehring-bound-below-sigma-B+}, and the choice we made of $\lambda$:
    \begin{align*}
        C_{0}^{p}\langle f\rangle_{Q}^{p}&\ge \langle f^{p}\rangle _{Q}\\
        &\ge \frac{1}{\sigma(T(Q))}\sum_{Q'\in \mathcal{J}^{+}(Q)}\sigma(T(Q'))\langle f^{p}\rangle_{Q'} \\
        &\ge \frac{1}{\sigma(T(Q))}\sum_{Q'\in \mathcal{J}^{+}(Q)}\sigma(T(Q'))\langle f\rangle_{Q'}^{p} \\
        &\ge \frac{\sigma(\mathcal{B}^{+}(Q))}{\sigma(T(Q))}\lambda^{p}\langle f\rangle_{Q}^{p}\\
        &>\frac{\lambda^{p-1}}{3\bar C}\langle f\rangle_{Q}^{p}>C_{0}^{p}\langle f\rangle_{Q}^{p}.
    \end{align*}

    To conclude this step, let us iterate the estimate in \eqref{eq:gehring-outcome-step1}. For any $Q\in \mathcal{D}$, we recursively define
    \begin{equation*}
        \mathcal{J}_{1}(Q)\coloneqq \mathcal{J}(Q),\qquad \mathcal{J}_{k+1}(Q)\coloneqq\bigcup_{Q'\in \mathcal{J}_{k}(Q)}\mathcal{J}(Q')\quad \forall k\ge 1.
    \end{equation*}
    Next we set 
    \begin{equation*}
        \mathcal{B}_{0}(Q)\coloneqq T(Q),\qquad \mathcal{B}_{k}(Q)\coloneqq \bigcup_{Q'\in \mathcal{J}_{k}(Q)}T(Q')\quad \forall k\ge 1,
    \end{equation*}
    and
    \begin{equation*}
        G_{k}(Q)\coloneqq \mathcal{B}_{k-1}(Q)\setminus \mathcal{B}_{k}(Q)\quad \forall k\ge 1.
    \end{equation*}
    Note that, up to a $\sigma$-null set, $T(Q)$ is the disjoint union of the sets $G_{k}(Q)$ for $k\ge 1$. Indeed, the residual set is $\bigcap_{k\ge 1}\mathcal{B}_{k}(Q)$, whose $\sigma$-measure is zero by the estimate below, which we deduce iterating \eqref{eq:gehring-outcome-step1}: 
    \begin{equation*}
        \sigma(\mathcal{B}_{k}(Q))\le c^{k}\sigma(T(Q))\quad \forall k\ge 1.
    \end{equation*}
    
    \smallskip
    \noindent \textbf{Step 2:} In this step we set $a\coloneqq 1-(1-c)(C_{0}^{p}C_{1}^{p}\lambda^{p})^{-1}\in (0,1)$, and we prove 
    \begin{equation}\label{eq:gehring-outcome-step2}
        \int_{G_{k}(Q)}f^{p}d\sigma\le a^{k-1}\int_{T(Q)}f^{p}d\sigma\qquad \forall Q\in \mathcal{D},\quad \forall k\ge 1.
    \end{equation}
    By property (b), we have $f\ge (C_{1}\lambda)^{-1}\langle f\rangle_{Q}$ in $G_{1}(Q)=G(Q)$. Moreover, $\sigma(G_{1}(Q))\ge (1-c)\sigma(T(Q))$ by \eqref{eq:gehring-outcome-step1}. Therefore, using the reverse-H\"{o}lder inequality from assumption $(i)$, for any $Q\in \mathcal{D}$ we derive
    \begin{equation*}
        \int_{G_{1}(Q)}f^{p}d\sigma\ge \sigma(G_{1}(Q))\frac{\langle f\rangle_{Q}^{p}}{C_{1}^{p}\lambda^{p}}\ge \frac{1-c}{C_{1}^{p}\lambda^{p}}\frac{1}{C_{0}^{p}}\int_{T(Q)}f^{p}d\sigma= (1-a)\int_{T(Q)}f^{p}d\sigma.
    \end{equation*}
    As a consequence, since by construction $G_{k}(Q)=\bigcup_{Q'\in \mathcal{J}_{k-1}(Q)}G_{1}(Q')$, we deduce
    \begin{align*}
        \int_{G_{k}(Q)}f^{p}d\sigma&=\sum_{Q'\in \mathcal{J}_{k-1}(Q)}\int_{G_{1}(Q')}f^{p}d\sigma\\
        &\ge (1-a)\sum_{Q'\in \mathcal{J}_{k-1}(Q)}\int_{T(Q')}f^{p}d\sigma=(1-a)\int_{\mathcal{B}_{k-1}(Q)}f^{p}d\sigma.
    \end{align*}
    This in turn implies
    \begin{equation*}
        \int_{\mathcal{B}_{k}(Q)}f^{p}d\sigma\le a \int_{\mathcal{B}_{k-1}(Q)}f^{p}d\sigma.
    \end{equation*}
    Iterating this bound, and using $G_{k}(Q)\subseteq \mathcal{B}_{k-1}(Q)$ and $\mathcal{B}_{0}(Q)=T(Q)$, we deduce the inequality in \eqref{eq:gehring-outcome-step2}.

    \smallskip
    \noindent \textbf{Step 3:} In this final step, we conclude the proof of the lemma, showing that the $L^{p+\eps}$-reverse-H\"{o}lder inequality holds for some $\eps>0$ and $C_{2}>1$. First note that iterating properties (a) and (b) yields
    \begin{equation}\label{eq:gehring-iteration-properties-a-b}
        \sup_{G_{k}(Q)}f\le C_{1}\bar C^{k-1}\lambda^{k}\langle f\rangle_{Q}\quad \forall k\ge 1.
    \end{equation}
    Then, recalling that $T(Q)=\bigcup_{k\ge 1}G_{k}(Q)$ up to a $\sigma$-null set, and combining \eqref{eq:gehring-iteration-properties-a-b} with the upper bound \eqref{eq:gehring-outcome-step2}, we find
    \begin{align*}
        \fint_{T(Q)}f^{p+\eps}d\sigma&=\frac{1}{\sigma(T(Q))}\sum_{k\ge 1}\int_{G_{k}(Q)}f^{p+\eps}d\sigma\\
        &\le \frac{1}{\sigma(T(Q))}\sum_{k\ge 1}\Big(\sup_{G_{k}(Q)}f\Big)^{\eps}\int_{G_{k}(Q)}f^{p}d\sigma\\
        &\le \Big(\sum_{k\ge 1}(\bar C^{\eps}\lambda^{\eps}a)^{k}\Big)\frac{C_{1}^{\eps}}{\bar C^{\eps} a}\langle f\rangle_{Q}^{\eps}\fint_{T(Q)}f^{p}d\sigma\le \Big(\sum_{k\ge 1}(\bar C^{\eps}\lambda^{\eps}a)^{k}\Big)\frac{C_{1}^{\eps}C_{0}^{p}}{\bar C^{\eps}a}\langle f\rangle_{Q}^{p+\eps},
    \end{align*}
    where in the last step we also used assumption $(i)$.
    Choosing $\eps>0$ so small that $\bar C^{\eps}\lambda^{\eps}a=(1+a)/2<1$, we deduce that the $(p+\eps)$-reverse-H\"{o}lder inequality in \eqref{eq:reverse-holder-ppluseps-gehring} holds with a constant
    \begin{equation*}
        C_{2}\coloneqq \left(\frac{C_{1}^{\eps}C_{0}^{p}}{\bar C^{\eps}a}\sum_{k\ge 1}(\bar C^{\eps}\lambda^{\eps}a)^{k} \right)^{\frac{1}{p+\eps}},
    \end{equation*}
    concluding the proof.
\end{proof}

\paragraph{Estimates on a maximal operator with boundary balls.}

Recall the notation $\mathcal{B}(\Omega)$ for the set of all balls with center on the boundary of a given open set $\Omega$, as well as the maximal operator $M^{\Omega}_{\mu}$ as in Definition \ref{def:boundary-balls-maximal-operator}. In the next lemma we show that $f\mapsto M_{\mu}^{\Omega}(f\mu)$ satisfies the usual maximal $L^{q}$-estimates provided that $\mu$ is doubling on boundary-centered balls.
\begin{lem}\label{lem:maximal-inequality-general-weights}
    Let $\Omega\subset \R^{n}$ be an open set, and let $\mu\in \mathcal{M}_{+}(\Omega^{c})$ be a finite nonnegative measure such that
    \begin{equation}\label{eq:doubling-condition-lemma-appendix}
        0<\mu(B_{2r}(\xi))\le \bar C \mu(B_{r}(\xi))\qquad \forall B_{r}(\xi)\in \mathcal{B}(\Omega).
    \end{equation}
    Then, the maximal operator $Tf\coloneqq M^{\Omega}_{\mu}(f\mu)$ is of weak type $(1,1)$ and of strong type $(p,p)$ with respect to $\mu$ for every $p\in (1,\infty]$, with operator norms depending only on $n$ and $\bar C$. 
\end{lem}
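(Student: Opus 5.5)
The plan is to run the standard Vitali covering argument, with boundary-centered balls in place of arbitrary balls, and then interpolate. The strong type $(\infty,\infty)$ bound is immediate: for $f\in L^{\infty}(\mu)$ and any $B\in\mathcal{B}(\Omega)$ we have $|f\mu|(B)\le \lVert f\rVert_{L^{\infty}(\mu)}\mu(B)$, so $Tf\le \lVert f\rVert_{L^\infty(\mu)}$ pointwise, with constant $1$. Once weak type $(1,1)$ is established, the Marcinkiewicz interpolation theorem (the operator is sublinear) gives strong type $(p,p)$ for every $p\in(1,\infty)$, with norm depending only on $p$ and the weak $(1,1)$ constant.

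For weak type $(1,1)$, fix $t>0$ and set $E_t\coloneqq\{y\in\Omega^c: Tf(y)>t\}$. For each $y\in E_t$ we pick a boundary ball $B_y=B_{r_y}(\xi_y)\ni y$ with $|f|\mu(B_y)>t\,\mu(B_y)$. The radii are uniformly bounded: $\mu(B_y)<t^{-1}\lVert f\rVert_{L^1(\mu)}$, while doubling prevents $\mu(B_r(\xi))$ from collapsing as $r$ grows. If $\Omega^c$ is bounded, then balls with $r\ge \diam(\Omega^{c})$ already contain $\Omega^c$, and we may truncate $r_y$ at this size without changing $\mu(B_y)$. Otherwise, we first restrict to $E_t\cap B_R$ and a finite family, and let $R\to\infty$ at the end by monotone convergence. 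To stay quantitative, the cleanest route is to fix a compact $K\subset E_t$, cover it by finitely many $B_y$, and then apply the Vitali $3r$-lemma to extract disjoint $B_1,\dots,B_m$ with $K\subset\bigcup_j 3B_j$.

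The key point, and the place where boundary centering matters, is that $3B_j=B_{3r_j}(\xi_j)$ is again a boundary-centered ball with the same center. Applying \eqref{eq:doubling-condition-lemma-appendix} twice therefore gives $\mu(3B_j)\le \mu(B_{4r_j}(\xi_j))\le \bar C^{2}\mu(B_j)$. It follows that
\[
\mu(K)\le \sum_j\mu(3B_j)\le \bar C^{2}\sum_j\mu(B_j)\le \frac{\bar C^{2}}{t}\sum_j |f|\mu(B_j)\le \frac{\bar C^{2}}{t}\lVert f\rVert_{L^{1}(\mu)} ,
\]
by disjointness. Inner regularity of the finite Borel measure $\mu$ then yields $\mu(E_t)\le \bar C^{2}t^{-1}\lVert f\rVert_{L^1(\mu)}$. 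Here $\mu$ is a measure on $\Omega^c$, viewed as a measure on $\R^n$ concentrated there, and we use that $E_t$ is open relative to $\Omega^c$ (lower semicontinuity is unnecessary if we work with compact subsets of a measurable set). We should also check that $E_t$ is measurable. It is open: if $y\in B$ with $|f|\mu(B)>t\mu(B)$, then every point of the open ball $B$ lies in $E_t$.

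The only mild obstacle is that the covering has to be carried out with balls whose enlargement keeps the same boundary center. That is automatic, since the Vitali lemma enlarges each chosen ball concentrically. Because of this, no dependence on $n$ enters beyond the covering constant $3$, and the final constants depend only on $\bar C$ (and on $p$ for the strong type bounds).
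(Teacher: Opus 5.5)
Your proof is correct and follows the paper's argument: you use the trivial $L^\infty$ bound plus Marcinkiewicz interpolation, and obtain the weak $(1,1)$ estimate from a Vitali covering of compact pieces of the level set by boundary-centered balls, whose concentric enlargements stay in $\mathcal{B}(\Omega)$ so that doubling applies. The differences are cosmetic: you use the finite $3r$-lemma (constant $\bar C^{2}$) where the paper uses the $5r$ version (constant $\bar C^{3}$), and your aside about uniformly bounded radii is unnecessary (and not quite justified as stated), since the compact/finite-subcover route avoids it.
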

\begin{proof}
    We clearly have the strong $L^{\infty}$-bound. By the Marcinkiewicz interpolation theorem it then suffices to prove the weak $L^{1}$-estimate. Let $\lambda>0$, and consider the set $E_{\lambda}\coloneqq\{M^{\Omega}_{\mu}(f\mu)>\lambda\}$. For every $y\in E_{\lambda}$, we pick a boundary ball $B_{r_{y}}(\xi_{y})\in \mathcal{B}(\Omega)$ such that 
    \begin{equation*}
        y\in B_{r_{y}}(\xi_{y}),\qquad\int_{B_{r_{y}}(\xi_{y})}|f|\,d\mu\ge \lambda \mu(B_{r_{y}}(\xi_{y})).
    \end{equation*}
    Given any compact set $K\subset \R^{n}$,
    by the Vitali covering theorem, we can extract countably many disjoint balls $B_{i}=B_{r_{y_{i}}}(\xi_{y_i})$ such that the union of $5B_{i}$ covers $E_{\lambda}\cap K$. Then, the doubling property of $\mu$ in \eqref{eq:doubling-condition-lemma-appendix} implies 
    \begin{equation*}
        \mu(E_{\lambda}\cap K)\le \sum_{i=1}^{\infty}\mu(5B_{i})\le \bar C^{3}\sum_{i=1}^{\infty}\mu(B_{i})\le \frac{\bar C^{3}}{\lambda}\sum_{i=1}^{\infty}\int_{B_{i}}|f|\,d\mu\le \frac{\bar C^{3}}{\lambda}\int_{\Omega^{c}}|f|\,d\mu,
    \end{equation*}
    and the weak $L^{1}$-estimate follows by the arbitrariness of $K$.
\end{proof}

\paragraph{Acknowledgments.} R.C. and X.F. are supported by the Swiss State Secretariat for Education, Research and Innovation (SERI) under contract number MB22.00034 through the project TENSE, and by the Swiss National Science Foundation (SNF grant PZ00P2\_208930). X.F. is further supported by the AEI project PID2024-156429NB-I00 (Spain). X.R. is supported by the European Union under the ERC Consolidator Grant No. 101123223 (SSNSD), by the AEI project PID2024-156429NB-I00 (Spain), the AEI-DFG project PCI2024-155066-2 (Spain-Germany), the AEI Grant RED2024-153842-T (Spain), and the AEI Maria de Maeztu Program for Centers and Units of Excellence in R\&D CEX2020-001084-M.

\bibliography{Nonlocal_Dahlberg_Refs.bib}
\bibliographystyle{alpha}

\end{document}